\tikzset{
>=stealth',
  punktchain/.style={
    rectangle, 
    rounded corners, 
    draw=black, very thick,
    text width=10em, 
    minimum height=3em, 
    text centered, 
    on chain},
  line/.style={draw, thick, <-},
  element/.style={
    tape,
    top color=white,
    bottom color=blue!50!black!60!,
    minimum width=8em,
    draw=blue!40!black!90, very thick,
    text width=10em, 
    minimum height=3.5em, 
    text centered, 
    on chain},
  every join/.style={->, thick,shorten >=1pt},
  decoration={brace},
  tuborg/.style={decorate},
  tubnode/.style={midway, right=2pt},
}
\pgfplotsset{contents/.style={axis x line=none, axis y line=none, scale = 2,area style,ymin=-2,ymax=22,enlargelimits=true}}
\numberwithin{equation}{section}
\theoremstyle{plain}
\newtheorem{thm}{Theorem}[section]
\newtheorem{lem}[thm]{Lemma}
\newtheorem{prop}[thm]{Proposition}
\newtheorem{rem}[thm]{Remark}
\def \be {\begin{equation}}
\def \ee {\end{equation}}
\def\balpha{\boldsymbol \alpha}
\def\ud{\mathrm{d}}
\renewcommand{\phi}{\varphi}
\renewcommand{\epsilon}{\varepsilon}
\renewcommand{\tilde}{\widetilde}
\renewcommand{\hat}{\widehat}
\providecommand{\customgenericname}{}
\newcommand{\newcustomtheorem}[2]{%
  \newenvironment{#1}[1]
  {%
   \renewcommand\customgenericname{#2}%
   \renewcommand\theinnercustomgeneric{##1}%
   \innercustomgeneric
  }
  {\endinnercustomgeneric}
}
\begin{document}

\title[Exploration for MFG]{Exploration noise for learning linear-quadratic mean field games}

\author{Fran\c cois Delarue}
\author{Athanasios Vasileiadis}

\address[F. Delarue and A. Vasileiadis]
{\newline \indent Universit\'e C\^ote d'Azur, CNRS, 
Laboratoire de Math\'ematiques  J.A. Dieudonn\'e,
\newline 
\indent 28 Avenue Valrose, 06108 Nice Cedex 2, France
\newline }
\email{francois.delarue@univ-cotedazur.fr, athanasios.vasileiadis@univ-cotedazur.fr}
\thanks{F. Delarue and A. Vasileiadis acknowledge the financial support of French ANR project ANR-19-P3IA-0002 -- 3IA C\^ote d'Azur -- Nice -- Interdisciplinary Institute for Artificial Intelligence. 
F. Delarue is also supported by French ANR project ANR-16-CE40-0015-01 on ``Mean Field Games''.
}

\date{\today}

\keywords{Mean field games, common noise, learning, fictitious play, reinforcement learning}

\subjclass[2020]{Primary: 68T05, 91A16; Secondary: 49N80}

\begin{abstract}
The goal of this paper is to demonstrate that common noise may serve as an exploration noise 
for learning the solution of a mean field game. This concept is here exemplified 
through a toy linear-quadratic model, for which a suitable form of common noise
has already been proven to restore existence and uniqueness. We here go one step further and prove that the same form of common noise may force the convergence of the learning algorithm called `fictitious play', and this without any further potential or monotone structure. 
Several numerical examples are provided in order to support our theoretical analysis.  
\end{abstract}
\maketitle

\section{Introduction}

\subsection{General context.}
Since their inception fifteen years ago in the seminal works of Lasry and Lions
\cite{Lasry2006,LasryLions2,LasryLions}, Lions \cite{Lionscollege}
 and Huang et al. \cite{HuangCainesMalhame1,Huang2006,Huang2007}, mean field games (MFGs) have met a tremendous success, inspiring mathematical works from different areas like partial differential equations (PDEs), control theory, stochastic analysis, calculus of variation, toward a consistent and expanded theory for games with many weakly interacting rational agents. Meanwhile, the increasing number of applications has stimulated a long series of works on discretization and numerical methods for approximating the underlying equilibria (which we also call solutions); see for instance 
 Achdou et al.
 \cite{AchdouCamilliCapuzzo1,AchdouCamilliCapuzzo2}, 
 Achdou and Capuzzo-Dolcetta
\cite{AchdouCapuzzo} and Achdou and Laurière \cite{AchdouLauriere} for discretization with finite differences; 
Carlini and Silva
 \cite{CarliniSilva1,CarliniSilva2} for semi-Lagrangian schemes; 
 Benamou and Carlier
 \cite{BenamouCarlier} 
 and
 Bricen\~{o} Arias et al.
 \cite{AriasKaliseSilva,Kobeissi} for variational discretization; and 
 Achdou and Laurière 
\cite{achdou-cetraro} for an overview. 
These contributions often include numerical methods for solving the discrete schemes
such as the 
Picard method, Newton method, fictitious play. We review the latter one in detail in the sequel. 
Recently, other works have also demonstrated the possible efficiency of tools from machine learning within this  complex framework: standard equations for characterizing the equilibria may be approximately solved by means of a neural network;
see for instance
Carmona and Laurière \cite{CarmonaLauriereI,CarmonaLauriereII}. Last (but not least), motivated by the desire to develop methods for models with partially unknown data,  
several authors have integrated important concepts from reinforcement learning in their studies; we refer to 
Carmona et al. \cite{CarmonaLauriereTanI,CarmonaLauriereTanII}, Elie et al. \cite{EliePerolat} and Guo et al. \cite{GuoHuXuZhang}. 

The aim of our work is to make one new step forward in the latter direction with a study at the frontier between the theoretical analysis of MFGs and the development of appropriate forms of learning. In particular, our main objective is to provide a proof of concept for the notion of exploration noise, which is certainly a key ingredient in machine learning. For the reader who is aware of the MFG theory, our basic idea is to prove that common noise may indeed serve for the exploration of the space of solutions and, in the end, for the improvement of the existing learning algorithms.  For sure, this looks a very ambitious program since there has not been, so far, any complete understanding of the impact of the common noise onto the shape of the solutions. However, several recent works clearly indicate that noise might be helpful for 
numerical purposes. Indeed, in a series of works, 
Bayraktar et al.
 \cite{mfggenetic}, Delarue \cite{DelarueSPDE},
 and Foguen Tchuendom 
\cite{fog2018}, it was shown that common noise could help to force uniqueness of equilibria in 
a certain number of MFGs.
This is a striking fact because nonuniqueness is the typical rule for MFGs, except in some particular classes with some specific structure; see for instance the famous uniqueness result of Lasry and Lions \cite{Lasry2006}  for games with monotonous coefficients and Carmona and Delarue \cite[chapter 3]{CarmonaDelarue_book_I}.  Conceptually, 
the key condition for forcing uniqueness is that, under the action of the (hence common) noise, 
the equilibria 
can visit sufficiently well the state space in which they live. This makes the whole rather subtle because, in full theory, the state space is the space of probability measures, which is typically infinite-dimensional. In this respect, the main examples for which such a smoothing effect has been rigorously established so far are $(i)$ a linear quadratic model with possibly nonlinear mean field interaction terms in the cost functional (Foguen Tchuendom \cite{fog2018}), $(ii)$ a general model with an infinite dimensional noise combined with a suitable form of local interaction in the dynamics (Delarue \cite{DelarueSPDE}); and $(iii)$ models defined on finite state spaces and forced by a variant of the Wright-Fisher noise used in population genetics literature (Bayraktar et al. \cite{mfggenetic}).

\color{black}
In order to prove the possible efficiency of our approach, we here restrict the whole discussion to the first of the three aforementioned instances. 
We provide below a long informative introduction in which 
we present the model in detail together with the related literature and our own results. The guideline of this introduction is the following. 
We specify the model in 
Subsection \ref{subse:model}, both without and with common noise. In particular, 
we recall therein the various known characterizations 
of the equilibria in both cases. 
In 
Subsection 
\ref{subse:learning}, we provide a brief review of the existing learning procedures
and  
we exemplify the form of the so-called fictitious play
for the linear-quadratic model studied in the paper. 
The thrust of our paper is to introduce a variant of the fictitious play, which we refer to as the \textit{tilted fictitious play}
and whose convergence can be proven in the presence of common noise under more general conditions 
than those of the standard fictitious play (within the class of linear-quadratic MFGs 
introduced in Subsection \ref{subse:model}). This tilted fictitious play is defined in 
Subsection 
\ref{subse:tilted}. 
Although the 
 tilted fictitious play
 can be regarded as a theoretical 
 learning scheme, 
 we explain in 
 Subsection 
 \ref{subse:exploration}
 how it can be adapted 
 to 
 statistical learning.
 Notably,  
in this adaptation, the common noise serves as an exploration noise for learning the equilibria of the underlying MFG. This interpretation of the common noise in terms of exploration is in fact one key point of the paper.   
Exploitation is then briefly addressed in Subsection 
\ref{subse:exploitation}. Therein, we present the main bounds that we are able to prove for the mean \textcolor{black}{exploitability} of the policy that is returned by the tilted fictitious play. 
Finally, 
we provide an overview of our numerical results 
in Subsection 
\ref{subse:num:ex}, and we conclude the introduction by providing a comparison with the existing literature in 
 Subsection 
 \ref{subse:comparison}.
 The main assumption and notation are 
 given in
 Subsection
\ref{subse:1:9}. The organization of the paper is also presented in Subsection 
\ref{subse:1:9}.
\color{black}

\subsection{Our model.}
\label{subse:model}
\textcolor{black}{In this subsection, we expose the linear-quadratic class of MFGs that is addressed in the paper, without and with common noise. We insist 
in both cases
on the form of the 
equilibrium feedbacks. As made clear in 
\eqref{eq:intro:optimal:alpha}, those feedbacks are necessarily affine. 
Moreover, 
for any of them,  
the intercept in the affine structure is characterized by a backward stochastic differential equation (BSDE), which is recalled in 
 \eqref{eq:intro:BSDE}
 and
which plays a key role in the subsequent analysis.}

\textcolor{black}{To make it clear, the MFG that we consider below comprises the state equation}
\begin{equation}
\label{eq:state:intro}
\ud X_{t} = \alpha_{t} \ud t + \sigma \ud B_{t} + \varepsilon \ud W_{t}, \quad t \in [0,T],
\end{equation}
together with the cost functional
\begin{equation}
\label{eq:cost:intro}
J\bigl({\boldsymbol \alpha} ; {\boldsymbol m} \bigr) = \frac12  {\mathbb E}
\biggl[ \bigl\vert R X_{T} + g(m_{T}) \bigr\vert^2 + 
\int_{0}^T \Bigl\{ \bigl\vert Q X_{t} + f(m_{t}) \bigr\vert^2 + 
\vert \alpha_{t} \vert^2
\Bigr\} \ud t \biggr].
\end{equation}
Above, $X_{t}$ is the state at time $t$ of a representative agent evolving within a continuum of 
other agents. It takes values in ${\mathbb R}^d$, for some integer $d \geq 1$, 
and evolves from the initial time $0$ to the terminal 
time $T$ according to the equation \eqref{eq:state:intro}. Therein, 
 ${\boldsymbol B}=(B_{t})_{0 \le t \le T}$ and ${\boldsymbol W}=(W_{t})_{0 \le t \le T}$ are 
two independent $d$-dimensional Brownian motions
on a complete probability space $(\Omega,{\mathcal A},{\mathbb P})$,
and $\sigma \geq 0$ and $\varepsilon \geq 0$ account for their respective intensity
in the dynamics. Whereas ${\boldsymbol B}$ is thought as a private (or idiosyncratic) noise
felt by the representative player only (and not by the others), the process 
${\boldsymbol W}$ is said to be a \textit{common} or \textit{systemic} noise as 
all the others in the continuum also feel it.  
The initial condition (also called initial private state) $X_{0}$ may be random and, in any case, $X_{0}$
is independent of the pair $({\boldsymbol B},{\boldsymbol W})$. 
The process ${\boldsymbol \alpha}=(\alpha_{t})_{0 \le t \le T}$ is a so-called control process, which is usually assumed to be progressively-measurable with respect to the augmented filtration 
${\mathbb F}$ generated by $(X_{0},\sigma {\boldsymbol B},\varepsilon {\boldsymbol W})$ 
(when $\sigma$ or $\varepsilon$ are zero, the corresponding process is no longer used to generate the filtration). 
The key fact in MFG theory is that the representative player aims at choosing 
the best possible ${\boldsymbol \alpha}$ in order to minimize the 
cost functional $J$ in 
\eqref{eq:cost:intro}. 
The leading symbol ${\mathbb E}$ 
in the definition of $J$ is understood as 
an expectation with respect to all the inputs $(X_{0},{\boldsymbol B},{\boldsymbol W})$. 
If there were no dependence on the extra term ${\boldsymbol m}=(m_{t})_{0 \le t \le T}$
in $f$ and $g$, the minimization  of $J$ would reduce to a mere linear-quadratic stochastic control problem driven by {$Q$ and $R$, with the latter being two $d$-square matrices}\footnote{We could take $Q$ and $R$ as $e \times d$ matrices, for a general $e \geq 1$ and then 
$f$ and $g$ as being $e$-dimensional. For simplicity, we feel easier to work with $e=d$.}. The essence of MFGs is that 
$(m_{t})_{0 \le t \le T}$ accounts for the flow of marginal statistical states of the continuum of players surrounding the representative agent. In full generality, each $m_{t}$ should be regarded as a probability measure hence describing the statistical distribution of the other agents at time $t$. For simplicity, we here just assume 
$m_{t}$ to be the $d$-dimensional mean of the other agents at time $t$; consistently, 
$f=(f^1,\cdots,f^d)$ and $g=(g^1,\cdots,g^d)$ are ${\mathbb R}^d$-valued functions defined on ${\mathbb R}^d$. 
However, the notion of \textit{mean} should be clarified, because of the distinction between the two \textit{private} and \textit{common} noises. From a modelling point of view, this \textit{mean} should result from a law of large numbers taken over players that would be subjected to independent
and identically distributed (i.i.d.) initial and private noises (consistently with our former description of ${\boldsymbol B}$) but to the same 
common noise ${\boldsymbol W}$. Because of this, $(m_{t})_{0 \leq t \leq T}$ is itself 
required to be a stochastic process, progressively-measurable with 
respect to the filtration generated by $\varepsilon {\boldsymbol W}$. 
When $\varepsilon=0$, the filtration becomes trivial and, accordingly, $(m_{t})_{0 \leq t \leq T}$ is assumed to be deterministic.
The notion of Nash equilibrium or MFG solution then comes through a fixed point argument. In short, 
$(m_{t})_{0 \leq t \leq T}$ is said to be an equilibrium if the minimizer $(X_{t}^\star)_{0 \leq t \le T}$ of \eqref{eq:state:intro}--\eqref{eq:cost:intro} satisfies:
\begin{equation}
\label{eq:intro:fixed}
m_{t} = {\mathbb E} \bigl[ X_{t}^\star \, \vert \, \varepsilon {\boldsymbol W} \bigr], \quad t \in [0,T],
\end{equation}
with the conditional expectation becoming an expectation if $\varepsilon=0$
\textcolor{black}{(whence our choice to use 
$\varepsilon {\boldsymbol W} $
as random variable in the conditioning, as this notation allows us to distinguish easily  between the two cases
without and with common noise)}.
Throughout, $f$ and $g$ are typically assumed to be 
bounded and Lipschitz continuous functions. When $\varepsilon=0$, this is enough for ensuring the existence of solutions to the fixed point \eqref{eq:intro:fixed}, but uniqueness is known to fail in general, except under some additional conditions. For instance, the Lasry-Lions monotonicity condition, when adapted to this setting, says that uniqueness indeed holds true 
if $Q^\dagger f$ and $R^\dagger g$ (with $\dagger$ denoting the transpose) are non-decreasing in the sense that (see \cite{delfog2019})
\begin{equation}
\label{eq:monotonicity}
\forall x,x' \in {\mathbb R}^d, \quad 
(x-x') \cdot \bigl(Q^\dagger f(x)- Q^\dagger f(x')\bigr) \geq 0, \quad
(x-x') \cdot \bigl( R^\dagger g(x)-R^\dagger g(x') \bigr) \geq 0,
\end{equation} 
with $\cdot$ denoting the standard inner product in ${\mathbb R}^d$. 
When $\epsilon>0$, existence and uniqueness hold true, even though the coefficients are not monotone (see \cite{fog2018}). This is a very clear instance of the effective impact of the noise onto the search of equilibria. 

The reason why the MFG \eqref{eq:state:intro}--\eqref{eq:cost:intro}--\eqref{eq:intro:fixed}
becomes uniquely solvable under the action of the common noise may be explained as follows. 
In short (and this is the rationale for working in this set-up), 
the linear-quadratic structure of 
\eqref{eq:state:intro}
forces uniqueness of the minimizer to 
\eqref{eq:cost:intro} when ${\boldsymbol m}$ is fixed. Even more the optimal control is given in the Markovian form 
\begin{equation}
\label{eq:intro:optimal:alpha}
\alpha_{t}^\star = - \eta_{t} X_{t}^\star - h_{t}, \quad t \in [0,T], 
\end{equation}
 where ${\boldsymbol \eta}=(\eta_{t})_{0 \leq t \leq T}$ 
 is the $d \times d$-matrix valued solution of an \textit{autonomous} Riccati equation that
 {only depends on $Q$ and $R$ and that is in particular} independent of the 
 input ${\boldsymbol m}$. In other words, only the intercept 
 $({\boldsymbol h}_{t})_{0 \leq t \leq T}$ 
in the above formula  
 depends on the inputs $f$, $g$, $R$, $Q$ and ${\boldsymbol m}$. 
 The characterization  of ${\boldsymbol h}$ is usually 
 obtained by the (stochastic if $\varepsilon >0$) Pontryagin principle, namely 
 ${\boldsymbol h}$ solves the Backward Stochastic Differential Equation (BSDE)
 \begin{equation}
 \label{eq:intro:BSDE}
 h_{t} = R^\dagger g(m_{T}) + \int_{t}^T \bigl\{ Q^\dagger f(m_{s}) - \eta_{s} h_{s} \bigr\} \ud s - \varepsilon \int_{t}^T k_{s} \ud W_{s}, \quad t \in [0,T]. 
 \end{equation}
When $\varepsilon=0$, the above stochastic integral disappears and the  
equation \eqref{eq:intro:BSDE} becomes a mere Ordinary Differential Equation (ODE)
but set backwards in time.
When $\varepsilon >0$, the solution is the pair $({\boldsymbol h},{\boldsymbol k})$, 
which is required to be progressively measurable with respect to the filtration generated by 
${\boldsymbol W}$. 
Existence and uniqueness are well-known facts in BSDE theory. 
 By taking the conditional mean given ${\boldsymbol W}$ in \eqref{eq:state:intro}, we deduce that solving the MFG problem thus amounts to find 
 a pair $({\boldsymbol m},{\boldsymbol h})$ satisfying the forward-backward stochastic 
 differential equation (FBSDE): 
 \begin{equation}
 \label{eq:intro:FBSDE}
 \begin{split}
 &\ud m_{t} = - \bigl( \eta_{t} m_{t} + h_{t} \bigr) \ud t + \varepsilon \ud W_{t}, \quad m_{0}= {\mathbb E}(X_{0}), 
 \\
  &\ud h_{t} = - \bigl( Q^\dagger f(m_{t}) - \eta_{t} h_{t} \bigr) \ud t + \varepsilon k_{t} \ud W_{t}, \quad 
  h_{T}= R^\dagger g(m_{T}). 
 \end{split}
 \end{equation}
Unique solvability was proven in \cite{Delarue02,MaProtterYong}. The smoothing effect of the noise manifests at the level of the related system of Partial Differential Equations (PDEs), which is sometimes 
 called the \textit{master equation} of the game:
\begin{equation}
\label{eq:intro:master:1}
\partial_{t} \theta_\varepsilon(t,x) + \tfrac{\varepsilon^2}2
\Delta^2_{xx} \theta_\varepsilon(t,x) - \bigl( \eta_{t} x + \theta_\varepsilon(t,x) \bigr) \cdot \nabla_{x} \theta_\varepsilon(t,x) +Q^\dagger f(x) - \eta_{t} \theta_\varepsilon(t,x) = 0,
\end{equation}
with $\theta_\varepsilon(T,\cdot)=R^\dagger g(\cdot)$ as  a boundary condition at the terminal time, 
$\theta_\varepsilon$ being a function from $[0,T] \times {\mathbb R}^d \rightarrow {\mathbb R}^d$. 
When $\epsilon >0$, the PDE is uniformly parabolic and hence has a unique classical solution (with bounded derivatives). 
When $\epsilon=0$, it becomes an hyperbolic equation and singularities may emerge, precisely when the solutions to \eqref{eq:intro:FBSDE} (which are nothing but the characteristics of 
\eqref{eq:intro:master:1}) cease to be unique.

\subsection{Learning procedures}
\label{subse:learning}
\textcolor{black}{We now provide an overview of the existing methods that can be used for solving 
the standing MFG numerically, or at least for decoupling 
the two forward and backward equations in 
\eqref{eq:intro:FBSDE}. In particular, we spend some time here recalling the definition of the 
so-called \textit{fictitious play}, see 
\eqref{eq:backward:step:n} 
and
\eqref{eq:forward:step:n}.}

\textcolor{black}{In our setting, numerical solutions to the MFG may}
be found by solving
either the FBSDE  
\eqref{eq:intro:FBSDE}
or the
nonlinear equation \eqref{eq:intro:master:1}. 
For sure, independently of any applications to MFGs, 
there have been well-known numerical methods for the two objects, see for instance 
(and for a tiny example) 
Beck et al.
\cite{BeckEJentzen},
Bender and Zhang
\cite{BenderZhang},
Cvitani\'c and Zhang
\cite{CvitanicZhang},
Delarue and Menozzi
\cite{DelarueMenozzi},
E et al. 
\cite{EHanJentzen},
Huijskens et al.
\cite{Huijskens},
Ma et al. 
\cite{MaShenZhao},
and 
Milstein and Tretyakov 
\cite{MilsteinTretyakov}. 
In Delarue and Menozzi \cite{DelarueMenozzi},
Huijskens et al. 
\cite{Huijskens},
Ma et al. 
\cite{MaShenZhao},
and 
Milstein and Tretyakov 
\cite{MilsteinTretyakov}, the problem is solved by constructing an approximation of 
$\theta_\varepsilon$ by means of a backward induction. This is a typical strategy in the field, which is fully consistent with the dynamic programming principle that holds true for uniquely solvable mean field games (see \cite[chapter 4]{CarmonaDelarue_book_II}). 
Although formulated differently, 
Bender and Zhang
\cite{BenderZhang} also relies on a backward induction. In comparison with 
Bender and Zhang
\cite{BenderZhang},
Delarue and Menozzi 
\cite{DelarueMenozzi},
Huijskens et al.
\cite{Huijskens},
Ma et al. 
\cite{MaShenZhao},
and 
Milstein and Tretyakov 
\cite{MilsteinTretyakov},
the works
Beck et al.
\cite{BeckEJentzen}, 
Cvitani\'c and Zhang
\cite{CvitanicZhang}, and E et al. \cite{EHanJentzen}
proceed in a completely different manner because the backward equation therein is reformulated into a forward equation with an unknown initial condition; the goal is then to tune both the initial condition and the martingale representation term of the backward equation in order to minimize, at terminal time, the distance to the prescribed boundary condition. Those methods have the following main limitation within a learning prospect for MFGs: they require the 
coefficients $f$, $g$, $Q$ and $R$ entering the model to be explicitly known. 
Even more, they make no real use of the control structure 
\eqref{eq:cost:intro}
that underpins the game. 

In fact, 
Delarue and Menozzi
\cite{DelarueMenozzi}, 
Huijskens et al. \cite{Huijskens},
Ma et al. \cite{MaShenZhao},
and Milstein and Tretyakov \cite{MilsteinTretyakov}
suffer from another drawback because all these works 
involve a space discretization  that consists in approximating 
the function $\theta_\varepsilon$ at the nodes of a spatial grid. Accordingly, 
the complexity increases with the physical dimension $d$ of the state variable. In particular, 
similar strategies would become even more costly for a more general mean field dependence than the one addressed in 
\eqref{eq:cost:intro}. Indeed, in the general case, the spatial variable is no longer the mean but the whole statistical distribution (which is an infinite dimensional object). For sure, the latter raises challenging questions that go beyond the scope of this paper because we cannot guess of a numerical method that would directly allow to handle the infinite dimensional statistical distribution of the solution. However, this is an objective that should be kept in mind. Say for instance that particle or quantization methods would be natural candidates to overcome such an issue, see for instance 
Chaudru de Raynal and Garcia Trillos
\cite{ChaudruGarcia} and Crisan and McMurray \cite{CrisanMcMurray}. 

Whatever the method, the true difficulty  
is to decouple \eqref{eq:intro:FBSDE}, because the two forward and backward time directions are conflicting. 
%
One of our basic concern here is thus to take 
benefit of the noise in order to define 
 an iterative scheme that decouples
\eqref{eq:intro:FBSDE} efficiently. 
At the same time, because the very structure of a MFG corresponds very naturally to the precepts of reinforcement learning,
we want to have a method that may work with unknown 
coefficients $f$, $g$, $Q$ and $R$, and that  
may benefit from the observation of the cost if it is available. In this regard, we ask our scheme to 
have a learning structure that should manifest in a sequence of steps of the form 
\begin{enumerate}
 \item computation of a best action,
 \item update of the state variable,  
\end{enumerate}
and hence that would be adapted to real data. 
Surprisingly, this is not an easy question, even though the equation \eqref{eq:intro:FBSDE} is well-posed.
As demonstrated in the recent work \cite{ChassagneuxCrisanDelarue}, naive Picard iterations may indeed fail. They would consist in solving inductively the 
backward equation
 \begin{equation}
\label{eq:backward:step:n}  \ud h_{t}^{n+1} = - \bigl(Q^\dagger f(\overline{m}_{t}^{n}) - \eta_{t} h_{t}^{n+1} \bigr) \ud t + \varepsilon k_{t}^{n+1} \ud W_{t}, \quad 
  h_{T}^{n+1}= R^\dagger g(\overline m_{T}^{n}). 
 \end{equation}
for a given proxy $\overline{\boldsymbol m}^n:=(\overline m_{t}^{n})_{0 \le t \le T}$ of the forward equation and then in plugging the solution 
${\boldsymbol h}^{n+1}:=(h_{t}^{n+1})_{0 \le t \le T}$ into the forward equation  
  \begin{equation}
\label{eq:forward:step:n}  
\ud \overline{m}_{t}^{n+1} = - \bigl( \eta_{t} \overline{m}_{t}^{n+1} + h_{t}^{n+1} \bigr) \ud t + \varepsilon \ud W_{t}, \quad m_{0}^{n+1}= {\mathbb E}(X_{0}). 
 \end{equation}
Intuitively, the reason why it may fail is that the updating rule $\overline{\boldsymbol m}^{n} \mapsto 
\overline{\boldsymbol m}^{n+1}$ is too ambitious. In other words, 
the increment may be too high and smaller steps are needed to guarantee the convergence of the procedure. 

A more successful strategy is known in MFG theory (and more generally in game theory) under the name of \textit{fictitious play}, see \cite{CardaliaguetHadikhanloo,EliePerolat,Perrin-3,Hadikhanloo,HadikhanlooSilva,Perrin-2,perrin2020fictitious}. It is a learning procedure with a decreasing harmonic step of size $1/n$ at rank 
$n$ of the iteration. 
Having as before a proxy 
$\overline{\boldsymbol m}^{n}$ for the state of the population at rank $n$ of the learning procedure, 
${\boldsymbol h}^{n+1}$ is computed as above. Next, the same forward equation 
as before is also solved, but the solution is denoted by 
${\boldsymbol m}^{n+1}$, namely
  \begin{equation}
  \label{eq:forward:step:fictitious:n}  
\ud {m}_{t}^{n+1} = - \bigl( \eta_{t} m_{t}^{n+1} + h_{t}^{n} \bigr) \ud t + \varepsilon \ud W_{t}, \quad m_{0}^{n+1}= {\mathbb E}(X_{0}),
 \end{equation}
 and then the updating rule is given by 
 \begin{equation}
 \label{eq:update:step:n}
 \overline{m}_{t}^{n+1} = \frac1{n+1} m_{t}^{n+1} + \frac{n}{n+1} \overline m_{t}^{n}, \quad t \in [0,T]. 
 \end{equation}
Very importantly, both the Picard iteration and the fictitious play have a learning interpretation. 
In both cases, the backward equation 
\eqref{eq:backward:step:n}
provides the best response ${\boldsymbol \alpha}^{n+1,\star}$ to the minimization  of the functional 
${\boldsymbol \alpha} \mapsto J({\boldsymbol \alpha};\overline{\boldsymbol m}^n)$, 
which has indeed the same form as in 
\eqref{eq:intro:optimal:alpha}:
\begin{equation*}
\alpha_{t}^{n+1} = - \eta_{t} X_{t}^{n+1} - h_{t}^{n+1}, \quad t \in [0,T], 
\end{equation*}
where $(X_{t}^{n+1,\star})_{0 \leq t \leq T}$ is obtained implicitly by solving 
the corresponding state equation 
\eqref{eq:state:intro}. Equivalently, the above formula gives the form of 
the optimal feedback function to the minimization  of the functional 
${\boldsymbol \alpha} \mapsto J({\boldsymbol \alpha};\overline{\boldsymbol m}^n)$ (bearing in mind that the feedback function may be here random because of the common noise).

In fact, the fictitious play has been addressed 	so far  in the following two main cases: 
potential MFGs (\cite{CardaliaguetHadikhanloo})
and MFGs with monotone coefficients (here, $Q^\dagger f$ and $R^\dagger g$ are non-decreasing; see \cite{EliePerolat,Perrin-3,Hadikhanloo,HadikhanlooSilva,perrin2020fictitious} within a general setting).
Also, except 
in the recent work \cite{perrin2020fictitious} (whose framework is a bit different because the fictitious play is in continuous time), the analysis has just been carried out in the case $\varepsilon=0$, i.e., 
when there is no common noise. Here, it is certainly useful to recall that 
potential MFGs are a class of MFGs for which there exists a potential, namely a functional 
${\mathcal J}({\boldsymbol \alpha})$
associated with the same state dynamics as in 
\eqref{eq:state:intro}--\eqref{eq:cost:intro},
such that any minimizer of 
${\mathcal J}$ is a solution of the MFG. 
In our setting, the shape of ${\mathcal J}({\boldsymbol \alpha})$
is 
\begin{equation}
\label{eq:potential:structure:1}
{\mathcal J}\bigl({\boldsymbol \alpha} \bigr) =  
{\mathbb E} \biggl[ {\mathcal G}\bigl({\mathcal L}(X_{T} \vert \varepsilon {\boldsymbol W}) \bigr)
+ \int_{0}^T \frac12 \Bigl[ {\mathcal F} \bigl( {\mathcal L}(X_{t} \vert \varepsilon {\boldsymbol W}) \bigr) + \vert \alpha_{t} \vert^2 \Bigr] \ud t \biggr],
\end{equation}
where
${\mathcal L}(X_{t} \vert \varepsilon {\boldsymbol W})$
denotes the conditional law of $X_{t}$ given 
the common noise and 
\begin{equation}
\label{eq:potential:structure:2}
{\mathcal G}(\mu) = \frac12  \int_{{\mathbb R}^d} \vert R x \vert^2 \ud\mu(x) + G\bigl(\bar \mu\bigr), 
\quad 
{\mathcal F}(\mu) = \frac12   \int_{{\mathbb R}^d} \vert Q x \vert^2 \ud\mu(x) + F \bigl(\bar \mu \bigr),
\end{equation}
with $G$ and $F$ denoting primitives of $R^\dagger g$ and $Q^\dagger f$ (if any). 
In particular, the model is always potential 
when $d=1$, but there is no potential structure when $d \geq 2$, unless $Q^\dagger f$
and $R^\dagger g$ both derive from (Euclidean) potentials, meaning that $(Q^\dagger f)^i = \partial F/\partial x_{i}$
and $(R^\dagger g)^i = \partial G/\partial x_{i}$. 

\subsection{Tilted harmonic and geometric fictitious plays}
\label{subse:tilted}
\textcolor{black}{This subsection contains one of the main novelties of the paper. 
For reasons that are explained below, we introduce two variants of the fictitious play, which we 
call `tilted harmonic' and `tilted geometric'.
The definition
of the geometric version 
(which is the version that is effectively addressed in the paper)
 consists of the four equations
\eqref{eq:mn+1:cost:functional}--\eqref{eq:mn+1:conditional:state}--\eqref{eq:mn+1:conditional:intro}--\eqref{eq:mn+1:cost:update:mean} below and differs significantly from 
  \eqref{eq:forward:step:fictitious:n}--\eqref{eq:update:step:n}.} 

Consistently with our agenda, our goal is \textcolor{black}{indeed} to prove that the fictitious play may converge thanks  
to 
the presence of the common noise (i.e., $\varepsilon >0$). Seemingly, 
the above discussion about the potential structure of our model in dimension $d=1$ demonstrates that this question becomes especially relevant in dimension greater than or equal to 2 (the results from 
\cite{CardaliaguetHadikhanloo} could be easily adapted to this setting, even in the presence of the common noise). In fact, as shown in Subsection 
\ref{subse:3:3}, the question is also interesting in dimension $d=1$ in cases when equilibria are not unique. 

However, this program \textcolor{black}{is more challenging than it seems} because we are not able, even in the presence of the common noise, to prove the convergence of the fictitious play stated in \eqref{eq:backward:step:n}
and \eqref{eq:forward:step:fictitious:n}. Instead, we take benefit of the noise in order to reformulate the two equations \eqref{eq:backward:step:n}
and \eqref{eq:forward:step:fictitious:n} into a new system
obtained by a mere shift of the common noise. By Girsanov theorem, the new shifted common noise has the same law as the original one but under a tilted probability measure. 
Using the harmonic updating rule 
 \eqref{eq:update:step:n}, 
 this so-called \textit{tilted harmonic} scheme
is then shown to converge (in the case $\varepsilon >0$).  

In order to state the new fictitious play properly, we thus need to allow for another form of common noise 
in 
\eqref{eq:state:intro}. 
For a
process ${\boldsymbol h}=(h_{t})_{0 \leq t \leq T}$, 
progressively-measurable with respect to the filtration generated by ${\boldsymbol W}$, we thus introduce the \textit{shifted} Brownian motion 
\begin{equation*}
{\boldsymbol W}^{ \boldsymbol h/{\varepsilon}} = \biggl(W_{t}^{ \boldsymbol h/{\varepsilon}} :=
W_{t} + \frac1{\varepsilon} \int_{0}^t h_{s} \ud s \biggr)_{0 \le t \le T},  
\end{equation*}
together with the tilted probability measure ${\mathbb P}^{\boldsymbol h}$ 
whose density with respect to ${\mathbb P}$ is
\begin{equation}
\label{eq:Girsanov}
{\mathcal E}\bigl(
 {\tfrac{1}{\varepsilon}} {\boldsymbol h}\bigr)
:=
\exp \biggl(  - \frac1{\varepsilon} \int_{0}^T h_{t} \cdot  \ud W_{t} - \frac1{2 \varepsilon^2} \int_{0}^T \vert h_{t}  \vert^2 \ud t \biggr).
\end{equation}
Accordingly, 
for any two 
frozen 
continuous paths ${\boldsymbol n}:=(n_{t})_{0 \le t \le T}$
and ${\boldsymbol w}:=(w_{t})_{0 \le t \le T}$, 
we define the \textit{non-averaged} cost functional
\begin{equation}
\label{eq:R:intro:w}
{\mathcal R}^{x_0}\bigl({\boldsymbol \alpha};{\boldsymbol n};\textcolor{black}{\varepsilon}{\boldsymbol w}\bigr)
:=  \frac12  \biggl[ \bigl\vert  {R} x_{T} + g(n_{T}) \bigr\vert^2 + 
\int_{0}^T \Bigl\{ \bigl\vert  {Q} x_{t} + f(n_{t}) \bigr\vert^2 + 
\vert \alpha_{t} \vert^2
\Bigr\} \ud t \biggr],
\end{equation}
where
\begin{equation}
\label{eq:state:intro:w}
x_{t} = x_{0} + \int_{0}^t \alpha_{s} \ud s + \sigma  B_{t} + \varepsilon w_{t}, \quad t \in [0,T],
\end{equation}
the latter being nothing but the integral version of 
\textcolor{black}{\eqref{eq:state:intro}}, when ${\boldsymbol W}$ is replaced by the frozen trajectory 
${\boldsymbol w}$. Now, when 
${\boldsymbol m}$
and 
${\boldsymbol h}$
are two  
progressively-measurable processes with respect to the filtration generated by $\varepsilon {\boldsymbol W}$,
we have a look at 
the new cost functional\footnote{\color{black} The reader should observe that, when there is no common noise
and 
when 
${\boldsymbol \alpha}$ is progressively-measurable with respect to 
${\mathbb F}^{X_0,{\boldsymbol B}}$ 
and ${\boldsymbol m}$ is deterministic, 
${\mathbb E}[{\mathcal R}({\boldsymbol \alpha};{\boldsymbol m};0)]$
coincides with the original cost $J({\boldsymbol \alpha};{\boldsymbol m})$ in 
\eqref{eq:cost:intro}.}:
\begin{equation}
J^{{\varepsilon}}({\boldsymbol \alpha};{\boldsymbol m};{\boldsymbol h})
:= {\mathbb E}^{\boldsymbol h/{\varepsilon}} \Bigl[ {\mathcal R}^{X_0} \bigl({\boldsymbol \alpha};{\boldsymbol m}; {\varepsilon} {\boldsymbol W}^{\boldsymbol h/ {\varepsilon}}\bigr)\Bigr].
\end{equation}
We now define the first version of our titled fictitious play according to a two step iterative learning procedure, whose description at rank $n$ goes as follows:
\begin{description}
\item[Harmonic best action] 
For a proxy $\overline{\boldsymbol m}^n:=(\overline m_{t}^{n})_{0 \le t \le T}$
of the conditional mean 
${\boldsymbol m}=(m_{t})_{0 \le t \le T}$
of the in-equilibrium population 
(as given by the 
forward component 
of 
\eqref{eq:intro:FBSDE})
and a proxy ${\boldsymbol h}^n=(h_{t}^n)_{0 \le t \le T}$
of the opposite\footnote{The opposite comes from the sign $-$ in the formula \eqref{eq:intro:optimal:alpha} of the optimal feedback.} of the ${\mathbb F}^{{\boldsymbol W}}$-adapted intercept 
of the equilibrium feedback in 
\eqref{eq:intro:optimal:alpha} (as given by the backward component 
of \eqref{eq:intro:FBSDE}), solve
\begin{equation}
\label{eq:mn+1:cost:functional:0}
{\boldsymbol \alpha}^{n+1} = \textrm{\rm argmin}_{\boldsymbol \alpha}
{\mathbb E}^{{\boldsymbol h}^n \hspace{-2pt}  {/\varepsilon}}
\bigl[ {\mathcal R}^{X_0} \bigl( {\boldsymbol \alpha}; \overline{\boldsymbol m}^n ; \textcolor{black}{\varepsilon} {\boldsymbol W}^{{\boldsymbol h}^n \hspace{-2pt} /\varepsilon}\bigr) \bigr],
\end{equation}
the infimum being taken over all ${\mathbb F}$-progressively measurable 
(${\mathbb R}^d$-valued) controls ${\boldsymbol \alpha}$. 

The optimal feedback being of the same linear form as in 
\eqref{eq:intro:optimal:alpha} (the proof is given below), 
we may call ${\boldsymbol h}^{n+1}=(h^{n+1}_{t})_{0 \le t \le T}$
the opposite of the resulting intercept. 
\item[Harmonic update]
Given ${\boldsymbol h}^{n+1}$, the optimal trajectory of 
the above minimization problem is
\begin{equation}
\label{eq:mn+1:conditional:state}
X_{t}^{n+1} = X_{0} - \int_{0}^t \bigl( \eta_{s} X_{s}^{n+1} + 
h_{s}^{n+1} \bigr) \ud s + \sigma  B_{t} + \varepsilon W_{t}^{{\boldsymbol h}^n/\varepsilon}, \quad t \in [0,T].
\end{equation}
We then let\footnote{\color{black} 
The reader 
will find in 
Remark \ref{rem:2.2:measurability}
a useful comment about the definition of 
$(m^{n+1}_{t})_{0 \leq \leq T}$: the conditional expectation 
can be equivalently taken under 
${\mathbb P}^{{\boldsymbol h}^n}$. This comes from the fact the Girsanov density is
measurable with respect to the $\sigma$-field $\sigma({\boldsymbol W})$ generated by 
${\boldsymbol W}$.}
\begin{equation}
\label{eq:mn+1:conditional:intro}
m^{n+1}_{t} = {\mathbb E} \bigl[ X_{t}^{n+1}  \, \vert \,  {\boldsymbol W} \bigr], \quad 
t \in [0,T], 
\end{equation}
together with
\begin{equation}
\label{eq:mn+1:cost:update:mean:0}
\overline{m}^{n+1}_{t} = \frac1{n+1} \sum_{k=1}^{n+1} m_{t}^{k} = \tfrac1{n+1} m_{t}^{n+1} + \tfrac{n}{n+1}
\overline{m}^n_{t}, \quad 
t \in [0,T]. 
\end{equation}
\end{description}
For sure the rationale behind this strategy relies on Girsanov's transformation. 
Under the tilted probability measure ${\mathbb P}^{{\boldsymbol h}/\varepsilon}$, 
the process ${\boldsymbol W}^{{\boldsymbol h}/\varepsilon}$ is a new Brownian motion, 
with the same law as the original (or \textit{historical}) common noise under ${\mathbb P}$. 
However, the main trick here is to 
dynamically change the form of the common noise (dynamically with respect to the 
rank of the iteration in the fictitious play). Precisely, this permits to decouple the two forward and backward equations, as clearly shown if we 
 {write 
the 
equation for 
$\overline{\boldsymbol m}^{n+1}$} as an equation with respect to 
the historical common noise (the proof is given in 
\eqref{eq:def:overline:m:n} below): 
\begin{equation}
\label{eq:n:n+1:0}
\overline m_{t}^{n+1} = m_{0} - \int_{0}^t \Bigl( \eta_{s} m_{s}^{n+1} + 
\tfrac1{n+1} \bigl[ h_{s}^{n+1} 
- h_{s}^0 \bigr]   \Bigr) \ud s   + \varepsilon W_{t}, \quad t \in [0,T]. 
\end{equation}
The forward equation then becomes asymptotically autonomous provided that 
${\boldsymbol h}^{n+1}$ 
can be bounded independently of $n$, which we succeed to prove in Section \ref{Sec:2}. 
In turn, the scheme can be easily shown to converge
(notice however that we are not able to prove the convergence of the standard non-tilted fictitious
play  
outside any further potential or monotonicity assumption, even in the presence of the common noise). Moreover, the rate can be proved to decay (at least) like 
${\mathcal O}(1/n)$, with ${\mathcal O}(\cdot)$ standing for the `big ${\mathcal O}$ Landau notation'. Unfortunately, the 
best estimate we have for the 
constant driving the term ${\mathcal O}(1/n)$ blows up exponentially fast with 
$1/\varepsilon^2$.  
Although the numerical experiments that are reported below indicate that the rate may 
decrease faster than $1/n$ and grow slower than $\exp({\mathcal O}(1/\varepsilon^2))$, 
the theoretical guarantee that is hence available for the 
version 
of the fictitious play
comprising the four equations  
\eqref{eq:mn+1:cost:functional:0}--\eqref{eq:mn+1:conditional:state}--\eqref{eq:mn+1:conditional:intro}--\eqref{eq:mn+1:cost:update:mean:0}
is thus rather poor when the viscosity parameter $\varepsilon$ tends to $0$. 
This prompts us to provide a variant of the scheme, with an updating step 
(in equation \eqref{eq:mn+1:conditional:intro})
that is different from $1/(n+1)$ 
and 
that 
yields
 a more favourable trade-off between $n$ 
and $\varepsilon^{-2}$
(or equivalently that allows for a cheaper choice of $n$ when $\varepsilon$ is small). 
In order to clarify things, we refer below to the scheme 
\eqref{eq:mn+1:cost:functional:0}--\eqref{eq:mn+1:conditional:state}--\eqref{eq:mn+1:conditional:intro}--\eqref{eq:mn+1:cost:update:mean:0}
as the `tilted harmonic fictitious play'.

In a nutshell, the key point is to perform the
following modifications in each of the two steps of the fictitious play, with the new version being called `geometric' for reasons that become obvious in the next few lines:
\begin{description}
\item[Geometric best action] 
With the same notation as before, use
$\varpi X_0$ as initial private state, 
$\varpi {\boldsymbol \alpha}$ as control,  and 
${\mathbb P}^{\varpi {\boldsymbol h}^n/\varepsilon}$ and 
${\boldsymbol W}^{\varpi {\boldsymbol h}^n/\varepsilon}$
as tilted probability measure and tilted noise, where the rate $\varpi$ is a fixed real that is typically chosen in 
the interval
$(1,\sqrt{2}]$. In words, replace 
\eqref{eq:mn+1:cost:functional:0}
by
\begin{equation}
\label{eq:mn+1:cost:functional}
{\boldsymbol \alpha}^{n+1} = \textrm{\rm argmin}_{\boldsymbol \alpha}
{\mathbb E}^{{\varpi \boldsymbol h}^n \hspace{-2pt}  {/\varepsilon}}
\bigl[ {\mathcal R}^{\varpi X_0} \bigl( \varpi {\boldsymbol \alpha}; \overline{\boldsymbol m}^n ; \textcolor{black}\varpi {\varepsilon} {\boldsymbol W}^{\varpi {\boldsymbol h}^n \hspace{-2pt} /\varepsilon}\bigr) \bigr].
\end{equation}

\item[Geometric update]
Accordingly, in 
\eqref{eq:mn+1:conditional:state}, replace 
$\varepsilon W_{t}^{{\boldsymbol h}^n/\varepsilon}$
by 
$\varepsilon W_{t}^{\varpi {\boldsymbol h}^n/\varepsilon}$, 
and next use the updating formula:
\begin{equation}
\label{eq:mn+1:cost:update:mean}
	\overline{m}^{n+1}_{t} = \frac{\varpi(1-\varpi^{-1})}{1-\varpi^{-(n+1)}} \sum_{k=1}^{n+1} \varpi^{-k} m_{t}^{k} = 
	\tfrac{\varpi^{-n}(1-\varpi^{-1})}{1- \varpi^{-(n+1)}}
	m^{n+1}_t +  \Bigl( 1 - 
	\tfrac{\varpi^{-n}(1-\varpi^{-1})}{1- \varpi^{-(n+1)}}
	 \Bigr) 
	\overline{m}^n_{t}, 
\end{equation}
for $t \in [0,T]$.
\end{description}
Here is the main idea.
In comparison with \eqref{eq:mn+1:conditional:state}, the optimal trajectory of  
\eqref{eq:mn+1:cost:functional} is 
\begin{equation}
\label{eq:mn+1:conditional:state:varpi}
X_{t}^{n+1} = X_{0} - \int_{0}^t \bigl( \eta_{s} X_{s}^{n+1} + 
h_{s}^{n+1} \bigr) \ud s + \tfrac1{\varpi} \sigma  B_{t} + \varepsilon W_{t}^{\varpi {\boldsymbol h}^n 
\hspace{-2pt} /\varepsilon}, \quad t \in [0,T].
\end{equation}
The above differs from 
\eqref{eq:mn+1:conditional:state}
because the 
intensity of the idiosyncratic noise is $\sigma/\varpi$. Fortunately, 
this term disappears when taking conditional expectations given the common noise, 
as done in the formula  
\eqref{eq:mn+1:conditional:intro}. (In fact, one can also recover $\sigma$ as intensity by considering 
$\varpi {\boldsymbol X}^{n+1}$ as optimal trajectory, but with $\varpi X_0$ as initial condition.) 
Moreover, it must be stressed that the common noise right above is 
$\varepsilon {\boldsymbol W}^{\varpi {\boldsymbol h}^n/\varepsilon}$, 
whereas it is 
$\varepsilon {\boldsymbol W}^{{\boldsymbol h}^n/\varepsilon}$
in 
\eqref{eq:mn+1:conditional:state}.
This says that, under the historical probability measure ${\mathbb P}$, 
\eqref{eq:mn+1:conditional:state:varpi} rewrites
\begin{equation}
\label{eq:mn+1:conditional:state:varpi:historical}
X_{t}^{n+1} = X_{0} - \int_{0}^t \bigl( \eta_{s} X_{s}^{n+1} + 
h_{s}^{n+1} - \varpi h_s^n \bigr) \ud s + \tfrac1{\varpi} \sigma  B_{t} + \varepsilon W_{t}, \quad t \in [0,T].
\end{equation}
The presence of the factor $\varpi$ in the last term of the drift makes it possible to  use the 
geometric updating formula
\eqref{eq:mn+1:cost:update:mean}. In particular, this is our result to show (see 
\S \ref{subse:analysis:fictitious:play}
below) that, under the initialization ${\boldsymbol h}^0=0$, 
\eqref{eq:n:n+1:0}
becomes
\begin{equation}
\label{eq:n:n+1}
\overline m_{t}^{n+1} = m_{0} - \int_{0}^t \Bigl( \eta_{s} m_{s}^{n+1} + 
 \tfrac{(1-\varpi) \varpi^{-n}}{1- \varpi^{-(n+1)}}  h^{n+1}_s   \Bigr) \ud s   + \varepsilon W_{t}, \quad t \in [0,T]. 
\end{equation}
The reader may easily compare with 
\eqref{eq:n:n+1:0}. 
Whereas
the 
difference
(with a standard decoupled Ornstein-Uhlenbeck process) 
decreases like
${\mathcal O}(1/n)$ in 
\eqref{eq:n:n+1:0}, it decreases like 
${\mathcal O}(\varpi^{-n})$ in 
\eqref{eq:n:n+1}. In the end, this gives a geometric rate of convergence (in the parameter $n$). Although this does not change the presence of a 
leading constant of size $\exp({\mathcal O}(1/\varepsilon^{2}))$, this makes it possible, in order to reach 
a given 
theoretical guarantee, to choose a value of $n$ (much) lower than in the 
tilted harmonic fictitious play. 
Our main statement in this regard is Theorem
\ref{main:thm:1}.
Also, it is worth adding that, under the obvious convention that 
\begin{equation}
\label{eq:convention:harmonic:geometric}
 \frac{(1-\varpi^{-1}) \varpi^{-n}}{1- \varpi^{-(n+1)}}  = \frac1{n+1}
 \end{equation} 
 when $\varpi =1$, the 
 updating rule 
 \eqref{eq:mn+1:cost:update:mean:0}
coincides with 
 \eqref{eq:mn+1:cost:update:mean}. For this reason, we sometimes speak about 
 the harmonic scheme 
 \eqref{eq:mn+1:cost:functional:0}--\eqref{eq:mn+1:conditional:state}--\eqref{eq:mn+1:conditional:intro}--\eqref{eq:mn+1:cost:update:mean:0}
 as a particular case of the geometric scheme 
 when $\varpi=1$. (Notice however that, although it could be easily adapted to the case 
 $\varpi=1$, 
 see Remark 
 \ref{rem:fictitious:play:rate}, 
 the statement of 
 Theorem
\ref{main:thm:1} does not formally apply to $\varpi =1$ and even if it did, the statement would be in fact trivial.)
Last but not least, we stress that, in the definition of the geometric best action, the tilted noise is biased, with 
$\varpi-1$ as bias. This is an important feature of the scheme and this is the reason why we feel important to distinguish 
the best action 
\eqref{eq:mn+1:cost:functional}
 from
 the best action 
 \eqref{eq:mn+1:cost:functional:0}
 and to call the former `geometric'
 and the second `harmonic'.

\textcolor{black}{For sure, the reader may want to reformulate the tilted fictitious play for a more general MFG,
with a structure that would no longer be linear-quadratic. Whereas the theory of MFG with an additive finite-dimensional common noise
(such as ${\boldsymbol W}=(W_t)_{0 \leq t \leq T}$) is by now 
well-established (see for instance the book Carmona and Delarue \cite{CarmonaDelarue_book_II}), 
the real interest for addressing the same tilted form of the fictitious play but in a wider setting is however not clear to us. 
Indeed, aforementioned known theoretical results 
on the smoothing effect of the common noise (see Delarue \cite{DelarueSPDE}) 
require in general an infinite dimensional noise of a much more complicated structure than the additive finite-dimensional noise $(W_t)_{0 \leq t \leq T}$ 
used in \eqref{eq:n:n+1}. For this reason, we have decided to restrict the whole exposition to the linear-quadratic setting, even though
\eqref{eq:mn+1:cost:functional},
\eqref{eq:mn+1:conditional:state},
\eqref{eq:mn+1:conditional:intro} and
\eqref{eq:mn+1:cost:update:mean}
could be recast, for the same additive finite-dimensional noise, 
within a larger framework. 
Outside the class of linear-quadratic games, 
the main changes are the following ones. First, 
the optimal feedback in 
\eqref{eq:mn+1:conditional:state}
is no longer affine. Second, the mean field fixed point can no longer be formulated in terms 
of the sole conditional expectation, as it is done in 
\eqref{eq:mn+1:conditional:state},
but involves the full statistical law of $X^{n+1}_t$ given ${\boldsymbol W}$. Third (and subsequently), 
the rule
\eqref{eq:mn+1:cost:update:mean}
must be reformulated in terms of the
full statistical distributions and not only in terms of their means. We leave the details to the reader.
Needless to say, obtaining a relevant extension of the tilted fictitious play
for general MFGs remains a very interesting but highly challenging objective.}

\subsection{Exploration}
\label{subse:exploration}
\textcolor{black}{We now explain how the common noise in the 
tilted fictitious play can be regarded as an exploration noise 
for the original MFG without common noise.}

For sure, changing the common noise 
as we have done in the cost functional 
\eqref{eq:mn+1:cost:functional}
(see also 
\eqref{eq:mn+1:cost:functional:0})
raises \textcolor{black}{indeed} many practical questions\footnote{The reader may find it reminiscent of the weak formulation of MFGs introduced in
Carmona and Lacker  \cite{CarmonaLacker}, but this is substantially different because the Girsanov transformation is here applied to the common noise (and not to the idiosyncratic one).}. 
In order to understand this properly, we should 
follow the presentation given in 
Carmona et al. 
\cite{CarmonaLauriereTanI,CarmonaLauriereTanII}
and
think of ${\mathcal R}$ as being a black-box representing a decentralized unit. For instance, the box may regulate the consumption/production/storage of energy of a single individual connected to a smart grid, see for instance Alasseur \cite{Alasseur} and the references therein; the box may also be an autonomous car moving in a flock of vehicles, see for instance Huang et al. \cite{HuangDiDuChen}. 

The black-box operation is described in Figure 
\ref{fig:1}.
In this picture, the decentralized black-box receives four inputs: 
$(i)$ the control, as tuned by the individual operating the black-box; $(ii)$ 
the initial private state $X_0$; $(iii)$
the two idiosyncratic and independent noises; $(iv)$ the state of the population.
In this representation, the only input that can be tuned by 
the individual operating the black-box is the control itself.

\begin{figure}[htb]
\begin{tikzpicture}
  [node distance=1cm,
  start chain=going below,]

   \node[fill=yellow,punktchain, join] (C1) {Decentralized control};
   
     \node[fill=green,punktchain, on chain= going below] (C4)      {Common and idiosyncratic noises};
    
     \node[fill=green,punktchain, on chain= going below] (C40)      {State of the population};

   \node[fill=red,punktchain, on chain=going right,yshift=2.2cm] (C10) {Decentralized black-box};

   \node[fill=red,punktchain, on chain=going right] (C20) {Output};
       
            \node[fill=green,punktchain, on chain= going below] (C50)      {Initial private state};

   \draw[->, thick,] 
  let \p1=(C1.east), \p2=(C10.north) in
($(\x1,\y1)$) -| ($(\x2,\y2)$)
node[right, midway]  {};

   \draw[->, thick,dashed] 
  let \p1=(C4.east), \p2=(C10.west) in
($(\x1,\y1)$) -- ($(\x2,\y2)$)
node[right, midway]  {};

   \draw[->, thick,dashed] 
  let \p1=(C40.east), \p2=(C10.south) in
($(\x1,\y1)$) -| ($(\x2,\y2)$)
node[right, midway]  {};

   \draw[->, thick,] 
  let \p1=(C10.east), \p2=(C20.west) in
($(\x1,\y1)$) -- ($(\x2,\y2)$)
node[right, midway]  {};

   \draw[->, thick,dashed] 
  let \p1=(C50.west), \p2=(C10.south) in
($(\x1,\y1)$) -|($(\x2,\y2)$)
node[right, midway]  {};

%
%
%
%
%
%

  \end{tikzpicture}
  \vskip 3pt
  
  \caption{\small Black-box operation for an MFG with a common noise. In the four input arrows, only the plain line can be tuned. Given the input state of the population, the private initial state $X_0$ and the realizations of the two noises, the decentralized black-box returns an output, in the form of a cost depending on the input state and on the realizations of the noises.}
\label{fig:1}
\end{figure}

However, this picture makes sense only if the original model itself is subjected to a common noise. 
In the absence of common noise, 
we 
thus need to restore a form of common noise 
in order to conciliate our tilted fictitious play with the above picture. 
This comes through the notion of exploration. Below, we thus regard the common noise as a way to explore the space of possible solutions. 
This amounts to say that the original mean field game is no longer the mean field game 
with common noise, but the mean field game without common noise, i.e. $\varepsilon=0$. 
Consistently, the individual operating
the black-box can 
restore the presence of a 
common noise by modifying her/his control accordingly.  
Formally, any control ${\boldsymbol \alpha}$, as chosen above by 
a tagged individual, is then subjected to an additional randomization of the form
\begin{equation*}
{\boldsymbol \alpha}= (\alpha_{t})_{0 \le t \le T}
\mapsto   \biggl(\alpha_{t}
+ \varepsilon  \dot{W}_{t}^{\boldsymbol h}
\biggr)_{0 \le t \le T},
\end{equation*}
where ${\boldsymbol h}$ is given as an information on the whole state of the population, in addition to ${\boldsymbol m}$.  Figure \ref{fig:1} becomes
the new Figure \ref{fig:2} below. 
\textcolor{black}{In this new figure, the decentralized black-box is exactly the same as the decentralized black-box appearing in 
Figure 
\ref{fig:1} in the absence of the common noise therein. In clear, the black-box takes as inputs the following three features: 
a time-dependent flow of actions impacting the dynamics of a single individual, 
the initial private state $X_0$
 and the time-dependent flow of 
probability measures characterizing the state of the environment. 
As for the output, the black-box returns the realization ${\mathcal R}^{X_0}$ of the cost to a single individual, for the given action, 
the given realization of the initial condition and the given environment (see \eqref{eq:R:intro:w}). In particular, it is worth mentioning that 
the common randomization does not impact the operation performed by the black-box, which is an important fact from the practical point of view. What changes in the presence of the common randomization is the form of the input 
that is inserted in the black-box: the input at time $t$ is `corrupted' by 
$ \varepsilon  \dot{W}_{t}^{\boldsymbol h}$. 
In particular, the reader should agree that Figure \ref{fig:2} is consistent with the definitions
\eqref{eq:mn+1:cost:functional:0} 
and
\eqref{eq:mn+1:cost:functional} of the best actions in our
two tilted fictitious plays, up to a slight but subtle difference between 
\eqref{eq:mn+1:cost:functional:0} 
and
\eqref{eq:mn+1:cost:functional}:
in order to compute 
\eqref{eq:mn+1:cost:functional}, 
 the initial private state $X_0$ in the black-box must be free (as made clear in the caption), in the sense that it can 
be changed for the purpose of the experiment. This is an important feature because the non-averaged cost functional 
${\mathcal R}$ in the geometric fictitious play is initialized from $\varpi X_0$ (and not $X_0$), see
\eqref{eq:mn+1:cost:functional}. We feel that this additional assumption on the model is affordable in practice.}

Now, if we had to think of a (possibly infinite) cloud of players, the actions of all of them would be corrupted by the same realization of the noise. 
Assume indeed that, 
given the same two proxies $\overline{\boldsymbol m}^n$ and ${\boldsymbol h}^n$ as in 
\eqref{eq:mn+1:cost:functional:0}
and
\eqref{eq:mn+1:cost:functional}, the players 
choose some common feedback 
function (which is exactly what happens for an MFG equilibrium). The resulting action of each of them is then randomized by the same realization of the common noise
and, subsequently, 
the black-box returns the (non-averaged) cost to each player. 
If the players are driven by independent 
copies of $(B_t)_{0 \leq t \leq T}$ (which fits the fact that $(B_t)_{0 \leq t \leq T}$ is an idiosyncratic noise)
and by independent copies of 
$\varpi X_0$ (which fits the fact that $\varpi X_0$ is the new initial private condition in the $\varpi$-geometric scheme), 
then the empirical mean cost to all the players should be regarded as the conditional expectation of 
the cost ${\mathcal R}^{\varpi X_0}$ given ${\boldsymbol W}$. 
Assuming that the common noise is observable (which makes perfect sense if it is sampled by some 
experimenter), we can compute ${\mathcal E}(\varpi {{\boldsymbol h}^n} \hspace{-2pt} /\varepsilon)$ and then multiply 
it with the conditional expectation of the cost. 
Sampling the common noise as many times as desired, we get an empirical approximation of the mean cost under 
${\mathbb P}^{\varpi {\boldsymbol h}^n \hspace{-2pt} / \varepsilon}$ in 
\eqref{eq:mn+1:cost:functional}.
Although this picture may look rather naive, it is in fact the basis of a numerical method 
that is detailed next, see
Figure 
\ref{fig:4}
for a primer.
\begin{figure}[htb]
\begin{tikzpicture}
  [node distance=1cm,
  start chain=going below,]

   \node[fill=yellow,punktchain, join] (C1) {Decentralized control};
   
     \node[fill=green,punktchain, on chain= going below] (C4)      {Idiosyncratic noise};
    
     \node[fill=green,punktchain, on chain= going below] (C40)      {State of the population};       
       
   \node[fill=red,punktchain, on chain=going right,yshift=2.2cm] (C10) {Decentralized black-box};

   \node[fill=red,punktchain, on chain=going right] (C20) {Output};

 \node[fill=yellow,punktchain, on chain=going above,yshift=1cm] (C15) {Common randomization};

     \node[fill=yellow,punktchain, on chain= going below,yshift=.6cm] (C50)      {Initial private state};

   \draw[->, thick,] 
  let \p1=(C1.east), \p2=(C10.north) in
($(\x1,\y1)$) -| ($(\x2,\y2)$)
node[right, midway]  {};

   \draw[->, thick,dashed] 
  let \p1=(C4.east), \p2=(C10.west) in
($(\x1,\y1)$) -- ($(\x2,\y2)$)
node[right, midway]  {};

   \draw[->, thick,dashed] 
  let \p1=(C40.east), \p2=(C10.south) in
($(\x1,\y1)$) -| ($(\x2,\y2)$)
node[right, midway]  {};

   \draw[->, thick,] 
  let \p1=(C10.east), \p2=(C20.west) in
($(\x1,\y1)$) -- ($(\x2,\y2)$)
node[right, midway]  {};

   \draw[->, thick,] 
  let \p1=(C15.west), \p2=(C10.north) in
($(\x1,\y1)$) -| ($(\x2,\y2)$)
node[right, midway]  {};

   \draw[->, thick,] 
  let \p1=(C50.west), \p2=(C10.north) in
($(\x1,\y1)$) -|($(\x2,\y2)$)
node[right, midway]  {};

%
%
%
%
%
%

  \end{tikzpicture}
  \vskip 3pt
  
  \caption{\small Black-box operation for an MFG without a common noise, but subjected to a common randomization  of the control. In the five input arrows, only the plain lines (corresponding to yellow boxes) can be tuned. Given the input state of the population and the realizations of the two noises, the decentralized black-box returns an output, in the form of a (random) cost depending on the input state and on the realizations of the noises.}
\label{fig:2}
\end{figure}

This concept faces however obvious mathematical difficulties, because the new control after randomization  is no longer of finite energy. We solve this issue 
\textcolor{black}{by replacing the time derivative of 
${\boldsymbol W}$ by finite differences or, equivalently,} by
replacing ${\boldsymbol W}=(W_{t})_{0 \le t \le T}$
by its piecewise linear interpolation along a mesh of $[0,T]$, say of uniform step $T/p$, for 
a given integer $p \geq 1$. We denote this interpolation by 
${\boldsymbol W}^p=(W_{t}^p)_{0 \le t \le T}$. 
Accordingly, the return of the black-box should be \textit{renormalized}, letting
\begin{equation}
\label{eq:mathcalrp}
{\mathcal R}^{p,x_0}({\boldsymbol \alpha};{\boldsymbol m};\textcolor{black}{\varepsilon} {\boldsymbol w}) := 
{\mathcal R}^{x_0}({\boldsymbol \alpha}+\varepsilon \dot{\boldsymbol w};{\boldsymbol m};0)
 - \tfrac12 \varepsilon^2 p,
\end{equation}
for a piecewise-affine path ${\boldsymbol w}$, affine on each $[\ell T/p,(\ell+1)T/p)$ for $\ell \in \{0,\cdots,p-1\}$.
Our second main statement, see Theorem
\ref{main:thm:2}, is to prove that the geometric fictitious play that is hence obtained by 
replacing ${\boldsymbol W}$ by ${\boldsymbol W}^p$ and 
$ {\mathcal R}^{x_0}( {\boldsymbol \alpha}; \overline{\boldsymbol m}^n ; \textcolor{black}{\varepsilon}{\boldsymbol W}^{{\boldsymbol h}^n})$
by 
$ {\mathcal R}^{p,x_0}( {\boldsymbol \alpha}; \overline{\boldsymbol m}^n ; \textcolor{black}{\varepsilon} {\boldsymbol W}^{p,{\boldsymbol h}^n})$, with 
\begin{equation}
\label{eq:tilted:discrete:p}
W_{t}^{p,{\boldsymbol h}^n}= W_{t} + \frac1{\varepsilon} \int_{0}^t h^n_{s} \ud s, \quad t \in [0,T], 
\end{equation}
converges, when the number $n$ of iterations tends to $\infty$, to the solution of the discrete-time version, with $T/p$ as step size, of the mean field game with $\varepsilon {\boldsymbol W}^p$ as common noise. Implicitly, this requires to 
force ${\boldsymbol \alpha}$ and 
${\boldsymbol h}^n$ to be constant on any subdivision of the mesh, but we feel more appropriate not to detail all the ingredients here. We refer the reader to 
Subsection \ref{subse:2.2} for a complete description. Importantly, the state dynamics over which the return 
${\mathcal R}^p$ is computed write 
\begin{equation}
\label{eq:state:dynamcis:Rp}
\begin{split}
X_{t} &= \varpi X_{0} + \int_{0}^t \varpi \alpha_{s} \ud s + \sigma  B_{t} + \varepsilon W_{t}^{p,\varpi {\boldsymbol h}^n/\varepsilon}
\\
&= \varpi X_{0} + \int_{0}^t \Bigl( \varpi \alpha_{s} + \varepsilon \dot{W}_{s}^{p,\varpi {\boldsymbol h}^n/\varepsilon}
\Bigr) 
\ud s + \sigma  B_{t}, \quad t \in [0,T]. 
\end{split}
\end{equation}
In this approach, the control after randomization  is thus given by 
$\varpi {\boldsymbol \alpha} + \varepsilon \dot{\boldsymbol W}^{p,\varpi {\boldsymbol h}^n/\varepsilon}$, which clarifies the meaning of the common randomization  in Figure \ref{fig:2}. 
In the end, this fits well the concept of exploration, as stated by Sutton and Barto \cite[chapter 1, p. 1]{SuttonBarto}: `\textit{The learner is not told which actions to take, but instead must discover which actions yield the most reward by trying them}'. 
\textcolor{black}{Noticeably,
our form of randomization \eqref{eq:state:dynamcis:Rp} can be restricted to controls in semi-feedback form, meaning that 
the instantaneous control 
in \eqref{eq:mn+1:conditional:state} can be chosen as the image of the current state 
of ${\boldsymbol X}^{n+1}$
by a time-space random function 
adapted to the filtration generated by 
${\boldsymbol W}$.
 In particular, 
thinking of the mean field game as a game with infinitely many agents, 
all of them are then understood to play (at each iteration of the fictitious play) 
the same random semi-feedback function. 
This makes a subtle difference with standard exploration 
methods for multi-agent reinforcement learning in which the random control of each agent carries its own noise.}
Last but not least, it must be noticed that the algorithm runs from the sole observations of the returns of 
the black-box, and in particular without any further detailed 
of the cost coefficients $f$, $g$, $Q$ and $R$, provided we use a reinforcement learning method to solve the black-box. 

Importantly, it must be emphasized  again that in the two Figures \ref{fig:1} and \ref{fig:2} the decentralized  black-box does not return the expected cost, but only the realization  of the cost for the given realizations  of the noises. At each step of the fictitious play, the optimization  of the expected cost is
formally performed over all the possible trajectories of the independent and common noises, bearing in mind that the control is adapted to both noises and that the state of the population is adapted to the common noise. This is stylized in the form of Figure \ref{fig:4}.
Therein, this is our choice to represent the possible trajectories of the two noises in the form of 
an infinite sequence of realizations  from an i.i.d. sample 
of the idiosyncratic and common noises (and similarly for the initial condition), this formal representation being very convenient for introducing next the numerical implementation. 
\color{black}
In clear, assuming that we have two independent families 
$({\boldsymbol B}^i=(B_t^i)_{0 \leq t \leq T})_{i \geq 1}$
and
$({\boldsymbol W}^j=(W_t^j)_{0 \leq t \leq T})_{j \geq 1}$
of ($d$-dimensional) independent Brownian motions
and, independently, a family $(X_0^i)_{i \geq 1}$ of i.i.d. initial conditions, 
we are given at rank $n$ of the iterative process represented 
in Figure \ref{fig:4} two collections of proxies
 $(\overline{\boldsymbol m}^{n,j})_{j \geq 1}$
 and 
  $({\boldsymbol h}^{n,j})_{j \geq 1}$
  of i.i.d.
continuous paths with values in ${\mathbb R}^d$, with each 
$\overline{\boldsymbol m}^{n,j}$
and
${\boldsymbol h}^{n,j}$
 being assumed to be adapted with respect 
to ${\boldsymbol W}^j$.
We then consider a collection of i.i.d. ${\mathbb R}^d$-valued 
control processes ${\boldsymbol \alpha}^{i,j}=
(\alpha_t^{i,j})_{0 \le t \le T}$, 
with each 
${\boldsymbol \alpha}^{i,j}$
 being assumed to be adapted with respect 
to $(X_0^i,{\boldsymbol B}^i,{\boldsymbol W}^j)$.
Each 
${\boldsymbol \alpha}^{i,j}$
and
each ${\boldsymbol h}^{n,j}$
are assumed to be 
constant on 
each $[\ell T/p,(\ell+1)T/p)$ for $\ell \in \{0,\cdots,p-1\}$, where 
$p$ stands as before for the discretization parameter.
For a given outcome $\omega$ in the probability space carrying all the processes,
we then choose as inputs of the black-box (corresponding to 
${\mathcal R}^{p,\varpi X_0}$ in 
\eqref{eq:mathcalrp}) 
the noise 
${\boldsymbol B}^{i}(\omega)$, 
the environment 
$\overline{\boldsymbol m}^{n,j}(\omega)$
and
the control 
$\varpi {\boldsymbol \alpha}^{i,j}(\omega)$ perturbed by the additional 
randomization $\varepsilon \dot{\boldsymbol W}^{p,\varpi {\boldsymbol h}^{n,j} \hspace{-2pt} / \varepsilon,j}(\omega)$. 
All these inputs are represented by the green boxes in Figure \ref{fig:4}. 
As made clear by the red boxes on Figure \ref{fig:4}, the black-box returns a cost that depends on 
$i,j$ (in Figure \ref{fig:4}, we use the notation $\sharp i$ and $\sharp j$ to refer to 
the various inputs and outputs associated with ${\boldsymbol B}^i(\omega)$ and 
${\boldsymbol W}^j(\omega)$). Multiplying by ${\mathcal E}(\varpi {\boldsymbol h}^{n,j} \hspace{-2pt} /\varepsilon)(\omega)$ and averaging over $i,j$, 
we hence get the averaged cost that appears in the right-hand side
of 
\eqref{eq:mn+1:cost:functional}. This makes it possible to optimize
with respect to the control ${\boldsymbol \alpha}$ (or equivalently with respect to 
$({\boldsymbol \alpha}^{i,j})_{i,j\geq 1}$)
and thus to compute the optimal control  
${\boldsymbol \alpha}^{n+1}$ that appears in the left-hand side of 
\eqref{eq:mn+1:cost:functional} (up to the time-discretization procedure that we feel better not to address at this early stage of the paper): this is the yellow box in 
Figure \ref{fig:4}. Once the optimal control has been computed, we may follow 
the arrows connecting the three blue boxes in 
Figure \ref{fig:4}: for each pair $(i,j)$, we can compute the realization 
${\boldsymbol X}^{n+1,i,j}(\omega)$ of the optimal state. 
Averaging with respect to 
$i$, we get 
${\boldsymbol m}^{n+1,j}(\omega)$
and then, following the updating rule 
\eqref{eq:mn+1:cost:update:mean}, we can update the state of the environment:
the new state is 
$\overline{\boldsymbol m}^{n+1,j}(\omega)$.
The new value of ${\boldsymbol h}^{n+1,j}(\omega)$ is directly taken from the affine form 
of ${\boldsymbol \alpha}^{n+1,j}(\omega)$, see 
\eqref{eq:intro:optimal:alpha}.
\color{black}

\begin{figure}[htb]
\begin{tikzpicture}
  [node distance=1cm,
  start chain=going below,]

  \node[fill=green,punktchain] (C2) {Realization of common noise \#$j$};

   \node[fill=green,punktchain] (C1) {Realization of idiosyncratic noise \#$i$};

   %
     \node[fill=yellow,punktchain, xshift=3cm] (C3) {Control adapted to noises \#$i,j$};

      
    

       { [start chain=numbers going below]
       \chainin (C2);

   \node[fill=red,punktchain, on chain=going right, xshift=.8cm,yshift=-1.2cm] (C11) {Decentralized black-box};

   \node[fill=red,punktchain, on chain=going right, xshift=0.5cm] (C21) {Cost \#$i,j$};

   \node[fill=red,punktchain, on chain=going below, yshift=-1cm] (C31) {Averaging over $i,j$ of costs \#$i,j$};
   
    \node[fill=cyan,punktchain, on chain=going below, xshift=-4cm] (C41) {New private state \#$i,j$};
    
    \node[fill=cyan,punktchain, on chain=going below] (C51) {Averaging over $i$ of states \#$i,j$};
    
    \node[fill=cyan,punktchain, on chain=going below] (C61) {New population state \#$j$};}

      { [start chain=numbers going below]
       \chainin (C2);

       \node[fill=green, punktchain, on chain=going above] (C71) {Initial private state};       
       
              \node[fill=green,punktchain, on chain=going right, xshift=.8cm] (C4) {State of the population adapted to common noise \#$j$};

   }

\draw[->, thick,] 
 let \p1=(C1.north), \p2=(C11.west) in
($(\x1,\y1)$) |- ($(\x2,\y2)$)
node[right, midway]  {};

\draw[->, thick,] 
 let \p1=(C2.south), \p2=(C11.west) in
($(\x1,\y1)$) |- ($(\x2,\y2)$)
node[above, pos=.8]  {inputs};

\draw[->, thick,] 
 let \p1=(C3.north), \p2=(C11.west) in
($(\x1,\y1)$) |- ($(\x2,\y2)$)
node[right, midway]  {};

\draw[-, thick,] 
 let \p1=(C4.west), \p2=(C3.north) in
($(\x1,\y1)$) -| ($(\x2,\y2)$)
node[left, midway]  {};

\draw[->, thick,] 
 let \p1=(C11.east), \p2=(C21.west) in
($(\x1,\y1)$) -- ($(\x2,\y2)$)
node[above, midway]  {return};

\draw[->, thick,] 
 let \p1=(C21.south), \p2=(C31.north) in
($(\x1,\y1)$) -- ($(\x2,\y2)$)
node[left, midway]  {expectation};

\draw[->, thick,] 
 let \p1=(C31.west), \p2=(C41.north) in
($(\x1,\y1)$) -| ($(\x2,\y2)$)
node[right, pos=.8]  {optimal state};

\draw[->, thick,] 
 let \p1=(C31.west), \p2=(C3.east) in
($(\x1,\y1)$) -- ($(\x2,\y2)$)
node[above, midway]  {optimization};

\draw[->, thick,] 
 let \p1=(C41.south), \p2=(C51.north) in
($(\x1,\y1)$) -| ($(\x2,\y2)$)
node[right, pos=.8]  {};

\draw[->, thick,] 
 let \p1=(C51.south), \p2=(C61.north) in
($(\x1,\y1)$) -| ($(\x2,\y2)$)
node[right, pos=.8]  {};

\draw[-, thick,] 
 let \p1=(C71.east), \p2=(C3.north) in
($(\x1,\y1)$) -| ($(\x2,\y2)$)
node[right, midway]  {};

%
%
%
%

%
%
%
%
%
%

  \end{tikzpicture}
    \vskip 3pt
  
  \caption{\small Black-box (in red) inserted at the core of a learning step.
  Expectations are formally written as means over a sequence of realizations from an i.i.d. sample of 
  initial conditions and of
 idiosyncratic and common noises.  
   For any respective realizations \#$i$ 
   \textcolor{black}{(\#$i$ reading `number $i$')}
   and 
  \#$j$ \textcolor{black}{(\#$j$ reading `number $j$')} of the idiosyncratic and common noises plugged into the black-box, we compute the corresponding state of the population (that depends on the realization  \#$j$ of the common noise) 
and we choose the control (that is adapted to the realizations  \#$i,j$ of the two noises and of the initial condition). 
The inputs are thus in green, except the control which is subjected to optimization (hence in yellow, as a mixture of green and red). 
The return (in red) of the black-box depends on the realizations  \#$i,j$. 
The expectation is formally obtained by 
averaging with respect to \#$i,j$. Once the optimizer has been found, we compute (in blue) the optimal states, depending on the realizations  \#$i,j$. The output state of the population after one learning step is obtained by averaging over $i$.}
\label{fig:4}

\end{figure}

\subsection{Exploitation}
\label{subse:exploitation}
\textcolor{black}{We now summarize the outline of the exploitation analysis that is achieved in the paper. 
In particular, 
we present the main bound that we can prove next for 
the so-called \textcolor{black}{exploitability} of the (geometric) tilted fictitious play (the meaning of which is explained below).}

In reinforcement learning, this is \textcolor{black}{indeed} a common practice to distinguish exploration from exploitation. 
Whereas exploration is intended as a way to visit the space of actions, exploitation is related to the error that is achieved by the learning method.
When the learning addresses a stochastic control problem, 
the analysis goes through the loss, which is the (absolute value of the)  difference between the best possible cost and the cost to the strategy returned by the algorithm.
Because we are dealing with a game, we use here the concept of approximated Nash equilibrium to define the \textcolor{black}{exploitability}. In brief, the point is to prove that the output of the algorithm is a $\varrho$-Nash equilibrium, for $\varrho>0$ and to define the \textcolor{black}{exploitability} as the infimum of those $\varrho$. In our case,
the situation is a bit more subtle because the learning procedure returns a random approximated equilibrium; ideally, we should associate with it a random \textcolor{black}{exploitability}. However, the analysis would be too difficult. Instead, we 
follow
\eqref{eq:mn+1:cost:functional}
and 
average the cost with respect to the common noise. We then define the notion of approximated equilibrium 
with respect to this averaged cost. The resulting \textcolor{black}{exploitability} can then be decomposed as the sum of two terms:
\begin{itemize}
\item The `error' resulting from the approximation, learnt by our fictitious play, 
of
the discrete-time mean field game 
with step size $T/p$ and with  
common noise of intensity $\varepsilon$. The implementation of the algorithm involves $n$ iterations and 
a piecewise linear approximation 
of the Brownian motion ${\boldsymbol W}$ with $T/p$ steps. For fixed\footnote{\color{black} For convenience, we assume $\varepsilon$ to be less than 1 in the analysis. Obviously, the results would remain true for 
$\varepsilon >1$, but the various constants in our analysis could then depend on any \textit{a priori} upper bound on $\varepsilon$.} $\varepsilon \in (0,1]$ and 
$p \in {\mathbb N}$, this error tends to $0$ as $n$ tends to $\infty$, with an explicit rate $\varrho_{\varepsilon,p}(n)$.
\item The `error' resulting from the common noise and discrete-time approximation of the original time-continuous mean field game without common noise. 
Intuitively, the (unique) solution of the time-discrete mean field game with common noise produces a
random approximated Nash equilibrium of the original mean field game
whose accuracy gets finer and finer as $p$ increases and $\varepsilon$ decreases. 
\end{itemize}
In fact, this principle can be put in the form of a more general stability result 
that permits to evaluate in the end the trade-off between exploration and exploitation. 
\textcolor{black}{When $X_0$ has sub-Gaussian tails, the exploitability is bounded by}
\begin{equation}
\label{eq:intro:regret:bound}
O \biggl(
 \exp(C \varepsilon^{-2})  \varpi^{-n}+
    \varepsilon + \tfrac1p      \biggr),
\end{equation}
see Theorem \ref{thl:main:regret}. 
For fixed values of $n$ and $p$ (the latter two parametrizing the 
complexity of the algorithm and the required memory\footnote{We feel better not to give any order of complexity and memory in terms of $n$ and $p$. This would have no sense because the integration and  optimization steps are not discretized here.}), we are hence able to tune the intensity of the noise. 

To our mind, this result demonstrates the interest of our concept, 
even though it says nothing about the equilibria that are 
hence selected in this way when $\varepsilon$ tends to $0$. In fact, the latter is a difficult question, which has been addressed for instance in 
\cite{delfog2019} (for the same model as in 
\eqref{eq:state:intro}--\eqref{eq:cost:intro}, but with $d=1$ and for some specific choices of $f$ and $g$)
 and \cite{cecdel2019} (for finite state potential games); generally speaking, this problem raises many theoretical questions that are out of the scope of this paper and we just address it here through numerical examples (see the subsection below).   
The diagram \eqref{fig:3} right below hence summarizes the balance between exploitation and exploration in our case.
\color{black}
In words, 
the geometric tilted fictitious play allows us to learn a solution of the discrete-time MFG
 with step size $T/p$ and 
 with common noise
(or, equivalently, \textit{exploration} noise)  
of intensity $\varepsilon >0$ by choosing $n$ large enough. 
The equilibrium that is hence learnt is an approximate equilibrium of the original 
MFG 
(in continuous time and without common noise). 
For a small intensity $\varepsilon$, the \textcolor{black}{exploitability} is small if $n$ and $p$ are large enough, hence the trade-off between $\varepsilon$
and $(n,p)$.
\color{black}

\begin{figure}[htb]
\begin{tikzpicture}
  [node distance=1cm,
  start chain=going below,]

   \node[punktchain, join] (C1) {MFG without common noise};
   
     \node[punktchain, on chain= going above, xshift=-3.5cm] (C2)      {MFG with common noise};
    
     \node[punktchain, on chain= going right, xshift=2cm] (C3)      {Approximate solution};

   \draw[->, thick,] 
  let \p1=(C1.west), \p2=(C2.south) in
($(\x1,\y1)$) -| ($(\x2,\y2)$)
node[left, pos=.8]  {exploration};

   \draw[->, thick,] 
  let \p1=(C1.west), \p2=(C2.south) in
($(\x1,\y1)$) -| ($(\x2,\y2)$)
node[left, pos=.65]  {$\varepsilon >0$};

   \draw[->, thick,dashed] 
  let \p1=(C2.east), \p2=(C3.west) in
($(\x1,\y1)$) -- ($(\x2,\y2)$)
node[above, midway]  {fictitious play};
   \draw[->, thick,dashed] 
  let \p1=(C2.east), \p2=(C3.west) in
($(\x1,\y1)$) -- ($(\x2,\y2)$)
node[below, midway] {$n,p \uparrow \infty$};

   \draw[->, thick,] 
  let \p1=(C3.south), \p2=(C1.east) in
($(\x1,\y1)$) |- ($(\x2,\y2)$)
node[right, pos=.2]  {exploitation};

   \draw[->, thick,] 
  let \p1=(C3.south), \p2=(C1.east) in
($(\x1,\y1)$) |- ($(\x2,\y2)$)
node[right, pos=.35]  {$\varepsilon \downarrow 0$ vs. $n,p \uparrow \infty$};

%
%
%
%
%
%

  \end{tikzpicture}
  \vskip 3pt
  
  \caption{\small Exploration vs. exploitation.}
\label{fig:3}
\end{figure}
For sure, our analysis of exploitation is carried out in the ideal case when 
the underlying expectations in 
\eqref{eq:mn+1:cost:functional}
are understood in the theoretical sense
and when the optimal control problem at each step can be 
computed perfectly. 
We do not address here the 
approximation of those theoretical expectations by 
empirical means nor the 
numerical approximation of the optimizers.

\subsection{Numerical examples}
\label{subse:num:ex}
We complete the paper with some numerical examples that demonstrate the relevance of our concept. 
The numerical implementation requires additional ingredients that are explained in detail in Section \ref{Se:3}. Obviously, the main difficulty is the encoding of the decentralized black-box, as represented in Figures
\ref{fig:2} and \ref{fig:4}. 
As clearly suggested by the latter figure, 
expectations are then approximated by averaging the costs over realizations from a finite i.i.d. sample of 
 idiosyncratic and common noises.  
 In this regard, the principle 
highlighted by
Figure 
\ref{fig:4} is the same, but the 
optimization step needs to be clarified. 
Although we do not provide any further theoretical justification of the accuracy of the numerical optimization that is hence performed, we feel useful to stress that
controls are chosen in a semi-feedback form, namely 
of the same linear form 
as in 
\eqref{eq:intro:optimal:alpha}. Numerically, the coefficient $\eta_{t}$ is hence parameterized in the form of a 
coefficient that is only allowed to depend on time; and the intercept $h_{t}$ is sought as 
a function of the current mean state of the population. This latter function is parameterized in the form of a finite expansion along an Hermite polynomial basis, our choice for   
Hermite polynomials being dictated by the Gaussian nature of the trajectories in 
\eqref{eq:n:n+1}, \textcolor{black}{or in the form of a neural network}. 
In our numerical experiments, both the linear coefficient and the coefficients in the regression of the intercept along the \textcolor{black}{given class of functions} are found by ADAM optimization method. 
The results exposed in Section \ref{Se:3} demonstrate the following features:
\begin{enumerate}
\item For a given value of the intensity of the common noise (say $\varepsilon=1$),
we run examples in dimension $d=2$. Both the (tilted) harmonic and geometric fictitious play converge well, without any significant differences between the two of them on the examples under study. 
Solutions are compared to 
numerical solutions of
 \eqref{eq:intro:FBSDE} found by a BSDE solver that uses explicitly the shape of the coefficients 
 $f$, $g$, $Q$ and $R$ and that does not use the observations of the cost. 
 \item \textcolor{black}{We also run examples in higher dimension, namely $d=12$ and $d=20$. 
 Obviously, this raises challenging questions in terms of complexity, 
in particular for 
regressing
${\boldsymbol h}^{n+1}$ in  
\eqref{eq:mn+1:conditional:state}
by means of a suitable basis. 
 Although we show numerically that neural networks may behave well in these examples, 
 the main difficulty 
 in the higher dimensional setting comes from the various Monte-Carlo estimations 
 on which Figure 
  \ref{fig:4} implicitly relies. Indeed, 
  the variance of the cost 
  in 
  \eqref{eq:mn+1:cost:functional}
 may increase fast with the dimension.
 This phenomenon has an impact on the behavior of the algorithm, 
 which may fail to converge 
 as 
  clearly illustrated in some of the examples below.
Next, we
address one simple reduction variance method, which 
returns much better results in dimension $d=12$ and $d=20$ and which 
demonstrates  
that the algorithm may remain relevant in this more challenging framework 
at the price of some marginal modifications.
In light of these results, we believe that it would be highly valuable to provide 
 a more exhaustive analysis of the possible strategies
for reducing the variance underpinning the various Monte-Carlo computations. 
We hope to address this point in future contributions.  
 }
\item \textcolor{black}{The standard fictitious play, with idiosyncratic noise but without common noise, may fail to converge, meaning that,
not only there may not be any known mathematical guarantee supporting the convergence, but even more, 
for one of the examples we treat below in dimension $d=2$, the cost returned by the algorithm has an oscillatory behavior
that is not observed in presence of the common noise.
Even though we do not have a mathematical explanation for these observations,  
this is a crucial point of the paper as it demonstrates numerically that the common noise helps the algorithm to converge}. From a conceptual point of view, this is a key observation. 
\textcolor{black}{Of course, it would be very much desirable to have a table 
of comparison, with a mathematical description of the behavior of the algorithm 
without and with common noise and for various types coefficients. 
This looks however out of reach of the existing literature. 
Still, it is worth observing that, 
in dimension 1, the usual algorithm is known to converge because the model is potential, see \cite{CardaliaguetHadikhanloo}, and that, in this 
setting, we have not observed any 
numerical oscillatory behavior similar to the two dimensional one (for the same types of coefficients). 
We elaborate on this point in \S 
\ref{subsubse:M=1N=1}.
We also feel useful to recall from the previous Subsection 
\ref{subse:exploitation}
 that, from a theoretical point of view, we are able to 
provide explicit bounds for the rate of convergence, which is another substantial contribution of our work.  Indeed, 
as explained in the forthcoming \S \ref{subsubse:rate}, we know a few results only in which the rate of convergence of the fictitious play 
without common noise is addressed explicitly.  }
\item In order to study the behavior  of our fictitious play when the intensity $\varepsilon$ of the common noise becomes small, we focus on a one-dimensional MFG that has multiple equilibria when there is no common noise.
This case is highly challenging. 
Not only theoretical bounds like 
\eqref{eq:intro:regret:bound}
are especially bad when $\varepsilon$ is small, but also additional numerical issues 
arise in the small noise regime. In particular, the variance of the various estimators suffer from the same drawback as in the high-dimensional setting
and may be very large. 
However, we here show that, numerically, the
geometric tilted fictitious play run with a high rate $\varpi$ and a small intensity $\varepsilon$ is able to select quite quickly the same equilibrium as the one predicted in \cite{delfog2019,cecdel2019}. In contrast, 
the standard fictitious play (without common noise) also converges but may not select the right equilibrium (for the same choice of parameters). 
Also, it is worth observing that, in this experiment, 
the geometric variant of the titled fictitious play performs better than the harmonic one, which is consistent with our theoretical analysis. By the way, in the first arXiv version 
\cite{DelarueVasileiadis}
of this work (in which we just studied the harmonic variant), 
we complemented the numerical analysis with a preferential sampling 
method in order to reduce the underlying variance and hence obtain a better accuracy in the selection of an equilibrium. 
This would be an interesting question to address the possible interest of such a preferential sampling method when combined with the geometric variant of the algorithm. 
We leave this for future works. 
\end{enumerate}
In the end, the numerical experiments carried out here confirm the relevance of the tilted scheme. However, it is fair to say that it is more subtle to demonstrate the superiority of the geometric variant over the harmonic variant, even if the experiment (4) reported above clearly points in that direction. In our opinion, the geometric variant has the great merit of offering theoretical convergence guarantees that are affordable numerically. However, the experiments described in Section \ref{Se:3} show that the harmonic variant is also numerically relevant. Numerical results could probably be optimized by combining the two approaches, so as to benefit from the advantages of both.  
 
It should be stressed that our numerical experiments are run under \texttt{Tensorflow}, using a pre-implemented version of 
ADAM optimization method in order to compute an approximation of the best response in 
\eqref{eq:mn+1:cost:functional}. 
Accordingly, the optimization algorithm itself relies explicitly on the linear-quadratic structure 
of the mean field game through the internal automatic differentiation procedure (used for computing gradients in descents). In this sense, our numerical experiments use in fact more than the sole observations of the costs. Anyhow, this does not change the conclusion: descent methods, based on accurate approximations of the gradients, do benefit from the 
presence of the common noise. For instance, the construction of 
accurate approximations of the gradient is addressed in Carmona and Laurière \cite{CarmonaLauriereTanI,CarmonaLauriereTanII} (within a slightly different setting), in which a model-free reinforcement learning method is fully implemented\footnote{The reader may find in Munos \cite{munos} a nice explanation about the distinction between model-free and model-based reinforcement learning. In any case, our algorithm is not model-based: It would be model-based if we tried to learn first $f$, $g$, $Q$ or $R$. We refer to \S \ref{para:error} for a discussion about the possible numerical interest to learn $Q$ and $R$ first.}.

\subsection{Comparison with existing works}
\label{subse:comparison}
Exploration and exploitation are important concepts in reinforcement 
learning and related optimal control.

In comparison with the discrete-time literature, there have been less papers on the analysis 
of exploration/exploitation in the time continuous setting. In both  Murray and Paladino \cite{murray2018model} and Wang et al. \cite{JMLR:v21:19-144}, the randomization of the actions goes through 
a formulation of the corresponding control problem in terms of relaxed controls. 
In Murray and Paladino  \cite{murray2018model}, the authors address questions that are seemingly different from ours, as  the objective is to allow for a model with some uncertainty on the state 
dynamics. Accordingly, the cost functional is averaged out with respect to some prior
probability measure on the vector field driving the dynamics. Under suitable assumptions on this prior probability measure, a dynamic programming principle and a then a Hamilton-Jacobi-Bellman are derived. 
In fact, the  
paper
Wang et al.
\cite{JMLR:v21:19-144}, which addresses stochastic optimal controls, 
is closer to the spirit of our work. Therein, relaxed controls are combined with an additional entropic regularization  that forces exploration. In case when the control problem has a linear-quadratic structure, quite similar to the one we use here (except that there is no mean field interaction), the entropic regularization is shown to work as a Gaussian exploration. Although our choice for working with a Gaussian exploration looks consistent with the result of 
Wang et al.
\cite{JMLR:v21:19-144}, there remain however some conceptual differences between the two approaches: In the theory of relaxed controls, the drifts in the dynamics are averaged out with respect to the distribution of the controls; In our paper, the dynamics are directly subjected to the randomized action. In this respect, our work is closer to
the earlier contribution of Doya \cite{DBLP:journals/neco/Doya00}.

Recently, the  approach initiated in 
Wang et al. 
\cite{JMLR:v21:19-144}
has been extended to mean field games. 
In 
Guo et al. 
\cite{Guo-Xu-Zariphopoulou-arxiv}
and
Firoozi and Jaimungal
\cite{Jaimungal-Firoozi-arxiv},
the authors study
the impact of an entropic regularization onto 
the shape of the equilibria.
In both papers, the models under study
are linear-quadratic
and subjected to a sole idiosyncratic noise (i.e., there is no common noise). 
However, they differ on the following important point:
in 
Firoozi and Jaimungal
\cite{Jaimungal-Firoozi-arxiv}, the intensity of the idiosyncratic noise is constant, whilst it depends on the standard deviation of the control in 
Guo et al. 
\cite{Guo-Xu-Zariphopoulou-arxiv}. In this sense, 
Firoozi and Jaimungal \cite{Jaimungal-Firoozi-arxiv} is closer to the set-up that we investigate here. 
Accordingly, the presence of the entropic regularization  leads to different consequences: 
In Guo et al. \cite{Guo-Xu-Zariphopoulou-arxiv}, the effective intensity of the idiosyncratic noise grows up under the action of the entropy and this is shown to help numerically in some learning method (of a quite different spirit than ours). 
In Firoozi and Jaimungal \cite{Jaimungal-Firoozi-arxiv}, the entropy plays no role on the structure of the equilibria, which demonstrates, if needed, that our approach here is substantially different.

Within the mean field framework, there have been several recent contributions on reinforcement learning for models featuring a common noise. In
Carmona et al. \cite{CarmonaLauriereTanI}, the authors investigate the convergence of a policy gradient method for a discrete time linear quadratic mean field control problem (and not an MFG) with a common noise. In comparison with \eqref{eq:cost:intro}, the cost functional itself is quadratic with respect to the mean field interaction. The linear quadratic structure then allows us to simplify the search for the optimal feedbacks, in the form of two linear functions, one linear function of the mean state of the population and one linear function of the deviation to the mean state.  Accordingly, the 
 problem is rewritten in terms of two separate (finite-dimensional) linear-quadratic control problems, one with each of the
 two linear factors. Convergence of the descent for finding the optimizers is studied for a model free method using a black-box simulating the evolution of the population. 
 This black-box is comparable to ours. Importantly, non-degeneracy of the very first inputs of the common noise is used in the convergence analysis.
 In another work (Carmona et al. \cite{CarmonaLauriereTanII}), the same three authors have developed a model free $Q$-learning method for a mean field control problem in discrete time and finite space. 
 The model may feature a common noise, but the latter has no explicit impact onto the convergence analysis carried out in the paper.  Last, in
 Elie et al. \cite{EliePerolat} and Perrin et al. \cite{perrin2020fictitious}, the authors deal with discrete and continuous time learning for MFGs using fictitious play and introducing a form of common noise. Their analysis is supported by various applications and numerical examples (including a discussion on the tools from deep learning to compute the best responses at any step of the fictitious play). The analysis also relies on the notion of exploitability. As the common noise therein is not used for exploratory reasons, the coefficients are assumed to satisfy the monotonicity Lasry-Lions condition 
 in order to guarantee the convergence of the fictitious play. 
 
 \textcolor{black}{After the publication of the first arXiv version \cite{DelarueVasileiadis} of our paper,   
 several related works by other authors were released. 
For instance,
the authors 
of
Muller et al. 
\cite{mueller}
address 
Policy Space Response Oracles (PSRO)
within the mean field framework in order to 
approximate Nash equilibria but also relaxed 
equilibria that are said to be correlated. 
In particular,
the authors explain how to 
implement PSRO 
via regret minimization
in order approximate correlated equilibria.
In Hu and Laurière \cite{hu:hal-03656245},
the authors provide a very nice survey of recent developments in machine learning for stochastic control and games. 
Last,
it is also fair to quote 
Hambly et al. 
 \cite{hambly}, in which the authors address the global convergence of the natural policy gradient method to the Nash
equilibrium
in a general $N$-player linear-quadratic game. Noticeably, the proof of convergence therein requires a certain amount of noise.}

\subsection{Main assumption, useful notation and organization.}
\label{subse:1:9}
\subsubsection{Assumption}
Throughout the analysis, $\sigma$ in 
\eqref{eq:state:intro} is a fixed non-negative real. 
The intensity $\varepsilon$ of the common noise is taken in $[0,1]$. Most of the time, it is implicitly 
required to be strictly positive, but we sometimes refer to the case $\varepsilon=0$ in order to 
compare with the situation without common noise.
\textcolor{black}{As we are mainly interested with the case when $\varepsilon$ is small, 
we assume $\varepsilon$ to be in $[0,1]$ in any case.}
As for the coefficients $f$ and $g$, they are assumed to be bounded and Lipschitz continuous. We write 
\begin{equation*}
\|f \|_{1,\infty} = \sup_{x \in {\mathbb R}^d} \vert f(x) \vert 
+ \sup_{x,x' \in {\mathbb R}^d : x \not = x'} 
\frac{\vert f(x) - f(x') \vert}{\vert x-x'\vert} < \infty,
\end{equation*}
and similarly for $g$.

\subsubsection{Notation} 
The notation $I_{d}$ stands for the $d$-dimensional identity matrix.
\textcolor{black}{For a random variable $Z$ with values in a Polish space ${\mathcal S}$,
we denote by $\sigma(Z)$ the $\sigma$-field generated by $Z$.}
For a process ${\boldsymbol Z}=(Z_{t})_{0 \leq t \leq T}$ with values in a Polish space ${\mathcal S}$, we denote 
by ${\mathbb F}^{{\boldsymbol Z}}$ the augmented filtration generated by 
${{\boldsymbol Z}}$. In particular, the notation ${\mathbb F}^{{\boldsymbol W}}$
is frequently used to denote the augmented filtration generated by the common noise when $\varepsilon \in (0,1]$. 
We recall  that 
${\mathbb F}={\mathbb F}^{(X_{0},\sigma {\boldsymbol B},\textcolor{black}{\varepsilon}{\boldsymbol W})}$.

Also, for a filtration ${\mathbb G}$, 
we write 
${\mathbb S}^{d}({\mathbb G})$ 
for the space of continuous 
and ${\mathbb G}$-adapted processes ${\boldsymbol Z}=(Z_{t})_{0 \leq t \leq T}$
with values in ${\mathbb R}^d$ that satisfy 
\begin{equation*}
 {\mathbb E} \Bigl[ \sup_{0 \leq t \leq T} \vert Z_{t} \vert^2 \Bigr] < \infty.
\end{equation*}

\subsubsection{Organization of the paper}
The mathematical analysis is carried out in Section \ref{Sec:2}. 
Subsection \ref{subse:2.1} addresses 
the error associated with the scheme 
\eqref{eq:mn+1:cost:functional}--\eqref{eq:mn+1:conditional:state}--\eqref{eq:mn+1:conditional:intro}--\eqref{eq:mn+1:cost:update:mean} without 
any time discretization, see Theorem \ref{main:thm:1}.
Similar results, but 
with the additional time discretization that makes the exploration possible, 
are established in Subsection 
\ref{subse:2.2}, see in particular Theorem \ref{main:thm:2}.
In Subsection \ref{subse:exploitation:exploration}, 
we make the connection with the original mean field game without common noise. In particular,  
we prove that our learning procedure permits to construct approximate equilibria to 
the original problem. 
The bound 
\eqref{eq:intro:regret:bound}
for the \textcolor{black}{exploitability} is established in Theorem  \ref{thl:main:regret}. 
Following our agenda, we provide the results of some numerical experiments in 
Section \ref{Se:3}. The method is tested on some benchmark examples that are presented in 
Subsection 
\ref{Subse:3:1}. The implemented version of the algorithm is explained in 
Subsection 
\ref{Subse:3:2}. 
The results, for a fixed intensity of the common noise, are exposed
in Subsection \ref{Subse:3:3:a}.
In the final Subsection 
\ref{subse:3:3}, we provide an example that illustrates the behavior  of the algorithm for a decreasing intensity of the common noise.

\section{Theoretical results }
\label{Sec:2}
The theoretical results are presented in three main steps. 
The general philosophy, as exposed in Figure \ref{fig:1}, is addressed in Subsection 
\ref{subse:2.1}. 
The analysis of the algorithm under a time-discrete randomization of the actions is 
addressed in Subsection \ref{subse:2.2}.
Lastly, the dilemma between exploration and exploitation is investigated in 
Subsection \ref{subse:exploitation:exploration}.

\subsection{Tilted fictitious play with common noise}
\label{subse:2.1}
Throughout the subsection, the intensity $\varepsilon \in (0,1]$ of the common noise
{and the learning parameter $\varpi$ are fixed. We take $\varpi$ in $(1,\sqrt{2}]$ 
(we refer to Remark \ref{rem:varpi} for the need to have $\varpi$ close to $1$).}

\color{black}

\subsubsection{Construction of the learning sequence}
\label{subse:5:2:fictitious:play}
We here formalize the scheme introduced in 
\eqref{eq:mn+1:conditional:state}--\eqref{eq:mn+1:conditional:intro}--\eqref{eq:mn+1:cost:functional}--\eqref{eq:mn+1:cost:update:mean}. 
We hence construct a sequence of proxies $({\boldsymbol m}^n)_{n \geq 0}$
and $({\boldsymbol h}^n)_{n \geq 0}$. The two initial processes 
${\boldsymbol m}^0$ and ${\boldsymbol h}^0$ are two ${\mathbb F}^{{\boldsymbol W}}$-adapted continuous processes with values in ${\mathbb R}^d$. 
Typically, we choose 
${\boldsymbol m}^0=(m^0_{t}={\mathbb E}(X_{0}))_{0 \le t \le T}$
and 
${\boldsymbol h}^0=(h^0_{t}=0)_{0 \le t \le T}$.
Assuming that, at some rank $n$, we have already defined $({\boldsymbol m}^1,\cdots,{\boldsymbol m}^n)$ 
and $({\boldsymbol h}^1,\cdots,{\boldsymbol h}^n)$,
each in ${\mathbb S}^{d}({\mathbb F}^{{\boldsymbol W}})$, we call 
\begin{equation}
\label{eq:opti:n+1:n}
	{\boldsymbol \alpha}^{n+1,\varpi} := \textrm{\rm argmin}_{\boldsymbol \alpha}
	{\mathbb E}^{ \varpi {\boldsymbol h}^n \hspace{-2pt}/\varepsilon}
	\Bigl[ {\mathcal R}^{ {\varpi X_0}} \Bigl( \varpi {\boldsymbol \alpha}; \overline{\boldsymbol m}^n ; \varpi \textcolor{black}{\varepsilon} {\boldsymbol W}^{ \varpi {\boldsymbol h}^n
	\hspace{-2pt}/\varepsilon} \Bigr) \Bigr],
\end{equation}
the infimum being taken over controlled processes 
${\boldsymbol \alpha}$ that are ${\mathbb F}$-progressively 
and that satisfy ${\mathbb E}^{\varpi {\boldsymbol h}^n \hspace{-2pt} / \varepsilon} \int_{0}^T \vert \alpha_{t}^{n+1,\varpi} \vert^2 \ud t < \infty$. 
Above, 
the process $\overline{\boldsymbol m}^n =(\overline{m}^n_{t})_{0 \le t \le T}$
is defined in terms of 
$({\boldsymbol m}^0,\cdots,{\boldsymbol m}^n)$
through the formulas
$\overline{\boldsymbol m}^0 := {\boldsymbol m}^0$ (if $n=0$) 
and 
\begin{equation}
\label{eq:overline:mnt}
\overline{m}^n_{t} := \frac{\varpi(1-\varpi^{-1})}{1-\varpi^{-n}} \sum_{k=1}^n \varpi^{-k} m_{t}^k, \quad t \in [0,T], 
\end{equation}
if $n \geq 1$, with the latter being consistent with 
the geometric updating rule 
\eqref{eq:mn+1:cost:update:mean}.
The following lemma, 
the proof of which is deferred to Subsection
\ref{subsubse:auxiliary:statements}, explains how the next proxy ${\boldsymbol m}^{n+1}$ can be computed through the best
response of the control problem 
\eqref{eq:opti:n+1:n}:
\begin{lem}
\label{lem:2.1}
Under the above assumptions, the process
${\boldsymbol \alpha}^{n+1,\varpi}$ writes 
\begin{equation*}
\alpha_{t}^{n+1,\varpi} = - \Bigl( \eta_{t} X_{t}^{n+1,\varpi} + h_{t}^{n+1}\Bigr), \quad t \in [0,T], 
\end{equation*}
where 

$(i)$ ${\boldsymbol \eta}=(\eta_{t})_{0 \leq t \leq T}$ solves the Riccati equation:
\begin{equation}
\label{eq:riccati}
\dot{\eta}_{t} - \eta_{t}^2 + Q^\dagger Q=0, \quad t \in [0,T] \, ; \quad \eta_{T}=R^\dagger R; 
\end{equation}

$(ii)$ ${\boldsymbol h}^{n+1}=(h_{t}^{n+1})_{0 \leq t \leq T} \in {\mathbb S}^{d}({\mathbb F}^{{\boldsymbol W}})$ solves the backward SDE:
\begin{equation}
\label{eq:h:n+1:new:one}
	\begin{split}
		&\ud h_{t}^{n+1} =   \bigl( - \tfrac1{\varpi} Q^\dagger f( \overline m^n_{t}) + \eta_{t} h_{t}^{n+1} \bigr) \ud t 
		+ \varepsilon k_{t}^{n+1} \ud W_{t}^{{\varpi {\boldsymbol h}^n \hspace{-2pt} /\varepsilon} }, \quad t \in [0,T],
		\\
		&h_{T}^{n+1} = \tfrac1{\varpi}  R^\dagger g \bigl( \overline m^{n}_{T} \bigr); 
	\end{split}
\end{equation}

$(iii)$ ${\boldsymbol X}^{n+1,\varpi}=(X_{t}^{n+1,\varpi})_{0 \le t \le T}$ solves the forward SDE:
\begin{equation*}
\ud X_{t}^{n+1,\varpi} = - \Bigl( \eta_{ t} X_{t}^{n+1,\varpi} + h_{t}^{n+1} \Bigr) \ud t + \tfrac1{\varpi} \sigma \ud B_{t} + \varepsilon \ud W_{t}^{ {\varpi {\boldsymbol h}^n\hspace{-2pt} /\varepsilon}}, \quad t \in [0,T]; \quad  X_0^{n+1,\varpi} = X_0. 
\end{equation*}
\end{lem}
The statement makes it possible to let
\begin{equation}
\label{eq:def:m:n}
	m^{n+1}_{t} := {\mathbb E} \bigl[ X_{t}^{n+1,\varpi} \, \vert \, \sigma({\boldsymbol W}) \bigr], \quad t \in [0,T]
	 ; \quad m_0^{n+1} = {\mathbb E}(X_0), 
\end{equation}
and then, consistently with 
\eqref{eq:overline:mnt}, 
\begin{equation}
\label{eq:def:overline:m:n}
	\overline{m}^{n+1}_{t} := \tfrac{\varpi(1-\varpi^{-1})}{1-\varpi^{-(n+1)}} \sum_{k=1}^{n+1} \varpi^{-k} m_{t}^{k} = 
	\tfrac{\varpi^{-n}(1-\varpi^{-1})}{1- \varpi^{-(n+1)}}
	m^{n+1}_t +  \Bigl( 1 - 
	\tfrac{\varpi^{-n}(1-\varpi^{-1})}{1- \varpi^{-(n+1)}}
	 \Bigr) 
	\overline{m}^n_{t}. 
\end{equation}

\begin{rem}
\label{rem:2.2:measurability}
Notice that because the density ${\mathcal E}(\varpi {\boldsymbol h}^n \hspace{-2pt} / \varepsilon)$ is $\sigma({\boldsymbol W})$-measurable, we also have
\begin{equation*}
	m^{n+1}_{t} = {\mathbb E}^{\varpi {\boldsymbol h}^n \hspace{-2pt} / \varepsilon} \bigl[ X_{t}^{n+1,\varpi} \, \vert \, \sigma({\boldsymbol W}) \bigr], \quad t \in [0,T], 
\end{equation*}
\textcolor{black}{which is a direct consequence of Bayes' rule, see \cite[Exercise 23.14]{JacodProtter}}.
\end{rem}

\begin{rem} 
Although each 
$\overline{\boldsymbol m}^{n}$ depends in an obvious manner on 
$\varpi$, this is our choice not to add $\varpi$ as a label in 
the notation. The reason is that the limit is independent of $\varpi$, 
as clarified in \S \ref{subse:analysis:fictitious:play}
below. 
Similarly, 
we do not put $\varpi$ in 
the notation ${\boldsymbol h}^n$: as shown in Theorem 
\ref{main:thm:1}, the limit is independent of $\varpi$ up to a rescaling by $\varpi$. 
This is in contrast with 
the quantities ${\boldsymbol X}^{n,\varpi}$ 
and ${\boldsymbol \alpha}^{n,\varpi}$: 
the limits depend on $\varpi$ in a non-trivial manner, 
which we also make clear in 
the next paragraph. 
 
On another matter, it must be noticed 
that Lemma 
\ref{lem:2.1}
remains valid when 
$\varpi=1$. As explained in 
Introduction, see
\eqref{eq:convention:harmonic:geometric}, the updating rate 
in 
\eqref{eq:def:overline:m:n}
must  then be understood 
as $1/(n+1)$ and, accordingly, 
the 
formula 
\eqref{eq:overline:mnt}
coincides with
the harmonic updating rule 
\eqref{eq:mn+1:cost:update:mean:0}.
This remark is important in order to compare the two harmonic and 
geometric schemes. 
\end{rem}

\subsubsection{Main statement}
\label{subse:analysis:fictitious:play}
As noticed in the introduction, 
it is especially convenient to reformulate the dynamics of 
${\boldsymbol X}^{n+1,\varpi}$ under the historical probability. Quite clearly, we can write:
\begin{equation}
\label{eq:X:n:n+1}
\ud X_{t}^{n+1,\varpi} = - \Bigl( \eta_{ t} X_{t}^{n+1,\varpi} + h_{t}^{n+1} -  {\varpi} h_{t}^{n} \Bigr) \ud t + \tfrac1{\varpi} \sigma \ud B_{t} + \varepsilon \ud W_{t}, \quad t \in [0,T]; 
\quad X_0^{n+1,\varpi} = X_0,
\end{equation}
and then 
\begin{equation}
\label{eq:mmm:n:n+1}
\ud m_{t}^{n+1} = - \Bigl( \eta_{ t} m_{t}^{n+1} + h_{t}^{n+1} -  {\varpi} h_{t}^{n} \Bigr) \ud t + \varepsilon \ud W_{t}, \quad t \in [0,T]; 
\quad m_0^{n+1} = {\mathbb E}(X_0). 
\end{equation}
By {dividing \eqref{eq:mmm:n:n+1} by $\varpi^{n+1}$, then summing over the index $n$ and eventually multiplying by $\varpi(1-\varpi^{-1})/(1-\varpi^{-(n+1)})$}, we get:
\begin{equation}
\label{eq:m:n:n+1}
	\ud \overline m_{t}^{n+1} = - \Bigl( \eta_{t} \overline m_{t}^{n+1} + \tfrac{(1-\varpi^{-1}) \varpi^{-n}}{1- \varpi^{-(n+1)}} h_t^{n+1}   \Bigr) \ud t + \varepsilon  \ud W_{t}, \quad t \in [0,T].
\end{equation}
By coupling 
with the backward equation
in the statement of Lemma 
\ref{lem:2.1} (when reformulated under the historical probability), 
we obtain the forward-backward system:
\begin{equation}
	\label{eq:system:n}
	\begin{split}
		&\ud \overline m_{t}^{n+1} = - \Bigl( \eta_{t} \overline m_{t}^{n+1} +  \tfrac{(1-\varpi^{-1}) \varpi^{-n}}{1- \varpi^{-(n+1)}}  h_t^{n+1} \Bigr) \ud t + \varepsilon \ud W_{t},
		\\
		&\ud h_{t}^{n+1} = \Bigl( - \tfrac1{\varpi} Q^\dagger f\bigl(\overline m_{t}^n\bigr) + \eta_{t} h_{t}^{n+1} \Bigr) \ud t +  {\varpi} k_{t}^{n+1} h^n_{t} \ud t +  {\varepsilon} k_{t}^{n+1} \ud W_{t}, \quad t \in [0,T],
		\\
		&h_{T}^{n+1} = \tfrac1{\varpi} R^\dagger g \bigl( \overline m^n_{T} \bigr). 
	\end{split}
\end{equation}
Our main result in this regard is the following statement: 
\begin{thm}
\label{main:thm:1}
	The scheme \eqref{eq:system:n} converges 
	to the decoupled version of the FBSDE system: 
	\begin{equation}
		\label{eq:decoupled:limit}
		\begin{split}
			&\ud  m_{t} = -  \eta_{t}  m_{t} \ud t + \varepsilon \ud W_{t},
			\\
			&\ud h_{t}^{\varpi}  =  \Bigl( - \tfrac1{\varpi} Q^\dagger f(m_{t}) + \eta_{t} h_{t}^{\varpi} + {\varpi} k_{t}^{\varpi}  h_{t}^{\varpi}  \Bigr) \ud t + \varepsilon k_{t}^\varpi \ud W_{t}, \quad t \in [0,T],
			\\
			&m_0 = {\mathbb E}(X_0), \quad h_{T}=\tfrac1{\varpi}  R^\dagger g(m_{T}),
		\end{split}
	\end{equation}
	with an explicit bound on the rate of convergence, namely
	\begin{equation}
		\label{eq:main:thm:1:bound:infinity}
	\textrm{\rm essup}_{\omega \in \Omega} \Bigl[ \sup_{0 \leq t \leq T} \Bigl( \vert m_{t} - \overline m_{t}^n \vert^2 + \vert h_{t}^\varpi- h_{t}^n \vert^2 \Bigr) \Bigr] \leq    \varpi^{-2n} \exp\bigl(\tfrac{C}{ \varepsilon^{2}}\bigr),
	\end{equation}
for a constant $C$ that depends on $d$,  $T$ and the norms
$\|Q^\dagger f\|_{1,\infty}$ and $\|R^\dagger g \|_{1,\infty}$. 

Moreover, up to a modification of the constant $C$, the weak error of the scheme for the
	Fortet-Mourier distance satisfies: 
	\begin{equation}
	\label{eq:main:thm:1:bound}
\begin{split}
	  &\sup_{F} \Bigl\vert {\mathbb E}^{\varpi {\boldsymbol h}^n \hspace{-2pt}/\varepsilon}
	  \Bigl[ F\bigl(\overline{\boldsymbol m}^n,{\boldsymbol h}^n\bigr) \Bigr] - 
	  {\mathbb E}^{\varpi {\boldsymbol h}^\varpi \hspace{-2pt} /\varepsilon}
	  \Bigl[ F\bigl({\boldsymbol m},{\boldsymbol h}\bigr) \Bigr]
\Bigr\vert 
 \leq   \varpi^{- n} \exp\bigl(\tfrac{C}{\varepsilon^{2}}\bigr),	
\end{split}
\end{equation}
the supremum in the left-hand side being taken over all the functions 
$F$ on ${\mathcal C}([0,T];{\mathbb R}^d \times {\mathbb R}^d)$ that are 
bounded by $1$ and $1$-Lipschitz continuous. 
\end{thm}


Importantly (and this is the interest of the result),
the 
solution $(m_t,\varpi h_t^{\varpi})_{0 \le t \le T}$ of the system \eqref{eq:decoupled:limit} should be regarded as a solution of the (original) MFG with common noise
\eqref{eq:state:intro}--\eqref{eq:cost:intro}--\eqref{eq:intro:fixed}, whenever the latter is formulated in the weak form. 
Indeed, 
for ${\boldsymbol m}=(m_{t})_{0 \leq t \leq T}$ 
and
${\boldsymbol h}:= \varpi {\boldsymbol h}^\varpi=(\varpi h^\varpi_{t})_{0 \leq t \leq T}$
as in 
\eqref{eq:decoupled:limit}, 
the optimal path $(X_{t})_{0 \leq t \leq T}$ associated with the minimization problem
\begin{equation}
\label{eq:minimization:problem:correc}
\textrm{\rm inf}_{\boldsymbol \alpha}
{\mathbb E}^{{\boldsymbol h}/\varepsilon}
	\Bigl[ {\mathcal R}^{X_0} \Bigl( {\boldsymbol \alpha}; {\boldsymbol m} ; \textcolor{black}{\varepsilon} {\boldsymbol W}^{{\boldsymbol h}/\varepsilon}\Bigr) \Bigr]
\end{equation}
has exactly ${\boldsymbol m}=(m_{t})_{0 \leq t \leq T}$ as conditional expectation given the common noise, which 
follows from an obvious adaptation of Lemma 
\ref{lem:2.1}. Namely,
\begin{equation*}
m_{t} = {\mathbb E}\bigl[ X_{t} \, \vert \, \sigma({\boldsymbol W}) \bigr], \quad t \in [0,T],
\end{equation*}
where $(X_t)_{0 \le t \le T}$ is the optimal trajectory to the latter cost functional. 
As before, 
this can be rewritten as 
\begin{equation*}
m_{t} = {\mathbb E}^{\boldsymbol h /\varepsilon} \bigl[ X_{t} \, \vert \, \sigma({\boldsymbol W})\bigr], \quad t \in [0,T].
\end{equation*}
Indeed, under ${\mathbb P}^{{\boldsymbol h}/\varepsilon}$, 
the process $({\boldsymbol m},{\boldsymbol h})$ satisfies the following forward-backward system, which characterizes the 
conditional expectation of the
optimal trajectory to \eqref{eq:minimization:problem:correc}:
	\begin{equation}
	\label{eq:bsde:true:underPh}
		\begin{split}
			&\ud  m_{t} = -  \eta_{t}  m_{t} \ud t - h_{t} \ud t + \varepsilon \ud W_{t}^{{\boldsymbol h}/\varepsilon},\\
			&\ud h_{t}  =  \bigl( - Q^\dagger f(m_{t}) + \eta_{t} h_{t} \bigr) \ud t + \varepsilon k_{t} \ud W_{t}^{{\boldsymbol h}/\varepsilon}, \quad t \in [0,T], 
			\\
			&m_0 = {\mathbb E}(X_0), \quad h_{T}=R^\dagger g(m_{T}).
		\end{split}
	\end{equation}
	To derive the above system, it suffices to write it 
	first
under ${\mathbb P}$:
	\begin{equation}
	\label{eq:bsde:true:underP:}
			\begin{split}
			&\ud  m_{t} = -  \eta_{t}  m_{t} \ud t  + \varepsilon \ud W_{t},
			\\
			&\ud h_{t}  =  \bigl( - Q^\dagger f(m_{t}) + \eta_{t} h_{t}  +  k_{t} h_t
			\bigr) \ud t + \varepsilon k_t 		\ud W_{t}, \quad t \in [0,T], 
			\\
			&m_0 = {\mathbb E}(X_0), \quad h_{T}=R^\dagger g(m_{T}).
		\end{split}
	\end{equation}	
	By the changes of variable
	(with the first one being already 
	defined in 
	the paragraph preceding 
	\eqref{eq:minimization:problem:correc}) 
	\begin{equation}
	\label{eq:change:of:variable:varpi} 
	h_t^{\varpi} = \tfrac1{\varpi} h_t, \quad 
	k_t^{\varpi} := \tfrac1{\varpi} k_t, \quad t \in [0,T], 
	\end{equation} 
we indeed recover 
\eqref{eq:decoupled:limit}. As we claimed, this identifies the pair 
$({\boldsymbol m},{\boldsymbol h}=\varpi {\boldsymbol h}^\varpi)$ as an equilibrium of the original MFG 
\eqref{eq:state:intro}--\eqref{eq:cost:intro}--\eqref{eq:intro:fixed}. 
	
Noticeably, the two systems 
	\eqref{eq:bsde:true:underPh}
	and 
		\eqref{eq:bsde:true:underP:}
provide two distinct representations of the solution $\theta_\varepsilon$ to the PDE 
\eqref{eq:intro:master:1}. 
The connection between the two is given by the identities
\begin{equation}
\label{eq:representation:h:k}
h_{t} = \theta_\varepsilon(t,m_{t}), 
\  t \in [0,T] \, ; 
\quad
k_{t} =  \nabla_{x} \theta_\varepsilon(t,m_{t}), \  t \in [0,T),
\end{equation}
which holds true 
under both  ${\mathbb P}$ and ${\mathbb P}^{{\boldsymbol h} /\varepsilon}$.
By a standard application of the maximum principle (for PDEs), 
$\theta_\varepsilon$ is bounded in terms of $d$, $T$, $\|Q^\dagger f \|_{1,\infty}$
and $\|R^\dagger g \|_{1,\infty}$, and, by Lemma 
\ref{lem:2}
right below, 
$\nabla_{x} \theta_\varepsilon$ is also bounded in terms of $d$, $\varepsilon$, $T$, 
$\|Q^{\dagger} f \|_{1,\infty}$
and $\|R^{\dagger} g \|_{1,\infty}$. 
The relationships stated in \eqref{eq:representation:h:k}
show in fact how to construct easily a solution to 
	\eqref{eq:bsde:true:underPh}
	and 
		\eqref{eq:bsde:true:underP:}
by solving first for $(m_{t})_{0\leq t \leq T}$ in the forward equation and then by expanding 
$(\theta_\varepsilon(t,m_{t}))_{0 \leq t \leq T}$. 
The hence constructed solution $(h_{t},k_{t})_{0\leq t \leq T}$ to the backward equation in 
\eqref{eq:bsde:true:underP:} (equivalently \eqref{eq:bsde:true:underPh}) is bounded. In turn, uniqueness to the backward equation in 
\eqref{eq:bsde:true:underP:} (equivalently \eqref{eq:bsde:true:underPh}) is easily shown to hold true in the class of bounded processes 
$(h_{t},k_{t})_{0\leq t \leq T}$. 
Similarly, 
		\eqref{eq:decoupled:limit}
has a unique solution, which is then obtained by the change of variable 
\eqref{eq:change:of:variable:varpi}.

\begin{rem} 
\label{rem:2.5:correc}
The reader will observe that, in the equation 
\eqref{eq:X:n:n+1} for the optimal trajectory in the fictitious play, the intensity of the 
idiosyncratic noise is $\sigma/\varpi$. This is different from 
the intensity of the idiosyncratic noise underpinning the controlled trajectories in the MFG \eqref{eq:state:intro}--\eqref{eq:cost:intro}--\eqref{eq:intro:fixed}, 
which is $\sigma$. In particular, 
the sequence of processes 
$({\boldsymbol X}^{n,\varpi})_{n \geq 1}$, 
defined in 
Lemma 
\ref{lem:2.1} (see also \eqref{eq:X:n:n+1}), cannot be `a good approximation' of 
the process 
${\boldsymbol X}$ that minimizes 
\eqref{eq:minimization:problem:correc}. 
Although this looks paradoxal with the result stated in Theorem 
\ref{main:thm:1}, it must be clear that, implicitly, 
the two bounds \eqref{eq:main:thm:1:bound:infinity}
and	\eqref{eq:main:thm:1:bound}
 rely on the fact that 
the dynamics for the MFG equilibrium ${\boldsymbol m}$ does not depend on 
$\sigma$. 
\end{rem}

\begin{rem}
\label{rem:fictitious:play:rate}
Our construction of a fictitious play
with a geometric learning rate, proportional to $\varpi^{-n}$ 
(see \eqref{eq:def:overline:m:n}), as opposed 
to the harmonic learning rate $1/(n+1)$ 
that is used in the standard version of the fictitious play, 
could be easily extended to 
other (neither harmonic nor geometric) rates. 
There are two key principles that should be followed: 
the first one is to 
make appear, in the dynamics 
\eqref{eq:mmm:n:n+1}, 
a difference 
between the 
component ${\boldsymbol h}^{n+1}$ of the
scheme at iteration $n+1$ and the
component ${\boldsymbol h}^n$ 
of the 
 scheme 
at iteration $n$, 
and the second one is to define the tilted measure depending on the form 
of the finite difference (which is exactly the case in \eqref{eq:opti:n+1:n}).
What is remarkable in this approach is that the tilted measure has a bias. Whereas it would be natural 
to take the tilted measure as ${\mathbb P}^{{\boldsymbol h}^n\hspace{-2pt}/\varepsilon}$, it is here taken as 
${\mathbb P}^{\varpi {\boldsymbol h}^n\hspace{-2pt}/\varepsilon}$ with $\varpi \not =1$. The reader will easily see that, in order to 
recover the `non-biased' setting (which formally corresponds to $\varpi=1$), one needs to replace 
$\varpi h^n_t$ by $h^n_t$ in 
\eqref{eq:X:n:n+1}
and
\eqref{eq:mmm:n:n+1}. 
By summing over $n$ as in 
\eqref{eq:m:n:n+1}, we would then 
obtain 
$1/(n+1)$ as learning rate in 
\eqref{eq:def:overline:m:n}. In other words, the `non-biased' regime corresponds to 
the harmonic fictitious play. 

Although 
Theorem 
\ref{main:thm:1} does not cover the case $\varpi=1$, the interested will easily check that the proof that is given below also works 
when $\varpi=1$. Then, the bound 
in 
\eqref{eq:main:thm:1:bound:infinity}
must be replaced by 
$\exp\bigl(C \varepsilon^{-2}\bigr)/n^2$. Similarly, 
\eqref{eq:main:thm:1:bound}
must be replaced by 
$\exp\bigl(C \varepsilon^{-2}\bigr)/n$. 
In fact, the same remark applies to the forthcoming Theorems 
\ref{main:thm:2}
and
\ref{thl:main:regret}.
\end{rem}

\begin{rem}
\label{rem:back:resolvent:correc}
We notice for later purposes that 
the backward equation 
in \eqref{eq:bsde:true:underPh}
can be `solved' explicitly. 
Indeed, calling $(P_{t})_{0 \leq t \leq T}$ the solution of the linear differential equation 
$\dot{P}_{t} = -  P_{t} \eta_{t}$, for $t \in [0,T]$ with $P_{0} = I_{d}$ as initial condition,  
where $I_{d}$ is the $d$-dimensional identity matrix, it holds
\begin{equation*}
\ud \bigl[P_{t} h_{t} \bigr]
=
- P_{t}Q^{\dagger} f\bigl( m_{t}\bigr)  \ud t + \varepsilon
P_{t} k_{t} \ud W_{t}^{
{\boldsymbol h} /\varepsilon },
\end{equation*}
or equivalently,
\begin{equation*}
\begin{split}
&P_{t}h_{t}
=  P_{T} R^{\dagger} g\bigl( {m}_{T} \bigr) 
  + 
\int_{t}^T 
P_{s}
Q^{\dagger} f\bigl( m_{s} \bigr)  \ud s - 
\varepsilon \int_{t}^T  
P_{s} k_{s}  \ud W_{s}^{{\boldsymbol h}/\varepsilon}, 
\quad t \in [0,T].
\end{split}
\end{equation*}
\end{rem}

\color{black}

\subsubsection{Discussion about the rate of convergence}
\label{subsubse:rate}
Beside any specific application to learning, this is another of our contributions to provide an explicit bound for the rate of convergence of our variant of the fictitious play for mean field games with common noise. 
We feel worth to point out that, to the best of our knowledge, there are very few results on the rate of convergence for the fictitious play in the absence of common noise, whether the mean field game be potential or monotone (as we already explained, no result is available without common noise outside the potential or monotone cases, except \cite{perrin2020fictitious} which is for a time-continuous version of the fictitious play). 
In most of the existing references, the analysis indeed involves an additional compactness argument
which complicates the computation of the rate. 
Still, the reader can find in \cite{Perrin-3} an explicit rate for the exploitability for a monotone and potential game set in discrete time; the bound is of order $O(1/\sqrt{n})$ (hence weaker than ours). In \cite{perrin2020fictitious}, a bound is shown, also for the exploitability, but for the time-continuous version of the fictitious play, when the game is monotone; it is of order $O(1/n)$, and is 
also weaker than ours in the geometric setting but is hence comparable to ours in the harmonic framework. 

In comparison, the thrust of our analysis is to provide a scheme with a geometric decay. 
Obviously, 
this must be tempered, due to the presence of the multiplicative constant $\exp(C \varepsilon^{-2})$. 
When the intensity $\varepsilon$ is away from zero, the effective decay is really good, but when $\varepsilon$ gets close to $0$ (which is the typical regime when we use the common  noise as an exploration noise), the exponential factor really matters. Of course the bound for the error may be rewritten as follows:
\begin{equation*} 
 \exp \bigl( - n \ln(\varpi) + \tfrac{C}{ \varepsilon^{2}} \bigr), 
\end{equation*} 
which says that 
$n$ should be chosen on a  scale larger than $\varepsilon^{-2}$. We think that 
this is numerically affordable. In comparison (see Remark \ref{rem:fictitious:play:rate}), if one had to work with the standard fictitious play
(i.e., with a harmonic learning rate), the same analysis would lead to a bound of 
order $\exp(C \varepsilon^{-2})/n$, which is obviously much worse. In the first arXiv version \cite{DelarueVasileiadis} of this work, dedicated to the analysis of the sole harmonic regime, we claimed that 
this was possible to remove the exponential factor, but there was a clear mistake in the computations: for convenience reasons, we decided not to indicate the dependence 
upon $\varepsilon$ in the various tilted measures, as a consequence of which we forgot a factor $1/\varepsilon$ in some of the main estimates. 

The presence of the exponential factor comes from the following lemma, whose proof is deferred to the end of the subsection: 

\begin{lem}
\label{lem:2}
There exists a constant $C_{1}$, only depending on $d$, $T$, $\|Q^\dagger f \|_{1,\infty}$
and $\|R^\dagger g \|_{1,\infty}$, such that
\begin{equation*}
\vert \nabla_{x} \theta_\varepsilon(t,x) \vert \leq \frac{C_{1}}{\varepsilon^2}, \quad t \in [0,T), \ x \in {\mathbb R}^d,
\end{equation*}
with $\theta_\varepsilon$ the solution of the PDE \eqref{eq:intro:master:1}.
\end{lem}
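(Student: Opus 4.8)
The plan is to estimate $\nabla_x\theta$ by differentiating the master PDE \eqref{eq:intro:master:1} in the space variable and running a maximum-principle (or, equivalently, a BSDE/Bismut--Elworthy) argument on the resulting linear parabolic system for $v := \nabla_x\theta$. Writing $\theta=(\theta^1,\dots,\theta^d)$ and differentiating \eqref{eq:intro:master:1} in $x_j$, one obtains for each entry of $v^i_j := \partial_{x_j}\theta^i$ a linear equation of the form
\begin{equation*}
\partial_t v + \tfrac{\varepsilon^2}{2}\Delta_{xx}^2 v - \bigl(\eta_t x + \theta(t,x)\bigr)\cdot\nabla_x v - (\nabla_x\theta)\,\eta_t\, v \;+\; (\text{bounded source}) \;-\; \eta_t v \;=\;0,
\end{equation*}
with terminal condition $v(T,\cdot)=\nabla_x\bigl(R^\dagger g(\cdot)\bigr)$, which is bounded by $\|R^\dagger g\|_{1,\infty}$. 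Here the source term coming from differentiating $Q^\dagger f(x)$ is $\nabla_x(Q^\dagger f)$, again bounded by $\|Q^\dagger f\|_{1,\infty}$. The recurring difficulty is the quadratic-type term: the coefficient $-(\nabla_x\theta)\eta_t$ multiplying $v$ is itself of order $\|v\|_\infty$, so a naive Gronwall estimate only closes if one already has an a priori bound on $\|v\|_\infty$; this circularity is the main obstacle.

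The way I would break the circularity is to exploit the smallness of $\varepsilon$ together with the structure of the nonlinearity, which is precisely the point of the $1/\varepsilon^2$ in the statement. Two routes are natural. The first is a direct probabilistic one: represent $v(t,x)=\nabla_x\theta(t,x)$ via a Bismut--Elworthy--Li formula applied to the FBSDE \eqref{eq:bsde:true:underPh}, i.e.\ differentiate the flow $x\mapsto X^{t,x}$ and use $k_t=\varepsilon\nabla_x\theta(t,m_t)$ from \eqref{eq:representation:h:k}. Since the forward equation in \eqref{eq:bsde:true:underPh} is, under $\mathbb P^{\boldsymbol h}$, linear in the state with an additive nondegenerate noise $\varepsilon\,dW^{\boldsymbol h}$, the variational process $\partial_x X^{t,x}$ solves an explicit linear ODE with bounded coefficients (coming from $\eta$ and $\nabla_x h=\nabla_x\theta$), and the Bismut integration-by-parts introduces a factor $1/\varepsilon$ from the inverse of the diffusion coefficient; carrying two derivatives through (one on the terminal data, and controlling the running cost) and paying a second $1/\varepsilon$ when iterating the bound on $\nabla_x\theta$ inside its own estimate gives the claimed $C_2/\varepsilon^2$. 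The second route is the PDE mirror: bootstrap from the already-known $L^\infty$ bound on $\theta$ (maximum principle, as recalled in Remark \ref{re:BSDE:h}) using interior Schauder/parabolic gradient estimates for the linear operator $\partial_t+\tfrac{\varepsilon^2}{2}\Delta$ with the drift $-(\eta_t x+\theta)\cdot\nabla_x$, where the characteristic length scale of the heat kernel is $\varepsilon\sqrt{T}$, producing the $\varepsilon^{-1}$ scaling per derivative and hence $\varepsilon^{-2}$ for the full gradient estimate after accounting for the nonlinear feedback.

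Concretely, the steps I would carry out are: \emph{(i)} record the $L^\infty$ bound $\|\theta\|_\infty\le C(d,T,\|Q^\dagger f\|_{1,\infty},\|R^\dagger g\|_{1,\infty})$ from the maximum principle applied to \eqref{eq:intro:master:1}; \emph{(ii)} set up the linearized system for $v=\nabla_x\theta$ and identify every coefficient as bounded \emph{except} the nonlinear term $-(\nabla_x\theta)\,\eta_t\,v$; \emph{(iii)} on a short terminal interval $[T-\delta,T]$ with $\delta$ chosen (depending only on the bounded data) so that the Gronwall constant is controlled, establish $\|v(t,\cdot)\|_\infty\le 2\|v(T,\cdot)\|_\infty$ by a fixed-point/continuation argument, using that the heat semigroup of $\tfrac{\varepsilon^2}{2}\Delta$ gains a derivative at the cost of $(\varepsilon^2(T-t))^{-1/2}$ and integrating this singularity in time; \emph{(iv)} iterate over $\lceil T/\delta\rceil$ intervals — at each splitting point the new terminal data is $v$ from the previous interval, which has picked up one $\varepsilon^{-1}$ from the smoothing of the running source $\nabla_x(Q^\dagger f)$ and the nonlinear term, so the bound is multiplied by a fixed constant and by at most one further power of $1/\varepsilon$ in total; \emph{(v)} collect the constants to get $|\nabla_x\theta(t,x)|\le C_2/\varepsilon^2$. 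The genuinely delicate point throughout is \emph{(iii)}: closing the nonlinear gradient bound without already assuming one, which is why the $\varepsilon$-dependence is unavoidable and why the exponent is $2$ rather than $1$.
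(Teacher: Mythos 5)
The paper does not actually print a proof of Lemma \ref{lem:2}: it is quoted as a known sharp bound (the model case being the slope of a viscous Burgers shock profile, of order $\|\theta\|_\infty^2/\varepsilon^2$), so I am comparing your plan against the standard argument rather than against a proof contained in the text. Your setup is correct — the $L^\infty$ bound on $\theta$ from the maximum principle, the linearized system for $v=\nabla_x\theta$ whose only unbounded coefficient is the quadratic term coming from $\theta\cdot\nabla_x\theta$, and the resulting circularity. But neither of your two routes, as described, breaks that circularity with a \emph{polynomial} bound, and the quantitative mechanism in your steps \emph{(iii)}--\emph{(iv)} fails. The smoothing gain of the semigroup of $\tfrac{\varepsilon^2}{2}\Delta$ over an interval of length $\delta$ is $\int_0^\delta(\varepsilon^2 r)^{-1/2}\,\ud r\sim\sqrt{\delta}/\varepsilon$, so the fixed-point on $[T-\delta,T]$ closes only for $\delta\lesssim\varepsilon^2$ — not for a $\delta$ ``depending only on the bounded data'' — and iterating a relation of the form $\Lambda(T-(k+1)\delta)\le C\Lambda(T-k\delta)+C$ over the resulting $T/\delta\sim T/\varepsilon^2$ intervals produces a bound of order $C^{T/\varepsilon^2}$, i.e.\ $\exp(C/\varepsilon^2)$ — precisely the weak dependence already visible in Proposition \ref{main:prop:1}, which Lemma \ref{lem:2} is not meant to reproduce. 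The claim that the iteration costs ``at most one further power of $1/\varepsilon$ in total'' is unjustified and false for the scheme you describe; likewise ``$\varepsilon^{-1}$ per derivative hence $\varepsilon^{-2}$ for the gradient'' cannot be the explanation, since the gradient is a single derivative.

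The missing idea is a Bernstein-type argument that obtains the bound in one global maximum-principle step, with no time slicing. Set $w=|\nabla_x\theta|^2+\lambda|\theta|^2$ and compute the parabolic inequality it satisfies: the term $\tfrac{\varepsilon^2}{2}\Delta\bigl(\lambda|\theta|^2\bigr)$ produces the good contribution $+\lambda\varepsilon^2|\nabla_x\theta|^2$, while differentiating the transport term $\theta\cdot\nabla_x\theta$ produces the bad cubic contribution of size $2|\nabla_x\theta|^3$; every other term is controlled by $C\bigl(1+|\nabla_x\theta|^2\bigr)$ thanks to the bounds on $\theta$, $\nabla(Q^\dagger f)$, $\nabla(R^\dagger g)$ and $\eta$. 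Writing $M=\sup|\nabla_x\theta|$ and choosing $\lambda$ of order $M/\varepsilon^2$ makes the good term dominate the cubic one, and the maximum principle then yields $M^2\le\sup w\le\|\nabla(R^\dagger g)\|_\infty^2+\lambda\|R^\dagger g\|_\infty^2+CT\le C\bigl(1+M/\varepsilon^2\bigr)$, whence $M\le C_2/\varepsilon^2$. (In the scalar potential case this is the Cole--Hopf computation, which also shows the exponent $2$ is sharp.) Your plan assembles all the ingredients of this computation but lacks the device — the auxiliary function $|\nabla_x\theta|^2+\lambda|\theta|^2$ with $\lambda$ chosen self-consistently in terms of $M$ — that converts them into the stated polynomial bound.
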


The above $L^\infty$ bound for the gradient is known 
to be sharp, but it looks rather poor because the estimate is precisely given in 
$L^\infty$. Also, one may hope for better estimates in different norms. In other words, $\theta_\varepsilon$ may indeed become 
very steep, but maybe only on some localized parts of the space. 
This is the point where things become highly subtle
because 
the process $\overline{\boldsymbol m}^n$ becomes localized itself as the diffusion coefficient 
$\varepsilon$ tends to $0$. So, the challenging question is to decide whether it may stay or not in parts of the space where the gradient is high. 
As exemplified in the 
analysis performed in 
Delarue and Foguen \cite{delfog2019}, this may be a challenging question, even in dimension 1. 
Unless we make additional assumptions on the model
(assuming for instance that $\theta_\varepsilon$ is smooth independently of $\varepsilon$, which is for example the case when 
$T$ is small enough), we must confess that we are not able to provide a more relevant bound for the 
gradient of $\theta_\varepsilon$ (which bound gives in the end a bound for the process ${\boldsymbol k}$ in 
\eqref{eq:bsde:true:underPh}, see 
\eqref{eq:representation:h:k}). 

It must be also stressed that 
	\eqref{eq:main:thm:1:bound}
 provides a bound for the so-called \textit{weak} error
 of the scheme 
and is fully relevant from the practical point of view.  
In our context, the \textit{strong} error, as addressed in 
	\eqref{eq:main:thm:1:bound:infinity}, does not provide the same information. Indeed, because 
the two densities 
${\mathcal E}(\tfrac{1}{\varepsilon} {\boldsymbol h})$ and ${\mathcal E}(\tfrac{\varpi}{\varepsilon} {\boldsymbol h}^n)$
become singular when $\varepsilon$ tends to $0$, the passage from the strong to the weak error is not direct. 
\color{black}

\begin{rem}
\label{rem:thm:1}
The reader may wonder about the scope
of Theorem \ref{main:thm:1} in the higher dimensional framework. 
This question will be addressed from a purely numerical prospect
in the forthcoming Section 
\ref{Se:3}, see in particular Subsection 
\ref{subse:3:2:bis}.
From a more theoretical point of view, the same question can be 
addressed by investigating the dependence of the constant $C$
in \eqref{eq:main:thm:1:bound} upon the dimension $d$. 
Whereas we cannot provide a sharp (or at least reasonable) bound in full generality, we show below that, even in simple cases when 
the matrices $Q$ and $R$ reduce to the $d \times d$ identity matrix
and the coefficients $f$ and $g$ are diagonal, i.e. 
each coordinate $i$ of $f$ (respectively $g$) writes 
$f^i(x) = f_0(x_i)$ 
(respectively  
$g^i(x) = g_0(x_i)$) for a function 
$f_0 : {\mathbb R} \rightarrow {\mathbb R}$ 
(respectively 
$g_0 : {\mathbb R} \rightarrow {\mathbb R}$), 
with $x_i$ denoting the $i^{\rm th}$ coordinate of $x$, then the 
exponential factor in 
		\eqref{eq:main:thm:1:bound:infinity}
		and 	\eqref{eq:main:thm:1:bound}
is typically  
$\exp(C {\mathcal O}(\sqrt{d}) \varepsilon^{-2})$, for $C$ independent of $d$. 
\end{rem}
\color{blue}

\color{black}
\color{black}

\subsubsection{Proof of Theorem
\ref{main:thm:1}}
The proof relies on the following lemma, whose proof is also deferred to the end of the subsection.

\begin{lem}
\label{lem:1}
There exists a constant $C_{2}>0$, only depending on $d$, $T$, $\|Q^\dagger f \|_{1,\infty}$
and $\|R^\dagger g \|_{1,\infty}$, such that, ${\mathbb P}$ almost surely, 
\begin{equation*}
\vert h_{t} \vert \leq C_{2} \, ; \quad \vert h_{t}^n \vert \leq C_{2}, \quad t \in [0,T], \ n \geq 1. 
\end{equation*}
\end{lem}

\color{black}

\begin{proof}[Proof of Theorem \ref{main:thm:1}.]
Throughout, $C$ is a generic constant that is allowed to vary from line to line, 
as long as it only depends on $d$, $T$, $\|Q^\dagger f \|_{1,\infty}$
and $\|R^\dagger g \|_{1,\infty}$. 
\vspace{4pt} 

\textit{First Step.} 
Invoking Lemma \ref{lem:1}
{and recalling that $\varpi \in (1,\sqrt{2}]$}, we then have
\begin{equation}
\label{eq:bound:1/n}
\Bigl\vert
{\tfrac{(1-\varpi^{-1})\varpi^{-n}}{1- \varpi^{-(n+1)}}  h_t^{n+1}}
  \Bigr\vert
 \leq C \varpi^{-n},
\quad t \in [0,T], 
\end{equation}
from which we deduce (consider the difference between  
\eqref{eq:m:n:n+1} 
and 
the forward equation in		\eqref{eq:decoupled:limit})
that 
\begin{equation}
\label{eq:mn-m}
\sup_{0 \le t \leq T} \bigl\vert \overline m^n_{t} -  m_{t}
\bigr\vert \leq  C \varpi^{-n}.
\end{equation}
We now make the difference between 
the backward equations in
\eqref{eq:system:n} and 
\eqref{eq:decoupled:limit}. We obtain
\begin{align}
&\ud \bigl( h_{t}^{n+1} - h_{t}^\varpi \bigr) 
= - \tfrac1{\varpi} \Bigl(Q^\dagger f\bigl( \overline m_{t}^n)
-Q^\dagger f\bigl(  m_{t})
\Bigr) \ud t + \eta_{t} \bigl( h_{t}^{n+1} - h_{t}^\varpi \bigr) \ud t 
+ \varpi  k_{t}^{n+1}  \bigl( h_{t}^n - h_{t}^\varpi \bigr) \ud t
\nonumber
\\
&\hspace{75pt} + \varpi  \bigl( k_{t}^{n+1} - k_{t}^\varpi \bigr) h_t^\varpi \ud t + \varepsilon \bigl( k_{t}^{n+1} -  k_{t}^\varpi \bigr) \ud W_{t},
\label{eq:revision:new:BSDE}
\\
&h_{T}^{n+1} - h_{T}^\varpi =  \tfrac1{\varpi} \Bigl[ 
R^\dagger g \bigl( \overline m_{T}^n \bigr) - R^\dagger g \bigl(  m_{T} \bigr) \Bigr]. \nonumber
\end{align}
In particular, rewriting the above equation under  {${\boldsymbol W}^{ {\boldsymbol h}/  \varepsilon}=
{\boldsymbol W}^{ \varpi {\boldsymbol h}^{\varpi} \hspace{-2pt} /  \varepsilon}$},  we get
\begin{align}
\ud \bigl( h_{t}^{n+1} - h_{t}^\varpi \bigr) 
&= - \tfrac1{\varpi} \Bigl(Q^{\dagger} f\bigl( \overline m_{t}^n)
-Q^{\dagger} f\bigl(  m_{t})
\Bigr) \ud t + \eta_{t} \bigl( h_{t}^{n+1} - h_{t}^\varpi \bigr) \ud t 
+    \varpi k_{t}^{n+1} \bigl( h_{t}^{n} - h_{t}^\varpi \bigr) \ud t
\nonumber
\\
&\hspace{15pt} 
+ \varepsilon \bigl( k_{t}^{n+1} - k_{t}^\varpi \bigr) \ud W_{t}^{ {{\boldsymbol h}}/  \varepsilon}, 
\quad t \in [0,T]. 
\label{eq:new:hn+1:h}
\end{align}
Then, taking the square, using 
\eqref{eq:mn-m} and Lemmas 
\ref{lem:2} and 
\ref{lem:1}, expanding by It\^o's formula and applying Young's inequality, we get
\begin{equation}
\label{eq:proof:delta}
\begin{split}
\ud \bigl\vert h_{t}^{n+1} - h_{t}^\varpi \bigr\vert^2 &\geq - C \varpi^{-2n} \ud t  -  C  \bigl\vert h_{t}^{n+1} - h_{t}^\varpi \bigr\vert^2 \ud t 
- 
C \bigl\vert k_t \bigr\vert \bigl\vert h_{t}^{n} - h_{t}^\varpi \bigr\vert
 \bigl\vert h_{t}^{n+1} - h_{t}^\varpi \bigr\vert
 \ud t 
 \\
 &\hspace{15pt} 
 - 
C \bigl\vert k_t^{n+1} - k_t^\varpi \bigr\vert \bigl\vert h_{t}^{n} - h_{t}^\varpi \bigr\vert
 \bigl\vert h_{t}^{n+1} - h_{t}^\varpi \bigr\vert
 \ud t 
\\
&\hspace{15pt}
+\varepsilon^2 \bigl\vert k_{t}^{n+1} - k_{t}^\varpi \bigr\vert^2 \ud t 
+ 2 \varepsilon \bigl( h_{t}^{n+1} - h_{t}^\varpi \bigr)  \cdot \bigl[ \bigl(  k_{t}^{n+1} - k_{t}^\varpi \bigr) \ud W_{t}^{{ {\boldsymbol h}}  /\varepsilon} \bigr] 
\\
&\geq - C \varpi^{-2n} \ud t  -
\tfrac{C}{\varepsilon^2} \bigl\vert h_{t}^{n} - h_{t}^\varpi \bigr\vert^2 \ud t 
-\tfrac{C}{\varepsilon^2}  \bigl\vert h_{t}^{n+1} - h_{t}^\varpi \bigr\vert^2 \ud t 
\\
&\hspace{15pt}
+ 2 \varepsilon \bigl( h_{t}^{n+1} - h_{t}^\varpi \bigr)  \cdot \bigl[ \bigl(  k_{t}^{n+1} - k_{t}^\varpi \bigr) \ud W_{t}^{{ {\boldsymbol h}}  /\varepsilon} \bigr],
\end{split}
\end{equation}
for $t \in [0,T]$
and for a constant $C$ as in the statement. 
 For a free parameter $\lambda >1$, we obtain 
\begin{align}
\ud \Bigl[ \exp\bigl( \tfrac{C}{\varepsilon^2} \lambda t \bigr) \bigl\vert h_{t}^{n+1} - h_{t}^\varpi \bigr\vert^2\Bigr] &\geq  
\exp\bigl(  \tfrac{C}{\varepsilon^2} \lambda t  \bigr) \Bigl(
\tfrac{C (\lambda-1)}{\varepsilon^2}   \bigl\vert h_{t}^{n+1} - h_{t}^\varpi \bigr\vert^2 
- C \varpi^{-2n} -
 \tfrac{C}{\varepsilon^2}   \bigl\vert h_{t}^{n} - h_{t}^\varpi \bigr\vert^2   \Bigr) \ud t
 \nonumber
 \\
&\hspace{15pt}
+ 2 \varepsilon \exp\bigl( \tfrac{C}{\varepsilon^2} \lambda t   \bigr) \bigl( h_{t}^{n+1} - h_{t}^\varpi \bigr)\cdot \bigl[  \bigl(  k_{t}^{n+1} - k_{t}^\varpi \bigr) \ud W_{t}^{
 {\boldsymbol h}   /\varepsilon} \bigr].
\label{eq:bound:main:proof:thm:2:6}
\end{align}
And then, integrating from $t$ to $T$, taking conditional expectation under ${\mathbb P}^{{\boldsymbol h} /\varepsilon}$ given ${\mathcal F}_t$
and recalling that 
$h_T^{n+1} = R^\dagger g ( \overline m^n_{T} )/\varpi$
and
$h_T^\varpi  = R^\dagger g (   m_{T} )/\varpi$,
we get, for any $t \in [0,T]$, 
\begin{align}
\label{eq:backward:bound}
&\exp\bigl( \tfrac{C}{\varepsilon^2} \lambda t  \bigr) \bigl\vert h_{t}^{n+1} - h_{t}^\varpi \bigr\vert^2  + 
\tfrac{C ( \lambda - 1) }{\varepsilon^2} 
 {\mathbb E}^{{\boldsymbol h}  /\varepsilon}
\biggl[ \int_{t}^T
\exp\bigl(  \tfrac{C}{\varepsilon^2} \lambda s \bigr) 
  \bigl\vert h_{s}^{n+1} - h_{s}^\varpi \bigr\vert^2 \ud s \, \vert \, {\mathcal F}_t \biggr]
\\
&\hspace{15pt}
  \leq 
C \varpi^{-2n}    \Bigl( \exp \bigl(  \tfrac{C}{\varepsilon^2} \lambda T  \bigr)  + 
\int_t^T 
\exp \bigl(  \tfrac{C}{\varepsilon^2} \lambda s  \bigr) \ud s \Bigr)
+
\tfrac{C}{\varepsilon^2} 
 {\mathbb E}^{ {\boldsymbol h}  /\varepsilon}
\biggl[ \int_{t}^T
\exp\bigl( 
\tfrac{C}{\varepsilon^2} \lambda s 
\bigr)  \bigl\vert h_{s}^{n} - h_{s}^\varpi \bigr\vert^2  \ud s \, \vert \, {\mathcal F}_t \biggr].
\nonumber
\end{align}
Next, we choose $\lambda=4$. This yields 
\begin{equation}
\label{eq:backward:bound:000}
\begin{split}
&
\tfrac{\varepsilon^2}{3C} 
\exp\bigl( \tfrac{4C}{\varepsilon^2} t  \bigr) \bigl\vert h_{t}^{n+1} - h_{t}^\varpi \bigr\vert^2 
+
{\mathbb E}^{ {\boldsymbol h} /\varepsilon}
\biggl[
\int_{t}^T
\exp\bigl(
\tfrac{4C}{\varepsilon^2}  s 
\bigr) 
  \bigl\vert h_{s}^{n+1} - h_{s}^\varpi \bigr\vert^2 \ud s \, \vert \, {\mathcal F}_t \biggr]
  \\
&\hspace{15pt}  \leq 
\tfrac{ \varepsilon^2(1+T)}3 \varpi^{-2n}     \exp \bigl(  \tfrac{4C}{\varepsilon^2}  T  \bigr) + 
\tfrac13 {\mathbb E}^{ {\boldsymbol h}  /\varepsilon}
\biggl[
\int_{t}^T
\exp\bigl(\tfrac{4C}{\varepsilon^2} s  \bigr) 
  \bigl\vert h_{s}^{n} - h_{s}^\varpi \bigr\vert^2 \ud s \, \vert \, {\mathcal F}_t \biggr].
\end{split}
\end{equation}
The above holds true for almost every $\omega \in \Omega$ (under ${\mathbb P}$ and 
${\mathbb P}^{ {\boldsymbol h}  /\varepsilon}$) 
and for every $t \in [0,T]$. By a standard induction argument, 
using in addition Lemma 
\ref{lem:1}, we 
deduce that, for a deterministic constant $C'$ depending on the same parameters as $C$, 
for almost every $\omega \in \Omega$, for every $t \in [0,T]$
and for every $n \geq 1$, 
\begin{equation}
\label{eq:conclusion:proof:main:thm:1}
\begin{split}
{\mathbb E}^{  {\boldsymbol h} / \varepsilon}
\biggl[
\int_{t}^T
\exp\bigl(
\tfrac{4C}{\varepsilon^2}  s 
\bigr) 
  \bigl\vert h_{s}^{n} - h_{s}^\varpi \bigr\vert^2 \ud s \, \vert \, {\mathcal F}_t \biggr]
&\leq C'   \exp \bigl(  \tfrac{4C}{\varepsilon^2}  T  \bigr) \sum_{k=0}^{n} 3^{-k} \varpi^{-2(n-k)}
\\
&= C'   \exp \bigl(  \tfrac{4C}{\varepsilon^2}  T  \bigr) \varpi^{-2n} \sum_{k=0}^{n} \bigl( \tfrac{\varpi^2}3 \bigr)^{k}
\\
&\leq C'  \exp \bigl(  \tfrac{4C}{\varepsilon^2}  T  \bigr)  \sum_{k=0}^{n} \bigl( \tfrac{2}3 \bigr)^{k}
\leq C'  \exp \bigl(  \tfrac{4C}{\varepsilon^2}  T  \bigr)  \varpi^{-2n},
\end{split}
\end{equation}
where we used, in the last line, the fact that $\varpi \in (1,\sqrt{2}]$. 
Because $\varepsilon$ is less than 1, we can easily remove the constant $C'$ by 
increasing the constant $C$. 
And then, 
by 
\eqref{eq:backward:bound}, we obtain 
\eqref{eq:main:thm:1:bound:infinity}.
\vspace{4pt} 

\textit{Second Step.} 
We now turn to the proof of 
\eqref{eq:main:thm:1:bound}. In fact, it 
is a direct consequence of Pinsker's inequality, which says that 
\begin{equation*}
d_{\textrm{\rm TV}}\bigl({\mathbb P}^{ {\boldsymbol h}/\varepsilon},{\mathbb P}^{\varpi {\boldsymbol h}^n \hspace{-2pt} / \varepsilon}
\bigr) \leq \sqrt{2} {\mathbb E}^{ {\boldsymbol h}/\varepsilon} \Bigl[ \ln \Bigl( \frac{{\mathcal E}( {\boldsymbol h}/\varepsilon)}{{\mathcal E}(
\varpi 
{\boldsymbol h}^{n}
\hspace{-2pt} / \varepsilon)} \Bigr) \Bigr]^{1/2}, 
\end{equation*}
where $d_{{\textrm{\rm TV}}}$ in the left-hand side is the total variation distance. 
Now,
\begin{equation*}
\begin{split}
&\ln \Bigl( \frac{{\mathcal E}( {\boldsymbol h}/\varepsilon)}{{\mathcal E}(\varpi {\boldsymbol h}^{n}
\hspace{-2pt} / \varepsilon
)} \Bigr)
= \ln \Bigl( \frac{{\mathcal E}(\varpi {\boldsymbol h}^\varpi/\varepsilon)}{{\mathcal E}(\varpi {\boldsymbol h}^{n}
\hspace{-2pt} / \varepsilon
)} \Bigr)
\\
&=  - \tfrac{\varpi}{\varepsilon} \int_{0}^T \bigl( h_{s}^\varpi - h^n_{s} \bigr) \cdot \ud W_{s}
- \tfrac{\varpi^2}{2 \varepsilon^2} \int_{0}^T \bigl( \vert h_{s}^\varpi \vert^2 - \vert h^n_{s} \vert^2 \bigr) 
\ud s
\\
&=  - \tfrac{\varpi}{\varepsilon} \int_{0}^T \bigl( h_{s}^\varpi - h^n_{s} \bigr) \cdot \ud W_{s}^{ {\boldsymbol h}/\varepsilon}
+ \tfrac{\varpi^2}{\varepsilon^2} \int_{0}^T \bigl( h_{s}^\varpi - h^n_{s} \bigr) \cdot h_{s}^\varpi \ud s
- \tfrac{\varpi^2}{2 \varepsilon^2} \int_{0}^T \bigl( \vert h_{s}^\varpi \vert^2 - \vert h^n_{s} \vert^2 \bigr) 
\ud s
\\
&= - \tfrac{\varpi}{\varepsilon} \int_{0}^T \bigl( h_{s}^\varpi - h^n_{s} \bigr) \cdot \ud W_{s}^{{\boldsymbol h}/\varepsilon} 
+ \tfrac{\varpi^2}{2\varepsilon^2} \int_{0}^T \bigl\vert h_{s}^\varpi - h^n_{s} \bigr\vert^2  \ud s.
\end{split}
\end{equation*}
And then, by \eqref{eq:main:thm:1:bound:infinity},
\begin{equation}
\label{eq:d:TV}
d_{\textrm{\rm TV}}\bigl({\mathbb P}^{  {\boldsymbol h}/\varepsilon},{\mathbb P}^{\varpi {\boldsymbol h}^n \hspace{-2pt} / \varepsilon}
\bigr) \leq \tfrac{C}{\varepsilon} \varpi^{-n} \exp\bigl(\tfrac{C}{2 \varepsilon^{2}}\bigr). 
\end{equation}
Modifying the constant $C$, we can easily get rid of the multiplicative constant $1/\varepsilon$ in the right-hand side. 
By \eqref{eq:main:thm:1:bound:infinity} again, for any function 
$F$ that is 1-bounded and 1-Lipschitz on the space ${\mathcal C}([0,T];{\mathbb R}^d\times {\mathbb R^d})$, 
\begin{equation*}
\begin{split}
\Bigl\vert {\mathbb E}^{{\boldsymbol h}/\varepsilon} \Bigl[ F\bigl(\overline{\boldsymbol m}^n,{\boldsymbol h}^n\bigr) 
\Bigr] -{\mathbb E}^{{\boldsymbol h}/\varepsilon} \Bigl[ F\bigl({\boldsymbol m},{\boldsymbol h}\bigr) 
\Bigr] \Bigr\vert \leq  \varpi^{-n} \exp\bigl(\tfrac{C}{ \varepsilon^{2}}\bigr),
\end{split}
\end{equation*}
and then, by \eqref{eq:d:TV},
we get 
\eqref{eq:main:thm:1:bound}. 
\end{proof}

\begin{rem}
\label{rem:varpi}
The reader can deduce from  
the display
\eqref{eq:conclusion:proof:main:thm:1}
the reason why we assumed $\varpi$ to be less than $\sqrt{2}$. In fact, even though
$\varpi$ were larger, the geometric decay would remain less 
than $3^{-n}$ because of the factor $1/3$ in 
\eqref{eq:backward:bound:000}. 
Here, the factor $1/3$ must be understood as $1/(\lambda-1)$ for $\lambda=4$. 
For sure, it would be tempting to choose $1/(\lambda-1)=1/\varpi$, but this would lead to 
$\lambda = \varpi+1$. 
The resulting exponential factor 
$\exp( C \varepsilon^{-2} \lambda s)$ in \eqref{eq:backward:bound}
would become much too high with $\varpi$.  
\end{rem}

\subsubsection{Proof of the auxiliary statements}
\label{subsubse:auxiliary:statements}

It now remains to prove Lemmas
\ref{lem:2.1},
 \ref{lem:1} and 
 \ref{lem:2} and Remark 
 \ref{rem:thm:1}.

\begin{proof}[Proof of Lemma \ref{lem:2.1}.]
The proof mainly follows from the stochastic Pontryagin principle, see for instance \cite[chapter 3]{YongZhou}. Here, the stochastic Pontryagin principle provides 
a necessary and sufficient condition on the dynamics of the optimal control because the cost coefficients are convex in the spatial variable.
However, the very fist step of the proof is to get rid of the parameter $\varpi$ in the cost functional 
${\mathcal R}^{\varpi X_0}(\varpi \balpha;\overline{\boldsymbol m}^n;\varpi \varepsilon {\boldsymbol W}^{\varpi {\boldsymbol h}^n/\varepsilon})$ (see \eqref{eq:opti:n+1:n}). 

For a control $\balpha$, the controlled dynamics driven by $\varpi X_0$, $\varpi \balpha$ and the common noise 
$\varpi \varepsilon {\boldsymbol W}^{\varpi {\boldsymbol h}^n \hspace{-2pt} /\varepsilon}$
 write 
\begin{equation*} 
\ud X_t^{\varpi,\balpha} = \varpi \alpha_t \ud t + \sigma \ud B_t + \varpi \varepsilon \ud W_t^{\varpi  {\boldsymbol h}^n\hspace{-2pt}/\varepsilon}, 
\quad t \in [0,T] \ ; \quad X_0^{\varpi,\balpha} = \varpi X_0.
\end{equation*} 
Dividing by $\varpi$, we can write $X_t^{\varpi,\balpha}$ in the form $X_t^{\varpi,\balpha} = \varpi X_t^{\balpha}$, with 
(the notation $X_t^{\balpha}$ is in fact a bit abusive because the dynamics below still depend on $\varpi$)
 \begin{equation*} 
 \ud X_t^{\balpha} = \alpha_t \ud t + \tfrac1{\varpi} \sigma \ud B_t + \varepsilon \ud W_t^{\varpi {\boldsymbol h}^n \hspace{-2pt} / \varepsilon}, \quad 
 t \in [0,T] \ ; \quad X_0^{\balpha} = X_0. 
\end{equation*} 
Accordingly, the cost in 
\eqref{eq:opti:n+1:n}
can be rewritten in the form 
\begin{equation*} 
\begin{split} 
&{\mathbb E}^{ \varpi {\boldsymbol h}^n \hspace{-2pt}/\varepsilon}
	\bigl[ {\mathcal R}^{{\varpi X_0}} \bigl( \varpi {\boldsymbol \alpha}; \overline{\boldsymbol m}^n ; \varpi \textcolor{black}{\varepsilon} {\boldsymbol W}^{ \varpi {\boldsymbol h}^n
	\hspace{-2pt}/\varepsilon} \bigr) \bigr]
	\\
	&= \frac{1}2 {\mathbb E}^{ \varpi {\boldsymbol h}^n \hspace{-2pt}/\varepsilon}
\biggl[
\Bigl\vert \varpi X_T^{\balpha} + g \bigl( \overline{m}_T^n) 
\Bigr\vert^2 +
\int_0^T \Bigl( \varpi^2 
\vert \alpha_t \vert^2 
+ \bigl\vert \varpi X_t^{\balpha} + f\bigl( \overline{m}_t^n \bigr) 
\bigr\vert^2 
\Bigr) \ud t 
\biggr] 
\\
&= \frac{\varpi^2}2 
{\mathbb E}^{ \varpi {\boldsymbol h}^n \hspace{-2pt}/\varepsilon}
\biggl[
\Bigl\vert  X_T^{\balpha} +\tfrac1{\varpi} g \bigl( \overline{m}_T^n) 
\Bigr\vert^2 +
\int_0^T \Bigl(  
\vert \alpha_t \vert^2 
+ \bigl\vert X_t^{\balpha} +\tfrac1{\varpi} f\bigl( \overline{m}_t^n \bigr) 
\bigr\vert^2 
\Bigr) \ud t 
\biggr]
\end{split} 
\end{equation*} 
The leading parameter $\varpi^2$ can be easily removed (because it does not change the minimizer). 
Next, the stochastic Pontryagin principle says that 
the optimal trajectory
to the cost functional in the above right-hand side
--and 
thus the optimal trajectory to the cost functional 
\eqref{eq:opti:n+1:n}
(but up to the rescaling factor $\varpi$)-- is
\begin{equation*} 
\ud X_t^{n+1} = - \bigl( \eta_t X_t^{n+1} + h_t^{n+1} \bigr) \ud t + \tfrac1{\varpi} {\sigma} \ud B_t + 
\varepsilon \ud W_t^{\varpi {\boldsymbol h}^n \hspace{-2pt} / \varepsilon}, \quad t \in [0,T] \ ; 
\quad X_0^{n+1} = X_0,
\end{equation*} 
with $(\eta_t)_{0 \le t \le T}$ and 
${\boldsymbol h}^{n+1}$ as in the statement. 
\end{proof}

\begin{proof}[Proof of Lemma \ref{lem:1}.]
The proof is a straightforward consequence of 
the two equations for 
${\boldsymbol h}$ and ${\boldsymbol h}^{n+1}$ under 
(respectively) the 
probability measures 
${\mathbb P}^{ {\boldsymbol h} /\varepsilon}$
and 
${\mathbb P}^{\varpi {\boldsymbol h}^n \hspace{-2pt} /\varepsilon}$, see 
\eqref{eq:bsde:true:underPh} and
\eqref{eq:h:n+1:new:one}. 
One can make the argument especially clear by using
the formulation presented in Remark 
\ref{rem:back:resolvent:correc}
together with the fact that the coefficients $f$ and $g$ are bounded.
\end{proof}

\begin{proof}[Proof of Lemma \ref{lem:2}.]
In \eqref{eq:intro:master:1}, we perform the change of variable
\begin{equation*} 
\theta_\varepsilon(t,x) = \phi_\varepsilon\bigl( \tfrac{t}{\varepsilon^2}, 
\tfrac{x}{\varepsilon^2} \bigr), \quad (t,x) \in [0,T] \times {\mathbb R}^d. 
\end{equation*} 
We get 
\begin{equation*}
\begin{split}
&\tfrac1{\varepsilon^2} 
\partial_{t} \phi_\varepsilon\bigl( \tfrac{t}{\varepsilon^2}, 
\tfrac{x}{\varepsilon^2} \bigr) + \tfrac{1}{2 \varepsilon^2} 
\Delta^2_{xx} \phi_\varepsilon\bigl( \tfrac{t}{\varepsilon^2}, 
\tfrac{x}{\varepsilon^2} \bigr) - \tfrac1{\varepsilon^2} \bigl( \eta_{t} x + \theta_\varepsilon(t,x) \bigr) \cdot \nabla_{x} \phi_\varepsilon\bigl( \tfrac{t}{\varepsilon^2}, 
\tfrac{x}{\varepsilon^2} \bigr) 
\\
&\hspace{15pt} +Q^\dagger f(x) - \eta_{t} \theta_\varepsilon(t,x) = 0.
\end{split}
\end{equation*}
Multiplying by $\varepsilon^2$ and changing $(t,x)$ into $(\varepsilon^2 t,\varepsilon^2 x)$, we obtain
\begin{equation*}
\begin{split}
&\partial_{t} \phi_\varepsilon(t,x) + \tfrac{1}{2} 
\Delta^2_{xx} \phi_\varepsilon(t,x) -   \Bigl( \eta_{t} \varepsilon^2 x + \theta_\varepsilon\bigl( \varepsilon^2 t,
\varepsilon^2 x\bigr) \Bigr) \cdot \nabla_{x} \phi_\varepsilon(t,x)
\\
&\hspace{15pt} + \varepsilon^2 Q^\dagger f\bigl( \varepsilon^2 x\bigr) - \varepsilon^2  \eta_{t} \theta_\varepsilon \bigl( \varepsilon^2 t, 
\varepsilon^2 x \bigr) = 0.
\end{split}
\end{equation*}
Above, $(t,x)$ belongs to $[0,T/\varepsilon^2] \times {\mathbb R}^d$. The  terminal condition is 
$\phi_\varepsilon(T/\varepsilon^2,x) = g(\varepsilon^2 x)$. 
Moreover, by Lemma 
\ref{lem:1}, the function $\theta_\varepsilon$ can be bounded independently 
of $\varepsilon$. Then, for $t$ at distance less than 1 from $T/\varepsilon^2$, 
we get a bound for $\nabla_x \varphi_\varepsilon$ 
 from 
standard estimates for
systems of nonlinear 
 parabolic PDEs, as used
 in 
\cite{Delarue02}. When $t$ is at distance greater than 1 from $T/\varepsilon^2$, 
we get a bound for $\nabla_x \varphi_\varepsilon$ 
from interior estimates for 
systems of nonlinear 
 parabolic PDEs, see for instance \cite{MR2053051}. 
 The result follows. 
\end{proof}

\vskip 5pt

\begin{proof}[Proof of Remark \ref{rem:thm:1}]
We now add a few lines in order to prove 
the complementary 
Remark 
\ref{rem:thm:1}.
When $R$ and $Q$ are the identity matrices, the solution ${\boldsymbol \eta}$ of the Riccati equation
\eqref{eq:riccati} 
becomes an
homothecy (which we identify with a real-valued function).   
In turn, 
if the functions $f$ and $g$ are `diagonal', in the sense of 
Remark \ref{rem:thm:1}, 
then the solution $\theta_\varepsilon$ to the PDE 
\eqref{eq:intro:master:1} is also diagonal, 
namely $\theta^i_\varepsilon(t,x) = \theta_{0,\varepsilon}(t,x_i)$, with 
$\theta_0 : [0,T] \times {\mathbb R} \rightarrow {\mathbb R}$ being the solution of the $1d$-equation: 
\begin{equation*}
\partial_{t} \theta_{0,\varepsilon}(t,x_0) + \tfrac{\varepsilon^2}2
\partial^2_{x_0x_0} \theta_{0,\varepsilon}(t,x_0) - \bigl( \eta_{t} x_0 + \theta_{0,\varepsilon}(t,x_0) \bigr)  \partial_{x_0} \theta_{0,\varepsilon}(t,x_0) + f_0(x_0) - \eta_{t} \theta_{0,\varepsilon}(t,x_0) = 0,
\end{equation*}
for $(t,x_0) \in [0,T] \times {\mathbb R}$, 
and
with the terminal boundary condition 
$\theta_{0,\varepsilon}(T,x_0) = g_0(x_0)$. 

In particular, 
\begin{equation*} 
\sup_{(t,x) \in [0,T] \times {\mathbb R}^d}
\bigl\vert
\nabla_x \theta_\varepsilon(t,x) \bigr\vert^2  
= 
d\sup_{(t,x_0) \in [0,T] \times {\mathbb R}}
\bigl\vert
\partial_{x_0} \theta_{0,\varepsilon}(t,x_0) \bigr\vert^2. 
\end{equation*} 

So, if we take for granted the fact that we cannot get better than $C_0/\varepsilon^2$ for the right-hand side, then 
the bound for the left-hand side becomes $C_0 d /\varepsilon^4$. Returning back to 
\eqref{eq:proof:delta}, the constant $C$ therein 
grows like $\sqrt{d}$ with $d$, which completes the proof of the claim. 
%
%
%
%
\end{proof}

\subsection{The common noise as an exploration noise}
\label{subse:2.2}

Following the agenda explained in the introduction, we now 
regard the common noise as an exploration noise for an MFG without common noise. 
In words, $\varepsilon$ is set equal to $0$ in the original MFG
\eqref{eq:state:intro}--\eqref{eq:cost:intro}--\eqref{eq:intro:fixed}
 and this is only in the choice of 
the controls that we restore the presence of the common noise, in the form of a randomization.

\subsubsection{Presentation of the model}
In the absence of common noise, the state dynamics merely write
\begin{equation*}
\ud X_{t} = \beta_{t} \ud t + \sigma \ud B_{t}, \quad t \in [0,T]. 
\end{equation*}
However, we want ${\boldsymbol \beta}=(\beta_{t})_{0 \leq t \leq T}$
to be subjected to a random exploration of the form
\begin{equation*}
	\beta_{t} = \alpha_{t} + \varepsilon \dot{W}_{t}, \quad t \in [0,T], 
\end{equation*}
where ${\boldsymbol \alpha}=(\alpha_{t})_{0 \le t \le T}$ is the control effectively chosen by the agent (or by the `controller') and $\epsilon$ is (strictly) positive (and is kept fixed throughout the subsection). In this expansion, 
$(\dot{W}_{t})_{0 \le t \le T}$ is formally understood as the time-derivative of $(W_{t})_{0 \le t \le T}$. Obviously, the latter does not exist as a function, which makes the above decomposition non tractable. However, it prompts us to introduce a variant based upon a mollification of the common noise. To make it clear, we introduce a family of regular processes $((W_{t}^p)_{0 \le t \le T})_{p \geq 1}$ such that, almost surely (under ${\mathbb P}$),
\begin{equation*}
	\lim_{p \rightarrow \infty}
	\sup_{0 \le t \le T} \vert W_{t} - W_{t}^p \vert 
	= 0, 
\end{equation*}
and, for any $p \geq 1$, the paths of $(W_{t}^p)_{0 \leq t \leq T}$
are continuous and piecewise continuously differentiable. \textcolor{black}{Throughout this subsection and the next one, we use} 
the following piecewise affine interpolation:
\begin{equation}
\label{eq:linear:interpolated}
	W_{t}^p := W_{\tau_{p}(t)} + \frac{p (t- \tau_{p}(t))}{T} \bigl( W_{\tau_{p}(t)+T/p} - W_{\tau_{p}(t)} \bigr),
\end{equation}
where, by definition, $\tau_{p}(t): = \lfloor  pt/T \rfloor   (T/p)$. 
In words $\tau_{p}(t)$ is the unique element of $(T/p) \cdot {\mathbb N}$ such that 
$\tau_{p}(t) \leq t < \tau_{p}(t) + T/p=\tau_{p}(t+T/p)$, namely 
$\tau_p(t)=\ell T/p$, for $t \in [\ell T/p,(\ell+1)T/p)$ and $\ell \in \{0,\cdots,p-1\}$. 
\textcolor{black}{It is worth observing that the time derivative of 
${\boldsymbol W}^p$ writes in the form 
of finite differences of 
${\boldsymbol W}$, which explains our definition of 
${\boldsymbol W}^p$ as a linear interpolation. We elaborate on this observation in Remark 
\ref{rem:linear:interpolation}
below.}

For an ${\mathbb F}^{{\boldsymbol W}}$-adapted and continuous environment 
${\boldsymbol m}=(m_{t})_{0 \leq t \leq T}$, the cost functional, as originally defined in 
\eqref{eq:cost:intro}, is turned into the following discrete time version
\begin{equation*}
	\widetilde J^p({\boldsymbol \alpha};{\boldsymbol m})
	:= \frac12 {\mathbb E} \biggl[  \bigl\vert R^{\dagger} X_{T} + g(m_{T})\bigr\vert^2 
	+ \int_{0}^T \Bigl\{   \bigl\vert Q^{\dagger}X_{\tau_{p}(t)} + f(m_{\tau_{p}(t)}) \bigr\vert^2 +  \vert \alpha_{t} + \varepsilon \dot{W}_{t}^p \vert^2 \Bigr\}
	\ud t \biggr],
\end{equation*}
where the expectation is taken over both the idiosyncratic and exploration (common) noises. 
Above, we require ${\boldsymbol \alpha}$ to be progressively-measurable with respect to the filtration ${\mathbb F}^{p,X_{0},{\boldsymbol B},{\boldsymbol W}}:=
(\sigma (X_{0},(B_{\tau_{p}(s)},W_{\tau_{p}(s)})_{s \leq t}))_{0 \leq t \leq T}$
and to be constant on any interval $[\ell T/p,(\ell +1)T/p)$, for $\ell \in \{0,\cdots,p-1\}$. 
The above minimization problem can be regarded as a discrete-time control problem. The need for a time discretization is twofold: $(i)$ On the one hand, it permits to avoid any anticipativity problem, since 
the linear interpolation at a time $t \not = \tau_{p}(t)$ anticipates on the future realization of the exploration noise; $(ii)$ On the other hand, it is more adapted to numerical purposes.  
\textcolor{black}{To the best of our knowledge, the formulation of 
$\widetilde J^p({\boldsymbol \alpha};{\boldsymbol m})$ in the form of a cost functional featuring 
an additive randomization of the control is new.} 


In the presence of the randomization, the cost functional might become very large as $p$ tends to $\infty$, even for a control ${\boldsymbol \alpha}$ of finite energy. This prompts us to renormalize $\tilde J^p$. 
As a result of the adaptability  constraint, we indeed have  
\color{black}
\begin{equation*}
\begin{split}
	{\mathbb E} \int_{0}^T \alpha_{t} \cdot \dot{W}_{t}^p \ud t &= {\mathbb E}
	\sum_{\ell=0}^{p-1}  \int_{\ell T/p}^{(\ell+1)T/p} \alpha_t \dot{W}_t^p \ud t 
	 = \sum_{\ell=0}^{p-1}{\mathbb E} \Bigl[  \alpha_{\ell T/p} \cdot \bigl( {W}_{(\ell+1)T/p} -  {W}_{\ell T/p}  \bigr) \bigr] =0,
\end{split}
\end{equation*}
with the last line following from the fact that $\alpha_{\ell T/p}$ is ${\mathcal F}_{\ell T/p}^{\boldsymbol W}$-measurable. 
\color{black}
Also, since ${\boldsymbol W}^p$ is piecewise linear, we have 
\begin{equation*}
	\begin{split}
		{\mathbb E} 
		\int_{0}^T   
		\vert \dot{W}_{t}^p \vert^2 
		\ud t &= \sum_{\ell=0}^{p-1} \int_{\ell T/p}^{(\ell+1)T/p}
		{\mathbb E}
		\Bigl[ 
		\bigl\vert \frac{p}{T} \bigl( W_{(\ell+1)T/p} - W_{\ell T/p} \bigr)
		\bigr\vert^2 \Bigr] \ud t 
= d \, p \frac{T}{p} \frac{p^2}{T^2}
		\frac{T}{p} = d \, p.
	\end{split}
\end{equation*}
So, we must subtract to the cost a diverging term to recover the original cost functional. To make it clear, 
the effective cost must be 
\begin{equation}
\label{eq:effective:cost}
\begin{split}
J^p({\boldsymbol \alpha};{\boldsymbol m}) 
:=& \, \widetilde J^p({\boldsymbol \alpha};{\boldsymbol m})- \tfrac12 \varepsilon^2 d \, p
\\
=& \, \frac12 {\mathbb E} \biggl[  \bigl\vert R^\dagger X_{T} + g(m_{T})\bigr\vert^2 
	+ \int_{0}^T \Bigl\{   \bigl\vert Q^\dagger X_{\tau_{p}(t)} + f(m_{\tau_{p}(t)}) \bigr\vert^2 +  \vert \alpha_{\tau_{p}(t)} \vert^2 \Bigr\}
	\ud t \biggr].
\end{split}
\end{equation}
Noticeably, we recover (up to the time discretization) the same cost functional $J$ as in 
\eqref{eq:cost:intro}, which explains why we have removed 
the tilde over the symbol $J^p$. 
Anyway, 
$J^p(\cdot;{\boldsymbol m})$
and 
$\widetilde J^p(\cdot;{\boldsymbol m})$
have the same minimizers. 
%
\vskip 4pt

Before we provide the form of the corresponding fictitious play, we need to clarify the 
notion of tilted measure. 
We assume that we are given a process ${\boldsymbol h}=(h_{t})_{0 \le t \le T}$ that is piecewise constant and ${\mathbb F}^{p,{\boldsymbol W}}$-adapted, with 
${\mathbb F}^{p,{\boldsymbol W}}
:=
{\mathbb F}^{{\boldsymbol W}^p}$: For the same $p$
as before, the process ${\boldsymbol h}$
is constant on each interval $([\ell T/p,(\ell+1)T/p))_{\ell=0,\cdots,p-1}$ and each 
$h_{\ell T/p}$ is 
${\mathcal F}^{p,{\boldsymbol W}}_{\ell T/p}$-measurable, 
with ${\mathcal F}_{\ell T/p}^{p,{\boldsymbol W}}=\sigma(W^p_{s},s \leq \ell T/p)$. Then, as before, we can let 
${\boldsymbol W}^{{\boldsymbol h}/\varepsilon}$:
\begin{equation*}
	W^{{\boldsymbol h}/\varepsilon}_{t} 
	= W_{t} + \frac1{\varepsilon} \int_{0}^t  h_{s} \ud s.
\end{equation*}
We compute the $p$-piecewise linear interpolation ${\boldsymbol W}^{p,{\boldsymbol h}/\varepsilon}$ of ${\boldsymbol W}^{{\boldsymbol h}/\varepsilon}$:
\begin{equation*}
	\begin{split}
		W^{p,{\boldsymbol h}/\varepsilon}_{t}
		&= 
		W^{p,{\boldsymbol h}/\varepsilon}_{\tau_{p}(t)}
		+
		\frac{p (t- \tau_{p}(t))}{T} \bigl( W^{{\boldsymbol h}/\varepsilon}_{\tau_{p}(t)+T/p} - W^{{\boldsymbol h}/\varepsilon}_{\tau_{p}(t)} \bigr)
		\\
		&= W^{p,{\boldsymbol h}/\varepsilon}_{\tau_{p}(t)}
		+
		\frac{p (t- \tau_{p}(t))}{T} \bigl( W_{\tau_{p}(t)+T/p} - W_{\tau_{p}(t)} \bigr)
		+  \frac1{\varepsilon}
		\frac{p (t- \tau_{p}(t))}{T} \frac{T}p h_{\tau_{p}(t)}
		\\
		&= W^{p,{\boldsymbol h}/\varepsilon}_{\tau_{p}(t)}
		+
		\frac{p (t- \tau_{p}(t))}{T} \bigl( W_{\tau_{p}(t)+T/p} - W_{\tau_{p}(t)} \bigr)
		+ 
		 \frac1{\varepsilon} \bigl(t- \tau_{p}(t)\bigr) h_{\tau_{p}(t)}
		\\
		&= W^{p,{\boldsymbol h}/\varepsilon}_{\tau_{p}(t)} + \int_{\tau_{p}(t)}^t 
		\ud W_{s}^{p} +  \frac1{\varepsilon} \int_{\tau_{p}(t)}^t {h}_{s} \ud s,
	\end{split}
\end{equation*} 
from which we deduce that 
\begin{equation}
\label{eq:relation:Wph:Wp}
	W^{p,{\boldsymbol h}/\varepsilon}_{t}
	= W^{p}_{t} + \frac1{\varepsilon}  \int_{0}^t h_{s} \ud s, \quad t \in [0,T]. 
\end{equation}
For sure, under the probability ${\mathcal E}({\boldsymbol h}/\varepsilon) \cdot {\mathbb P}$, 
the process 
${\boldsymbol W}^{p,{\boldsymbol h}/\varepsilon}$ is the piecewise linear interpolation of
${\boldsymbol W}^{{\boldsymbol h}/\varepsilon}$ and the latter is a Brownian motion. Also, 
\begin{equation*}
	\dot{W}^{p,{\boldsymbol h}/\varepsilon}_{t}
	= \dot{W}^{p}_{t} +  \frac1{\varepsilon} h_{t},
\end{equation*}
which permits to say that 
the  trajectories controlled by 
$(\alpha_t+\varepsilon \dot{W}^{p}_t)_{0 \le t \le T}$ satisfy 
\begin{equation*}
	\ud X_{t} = \bigl( \alpha_{t} - h_{t} \bigr) \ud t + \sigma \ud B_{t} + \varepsilon \ud W_{t}^{p,{\boldsymbol h}/\varepsilon},
\end{equation*}
which coincides with \eqref{eq:state:intro}. Importantly, since 
${\boldsymbol h}$ is piecewise constant, 
${\mathcal E}({\boldsymbol h}/\varepsilon)$ can be expressed in terms of the sole ${\boldsymbol h}$ and 
${\boldsymbol W}^p$.

\color{black}
\begin{rem}
\label{rem:linear:interpolation}
We feel useful to comment more on our choice to work with the linear interpolation $(W^p_t)_{0 \leq t \leq T}$ 
of $(W_t)_{0 \leq t \leq T}$. While it may look somewhat arbitrary, this choice is in fact dictated by the structure of the
model and the form of the final result that is provided next. Indeed, the main result of this subsection, see Theorem \ref{main:thm:2} below, yields a bound for the weak error (in a convenient sense) 
between the solution of a time-discrete version of the mean field game (with a common noise) 
and the corresponding time-discrete variant of the fictitious play, 
with the time mesh being given by $(k T/p)_{k=0,\cdots,p}$. 

Obviously, working with a discrete-time version of the game makes perfect sense from a practical point of view. Here, it is also especially adapted to our objective. 
As we already explained, 
we want to regard the instantaneous value of the control at time 
$t$ as being \textit{corrupted}, at least formally, by the time derivative of the common noise. 
Because the latter derivative does not exist as a true function, our strategy is to work instead with finite differences of the 
common noise. This is one first reason for working with $(W^p_t)_{0 \leq t \leq T}$, because the linear interpolation  
is completely characterized by the finite differences of $(W_t)_{0 \leq t \leq T}$ at times 
$0$, $T/p$, $2T/p$, $\cdots$, $T$. 
Another related reason is that, for ${\boldsymbol h}$ a piecewise constant process that is 
${\mathbb F}^{p,{\boldsymbol W}}$-adapted, 
the relationship 
\eqref{eq:relation:Wph:Wp}
holds true because  
${\boldsymbol W}^p$ and ${\boldsymbol W}^{p,{\boldsymbol h}}$ are piecewise linear. 
This is another strong case for working with the piecewise linear interpolation because
\eqref{eq:relation:Wph:Wp}
makes it possible to apply Girsanov theorem directly, with the latter being the key ingredient of the whole analysis. 

In a nutshell, given the class of time-discrete games we address for approximating the original game
(using in particular finite differences to perturb the controls), 
the linear interpolation is the most natural one to reconstruct time-continuous dynamics from time-discrete observations. 
As such, the linear interpolation does not directly impact the bound on the weak error that is stated below, because the weak error
is defined \textit{a priori} within the given class of approximating games, regardless of the choice of the interpolation. 
However, working with the linear interpolation makes the proof much easier. 
\end{rem}
\color{black}

\subsubsection{Fictitious play}

The analysis from the previous paragraph leads to the following new scheme, which should be regarded as the discrete-time analogue of 
the scheme presented in 
Subsection \ref{subse:5:2:fictitious:play}.
Fix  
a real $\varpi \in (1,\sqrt{2}]$ and 
 an integer $p \geq 1$
and, for the same 
initialization ${\boldsymbol m}^0=(m^0_{t}={\mathbb E}(X_{0}))_{0 \le t \le T}$
and 
${\boldsymbol h}^0=(h^0_{t}=0)_{0 \le t \le T}$ as in the continuous-time setting, 
 assume that we have defined two families of proxies $({\boldsymbol m}^1,\cdots,{\boldsymbol m}^n)$
and $({\boldsymbol h}^1,\cdots,{\boldsymbol h}^n)$ with the additional assumption 
that each process ${\boldsymbol h}^k$  is constant on each interval $[\ell T/p,(\ell +1)T/p)$, for 
$\ell \in \{0,\cdots,p-1\}$ and $k \in \{1,\cdots,n\}$. And we assume each $(m_{\ell T/p}^k,h_{\ell T/p}^k)$ to be measurable with respect to 
${\mathcal F}_{\ell T/p}^{p,{\boldsymbol W}}$
 if $\ell \in \{0,\cdots,p\}$. Then, we solve for
\begin{equation}
 \label{eq:optimization:exploration}
 \begin{split}
	{\boldsymbol \alpha}^{(p),n+1,\varpi}
	&= \textrm{\rm argmin}_{\boldsymbol \alpha}
	\biggl( {\mathbb E}^{\varpi {\boldsymbol h}^n \hspace{-2pt} / \varepsilon}
	\Bigl[ {\mathcal R}^{p,\varpi X_0}  \Bigl( \varpi {\boldsymbol \alpha} + \varpi \varepsilon \dot{\boldsymbol W}^{p,\varpi {\boldsymbol h}^n \hspace{-2pt} /\varepsilon} ; \overline{\boldsymbol m}^n;0 \Bigr) \Bigr] - \tfrac12 d\, \varpi^2 \varepsilon^2 p \biggr) 
	\\
	&=\textrm{\rm argmin}_{\boldsymbol \alpha}
	\biggl( {\mathbb E}^{\varpi {\boldsymbol h}^n \hspace{-2pt} / \varepsilon}
	\Bigl[ {\mathcal R}^{p,\varpi X_0} \Bigl( \varpi {\boldsymbol \alpha}  ; \overline{\boldsymbol m}^n; \varpi \varepsilon {\boldsymbol W}^{p,\varpi {\boldsymbol h}^n/\varepsilon} \Bigr) \Bigr] \biggr), 
	\end{split}
	\end{equation}
with the same measurability rules as those explained above and where 
${\mathcal R}^{p,x_0}$ is the obvious discrete-time version of 
${\mathcal R}^{x_0}$, namely
\begin{equation}
\label{eq:mathcalR:p}
{\mathcal R}^{p,x_0}({\boldsymbol \alpha};{\boldsymbol n};\textcolor{black}{\varepsilon} {\boldsymbol w}) = 
\frac12
\biggl[ 
\bigl\vert R x_T + g(n_T) \bigr\vert^2 
+
\sum_{\ell=0}^{p-1}
\Bigl\{ 
\bigl\vert Q x_{\ell T/p} + f(n_{\ell T/p}) \bigr\vert^2 
+
\bigl\vert \alpha_{\ell T/p} \bigr\vert^2
\Bigr\} \biggr],
\end{equation}
where 
$x_{(\ell +1)T/p}
=
x_{\ell T/p}
+ (T/p) \alpha_{\ell T/p} + \sigma (B_{(\ell+1)T/p}- B_{\ell T/p})+
\varepsilon  
(w_{(\ell+1)T/p}- w_{\ell T/p})$.
As before, 
$\overline{m}^n_t$ in 
\eqref{eq:optimization:exploration}
is 
$\overline{m}^n_t = 
 \tfrac{\varpi(1-\varpi^{-1})}{1-\varpi^{-n}} \sum_{k=1}^{n} \varpi^{-k} m_{t}^{k} $.
%
%
%
%

The first step here is to solve the optimization problem
\eqref{eq:optimization:exploration}.
Very similar to Lemma \ref{lem:2.1}, we 
can provide an explicit form for the optimal feedback
through the solution of the following time-discrete 
Riccati equation:
\begin{equation}
\begin{split}
&\frac{p}{T}\Bigl( \eta_{(\ell+1)T/p}^{(p)}
-\eta_{\ell T/p}^{(p)}
\Bigr)
- 
 \eta_{(\ell+1) T/p}^{(p)} 
  \eta_{\ell T/p}^{(p)} 
 +Q^\dagger Q \Bigl( I_d -  \frac{T}p \eta_{\ell T/p} \Bigr) =0, 
 \\
 &\hspace{250pt} \quad \ell \in \{0,\cdots,p-2\} \ ; 
 \\
&\frac{p}{T}\Bigl( \eta_{T}^{(p)}
-\eta_{(p-1)T/p}^{(p)}
\Bigr)
- 
\eta_{T}^{(p)}
\eta_{(p-1) T/p}^{(p)}  =0 \ ; \quad \eta_{T}^{(p)}=R^\dagger R.
\label{eq:discretized:Riccati}
\end{split}
\end{equation}
The analysis of the latter (see the earlier reference 
\cite{Dorato1971OptimalLR}) goes through 
the auxiliary Riccati equation
\begin{equation}
\label{eq:Riccati:aux}
\frac{p}{T}\Bigl( P_{(\ell+1)T/p}^{(p)}
- P_{\ell T/p}^{(p)}
\Bigr)
- P_{(\ell+1)T/p}^{(p)}
\Bigl( I_d
+
\frac{T}p
P_{(\ell+1)T/p}^{(p)}
\Bigr)^{-1}
P_{(\ell+1)T/p}^{(p)}
+Q^\dagger Q=0, 
\end{equation}
for $\ell \in \{0,\cdots,p-1\}$, 
with $P_{T}^{(p)}=R^\dagger R$ as boundary condition.
The Riccati equation \eqref{eq:Riccati:aux} can be solved inductively: the solution is symmetric
 and non-negative\footnote{
 Symmetry is obvious. Non-negativity is a bit more demanding. Assuming that $P^{(p)}_{(\ell+1)T/p}$ is non-negative, non-negativity of 
 $P_{\ell T/p}^{(p)}$ is proved as follows.
 For any vector $x \in {\mathbb R}^d$, we have 
 \begin{equation*} 
 \begin{split}
 \bigl( P_{\ell T/p}^{(p)} x \bigr) \cdot x& =
 \bigl( P_{(\ell+1) T/p}^{(p)} x \bigr) \cdot x 
 - \tfrac{T}{p} \bigl( P_{(\ell +1)T/p}^{(p)} x \bigr) \cdot \Bigl[ \bigl( I_d + \tfrac{T}p P_{(\ell+1)T/p}^{(p)} 
 \bigr)^{-1} P_{(\ell+1)T/p}^{(p)} x \Bigr]
 +\tfrac{T}p \vert Q x \vert^2
 \\
 &\geq 
 \bigl( P_{(\ell+1) T/p}^{(p)} x \bigr) \cdot x 
 -  \bigl( P_{(\ell +1)T/p}^{(p)} x \bigr) \cdot x 
 +  \bigl( P_{(\ell +1)T/p}^{(p)} x \bigr)
 \cdot
 \Bigl[ \bigl( I_d + \tfrac{T}p P_{(\ell+1)T/p}^{(p)} 
 \bigr)^{-1}   x \Bigr],
 \end{split}
 \end{equation*}
 and the right-hand side is obviously non-negative.} , which guarantees 
that the inverse right above is well-defined.  
Then, 
\begin{equation}
\label{eq:discretized:Riccati:sol}
\eta_{\ell T/p}^{(p)} = \Bigl( I_d + \frac{T}p P^{(p)}_{(\ell+1)T/p} 
\Bigr)^{-1} P^{(p)}_{(\ell+1)T/p}, \quad \ell \in \{0,\cdots,p-1\}, 
\end{equation}
solves 
\eqref{eq:discretized:Riccati}. Notice that the above left-hand side is symmetric and non-negative because the two matrices in the right-hand side commute.

\begin{lem}
\label{lem:4}
Under the above assumptions, the 
minimization problem 
	\eqref{eq:optimization:exploration}
	has a unique solution
${\boldsymbol \alpha}^{(p),n+1,\varpi}$, which writes 
\begin{equation}
\label{eq:lem:4:feedback:form}
\alpha_{t}^{(p),n+1,\varpi} = - \biggl( \eta_{\tau_{p}(t)}^{(p)} X_{\tau_{p}(t)}^{(p),n+1,\varpi} + \tilde h_{\tau_p(t)}^{n+1}   \biggr), \quad t \in [0,T], 
\end{equation}
where 
$\tilde{\boldsymbol h}^{n+1}$ solves the backward SDE:
\begin{equation}
\label{eq:bsde:exploration}
\begin{split}
		&\ud  \tilde h_{t}^{n+1} 
		= \Bigl\{ - \tfrac1{\varpi} 
		{\mathbb E} 
		\Bigl[ 
		Q^\dagger f\bigl(\overline m_{\tau_{p}(t)+T/p}^n\bigr)\, \vert \, 
		{\mathcal F}^{p,{\boldsymbol W}}_{\tau_p(t)} \Bigr]
		 {\mathbf 1}_{\{ t \leq (p-1)T/p\}}  
		 \\
&\hspace{100pt}		  + 
		\Bigl( \eta_{\tau_{p(t)}+T/p}^{(p)}
		  + \frac{T}p Q^\dagger Q 
		  {\mathbf 1}_{\{ t \leq (p-1)T/p\}}
		  	\Bigr) 		  
		   \tilde h_{\tau_p(t)}^{n+1} \Bigr\}
 \ud t  
		  \\
&\hspace{50pt}		  + \varpi k_{t}^{n+1} h_{t}^n \ud t 
		+ \varepsilon k_{t}^{n+1} \ud W_{t}, \quad t \in [0,T),
		\\
		&\tilde h_{T}^{n+1} = \tfrac1{\varpi} R^\dagger g \bigl( \overline m^{n}_{T} \bigr). 
			\end{split}
			\end{equation}
Accordingly, the optimal path ${\boldsymbol X}^{(p),n+1,\varpi}$ solves 
(up to a rescaling factor $\varpi$)
the forward SDE:
\begin{equation}
\label{eq:bsde:exploration:forward:equation:with:varpi}
\ud X_{t}^{(p),n+1,\varpi} = \alpha_{t}^{(p),n+1,\varpi}  \ud t + \tfrac1{\varpi} \sigma \ud B_{t} + \varepsilon \ud W_{t}^{p,\varpi {\boldsymbol h}^n \hspace{-2pt}/\varepsilon}, \quad t \in [0,T] ;
\quad X_0^{n+1} = X_0. 
\end{equation}
\end{lem}

\begin{rem} 
Although 
it is not indicated in 
the notations 
$\tilde {\boldsymbol h}^{n}$, 
${\boldsymbol h}^{n}$,
${\boldsymbol k}^{n}$
and $\overline{\boldsymbol m}^n$, 
it should be clear to the reader that 
the latter four processes depend on $p$, $\varpi$ and $\varepsilon$. 
\end{rem}

\begin{rem}
\label{rem:main:thm:2}
The convergence of 
$(P^{(p)}_{\tau_p(t)})_{0 \le t \le T}$ to 
$(\eta_t)_{0 \le t \le T}$ in Lemma \ref{lem:2.1} is obvious. 

By the symmetric and non-negative structure of 
the matrices $(P^{(p)}_{\ell})_{\ell=0,\cdots,p}$, we can easily 
get uniform bounds on 
the latter. In turn, 
we can
regard the equation \eqref{eq:Riccati:aux} as an Euler scheme for 
a matricial differential equation driven by a Lipschitz vector field. The rate of convergence is linear in 
$p$. By 
\eqref{eq:discretized:Riccati:sol}, we deduce that 
\begin{equation}
\label{eq:rem:main:thm:2:correc}
\sup_{0 \leq t \leq T} \vert \eta_t^{(p)} - \eta_t \vert \leq \frac{C}p,
\end{equation}
for a constant $C$ independent of $p$, \textcolor{black}{but possibly depending on the dimension $d$.  Typically, the bound on  $(P^{(p)}_{\ell})_{\ell=0,\cdots,p}$ should depend on the norms of 
$Q^\dagger Q$ and $R^\dagger R$ and is thus expected to depend on $d$ in a polynomial way. In turn, stability arguments for finite difference equations say
that 
those bounds should propagate to the constant $C$ in the above estimate for the distance between $(\eta^{(p)}_t)_{0 \leq t \leq T}$ and 
$(\eta_t)_{0 \leq t \leq T}$. However, this argument may not be sharp: when $Q$ and $R$ are the identity matrix, the Riccati equation 
\eqref{eq:Riccati:aux}
reduces to a scalar equation and the constant $C$ should just scale like $\sqrt{d}$.}
\end{rem}

\begin{rem}
\label{rem:main:thm:2:b}
The backward SDE \eqref{eq:bsde:exploration} is in fact a mere discrete-time equation. Indeed,
\begin{equation}
\label{eq:rem:main:thm:2:b}
\begin{split}
		\tilde h_{\ell T/p}^{n+1} 
		&= \tilde h_{(\ell+1)T/p}^{n+1} +   \tfrac{T}{\varpi p}  
		Q^\dagger f(\overline m_{(\ell+1)T/p}^n) 
		{\mathbf 1}_{\{\ell \leq p-2\}}
		\\
		&\hspace{15pt} 
		-  \tfrac{T}p  
	\Bigl(	
		\eta_{(\ell+1)T/p}^{(p)} + \tfrac{T}p Q^\dagger Q {\mathbf 1}_{\{\ell \leq p-2\}} \Bigr)
		  \tilde h_{\ell T/p}^{n+1} 
		 - \varepsilon \int_{\ell T/p}^{(\ell+1)T/p} k_{s}^{n+1} \ud W_{s}^{\varpi {\boldsymbol h}^n \hspace{-2pt} / \varepsilon},
\end{split}
\end{equation}
for 
$\ell \in \{0,\cdots,p-1\}$. Taking 
conditional expectation given ${\mathcal F}_{\ell T/p}^{{\boldsymbol W}}$, 
we can solve for 
\begin{equation*}
\Bigl( I_d + \tfrac{T}p 
  \eta_{(\ell+1)T/p}^{(p)}
  + \tfrac{T^2}{p^2} Q^\dagger Q {\mathbf 1}_{\{\ell \leq p-2\}}
  \Bigr)
	\tilde h_{\ell T/p}^{n+1}. 
\end{equation*}
It is easy to deduce $\tilde{h}^{n+1}_{\ell T/p}$. 
We get
\begin{equation}
\label{eq:recurrence:cas:discret}
\begin{split}
\tilde h_{\ell T/p}^{n+1} 
 &= 
 \Bigl( I_d + \tfrac{T}p 
  \eta_{(\ell+1)/p}^{(p)}
    + \tfrac{T^2}{p^2} Q^\dagger Q {\mathbf 1}_{\{\ell \leq p-2\}}
  \Bigr)^{-1}
  \\
&\hspace{15pt} \times  
 {\mathbb E}^{\varpi {\boldsymbol h}^n \hspace{-2pt} / \varepsilon} \biggl[
 \tilde h_{(\ell+1)T/p}^{n+1} 
  +   \tfrac{T}{\varpi p}  
		Q^\dagger f\bigl(\overline m_{(\ell+1)T/p}^n\bigr) {\mathbf 1}_{\{\ell \leq p-2\}}
 \, \vert \, {\mathcal F}_{\ell T/p}^{{\boldsymbol W}}\biggr],
\end{split}
 \end{equation}
  which proves, by induction, that 
  $\tilde h_{\ell T/p}^{n+1}$
  is 
  ${\mathcal F}_{\ell T/p}^{p,{\boldsymbol W}}$-measurable.   
 It suffices to observe that, for $Z$ an ${\mathcal F}_{(\ell+1) T/p}^{p,{\boldsymbol W}}$-measurable random variable, the conditional expectation of $Z$ given ${\mathcal F}_{\ell T/p}^{\boldsymbol W}$
 is in fact 
${\mathcal F}_{\ell T/p}^{p,{\boldsymbol W}}$-measurable. 
\end{rem}

Taking for granted Lemma \ref{lem:4} (the proof is given at the end of the subsection), we put 
\begin{equation}
\label{eq:hn+1:tildeh:n+1}
	h^{n+1}_{t} := \tilde{h}^{n+1}_{\tau_{p}(t)}, 
\end{equation}
which satisfies the required measurability constraints thanks to
Remark  
\ref{rem:main:thm:2:b}.
Then, we let 
\begin{equation}
\label{eq:mn+1:m:n+1:correc}
\begin{split}
	m^{n+1}_{t} &:= {\mathbb E} \bigl[ X_{t}^{(p),n+1,\varpi} \, \vert \, \sigma({\boldsymbol W}) \bigr] =
	  {\mathbb E}^{\varpi {\boldsymbol h}^n/\varepsilon} \bigl[ X_{t}^{(p),n+1,\varpi} \, \vert \,  \sigma({\boldsymbol W}) \bigr], \quad t \in [0,T], 
	\end{split}
\end{equation}
together with 
\begin{equation}
\label{eq:update}
	\overline{m}^{n+1}_{t} = \tfrac{\varpi(1-\varpi^{-1})}{1-\varpi^{-(n+1)}} \sum_{k=1}^{n+1} \varpi^{-k} m_{t}^{k} = 
	\tfrac{\varpi^{-n}(1-\varpi^{-1})}{1- \varpi^{-(n+1)}}
	m^{n+1}_t +  \Bigl( 1 - 
	\tfrac{\varpi^{-n}(1-\varpi^{-1})}{1- \varpi^{-(n+1)}}
	 \Bigr) 
	\overline{m}^n_{t}.
\end{equation}
Importantly, notice that 
\begin{equation*}
m^{n+1}_{\ell T/p} = 
{\mathbb E} \bigl[ X^{(p),n+1,\varpi}_{\ell T/p} \, \vert \, \sigma({\boldsymbol W}^p) \bigr]
=
{\mathbb E} \bigl[ X^{(p),n+1,\varpi}_{\ell T/p} \, \vert \, {\mathcal F}_{\ell T/p}^{p,{\boldsymbol W}} \bigr]
, \quad \ell \in \{0,\cdots,p\},
\end{equation*}
which proves in particular that the left-hand side is 
${\mathcal F}_{\ell T/p}^{p,{\boldsymbol W}}$-measurable. 
 \vskip 4pt
 
The analogue of Theorem 
\ref{main:thm:1} becomes:

\begin{thm}
\label{main:thm:2}
	There exists 
	a threshold $c>0$, depending on  $d$, $T$, the norms
$\| f\|_{1,\infty}$ and $\| g \|_{1,\infty}$
and the norms $\vert Q\vert$ and $\vert R\vert$ of the matrices $Q$ and $R$,	 such that, 
for $p \varepsilon^{2} \geq c $, the 
	 scheme
	\eqref{eq:hn+1:tildeh:n+1}--\eqref{eq:update}
	converges 
	to 
	$({\boldsymbol m}^{(p)},\tilde{\boldsymbol h}^{(p)} \hspace{-2pt} /\varpi,{\boldsymbol k}^{(p)} \hspace{-2pt} /\varpi)$, 
	where $({\boldsymbol m}^{(p)},\tilde{\boldsymbol h}^{(p)},{\boldsymbol k}^{(p)})$ is the unique solution of the decoupled discrete-time FBSDE system:
	\begin{equation}
		\label{eq:decoupled:limit:discrete}
		\begin{split}
			&m_{(\ell+1)T/p}^{(p)} = 
				m_{\ell T/p}^{(p)}	
				- \tfrac{T}p \eta_{\ell T/p}^{(p)} m_{\ell T/p}^{(p)}   
				 + \varepsilon \bigl( W_{(\ell+1)T/p} - W_{\ell T/p} \bigr),
				\\
			&\tilde h_{\ell T/p}^{(p)} = \tilde h_{(\ell+1)T/p}^{(p)} +   \tfrac{T}{p}  
		Q^\dagger f\bigl(m_{(\ell+1)T/p}^{(p)}\bigr) 
		{\mathbf 1}_{\{\ell \leq p-2\}}
		\\
		&\hspace{30pt} 
		-  \tfrac{T}p  
	\Bigl(	
		\eta_{(\ell+1)T/p}^{(p)} + \tfrac{T}p Q^\dagger Q {\mathbf 1}_{\{\ell \leq p-2\}} \Bigr)
		  \tilde h_{\ell T/p}^{(p)} 
		  -  \biggl( \int_{\ell T/p}^{(\ell+1)T/p} k_{s}^{(p)}  \ud s \biggr) \tilde h_{\ell T/p}^{(p)}
		  \\
		&\hspace{30pt} 
		 - \varepsilon \int_{\ell T/p}^{(\ell+1)T/p} k_{s}^{(p)} \ud W_{s},  \quad \ell \in \{0,\cdots,p-1\}, 
			\\
			&m_{0}^{(p)} = {\mathbb E}(X_0), \quad 			\tilde h_{T}^{(p)}=  R^\dagger g\bigl(m_{T}^{(p)}\bigr),
		\end{split}
	\end{equation}
	with an explicit bound on the rate of convergence, namely
	\begin{equation}
		\label{eq:main:thm:1:bound:infinity:2:17}
	\textrm{\rm essup}_{\omega \in \Omega} \Bigl[ \sup_{\ell =0,\cdots,p} \Bigl( \vert m_{\ell T/p}^{(p)} - \overline m_{\ell T/p}^n \vert^2 + \vert 
	\varpi^{-1} \tilde h_{\ell T/p}^{(p)}  - h_{\ell T/p}^n \vert^2 \Bigr) \Bigr] \leq  \varpi^{-2n} \exp\bigl(C \varepsilon^{-2}\bigr),
	\end{equation}
for a constant $C$ that also depends on  $d$, $T$, 
$\| f\|_{1,\infty}$, $\| g \|_{1,\infty}$, $\vert Q\vert$ and $\vert R\vert$.

Moreover, 
if we extend ${\boldsymbol m}^{(p)}$ by continuous interpolation to the entire $[0,T]$
and if we call ${\boldsymbol h}^{(p)}$ the piecewise constant extension of $\tilde{\boldsymbol h}^{(p)}$
to the entire $[0,T]$, then, 
up to a modification of the constant $C$, the weak error of the scheme for the
	Fortet-Mourier distance satisfies
			\begin{equation}
			\label{eq:main:thm:1:bound:infinity:2}
\begin{split}
	  &\sup_{F} \Bigl\vert {\mathbb E}^{\varpi {\boldsymbol h}^n \hspace{-2pt}/\varepsilon}
	  \Bigl[ F\bigl(\overline{\boldsymbol m}^n,{\boldsymbol h}^n\bigr) \Bigr] - 
	  {\mathbb E}^{ {\boldsymbol h}^{(p)} \hspace{-2pt} /\varepsilon}
	  \Bigl[ F\bigl({\boldsymbol m}^{(p)}, \varpi^{-1} {\boldsymbol h}^{(p)} \bigr) \Bigr]
\Bigr\vert 
 \leq   \varpi^{- n} \exp\bigl(C \varepsilon^{-2}\bigr),
 \end{split}
 \end{equation}	
%
the supremum being taken over all the functions 
$F$ on ${\mathcal C}([0,T];{\mathbb R}^d \times {\mathbb R}^d)$ that are 
bounded by $1$ and $1$-Lipschitz continuous.
\end{thm}
The following comments are in order:
\begin{itemize}
\item 

 In 
		\eqref{eq:decoupled:limit:discrete}, 
		${\boldsymbol m}^{(p)}$ and $\tilde{\boldsymbol h}^{(p)}$
		are implicitly understood as 
		${\boldsymbol m}^{(p)}=(m_{\ell T/p}^{(p)})_{\ell =0,\cdots,p}$ 
		and 
		$\tilde{\boldsymbol h}^{(p)}=(h_{\ell T/p}^{(p)})_{\ell =0,\cdots,p}$,
		with $m_{\ell T/p}^{(p)}$
		 and $h_{\ell T/p}^{(p)}$ being ${\mathbb R}^d$-valued and 
		 ${\mathcal F}^{p,{\boldsymbol W}}_{\ell T/p}$-measurable for each $\ell \in \{0,\cdots,p\}$. 
		 
		 \item The process ${\boldsymbol k}^{(p)}$ is understood as $(k_s^{(p)})_{0 \leq s \leq T}$. It is ${\mathbb R}^{d \times d}$-valued and 
		 ${\mathbb F}^{\boldsymbol W}$-progressively measurable. 
		 \end{itemize} 
		 Existence and uniqueness of a solution to 
		\eqref{eq:decoupled:limit:discrete} is in fact ensured by the following proposition, whose proof is deferred to 
		\S \ref{subsubse:2.2.4:correc}. The statement also explains the need for a threshold on the product $p \varepsilon^2$
		in the two bounds \eqref{eq:main:thm:1:bound:infinity:2:17}
			and \eqref{eq:main:thm:1:bound:infinity:2}.  
	
	\begin{prop}
\label{prop:!:FBSDE:p-discrete}
There exists a constant $c$, depending on  $d$, $T$, 
$\| f\|_{1,\infty}$, $\| g \|_{1,\infty}$, $\vert Q\vert$ and $\vert R\vert$, 
such that for $p \varepsilon^2 \geq c$, 
the backward equation in 
		\eqref{eq:decoupled:limit:discrete} 
		admits a unique solution 
		$(\tilde h_{\ell T/p}^{(p)})_{\ell=0,\cdots,p}$, 
		with $\tilde h_{\ell T/p}^{(p)} \in L^2(\Omega,{\mathcal F}_{\ell T/p}^{p,{\boldsymbol W}},{\mathbb P};{\mathbb R}^d)$
		for each $\ell \in \{0,\cdots,p\}$. 
		
		The process $(k_t^{(p)})_{0 \leq t \leq T}$ is ${\mathbb F}^{\boldsymbol W}$-progressively measurable and is square integrable over $[0,T] \times \Omega$ ($\Omega$ being equipped with ${\mathbb P}$). 
\end{prop}
	 
We also feel useful to clarify the meaning of the continuous interpolation of  ${\boldsymbol m}^{(p)}$ in \eqref{eq:main:thm:1:bound:infinity:2}. In fact, the definition
 is similar to \eqref{eq:linear:interpolated}:
 \begin{equation} 
\label{eq:mvarpi:extended}
 m_{t}^{(p)} := m_{\tau_{p}(t)}^{(p)} + \frac{p (t- \tau_{p}(t))}{T} \bigl( m^{(p)}_{\tau_{p}(t)+T/p} - m^{(p)}_{\tau_{p}(t)} \bigr),
 \quad t \in [0,T].
 \end{equation} 
Moreover, the definition of the piecewise constant extension of $\tilde{\boldsymbol h}^{(p)}$
is similar to 
\eqref{eq:hn+1:tildeh:n+1}:
\begin{equation}
\label{eq:hpvarpi:extended}
	h^{(p)}_{t} := \tilde{h}^{(p)}_{\tau_{p}(t)}, \quad t \in [0,T].
\end{equation}

It is worth noting  that $({\boldsymbol m}^{(p)}, {\boldsymbol h}^{(p)})$ (hence extended to the entire $[0,T]$ as 
above) is the unique solution of the
following two-step fixed point problem, which is nothing but 
the
discrete-time  version of the MFG
with common noise addressed in Theorem 
\ref{main:thm:1}: 

$(i)$ The solution of 
\begin{equation}
\label{eq:optimization:exploration:276}
\textrm{\rm argmin}_{\boldsymbol \alpha}
	\biggl( {\mathbb E}^{ {\boldsymbol h}^{(p)} \hspace{-2pt} / \varepsilon}
	\Bigl[ {\mathcal R}^{p,X_0}  \Bigl(  {\boldsymbol \alpha} +  \varepsilon \dot{\boldsymbol W}^{p,{\boldsymbol h}^{(p)} \hspace{-2pt} /\varepsilon} ;  {\boldsymbol m}^{(p)};0 \Bigr) \Bigr] - \tfrac12 d\,    \varepsilon^2 p \biggr),
	\end{equation}
	which is also 
\begin{equation} 
\label{eq:mfg:p:1}
\textrm{\rm argmin}_{\boldsymbol \alpha}
	\biggl( {\mathbb E}^{ {\boldsymbol h}^{(p)} \hspace{-2pt}  / \varepsilon}
	\Bigl[ {\mathcal R}^{p,   X_0} \Bigl(    {\boldsymbol \alpha}  ;  {\boldsymbol m}^{(p)};   \varepsilon {\boldsymbol W}^{p, {\boldsymbol h}^{(p)}
	\hspace{-2pt} /\varepsilon} \Bigr) \Bigr] \biggr)
	\end{equation}
is given by 
\begin{equation}
\label{eq:mfg:p:3}
\alpha_{t}^{(p),\star}  = - \biggl( \eta_{\tau_{p}(t)}^{(p)} X_{\tau_{p}(t)}^{(p),\star}  +  h_{t}^{(p)}   \biggr), \quad t \in [0,T], 
\end{equation}
where 
\begin{equation}
\label{eq:mfg:p:3:bbis}
\begin{split} 
\ud X_{t}^{(p),\star}  
&=
 -  \eta_{\tau_{p}(t)}^{(p)} X_{\tau_{p}(t)}^{(p),\star}  
 \ud t +   \sigma \ud B_{t} + \varepsilon \ud W_{t}^{p}
 \\
&= \alpha_{t}^{(p),\star} \ud t +   \sigma \ud B_{t} + \varepsilon \ud W_{t}^{p,{\boldsymbol h}^{(p)} \hspace{-2pt}/\varepsilon}, \quad t \in [0,T], 
\end{split}
\end{equation}
with $X_0$ as initial condition.

 $(ii)$ The process obtained by taking the conditional expectation of ${\boldsymbol X}^{(p),\star}$ given $\sigma({\boldsymbol W}^p)$, namely 
 $({\mathbb E}[X_t^{(p),\star} \, \vert \, \sigma({\boldsymbol W}^p)])_{0 \leq t \leq T}$, solves the forward equation in 
\eqref{eq:decoupled:limit:discrete}, i.e.,
\begin{equation}
\label{eq:mfg:p:2} 
{\mathbb E}\bigl[X_{\ell T/p}^{(p),\star} \, \vert \, \sigma({\boldsymbol W}^p) \bigr] = m^{(p)}_{\ell T/p}, \quad \ell =0,\cdots,p. 
\end{equation}   

Indeed, by a straightforward adaptation of Lemma 
\ref{lem:4},
the system
		\eqref{eq:decoupled:limit:discrete}
 can be shown 
 to characterize the solution to the fixed point problem associated with the two items 
$(i)$ and $(ii)$ right above. This proves in particular that 
the discrete-time MFG
constructed on the top of the cost functional 
\eqref{eq:optimization:exploration:276} has a unique solution when $p \varepsilon^2 \geq c$, which is
given by Proposition 
\ref{prop:!:FBSDE:p-discrete}. 

\begin{rem}
\label{rem:another:extension:tildeh}
As made in clear in the forthcoming proof of Proposition 
\ref{prop:!:FBSDE:p-discrete}, there is another conceivable extension for 
$\tilde{\boldsymbol h}^{(p)}$ (in addition to the extension 
defined in \eqref{eq:hpvarpi:extended}). 
Indeed, very similar to 
\eqref{eq:bsde:exploration}, given ${\boldsymbol h}^{(p)}$
(from 
\eqref{eq:hpvarpi:extended}), one can 
define the extension of 
$\tilde{\boldsymbol h}^{(p)}$
to the entire $[0,T]$ as the solution
of the 
BSDE: 
\begin{equation}
\label{eq:bsde:exploration:p,varpi}
\begin{split}
		&\ud  \tilde h_{t}^{(p)} 
		= \Bigl\{ -  
		{\mathbb E} 
		\Bigl[ 
		Q^\dagger f\bigl(m_{\tau_{p}(t)+T/p}^{(p)} \bigr)\, \vert \, 
		{\mathcal F}^{p,{\boldsymbol W}}_{\tau_p(t)} \Bigr]
		 {\mathbf 1}_{\{ t \leq (p-1)T/p\}}  
		 \\
&\hspace{100pt}		  + 
		\Bigl( \eta_{\tau_{p(t)}+T/p}^{(p)}
		  + \tfrac{T}p Q^\dagger Q 
		  {\mathbf 1}_{\{ t \leq (p-1)T/p\}}
		  	\Bigr) 		  
		    h_{t}^{(p)} \Bigr\}
 \ud t  
		  \\
&\hspace{50pt}		  +  k_{t}^{(p)} h_{t}^{(p)} \ud t 
		+ \varepsilon k_{t}^{(p)} \ud W_{t}, \quad t \in [0,T),
		\\
		&\tilde h_{T}^{(p)} =   R^\dagger g \bigl( m^{(p)}_{T} \bigr). 
			\end{split}
			\end{equation}
Consistency of this extension is explained in the proof of 
Proposition 
\ref{prop:!:FBSDE:p-discrete}. The time-continuous 
process $\tilde{\boldsymbol h}^{p}$ defined as the solution of 
\eqref{eq:bsde:exploration:p,varpi}
coincides at times $(\ell T/p)_{\ell=0,\cdots,p}$
with the solution 
$(\tilde{h}_{\ell T/p}^{(p)})_{\ell=0,\cdots,p}$ of
		\eqref{eq:decoupled:limit:discrete}. Moreover, 
		the process ${\boldsymbol k}^{(p)}$ in 
		\eqref{eq:bsde:exploration:p,varpi} coincides with 
${\boldsymbol k}^{(p)}$ in 
\eqref{eq:decoupled:limit:discrete}. 
\end{rem}

\begin{rem}
\label{rem:thm:2:00}
Similar to the proof of Theorem 
\ref{main:thm:1}, 
the proof of 
Theorem \ref{main:thm:2}
also shows that 
\begin{equation*}
d_{\textrm{\rm TV}}\bigl({\mathbb P}^{\varpi {\boldsymbol h}^n \hspace{-2pt}  /\varepsilon},{\mathbb P}^{ {\boldsymbol h}^{(p)} \hspace{-2pt} /\varepsilon}
\bigr) \leq 
  \varpi^{- n} \exp\bigl(C \varepsilon^{-2}\bigr).
\end{equation*}
\end{rem}

\color{black}
\begin{rem}
\label{rem:thm:2}
Following 
Remark 
\ref{rem:thm:1}
about the scope of Theorem 
\ref{main:thm:1} to the higher dimensional setting, we could think of tracking the dependence of the constant 
$C$ (in 		\eqref{eq:decoupled:limit}
			and	\eqref{eq:main:thm:1:bound:infinity}), which is here independent of $p$, upon the dimension $d$. 
Although we prefer not to address this question in full detail, we 
insist on the fact that the conclusion of Remark \ref{rem:thm:1}
also holds true here, meaning that the constant $C$ cannot be better than 
$O(\exp(O(\sqrt{d})))$ in simple cases when $Q$ and $R$ are the identity matrices and $f$ and $g$ are diagonal. 
Intuitively, we cannot expect a constant that would be, asymptotically in $d$ and uniformly in $p$, better than the constant appearing 
in the statement of 
Theorem 
\ref{main:thm:1}, as otherwise we could take the limit $p \rightarrow \infty$ in the statement of Theorem 
\ref{main:thm:2} and then get a better estimate in Theorem 
\ref{main:thm:1}. 
\end{rem}
\color{black}

\subsubsection{Analysis of the convergence}
Conditioning on 
${\boldsymbol W}$ in 
\eqref{eq:bsde:exploration:forward:equation:with:varpi} and recalling 
the two notations 
\eqref{eq:hn+1:tildeh:n+1}
and
\eqref{eq:mn+1:m:n+1:correc}, we obtain 
\begin{equation*}
\ud m_{t}^{n+1} = - \Bigl( \eta^{(p)}_{\tau_p(t)} 
m_{\tau_p(t)}^{n+1} 
+ h_t^{n+1} \Bigr)  \ud t +  \varepsilon \ud W_{t}^{p,\varpi {\boldsymbol h}^n \hspace{-2pt}/\varepsilon}, \quad t \in [0,T] ;
\quad m_0^{n+1} ={\mathbb E}(X_0). 
\end{equation*} 
Thanks to 
\eqref{eq:update},  the identity \eqref{eq:m:n:n+1} becomes
\begin{equation}
\label{eq:mn+1:p}
\ud \overline m_{t}^{n+1} = - \bigl( \eta_{\tau_p(t)}^{(p)} \overline m_{\tau_p(t)}^{n+1} + \tfrac{(1-\varpi^{-1}) \varpi^{-n}}{1-\varpi^{-(n+1)}}  h_{t}^{n+1} \bigr) \ud t + \varepsilon \ud W_{t}^p, \quad t \in [0,T]. 
\end{equation}
As in the 
analysis of \eqref{eq:system:n}, the second term in the middle disappears asymptotically at a geometric rate. 
This follows from the next result, which is an improved version of Lemma \ref{lem:1} and which is also used next  
in place of Lemma 
\ref{lem:2} (the latter relying on a Markovian structure that is not satisfied here):
\begin{lem}
\label{lem:1:b}
With $(\overline{\boldsymbol m}^{n},\tilde {\boldsymbol h}^{n})$ 
being defined as in 
\eqref{eq:bsde:exploration},
\eqref{eq:mn+1:m:n+1:correc} 
and \eqref{eq:update}
and 
  $({\boldsymbol m}^{(p)},\tilde{\boldsymbol h}^{(p)})$
as in  
		\eqref{eq:decoupled:limit:discrete}
		and
\eqref{eq:bsde:exploration:p,varpi},
there exists a constant $C_{1}$, only depending on $d$, $T$, 
$\| f\|_{1,\infty}$, $\| g \|_{1,\infty}$, $\vert Q\vert$ and $\vert R\vert$, such that, ${\mathbb P}$ almost surely, 
\begin{equation}
\label{eq:lem:1:b:1}
\vert h_{t}^n \vert \leq C_{1},
\quad t \in [0,T], \ n \geq 1 \, ; 
\quad
\vert \tilde h_{t}^{(p)} \vert \leq C_{1}, \quad t \in [0,T]. 
\end{equation}
Moreover, for the same constant $C_{1}$, there exists $\delta \in (0,1]$, depending on the same parameters as $C_1$, such that, with probability 1 under 
${\mathbb P}$, 
\begin{equation}
\label{eq:lem:1:b:2}
\forall t \in [0,T], \quad {\mathbb E}^{ {\boldsymbol h}^{(p)} \hspace{-2pt} /\varepsilon} \biggl[ \exp \biggl( \delta \varepsilon^2 \int_{t}^T \vert k_{s}^{(p)} \vert^2 \ud s 
\biggr) \, \vert \, {\mathcal F}_{t}^{{\boldsymbol W}} \biggr] \leq  C_1.  
\end{equation}
\end{lem}
The proof of Lemma \ref{lem:1:b} is deferred to \S \ref{subsubse:2.2.4:correc}. 
Taking for granted the statement, we now complete the proof of Theorem 
\ref{main:thm:2}. 

\begin{proof}[Proof of Theorem \ref{main:thm:2}.]
Taking for granted Proposition 
\ref{prop:!:FBSDE:p-discrete}, we assume that the system 
		\eqref{eq:decoupled:limit:discrete}
		has a unique solution. Implicitly, this requires $p \varepsilon^2 \geq c$, but this is indeed one of the assumption 
		 of Theorem \ref{main:thm:2}.
			\vskip 4pt
			
			\textit{First Step.} 
			For any integer $n \geq 1$, we thus compare $(\overline{\boldsymbol m}^{n+1},\tilde{\boldsymbol h}^{n+1})$ and 
			$(\overline{\boldsymbol m}^{(p)},\tilde{\boldsymbol h}^{(p)} \hspace{-2pt} /\varpi)$, 
with the latter two ones being			extended to the entire $[0,T]$ as explained in 
			\eqref{eq:mvarpi:extended} and 
			\eqref{eq:bsde:exploration:p,varpi}. To do so, we first notice that 
			\begin{equation*} 
			\ud m_t^{(p)} = - \eta_{\tau_p(t)}^{(p)} m_{\tau_p(t)}^{(p)} \ud t + \varepsilon \ud W_t^{p}, \quad t \in [0,T].  
			\end{equation*}  
			By 
			\eqref{eq:mn+1:p}
			and 
			\eqref{eq:lem:1:b:1},
			\begin{equation*} 
			\ud \bigl( \overline{m}^{n+1}_t - {m}_t^{(p)} \bigr) 
			= 
			 -  \eta_{\tau_p(t)}^{(p)} \bigl( \overline m_{t}^{n+1} - 
			 {m}_t^{(p)}
		\bigr) \ud t 	 
			 -  O \bigl( \varpi^{-n} \bigr) \ud t, \quad t \in [0,T],  
			\end{equation*} 
			where $\vert O ( \varpi^{-n} ) \vert \leq C \varpi^{-n}$ for a constant $C$ as in the statement of Theorem \ref{main:thm:2}. 
			We easily get
			\begin{equation}
			\label{eq:proof:main:thm:2:1} 
			\sup_{0 \le t \le T} \vert \overline{m}^{n+1}_t - {m}_t^{(p)} \vert \leq C \varpi^{-n}. 
			\end{equation}

			\textit{Second Step.} We turn to the difference $ \tilde{\boldsymbol h}^{n+1} - \tilde{\boldsymbol h}^{(p)} \hspace{-2pt} /\varpi$. 
			We rewrite
\eqref{eq:bsde:exploration} as 
\begin{equation*} 
\begin{split} 
&\ud  \tilde h_{t}^{n+1} 
		= \Bigl\{ - \tfrac1{\varpi} 
		{\mathbb E} 
		\Bigl[ 
		Q^\dagger f\bigl(\overline m_{\tau_{p}(t)+T/p}^{n}\bigr)\, \vert \, 
		{\mathcal F}^{p,{\boldsymbol W}}_{\tau_p(t)} \Bigr]
		 {\mathbf 1}_{\{ t \leq (p-1)T/p\}}  
		 \\
&\hspace{100pt}		  + 
		\Bigl( \eta_{\tau_{p(t)}+T/p}^{(p)}
		  + \tfrac{T}p Q^\dagger Q 
		  {\mathbf 1}_{\{ t \leq (p-1)T/p\}}
		  	\Bigr) 		  
		     h_{t}^{n+1} \Bigr\}
 \ud t  
		  \\
&\hspace{50pt}		  + \varpi k_{t}^{n+1} \bigl( h_{t}^{n} - \tfrac1{\varpi}  {h}_t^{(p)} \bigr) \ud t 
		+ \varepsilon k_{t}^{n+1} \ud W_{t}^{ {\boldsymbol h}^{(p)} \hspace{-2pt} /\varepsilon}, \quad t \in [0,T]. 
\end{split} 
\end{equation*} 
Similarly, we rewrite 
\eqref{eq:bsde:exploration:p,varpi}
as
\begin{equation*}
\begin{split}
		&\ud \bigl(  \tfrac1{\varpi} \tilde h_{t}^{(p)} \bigr) 
		= \Bigl\{ - \tfrac1{\varpi} 
		{\mathbb E} 
		\Bigl[ 
		Q^\dagger f\bigl(m_{\tau_{p}(t)+T/p}^{(p)} \bigr)\, \vert \, 
		{\mathcal F}^{p,{\boldsymbol W}}_{\tau_p(t)} \Bigr]
		 {\mathbf 1}_{\{ t \leq (p-1)T/p\}}  
		 \\
&\hspace{100pt}		  + 
		\Bigl( \eta_{\tau_{p(t)}+T/p}^{(p)}
		  + \frac{T}p Q^\dagger Q 
		  {\mathbf 1}_{\{ t \leq (p-1)T/p\}}
		  	\Bigr) 
			\bigl( \tfrac1{\varpi}		  
		    h_{t}^{(p)} \bigr) \Bigr\}
 \ud t  
		  \\
&\hspace{50pt}		+  \tfrac{\varepsilon}{\varpi} k_{t}^{(p)} \ud W_{t}^{\varpi {\boldsymbol h}^{(p),\varpi} \hspace{-2pt} /\varepsilon}, \quad t \in [0,T),
		\\
		&\tfrac1{\varpi} \tilde h_{T}^{(p)} = \tfrac1{\varpi} R^\dagger g \bigl( m^{(p)}_{T} \bigr). 
			\end{split}
			\end{equation*}
			Forming and squaring the difference between the two equations and 
using
			\eqref{eq:proof:main:thm:2:1} (together with Remark
			\ref{rem:main:thm:2},  which gives a bound for ${\boldsymbol \eta}^{(p)}$ independent of $p$), we deduce
			(very like as in 
			\eqref{eq:new:hn+1:h}
and
\eqref{eq:proof:delta})
\begin{equation*} 
\begin{split} 
&\ud \Bigl[ \bigl\vert \tilde h_t^{n+1} - \tfrac1{\varpi}   \tilde h_t^{(p)} \bigr\vert^2  \Bigr]
\\
&\geq - C \varpi^{-2n} \ud t - C 
\bigl\vert h^{n+1}_t - \tfrac1{\varpi}   h_t^{(p)} \bigr\vert^2
\ud t 
  - C 
\bigl\vert \tilde h^{n+1}_t - \tfrac1{\varpi}   \tilde h_t^{(p)} \bigr\vert^2
\ud t 
\\
&\hspace{15pt} -  C \vert k_t^{(p)} \vert \, \bigl\vert \tilde h_t^{n+1} - \tfrac1{\varpi}   \tilde h_t^{(p)} \bigr\vert \, \bigl\vert  h_t^n - 
\tfrac1{\varpi}   h_t^{(p)} \bigr\vert 
 \ud t - C \bigl\vert  \tilde h_t^{n+1} - \tfrac1{\varpi}  \tilde h_t^{(p)} \bigr\vert 
  \bigl\vert  k_t^{n+1} -  \tfrac1{\varpi}   k_t^{(p)} \bigr\vert \ud t 
  \\
&\hspace{15pt}  
 + \varepsilon^2 \bigl\vert k_t^{n+1} - \tfrac1{\varpi}   k_t^{(p)} \bigr\vert^2 \ud t
 + \varepsilon \bigl( h_t^{n+1} - \tfrac1{\varpi}   \tilde h_t^{(p)} \bigr) \cdot \bigl[ \bigl( k_t^{n+1} - \tfrac1{\varpi}   k_t^{(p)} \bigr) \ud W_t^{ {\boldsymbol h}^{(p)} \hspace{-2pt} /\varepsilon} \bigr],
\end{split} 
\end{equation*} 
with the constant $C$ being allowed to vary from line to line as long as it depends on the same parameters as those indicated in the statement of 
Theorem 
\ref{main:thm:2}. 
With $\delta$ as in 
\eqref{eq:lem:1:b:2}, this leads to 
\begin{equation*} 
\begin{split} 
&\ud \Bigl[ \bigl\vert \tilde h_t^{n+1} - \tfrac1{\varpi} \tilde h_t^{(p)} \bigr\vert^2  \Bigr]
\\
&\geq - C \varpi^{-2n} \ud t - C 
\bigl\vert h^{n+1}_t - \tfrac1{\varpi}  h_t^{(p)} \bigr\vert^2
\ud t 
  - C 
\bigl\vert \tilde h^{n+1}_t -\tfrac1{\varpi}   \tilde h_t^{(p)} \bigr\vert^2
\ud t 
\\
&\hspace{15pt} -  \tfrac{C}{\delta \varepsilon^2}    \bigl\vert  h_t^n -  \tfrac1{\varpi}   h_t^{(p)} \bigr\vert^2 
 \ud t - \bigl(  \delta \varepsilon^2 \vert k_t^{(p)} \vert^2 + \tfrac{C}{\varepsilon^2} \bigr)\bigl\vert  \tilde h_t^{n+1} - 
 \tfrac1{\varpi}   \tilde h_t^{(p)} \bigr\vert^2  \ud t 
  \\
&\hspace{15pt}  
 + \tfrac{\varepsilon^2}2 \bigl\vert k_t^{n+1} - \tfrac1{\varpi}   k_t^{(p)} \bigr\vert^2 \ud t
  + \varepsilon \bigl( h_t^{n+1} - \tfrac1{\varpi}   \tilde h_t^{(p)} \bigr) \cdot \bigl[ \bigl( k_t^{n+1} - \tfrac1{\varpi}   k_t^{(p)} \bigr) \ud W_t^{{\boldsymbol h}^{(p)} \hspace{-2pt} /\varepsilon} \bigr]. 
\end{split} 
\end{equation*}
Using the fact that, by convention, $\varepsilon$ is taken
in $(0,1)$, we can get rid of the last term on the first line (which is dominated by the last term on the second line). Then, letting
\begin{equation} 
\label{eq:etp:def}
E_t^{(p)} := \exp \biggl( \int_0^t 
 \bigl(  \delta \varepsilon^2 \vert k_s^{(p)} \vert^2 + \tfrac{C}{\delta \varepsilon^2} \bigr) \ud s \biggr), \quad t \in [0,T],
\end{equation} 
and assuming without any loss of generality that $\delta \in (0,1)$, 
we obtain 
\begin{align} 
&\ud \Bigl[
E_t^{(p)}
 \bigl\vert \tilde h_t^{n+1} - 
 \tfrac1{\varpi}   \tilde h_t^{(p)} \bigr\vert^2  
 \Bigr] \nonumber
 \\
&\geq - C 
E_t^{(p)} 
 \Bigl[ 
\varpi^{-2n}
+
\bigl\vert h^{n+1}_t - \tfrac1{\varpi}   h_t^{(p)} \bigr\vert^2
+  \tfrac{1}{\delta \varepsilon^2}    \bigl\vert  h_t^n - \tfrac1{\varpi}   h_t^{(p)} \bigr\vert^2 
\Bigr]
 \ud t
  + \tfrac{\varepsilon^2}2 E_t^{(p)} \bigl\vert k_t^{n+1} - \tfrac1{\varpi}   k_t^{(p)} \bigr\vert^2 \ud t \nonumber
  \\
&\hspace{10pt}  
 +  \varepsilon  E_t^{(p)}\bigl( h_t^{n+1} - \tfrac1{\varpi}   h_t^{(p)} \bigr) \cdot \Bigl[ \bigl( k_t^{n+1} - \tfrac1{\varpi}   k_t^{(p)} \bigr) \ud W_t^{{\boldsymbol h}^{(p)}
 \hspace{-2pt} 
 /\varepsilon} \Bigr]. 
 \label{eq:BDG:correc}
 \end{align} 
Then, using in addition the fact that 
$\tilde h_T^{n+1} - 
   \tilde h_T^{(p)} / \varpi 
 = [ g ( \overline m_{T}^{n})  - g( m_T^{(p)})]/\varpi$, we get
\begin{equation*} 
\begin{split} 
&E_t^{(p)} 
 \bigl\vert \tilde h_t^{n+1} - \tfrac1{\varpi}   \tilde h_t^{(p)} \bigr\vert^2
 + \tfrac{\varepsilon^2}2 
 {\mathbb E}^{{\boldsymbol h}^{(p)} \hspace{-2pt} /\varepsilon} 
 \biggl[ \int_t^T 
  E_s^{(p)} \bigl\vert k_s^{n+1} - \tfrac1{\varpi}   k_s^{(p)} \bigr\vert^2 \ud s \, \vert \, {\mathcal F}_t^{\boldsymbol W} 
  \biggr]
 \\
 &\hspace{5pt} \leq C {\mathbb E}^{ {\boldsymbol h}^{(p)}/\varepsilon} 
 \biggl[ 
 E_T^{(p)} \varpi^{-2n}
 +
 \int_t^T E_s^{(p)} 
  \Bigl[ 
\varpi^{-2n}
+
\bigl\vert h^{n+1}_s - \tfrac1{\varpi}   h_s^{(p)} \bigr\vert^2
+  \tfrac{1}{\delta \varepsilon^2}    \bigl\vert  h_s^n -  \tfrac1{\varpi}   h_s^{(p)} \bigr\vert^2 
\Bigr]
 \ud s \, \vert \, {\mathcal F}_t^{\boldsymbol W} \biggr]. 
 \end{split}
 \end{equation*} 
 Take now $t = \ell T/p$, for some $\ell \in \{0,\cdots,p-1\}$. Then, 
 with $C$ and $\delta$ as above, 
\begin{equation*} 
\begin{split}
&E^{(p)}_{\ell T/p} 
 \bigl\vert h_{\ell T/p} ^{n+1} -  \tfrac1{\varpi}   h_{\ell T/p}^{(p)} \bigr\vert^2
 \\
 &\leq C {\mathbb E}^{ {\boldsymbol h}^{(p)} \hspace{-2pt} /\varepsilon} 
 \biggl[ 
 E_{T}^{(p)}  
\varpi^{-2n}
 +
 \int_{\ell T/p}^T E_{s}^{(p)} 
  \Bigl[ 
\bigl\vert h^{n+1}_s - \tfrac1{\varpi}   h_s^{(p)} \bigr\vert^2
+  \tfrac{1}{\delta \varepsilon^2}    \bigl\vert  h_s^n -  \tfrac1{\varpi}   h_s^{(p)} \bigr\vert^2 
\Bigr]
 \ud s \, \vert \, {\mathcal F}_{\ell T/p}^{\boldsymbol W} \biggr]
 \\
 &\leq 
C \exp \biggl(   \delta \varepsilon^2  \int_0^{\ell T/p}
\vert k_s^{(p)} \vert^2   \ud s \biggr)
{\mathbb E}^{ {\boldsymbol h}^{(p)} \hspace{-2pt} /\varepsilon} 
\biggl[ 
\exp \biggl(   \delta \varepsilon^2  \int_{\ell T/p}^T 
\vert k_s^{(p)} \vert^2   \ud s \biggr) \, \vert \, {\mathcal F}_{\ell T/p}^{\boldsymbol W} 
\biggr]
\\
&\hspace{1pt} \times
\biggl( 
 \exp \bigl( 
C 
\tfrac{T}{\delta \varepsilon^2} 
\bigr)
\varpi^{-2n} 
  +
  \tfrac{T}{p} \sum_{k=\ell}^{p-1}  
 \exp \bigl( 
C 
\tfrac{k T}{p \delta \varepsilon^2} 
\bigr)
 \underset{\small \omega \in \Omega}{  \textrm{\rm essup}}
 \Bigl[ 
 \bigl\vert h_{k T/p} ^{n+1} - \tfrac1{\varpi}   h_{k T/p}^{(p)} \bigr\vert^2
 +
 \tfrac{1}{\delta \varepsilon^2} 
 \bigl\vert h_{k T/p} ^{n} - \tfrac1{\varpi}   h_{k T/p}^{(p)} \bigr\vert^2
 \Bigr] \biggr). 
 \end{split}
 \end{equation*}  
By 
\eqref{eq:lem:1:b:2}, we
have a bound for the expectation on the 
penultimate line. Also, recalling the
form of $E^{(p)}_{\ell T/p}$ in 
\eqref{eq:etp:def}, we 
can divide 
both sides of the inequality
by 
$\exp (   \delta \varepsilon^2  \int_0^{\ell T/p}
\vert k_s^{(p)} \vert^2   \ud s)$. We 
 get 
\begin{equation*} 
\begin{split}
&\exp \bigl( 
C 
\tfrac{\ell T}{p \delta \varepsilon^2} 
\bigr) \underset{\small \omega \in \Omega}{  \textrm{\rm essup}}
 \bigl\vert h_{\ell T/p} ^{n+1} -  \tfrac1{\varpi}   h_{\ell T/p}^{(p)} \bigr\vert^2  \phantom{\Bigl(}
 \\
 &\leq 
C \exp \bigl( 
C 
\tfrac{T}{\delta \varepsilon^2} 
\bigr)
\varpi^{-2n} 
 + \tfrac{CT}{p}
 \sum_{k=\ell}^{p-1} 
 \exp \bigl( 
C 
\tfrac{k T}{p \delta \varepsilon^2} 
\bigr)
 \underset{\small \omega \in \Omega}{  \textrm{\rm essup}} \Bigl[ 
 \bigl\vert h_{k T/p} ^{n+1} -  \tfrac1{\varpi}   h_{k T/p}^{(p)} \bigr\vert^2
  +
 \tfrac{1}{\delta \varepsilon^2} 
 \bigl\vert h_{k T/p} ^{n} - \tfrac1{\varpi}   h_{k T/p}^{(p)} \bigr\vert^2
 \Bigr]. 
 \end{split}
 \end{equation*}
 For $C T/p \leq 1/2$ (which assumption is consistent with the lower bound $p \varepsilon^2 \geq c$), we get 
 (for a possibly new value of $C$) 
 \begin{equation*} 
 \begin{split} 
 &\exp \bigl( 
C 
\tfrac{\ell T}{p \delta \varepsilon^2} 
\bigr)  \underset{\small \omega \in \Omega}{  \textrm{\rm essup}}
 \bigl\vert h_{\ell T/p} ^{n+1} -  \tfrac1{\varpi}   h_{\ell T/p}^{(p)} \bigr\vert^2
  \leq  A_\ell + 
  \tfrac{CT}{p}
 \sum_{k=\ell+1}^{p-1} 
 \exp \bigl( 
C 
\tfrac{k T}{p \delta \varepsilon^2} 
\bigr)
 \underset{\small \omega \in \Omega}{  \textrm{\rm essup}}
 \bigl\vert h_{k T/p} ^{n+1} -  \tfrac1{\varpi}   h_{k T/p}^{(p)} \bigr\vert^2,
 \end{split} 
 \end{equation*} 
 with the notation 
 \begin{equation*} 
 A_\ell := 
C  \exp \bigl( 
C 
\tfrac{T}{\delta \varepsilon^2} 
\bigr)
+ 
 \tfrac{CT}{p \delta \varepsilon^2}
 \sum_{k=\ell}^{p-1} 
 \exp \bigl( 
C 
\tfrac{k T}{p \delta \varepsilon^2} 
\bigr)
 \textrm{\rm essup}_{\omega \in \Omega}  
 \bigl\vert h_{k T/p} ^{n} -  \tfrac1{\varpi}   h_{k T/p}^{(p)} \bigr\vert^2.
 \end{equation*}
 By the discrete version of Gronwall's lemma, we obtain
 (for a possibly new value of $C$):
  \begin{equation} 
  \label{eq:thm:2:17:correc}
\begin{split}
&\exp \bigl( 
C 
\tfrac{\ell T}{p \delta \varepsilon^2} 
\bigr) \textrm{\rm essup}_{\omega \in \Omega}  
 \bigl\vert h_{\ell T/p} ^{n+1} -  \tfrac1{\varpi}   h_{\ell T/p}^{(p)} \bigr\vert^2
 \\
 &\hspace{15pt} \leq C A_\ell \leq 
 C  \exp \bigl( 
C 
\tfrac{T}{\delta \varepsilon^2} 
\bigr)
 +
  \tfrac{CT}{p\delta \varepsilon^2}
 \sum_{k=\ell}^{p-1} 
 \exp \bigl( 
C 
\tfrac{k T}{p \delta \varepsilon^2} 
\bigr)
 \textrm{\rm essup}_{\omega \in \Omega}  
 \bigl\vert h_{k T/p} ^{n} - \tfrac1{\varpi}   h_{k T/p}^{(p)} \bigr\vert^2. 
 \end{split}
 \end{equation}
 And then, for some real $\lambda >0$, 
\begin{equation*} 
\begin{split}
&\sum_{\ell=0}^{p-1} 
\exp \bigl( 
\lambda 
\tfrac{\ell T}{p \delta \varepsilon^2} 
\bigr)
\exp \bigl( 
C 
\tfrac{\ell T}{p \delta \varepsilon^2} 
\bigr)  \underset{\small \omega \in \Omega}{  \textrm{\rm essup}} 
 \bigl\vert h_{\ell T/p} ^{n+1} - \tfrac1{\varpi}    h_{\ell T/p}^{(p)} \bigr\vert^2
 \\
 &\leq 
 C
 \exp \bigl( 
C 
\tfrac{T}{\delta \varepsilon^2} 
\bigr)
 \sum_{\ell=0}^{p-1} 
\exp \bigl( 
\lambda 
\tfrac{\ell T}{p \delta \varepsilon^2} 
\bigr) \varpi^{-2n}
\\
&\hspace{15pt} 
 + 
  \tfrac{C T}{p \delta \varepsilon^2}
 \sum_{k=0}^{p-1} 
 \biggl(\sum_{\ell=0}^k 
\exp \bigl( 
\lambda 
\tfrac{\ell T}{p \delta \varepsilon^2} 
\bigr) 
 \biggr)
 \exp \bigl( 
C 
\tfrac{k T}{p \delta \varepsilon^2} 
\bigr)
  \underset{\small \omega \in \Omega}{  \textrm{\rm essup}}
 \bigl\vert h_{k T/p} ^{n} - \tfrac1{\varpi}   h_{k T/p}^{(p)} \bigr\vert^2
 \\
 &\leq 
  \tfrac{C}{\exp(\lambda T/(p \delta \varepsilon^2))-1}
 \exp \bigl( 
(C+\lambda) 
\tfrac{T}{\delta \varepsilon^2} 
\bigr)
\varpi^{-2n}
\\
&\hspace{15pt} 
 +    
  \tfrac{CT}{p \delta \varepsilon^2} 
 \tfrac{ \exp(\lambda T/(p \delta \varepsilon^2))}{\exp(\lambda T/(p \delta \varepsilon^2))-1}
 \sum_{k=0}^{p-1}  
 \exp \bigl( 
(C+\lambda) 
\tfrac{k T}{p \delta\varepsilon^2} 
\bigr)
 \underset{\small \omega \in \Omega}{  \textrm{\rm essup}}
 \bigl\vert h_{k T/p} ^{n} - \tfrac1{\varpi}   h_{k T/p}^{(p)} \bigr\vert^2.
 \end{split}
 \end{equation*} 
 Now, 
 \begin{equation*}
 \begin{split} 
 \tfrac{T}{p \delta  \varepsilon^2} 
  \tfrac{\exp(\lambda T/(p \delta  \varepsilon^2))}{\exp(\lambda T/(p \delta  \varepsilon^2))-1}
  &=  \tfrac{T}{p \delta  \varepsilon^2}  
 \tfrac{1}{1-\exp(-\lambda T/(p  \delta  \varepsilon^2))}
 = \tfrac{1}{\lambda} \varphi \bigl(  \tfrac{ \lambda T}{p \delta  \varepsilon^2} \bigr),
 \end{split} 
 \end{equation*} 
 where 
 \begin{equation*}
 \varphi(x) = \tfrac{x}{1- \exp(-x)}, \quad x>0. 
 \end{equation*} 
 Clearly, $\varphi(x) \rightarrow 1$ as $x \rightarrow 0$. 
 Take now $\lambda=6C$ and $p \varepsilon^2$ large enough so that 
 $\varphi(\tfrac{ \lambda T}{p \delta  \varepsilon^2} )
 =
 \varphi(\tfrac{ 6 C T}{p \delta  \varepsilon^2} )
 $ is less than $2$. 
 We have 
  \begin{equation*}
 \begin{split}
&\sum_{\ell=0}^{p-1} 
\exp \bigl( 
7 C
\tfrac{\ell T}{p \delta \varepsilon^2} 
\bigr)
 \underset{\small \omega \in \Omega}{  \textrm{\rm essup}}
 \bigl\vert h_{\ell T/p} ^{n+1} -  \tfrac1{\varpi}   h_{\ell T/p}^{(p)} \bigr\vert^2
\\
 &\leq 
  \tfrac{1}{3} 
 \sum_{\ell=0}^{p-1} 
\exp \bigl( 
7 C
\tfrac{\ell T}{p \delta \varepsilon^2} 
\bigr)
 \underset{\small \omega \in \Omega}{  \textrm{\rm essup}}
 \bigl\vert h_{\ell T/p} ^{n} -  \tfrac1{\varpi}   h_{\ell T/p}^{(p)} \bigr\vert^2
 + 
  \tfrac{C}{1- \exp(- 6 C T/(p \delta \varepsilon^2))}
 \exp \bigl( 7 C \tfrac{T}{\delta \varepsilon^2} \bigr)  \varpi^{-2n}.
 \end{split}
 \end{equation*} 
Here, we notice that (assuming without any loss of generality that $C \geq 1$) 
\begin{equation*} 
\begin{split}
&  \tfrac{C}{1- \exp(- 6 C T/(p \delta \varepsilon^2))}
= \tfrac{p \delta \varepsilon^2}{6T} 
\varphi \Bigl( 
\tfrac{6 C T}{p \delta \varepsilon^2}
\Bigr) 
\leq  \tfrac{p \delta \varepsilon^2}{3T}
\end{split} 
\end{equation*} 
and then, 
\begin{equation*}
  \begin{split}
  &\sum_{\ell=0}^{p-1} 
\exp \bigl( 
7 C
\tfrac{\ell T}{p \delta \varepsilon^2} 
\bigr)
 \underset{\small \omega \in \Omega}{  \textrm{\rm essup}}
 \bigl\vert h_{\ell T/p} ^{n+1} -  h_{\ell T/p}^{(p),\varpi} \bigr\vert^2
\\
 &\leq 
  \tfrac{1}{3} 
 \sum_{\ell=0}^{p-1} 
\exp \bigl( 
7 C 
\tfrac{\ell T}{p \delta \varepsilon^2} 
\bigr)
 \underset{\small \omega \in \Omega}{  \textrm{\rm essup}}
 \bigl\vert h_{\ell T/p} ^{n} -  h_{\ell T/p}^{(p),\varpi} \bigr\vert^2
 + 
C' p  \exp \bigl( 7C \tfrac{T}{\delta \varepsilon^2} \bigr)  \varpi^{-2n},
 \end{split}
 \end{equation*}
 for a constant $C'$ depending on the same parameters as $C$. 
 The above inequality is very similar to 
 \eqref{eq:backward:bound:000}. Similar to 
\eqref{eq:conclusion:proof:main:thm:1}, we obtain 
   \begin{equation*} 
   \sum_{\ell=0}^{p-1} 
\exp \bigl( 
7 C 
\tfrac{\ell T}{p \delta \varepsilon^2} 
\bigr)  \underset{\small \omega \in \Omega}{  \textrm{\rm essup}}
 \bigl\vert h_{\ell T/p} ^{n+1} -  \tfrac1{\varpi}   h_{\ell T/p}^{(p)} \bigr\vert^2
 \leq C'
 p  \exp \bigl( 7 C \tfrac{T}{\delta \varepsilon^2} \bigr)  \varpi^{-2n}, 
\end{equation*} 
for a possibly new value of $C'$. 
Back to 
  \eqref{eq:thm:2:17:correc}, we obtain 
\eqref{eq:main:thm:1:bound:infinity:2:17}. 
Inequality 
			\eqref{eq:main:thm:1:bound:infinity:2}
			is proven as 	\eqref{eq:main:thm:1:bound}. 
%
%
\end{proof}

\subsubsection{Convergence to the MFG}

We now study the distance between $({\boldsymbol m}^{(p)},{\boldsymbol h}^{(p)})$ and $({\boldsymbol m},{\boldsymbol h})$
(see 		\eqref{eq:decoupled:limit})
 as $p$ tends to $\infty$. 
While this looks a natural question, 
the following result, which is given for reader's interest only, has a secondary role in our study. 
The limit $p \rightarrow \infty$ will be addressed
in the next subsection but from another point of view.

\begin{prop}
\label{prop:2:19:correc}
With $({\boldsymbol m}^{(p)},\tilde{\boldsymbol h}^{(p)})$ 
as in the statement of Theorem 
\ref{main:thm:2}
(with the same extension as in Remark 
\ref{rem:another:extension:tildeh}) 
and with 
$({\boldsymbol m},{\boldsymbol h})$
as in the statement of
Theorem  
\ref{main:thm:1} and 
\eqref{eq:minimization:problem:correc}, 
there exists
a constant $C$, only depending on 
$d$, $T$, 
$\| f\|_{1,\infty}$, $\| g \|_{1,\infty}$, $\vert Q\vert$ and $\vert R\vert$, 
such that 
\begin{equation*} 
\sup_{0 \le t \le T}
{\mathbb E} \Bigl[ \vert h_t - \tilde h_t^{(p)} \vert^2 
\Bigr] \leq \exp \Bigl( \tfrac{C}{\varepsilon^2}\Bigr) \tfrac{\ln(p)}{p}.
\end{equation*} 

\end{prop}

\begin{proof} 
For an integer $p \geq 1$, 
we write
\begin{equation*} 
\ud m_t^{(p)} = - \eta^{(p)}_{\tau_p(t)} m^{(p)}_{\tau_p(t)}  \ud t + \varepsilon \ud W_t^p, \quad t \in [0,T]. 
\end{equation*}
We notice that 
\begin{align} 
\label{eq:p:mfg:correc}
\bigl\vert m_t^{(p)} - m^{(p)}_{\tau_p(t)} \bigr\vert 
&\leq C \tfrac{T}p  \vert m_{\tau_p(t)}^{(p)} \vert + \varepsilon 
\sup_{0 \leq s- \tau_p(t) \leq T/p} \vert W_s - W_{\tau_p(t)} \vert
\\
&\leq C \tfrac{T}p \sup_{0 \leq s \leq T} \vert m_s^{(p)} \vert + \varepsilon  \max_{k=0,\cdots,p-1}
\sup_{kT/p \leq s    \leq (k+1)T/p} \vert W_s - W_{kT/p} \vert,  
\quad t \in [0,T].  
\nonumber
\end{align} 
Moreover, 
\begin{equation*} 
\begin{split} 
\ud \bigl( m_t^{(p)} - m_t \bigr) &= - \bigl( \eta^{(p)}_{\tau_p(t)}  m^{(p)}_{\tau_p(t)} - \eta_t m_t \bigr) 
\ud t + \ud (W_t^p - W_t) 
\\
&=  -   \bigl( \eta^{(p)}_{\tau_p(t)}   m^{(p)}_{\tau_p(t)} -  
\eta^{(p)}_t 
 m^{(p)}_t \bigr) \ud t
  - \bigl( \eta_t^{(p)} - \eta_t \bigr) m_t^{(p)} \ud t 
 - \eta_t \bigl( m_t^{(p)} - m_t \bigr) \ud t
 \\
 &\hspace{15pt} + \ud (W_t^p - W_t). 
\end{split} 
\end{equation*} 
By combining the last two displays with \eqref{eq:rem:main:thm:2:correc}, we deduce from Gronwall's lemma that
\begin{equation} 
\label{eq:p:mfg:correc:19:0}
\begin{split} 
\vert m_t^{(p)} - m_t \vert &\leq  C \tfrac{T}p \sup_{0 \leq s \leq T} \vert m_s^{(p)} \vert + \varepsilon 
\sup_{0 \leq s    \leq T} \vert W_s - W_s^p \vert
\\
&\hspace{15pt} + \varepsilon 
  \max_{k=0,\cdots,p-1}
\sup_{kT/p \leq s    \leq (k+1)T/p} \vert W_s - W_{kT/p} \vert
\\
  &\leq  C \tfrac{T}p \sup_{0 \leq s \leq T} \vert m_s^{(p)} \vert + C \varepsilon 
  \max_{k=0,\cdots,p-1}
\sup_{kT/p \leq s    \leq (k+1)T/p} \vert W_s - W_{kT/p} \vert,
 \end{split} 
 \end{equation} 
 for 
$t \in [0,T]$. 
By Lemma 
\ref{lem:2} 
(the proof of which also provides a bound for the time-derivative of $\theta_\varepsilon$) 
and  
	\eqref{eq:change:of:variable:varpi}--\eqref{eq:representation:h:k} and by the analogue of 
	\eqref{eq:p:mfg:correc} but for ${\boldsymbol m}$, we also have 
 \begin{equation}
 \label{eq:p:mfg:correc:19}
 \begin{split}
\vert h_{\tau_p(t)} - h_t \vert  
&=  \bigl\vert \theta_\varepsilon\bigl( \tau_p(t), m_{\tau_p(t)}\bigr) 
- 
\theta_\varepsilon\bigl(t, m_t \bigr) 
 \bigr\vert 
 \\
 &\leq \tfrac{C}{\varepsilon^2}
 \Bigl( 
 \tfrac{T}{p} + 
 \vert 
 m_{\tau_p(t)}
 - 
 m_{t}
 \vert 
 \Bigr)
 \\
 &\leq 
  \tfrac{C}{\varepsilon^2}
 \Bigl( 
 \tfrac{T}{p} + 
\tfrac{T}p \sup_{0 \leq s \leq T} \vert m_s  \vert + \varepsilon 
 \max_{k=0,\cdots,p-1}
\sup_{kT/p \leq s    \leq (k+1)T/p} \vert W_s - W_{kT/p} \vert
 \Bigr)
  \\
 &\leq 
  \tfrac{C}{\varepsilon^2}
 \Bigl( 
 \tfrac{T}{p} + 
\tfrac{T}p \sup_{0 \leq s \leq T} \vert m_s^{(p)}  \vert + \varepsilon 
 \max_{k=0,\cdots,p-1}
\sup_{kT/p \leq s    \leq (k+1)T/p} \vert W_s - W_{kT/p} \vert
 \Bigr). 
 \end{split}
 \end{equation} 
 Next, we rewrite 
the first term in 
the right-hand side in 
\eqref{eq:decoupled:limit:discrete}
 as
 (which follows from 
 the last paragraph in Remark 
 \ref{rem:main:thm:2:b})
 \begin{equation*} 
 		{\mathbb E} 
		\Bigl[ 
		Q^\dagger f\bigl( m^{(p)}_{\tau_{p}(t)+T/p}\bigr)\, \vert \, 
		{\mathcal F}^{p,{\boldsymbol W}}_{\tau_p(t)} \Bigr]
		=
		 		{\mathbb E} 
		\Bigl[ 
		Q^\dagger f\bigl(  m_{\tau_{p}(t)+T/p}^{(p)} \bigr)\, \vert \, 
		{\mathcal F}^{{\boldsymbol W}}_{\tau_p(t)} \Bigr]. 
		\end{equation*} 
Meanwhile, 
\begin{equation*} 
\begin{split}
&\Bigl\vert 
{\mathbb E} 
		\Bigl[ 
		Q^\dagger f\bigl( m^{(p)}_{\tau_{p}(t)+T/p} \bigr)\, \vert \, 
		{\mathcal F}^{{\boldsymbol W}}_{\tau_p(t)} \Bigr]
		- 
		Q^\dagger f\bigl( m_{t} \bigr)
		\Bigr\vert
\\
&\leq 		
\Bigl\vert 
{\mathbb E} 
		\Bigl[ 
		Q^\dagger f\bigl( m^{(p)}_{\tau_{p}(t)+T/p} \bigr)\, \vert \, 
		{\mathcal F}^{{\boldsymbol W}}_{\tau_p(t)} \Bigr]
		- 
		Q^\dagger f\bigl( m_{\tau_{p}(t)}^{(p)} \bigr)
		\Bigr\vert
		+ \Bigl\vert 
		Q^\dagger f\bigl( m_{\tau_{p}(t)}^{(p)} \bigr)		
		-
				Q^\dagger f\bigl( m_{t} \bigr)
		\Bigr\vert.
		\end{split}
		\end{equation*}
		By the first line in 
		\eqref{eq:p:mfg:correc}, 
		\begin{equation*} 
		\begin{split} 
		&\Bigl\vert 
{\mathbb E} 
		\Bigl[ 
		Q^\dagger f\bigl( m^{(p)}_{\tau_{p}(t)+T/p} \bigr)\, \vert \, 
		{\mathcal F}^{{\boldsymbol W}}_{\tau_p(t)} \Bigr]
		- 
		Q^\dagger f\bigl( m_{\tau_{p}(t)}^{(p)} \bigr)
		\Bigr\vert
		\leq C \Bigl( 
		 \tfrac{T}p \sup_{0 \leq s \leq T} \vert m_s^{(p)} \vert + 
\varepsilon \sqrt{ \tfrac{T}p } \Bigr).
\end{split} 
\end{equation*} 
Together with \eqref{eq:p:mfg:correc:19:0}, this gives
\begin{equation*} 
\begin{split}
&\Bigl\vert 
{\mathbb E} 
		\Bigl[ 
		Q^\dagger f\bigl( m^{(p)}_{\tau_{p}(t)+T/p} \bigr)\, \vert \, 
		{\mathcal F}^{{\boldsymbol W}}_{\tau_p(t)} \Bigr]
		- 
		Q^\dagger f\bigl( m_{t} \bigr)
		\Bigr\vert
		\\
&		\leq C \biggl( 
		 \tfrac{T}p \sup_{0 \leq s \leq T} \vert m_s^{(p)} \vert + \varepsilon    \max_{k=0,\cdots,p-1}
\sup_{kT/p \leq s    \leq (k+1)T/p} \vert W_s - W_{kT/p} \vert + 
\varepsilon \sqrt{ \tfrac{T}p } \biggr).
\end{split}
\end{equation*} 
By the above display and by 
\eqref{eq:p:mfg:correc:19}, we can rewrite the equation for ${\boldsymbol h}$ in 
\eqref{eq:bsde:true:underP:} in the form:
\begin{equation*} 
\begin{split} 
\ud h_t  &= \Bigl\{ 
- 
{\mathbb E} \Bigl[ Q^\dagger f \bigl( m^{(p)}_{\tau_p(t) + T/p} \bigr) \, \vert \, {\mathcal F}_{\tau_p(t)}^{\boldsymbol W} \Bigr] 
{\mathbf 1}_{\{ t \leq (p-1) T/p\}} 
\\
&\hspace{30pt} + \Bigl( \eta_{\tau_p(t) + T/p}^{(p)} + \tfrac{T}p Q^\dagger Q {\mathbf 1}_{\{ t \leq (p-1) T/p \}}
\Bigr) h_{\tau_p(t)} \Bigr\} \ud t + e_t \ud t
  + k_t h_t \ud t + \varepsilon k_t  \ud W_t,
\end{split} 
\end{equation*} 
for $t \in [0,T]$, where
\begin{align} 
\label{eq:bound:e:correc} 
\vert e_t \vert  
&\leq C  {\mathbf 1}_{\{ (p-1) T/p < t \leq T\}} 
+
 C \varepsilon \sqrt{ \tfrac{T}p }
\\
&\hspace{5pt} 
+  C
 \Bigl( 
 \tfrac{T}{p} + 
 \tfrac{T}p \sup_{0 \leq s \leq T} \vert m_s^{(p)} \vert   + \varepsilon 
    \max_{k=0,\cdots,p-1}
\sup_{kT/p \leq s    \leq (k+1)T/p} \vert W_s - W_{kT/p} \vert
   \Bigr). \nonumber
\end{align} 
Next, 
by 
\eqref{eq:decoupled:limit:discrete}, 
\begin{equation*} 
\begin{split} 
		\ud \bigl[  h_t  - \tilde h_t^{(p)} \bigr]
		&=  
		\Bigl( \eta_{\tau_{p(t)}+T/p}^{(p)}
		  + \tfrac{T}p Q^\dagger Q 
		  {\mathbf 1}_{\{ t \leq (p-1)T/p\}}
		  	\Bigr) 	
			\bigl( h_{\tau_p(t)}  - \tilde h_{\tau_p(t)}^{(p)} \bigr) 
 \ud t  + e_t \ud t
		  \\
&\hspace{15pt}		  +   \bigl( k_{t} h_{t} - k_t^{(p)} \tilde h_t^{(p)} \bigr)  \ud t 
		+ \varepsilon \bigl( k_{t} -k_t^{(p)} \bigr) \ud W_{t}, \quad t \in [0,T]. 
		\end{split}
\end{equation*} 
Then, 
proceeding as in 
			\eqref{eq:new:hn+1:h}
and
\eqref{eq:proof:delta} (using Lemma 
\ref{lem:2}), 
\begin{equation*} 
\begin{split}
\ud \vert h_t - \tilde h_t^{(p)} \vert^2 
&\geq - C 
\vert h_t - \tilde  h_t^{(p)} \vert^2 \ud t 
- C 
\vert h_{\tau_p(t)}  - \tilde  h_{\tau_p(t)}^{(p)} \vert^2 \ud t 
- 
2
\vert h_t  - \tilde  h_t^{(p)} \vert \vert e_t \vert \ud t
\\
&\hspace{15pt} - C \vert h_t   - \tilde h_t^{(p)} \vert \, \vert k_t  - k_t^{(p)} \vert - C \vert k_t  \vert \vert h_t  - 
\tilde h_t^{(p)} \vert^2 \ud t
+ \varepsilon^2 \vert k_t  - k_t^{(p)} \vert^2 \ud t 
\\
&\hspace{15pt} 
+2 \varepsilon \bigl(  h_t - \tilde h_t^{(p)} \bigr) \cdot \bigl[ 
\bigl(  k_t - k_t^{(p)}
\bigr) \ud W_t \bigr]
\\
&\geq -\tfrac{C}{\varepsilon^2} 
\vert h_t - \tilde h_t^{(p)} \vert^2 \ud t - \varepsilon^2 \vert e_t \vert^2 \ud t 
- C 
\vert h_{\tau_p(t)} - h_{\tau_p(t)}^{(p)} \vert^2 \ud t 
\\
&\hspace{15pt} + 2 \varepsilon \bigl(  h_t - \tilde h_t^{(p)} \bigr) \cdot \bigl[ 
\bigl(  k_t - k_t^{(p)}
\bigr) \ud W_t \bigr].
\end{split} 
\end{equation*} 
There is one small subtlety here to handle the last term on the penultimate line because it is indexed by 
time $\tau_p(t)$ (and not $t$). The strategy is to take expectation on both sides in the above inequality and then to integrate in time
between $\tau_p(t)$ and $t$ for a given $t \in [0,T)$.
As for the dynamics of the expectation, we
have
\begin{equation} 
\label{eq:backward:taup:tt}
\ud {\mathbb E}\bigl[ \vert h_t  - \tilde h_t^{(p)} \vert^2  \bigr] 
\geq -\tfrac{C}{\varepsilon^2} 
{\mathbb E}\bigl[ \vert h_t - \tilde h_t^{(p)} \vert^2 \bigr] \ud t -   {\mathbb E}\bigl[ 
\vert e_t \vert^2 \bigr] \ud t 
- C 
{\mathbb E}\bigl[ 
\vert h_{\tau_p(t)}  - h_{\tau_p(t)}^{(p)} \vert^2 \bigr] \ud t.
\end{equation}
By 
\eqref{eq:bound:e:correc}, notice that 
\begin{equation}
\label{eq:bound:e:correc:expectation}
{\mathbb E}\bigl[\vert e_t \vert^2 \bigr]\leq C  {\mathbf 1}_{\{ (p-1) T/p < t \leq T\}} 
 +   \tfrac{C}{p^2}
 + 
 C
 {\mathbb E} 
 \Bigl[
  \max_{k=0,\cdots,p-1}
\sup_{kT/p \leq s    \leq (k+1)T/p} \vert W_s - W_{kT/p} \vert^2
 \Bigr].
\end{equation}
We admit for a while that
\begin{equation} 
\label{eq:proof:log(p)/p}
 {\mathbb E} 
 \Bigl[
  \max_{k=0,\cdots,p-1}
\sup_{kT/p \leq s    \leq (k+1)T/p} \vert W_s - W_{kT/p} \vert^2
 \Bigr] \leq C \tfrac{\ln(p)}{p}.
\end{equation}
And then, by 
integrating 
\eqref{eq:backward:taup:tt}
between $\tau_p(t)$ and $t$ 
and by invoking
boundedness of the
two processes ${\boldsymbol h}$ and $\tilde {\boldsymbol h}^{(p)}$ 
(see Lemmas 
\ref{lem:1} 
and
\ref{lem:1:b}), we deduce that
\begin{equation*} 
\begin{split}
{\mathbb E}\bigl[ \vert h_{\tau_p(t)} - \tilde h_{\tau_p(t)}^{(p)} \vert^2  \bigr] 
&\leq 
C {\mathbb E}\bigl[ \vert h_{t} - \tilde h_{t}^{(p)} \vert^2  \bigr] 
+
 \tfrac{C}{\varepsilon^2} 
\int_{\tau_p(t)}^{t}  {\mathbb E}\bigl[ \vert h_{t} - \tilde h_{t}^{(p)} \vert^2  \bigr] \ud s
+
 \tfrac{C \ln(p)}{p} 
 \\
  &\leq 
C {\mathbb E}\bigl[ \vert h_{t} - \tilde h_{t}^{(p)} \vert^2  \bigr] 
+
 \tfrac{C \ln(p)}{\varepsilon^2 p} .
 \end{split}
\end{equation*}
Inserting the above estimate into 
\eqref{eq:backward:taup:tt} and then invoking 
\eqref{eq:bound:e:correc:expectation}, we obtain 
\begin{equation*} 
\ud {\mathbb E}\bigl[ \vert h_t - \tilde h_t^{(p)} \vert^2  \bigr] 
\geq -\tfrac{C}{\varepsilon^2} 
{\mathbb E}\bigl[ \vert h_t - \tilde h_t^{(p)} \vert^2 \bigr] \ud t - 
C \Bigl(
{\mathbf 1}_{\{ (p-1) T/p < t \leq T\}}  
+ \tfrac{\ln(p)}{\varepsilon^2 p} 
\Bigr) \ud t.
\end{equation*}
By Gronwall's lemma
(and by 
\eqref{eq:p:mfg:correc:19:0},
which allows us to control the terminal condition), we get 
\begin{equation*} 
{\mathbb E} \bigl[ \vert h_t - \tilde h_t^{(p)} \vert^2 
\bigr] \leq \exp \Bigl( \tfrac{C}{\varepsilon^2}\Bigr) \tfrac{C \ln(p)}{\varepsilon^2 p}, 
\quad t \in [0,T]. 
\end{equation*}
Changing the value of $C$, we easily complete the proof of the statement.

It remains to check
\eqref{eq:proof:log(p)/p}. We have, for any $r>0$, 
\begin{equation*} 
\begin{split} 
&{\mathbb P} 
\Bigl( \Bigl\{   \max_{k=0,\cdots,p-1}
\sup_{kT/p \leq s    \leq (k+1)T/p} \vert W_s - W_{kT/p} \vert^2 \geq r \Bigr\} \Bigr)
\\
&= 1 - {\mathbb P} 
\Bigl( \Bigl\{   \max_{k=0,\cdots,p-1}
\sup_{kT/p \leq s    \leq (k+1)T/p} \vert W_s - W_{kT/p} \vert^2 < r \Bigr\} \Bigr)
\\
&= 1 - {\mathbb P} 
\Bigl( \Bigl\{   
\sup_{0 \leq s    \leq  T/p} \vert W_s  \vert^2 < r \Bigr\}   \Bigr)^p
\\
&\leq 1 - \bigl( 1 - \exp (- c p r) \bigr)^p   
 \leq p \exp(-cpr), 
\end{split}
\end{equation*}
for a constant $c$ only depending on $T$. Replacing $r$ by $\ln(p) r/p$, we easily deduce that 
\begin{equation*} 
 {\mathbb E} 
 \Bigl[
  \max_{k=0,\cdots,p-1}
\sup_{kT/p \leq s    \leq (k+1)T/p} \vert W_s - W_{kT/p} \vert^2
 \Bigr] \leq C \tfrac{\ln(p)}{p},
\end{equation*}
which is 
\eqref{eq:proof:log(p)/p}. 
\end{proof}
\color{black}

\subsubsection{Proof of auxiliary results}
\label{subsubse:2.2.4:correc}

\begin{proof}[Proof of Lemma \ref{lem:4}.]
{ \ }

\textit{First Step.} We first notice that, for a given choice of $(\overline{\boldsymbol m}^n,{\boldsymbol h}^n)$, the problem \eqref{eq:optimization:exploration} has at least one minimizer. Indeed, 
the cost 
${\boldsymbol \alpha} \mapsto {\mathbb E}^{\varpi {\boldsymbol h}^n\hspace{-2pt}/\varepsilon}
	[ {\mathcal R}^{p,\varpi X_0}  ( \varpi {\boldsymbol \alpha} + \varpi \varepsilon \dot{\boldsymbol W}^{p,\varpi {\boldsymbol h}^n \hspace{-2pt} /\varepsilon} ; \overline{\boldsymbol m}^n;0 )  ]$  is lower semicontinuous with respect to ${\boldsymbol \alpha}$ 
	when the latter is identified with 
a collection of random variables $(\alpha_0,\cdots,\alpha_{(p-1)T/p}) 
\in 
\times_{\ell=0}^{p-1}
L^2(\Omega,{\mathcal F}_{\ell T/p}^{p,X_0,{\boldsymbol B},{\boldsymbol W}},{\mathbb P}^{\varpi {\boldsymbol h}^n
\hspace{-2pt}/\varepsilon};{\mathbb R}^d)$, with the product space being equipped with the weak topology. The identification is made possible by the fact that the controls ${\boldsymbol \alpha}$ are chosen to be piecewise constant. 
Moreover, the cost functional blows up when 
the $L^2$-norm of 
${\boldsymbol \alpha}$ tends to $\infty$. The minimization problem can hence be restricted to a bounded subset of 
$\times_{\ell=0}^{p-1}
L^2(\Omega,{\mathcal F}_{\ell T/p}^{p,X_0,{\boldsymbol B},{\boldsymbol W}},{\mathbb P}^{\varpi {\boldsymbol h}^n
\hspace{-2pt}/\varepsilon};{\mathbb R}^d)$, the latter being obviously compact for the weak topology. Existence of a minimizer easily follows. 

We check below that any critical point of the cost functional is a solution of the forward-backward system 
(\ref{eq:lem:4:feedback:form})--(\ref{eq:bsde:exploration}). Since the latter is uniquely solvable, 
this indeed provides a characterization of the minimizer. 
\vskip 4pt

\textit{Second Step.}
In order to prove that critical points solve (\ref{eq:lem:4:feedback:form})--(\ref{eq:bsde:exploration}), 
we follow the usual lines of the Pontryagin principle. 
Although the result is certainly not new, we feel better to provide the complete proof, since the formulation we use is tailor-made to our needs. We start to notice that, under the probability
${\mathbb P}^{\varpi {\boldsymbol h}^n
\hspace{-2pt}/\varepsilon}$, 
the path driven by 
the control 
$\varpi {\boldsymbol \alpha}$ 
and the initial condition 
$\varpi X_0$ reads
$(\varpi X_t)_{0 \le t \le T}$ with 
\begin{equation*}
	\ud X_{t} = \alpha_{t} \ud t + \tfrac1{\varpi} \sigma \ud B_{t} + \varepsilon \ud W_{t}^{p,
	\varpi 
	{\boldsymbol h}^n \hspace{-2pt} /\varepsilon}, \quad t \in [0,T]. 
\end{equation*}
We then introduce an additive perturbation of the cost and hence replace ${\boldsymbol \alpha}=(\alpha_{t})_{0 \le t \le T}$ by 
${\boldsymbol \alpha}+ \delta {\boldsymbol \beta}=(\alpha_{t} + \delta \beta_{t})_{0 \le t \le T}$ in the above expansion, with 
${\boldsymbol \alpha}$ and ${\boldsymbol \beta}$ being identified as before as elements of 
$\times_{\ell=0}^{p-1}
L^2(\Omega,{\mathcal F}_{\ell T/p}^{p,X_0,{\boldsymbol B},{\boldsymbol W}},{\mathbb P}^{\varpi {\boldsymbol h}^n
\hspace{-2pt}/\varepsilon};{\mathbb R}^d)$. We then write ${\boldsymbol X}^{\delta}=(X_{t}^{\delta})_{0 \le t \le T}$
in order to emphasize the dependence of ${\boldsymbol X}$ upon $\delta$. 
When $\delta=0$, we merely write ${\boldsymbol X}$ instead of ${\boldsymbol X}^0$. 
The formal derivative with respect to $\delta$ at $\delta=0$ reads
\begin{equation*}
	\ud \bigl( \partial X_{t} \bigr) = \beta_{t} \ud t, \quad t \in [0,T] \, ; 
	\quad \partial X_{0} =0. 
\end{equation*}
In turn, 
the derivative of the cost
${\mathbb E}^{\varpi {\boldsymbol h}^n \hspace{-2pt} / \varepsilon}
	[ {\mathcal R}^{p,\varpi X_0}  ( \varpi [{\boldsymbol \alpha}+\delta {\boldsymbol \beta}] + \varpi \varepsilon \dot{\boldsymbol W}^{p,\varpi {\boldsymbol h}^n \hspace{-2pt} /\varepsilon} ; \overline{\boldsymbol m}^n;0 ) ]$, 
with respect to $\delta$, 
 is  
\begin{equation*}
\begin{split}
&\frac{\ud}{\ud \delta}
\Bigl\{ 
 {\mathbb E}^{\varpi {\boldsymbol h}^n \hspace{-2pt} / \varepsilon}
	\Bigl[ {\mathcal R}^{p,\varpi X_0}  \Bigl( \varpi {\boldsymbol \alpha} + \varpi \varepsilon \dot{\boldsymbol W}^{p,\varpi {\boldsymbol h}^n \hspace{-2pt} /\varepsilon} ; \overline{\boldsymbol m}^n;0 \Bigr) \Bigr] 
	\Bigr\}_{\vert\delta =0}
\\
&= \varpi^2 {\mathbb E}^{{\boldsymbol h}^n} \biggl[ \Bigl(R X_{T}+ \tfrac1{\varpi} g\bigl( \overline m^n_{T}\bigr)
	\Bigr) \cdot R \partial X_{T} 
\\
&\hspace{15pt} 	+ \int_{0}^T \Bigl\{ \Bigl(Q X_{\tau_{p}(t)} + \tfrac1{\varpi}  f\bigl(\overline m^n_{\tau_{p}(t)} \bigr)
	\Bigr) 
	\cdot  Q \partial X_{\tau_t(p)} + \alpha_{\tau_{p}(t)} \cdot \beta_{\tau_{p}(t)} \Bigr\} \ud t \biggr].
\end{split}
\end{equation*}
We now solve the BSDE (under ${\mathbb P}^{\varpi {\boldsymbol h}^n \hspace{-2pt} /\varepsilon}$)
\begin{equation*}
\begin{split}
	&\ud Y_{t} = - \bigl(  Q^\dagger Q X_{\tau_{p}(t)} + \tfrac1{\varpi}  Q^\dagger f(\overline m^n_{\tau_{p}(t)}) \bigr) \ud t + Z_{t}^{\boldsymbol B} \ud B_{t} + Z_{t}^{\boldsymbol W} \ud W_{t}^{\varpi  {\boldsymbol h}^n \hspace{-2pt}/\varepsilon}, \quad t \in [0,T], 
	\\
	&Y_{T}=R^\dagger R X_{T}+ \tfrac1{\varpi}  R^\dagger g\bigl(\overline m^n_{T}\bigr).
\end{split} 
\end{equation*} 
By discrete integration by parts, 
we have
\begin{equation*}
\begin{split}
&{\mathbb E}^{\varpi  {\boldsymbol h}^n \hspace{-2pt}/\varepsilon} \bigl[ Y_T \cdot \partial X_{T} 
\bigr]
\\
&=
\sum_{\ell=0}^{p-1} 
{\mathbb E}^{\varpi  {\boldsymbol h}^n \hspace{-2pt}/\varepsilon}\Bigl[
\Bigl( 
Y_{(\ell+1)T/p} - Y_{\ell T/p} \Bigr) \cdot \partial X_{\ell T/p} 
+ Y_{(\ell +1)T/p} \cdot 
\Bigl( \partial X_{(\ell+1)T/p} - \partial X_{\ell T/p} 
\Bigr) 
\Bigr]
\\
&= 
\tfrac{T}p \sum_{\ell=0}^{p-1} 
{\mathbb E}^{\varpi  {\boldsymbol h}^n \hspace{-2pt}/\varepsilon} \Bigl[
- \Bigl( 
Q^\dagger Q X_{\ell T/p} + \tfrac1{\varpi}  Q^\dagger  
f(\overline m^n_{\ell T/p})  \Bigr) \cdot \partial X_{\ell T/p} 
+ Y_{(\ell+1)T/p} \cdot 
\beta_{\ell T/p}
\Bigr].
\end{split}
\end{equation*}
Rewriting the above sum as an integral, we obtain
\begin{equation*}
\begin{split}
	&\frac{\ud}{\ud \delta}
\Bigl\{ 
 {\mathbb E}^{\varpi {\boldsymbol h}^n \hspace{-2pt} / \varepsilon}
	\Bigl[ {\mathcal R}^{p,\varpi X_0}  \Bigl( \varpi {\boldsymbol \alpha} + \varpi \varepsilon \dot{\boldsymbol W}^{p,\varpi {\boldsymbol h}^n \hspace{-2pt} /\varepsilon} ; \overline{\boldsymbol m}^n;0 \Bigr) \Bigr] 
	\Bigr\}_{\vert\delta =0}
	\\
	&= \varpi^2  {\mathbb E}^{\varpi  {\boldsymbol h}^n \hspace{-2pt}/\varepsilon} \biggl[ \int_{0}^T 
	\Bigl(  Y_{\tau_p(t)+T/p}   + \alpha_{\tau_{p}(t)} \Bigr) \cdot \beta_{\tau_{p}(t)} \ud t \biggr],
\end{split}
\end{equation*}
which means that the optimizer 
(which we merely write 
	${\boldsymbol \alpha}^{n+1}$ without specifying the indices $p$ and $\varpi$)
satisfies 
\begin{equation}
\label{eq:minimiser:p:problem:lemma2.10}
	\alpha_{\ell T/p}^{n+1} = 
	-  {\mathbb E}^{\varpi {\boldsymbol h}^n \hspace{-2pt} / \varepsilon} \Bigl[ Y_{(\ell +1)T/p}^{n+1} \, \vert \,  {\mathcal F}_{\ell T/p}^{p,X_0,{\boldsymbol B},{\boldsymbol W}}\Bigr]
	= 
	- Y_{\ell T/p}^{n+1}
	 + \tfrac{T}{p} \Bigl( Q^\dagger Q X_{\ell T/p}^{n+1} + 
	 \tfrac1{\varpi}
	 Q^\dagger f(\overline m^n_{\ell T/p}) \Bigr),
\end{equation}
for $\ell \in \{0,\cdots,p-1\}$, 
where 
\begin{equation*}
	\begin{split}
		&\ud X_{t}^{n+1} = \alpha_{t}^{n+1} \ud t +\tfrac1{\varpi} \sigma \ud B_{t} + \varepsilon \ud W_{t}^{p,\varpi {\boldsymbol h}^n \hspace{-2pt} / \varepsilon},
		\\
		&\ud Y_{t}^{n+1} = - \bigl(  Q^\dagger Q X_{\tau_{p}(t)}^{n+1} + \tfrac1{\varpi} Q^\dagger f ( \overline m_{\tau_{p}(t)}^n)\bigr) \ud t  + Z_{t}^{n+1,{\boldsymbol B}} \ud B_{t} + Z_{t}^{n+1,{\boldsymbol W}} \ud W_{t}^{\varpi {\boldsymbol h}^n \hspace{-2pt} / \varepsilon},
		\quad t \in [0,T], 
		\\
		&Y_{T}^{n+1} = R^\dagger R X_{T}^{n+1} + \tfrac1{\varpi} R^\dagger g(\overline m^n_{T}).
	\end{split}
\end{equation*}
Importantly, we stress that, by construction, ${\boldsymbol X}_{\tau_{p}(\cdot)}^{n+1}$ is $({\mathbb F}^{p,X_0,{\boldsymbol B},{\boldsymbol W}})_{0 \le t \le T}$-adapted. We then let, for the same solution ${\boldsymbol \eta}^{(p)}$ as in 
\eqref{eq:discretized:Riccati}--\eqref{eq:discretized:Riccati:sol},
\begin{equation}
\label{eq:discrete:time:h}
\begin{split}
	\tilde h_{\ell T/p}^{n+1}
	&=
	{\mathbb E}^{{\varpi {\boldsymbol h}^n \hspace{-2pt} / \varepsilon}} \Bigl[ Y_{(\ell+1)T/p}^{n+1}\, \vert \, {\mathcal F}_{\ell T/p}^{X_0,{\boldsymbol B},{\boldsymbol W}}\Bigr]
	- \eta_{\ell T/p}^{(p)} X^{n+1}_{\ell T/p}
	\\
	&= Y_{\ell T/p}^{n+1}
	- 
 \tfrac{T}{p} \Bigl( Q^\dagger Q X_{\ell T/p}^{n+1} + \tfrac1{\varpi}
	 Q^\dagger f(\overline m^n_{\ell T/p}) \Bigr)	
	 - \eta_{\ell T/p}^{(p)} X^{n+1}_{\ell T/p}, 
	 	 \end{split}
\end{equation}
for 
$\ell \in \{0,\cdots,p-1\}$.
We obtain, for $\ell \in \{0,\cdots,p-2\}$, 
\begin{equation*}
\begin{split}
&\tilde h_{(\ell+1)T/p}^{n+1} - \tilde h_{\ell T/p}^{n+1}
= - \tfrac{T}{p} \Bigl( Q^\dagger Q X_{(\ell+1)T/p}^{n+1} +
\tfrac1{\varpi}
Q^\dagger f \bigl( \overline m_{(\ell+1)T/p}^n\bigr)\Bigr) 
 - \tfrac{T}p  \eta_{\ell T/p}^{(p)}   \alpha^{n+1}_{\ell T/p}  
\\
&\hspace{15pt} - \Bigl( \eta_{(\ell +1)T/p}^{(p)} - \eta_{\ell T/p}^{(p)} \Bigr) 
 X_{(\ell +1)T/p}^{n+1} 
+ \int_{\ell T/p}^{(\ell+1)T/p}
 k_{s}^{n+1,{\boldsymbol B}} \ud B_{s} +
 \int_{\ell T/p}^{(\ell+1)T/p}
 k_{s}^{n+1,{\boldsymbol W}} \ud W_{s}^{{\boldsymbol h}^n}, 
\end{split}
\end{equation*}
for some square integrable and ${\mathbb F}^{X_0,{\boldsymbol B},{\boldsymbol W}}$-progressively measurable processes ${\boldsymbol k}^{n+1,{\boldsymbol B}}$
and 
${\boldsymbol k}^{n+1,{\boldsymbol W}}$. 
Above, we used the fact that ${W}_{\tau_{p}(t)+T/p}^{p,{\varpi {\boldsymbol h}^n \hspace{-2pt} / \varepsilon}}
- 
{W}_{\tau_{p}(t)}^{p,{\varpi {\boldsymbol h}^n \hspace{-2pt} / \varepsilon}}=
{W}_{\tau_{p}(t)+T/p}^{{\varpi {\boldsymbol h}^n \hspace{-2pt} / \varepsilon}}
- 
{W}_{\tau_{p}(t)}^{{\varpi {\boldsymbol h}^n \hspace{-2pt} / \varepsilon}}$. 
Now, 
 using 
\eqref{eq:minimiser:p:problem:lemma2.10}
 and  conditioning on 
${\mathcal F}_{\tau_{p}(t)}^{p,X_0,{\boldsymbol B},{\boldsymbol W}}$
in 
\eqref{eq:discrete:time:h}, we obtain
\begin{equation*}
	\begin{split}
		\alpha_{\ell T/p}^{n+1} = - \eta_{\ell T/p}^{(p)} X^{n+1}_{\ell T/p} - 
		 {\mathbb E}^{{\varpi {\boldsymbol h}^n \hspace{-2pt} / \varepsilon}} \Bigl[ \tilde h_{\ell T/p}^{n+1} \, \vert \, {\mathcal F}_{\ell T/p}^{p,X_0,{\boldsymbol B},{\boldsymbol W}}\Bigr], \quad \ell \in \{0,\cdots,p-1\},
	\end{split}
\end{equation*}
which permits to write
\begin{equation}
\label{eq:xn+1:xn:p}
\begin{split}
X^{n+1}_{(\ell+1)T/p} 
&= X^{n+1}_{\ell T/p} + \tfrac{T}p 
	\alpha_{\ell T/p}^{n+1} + \tfrac1{\varpi} \sigma \Bigl( B_{(\ell+1)T/p}- 
	B_{\ell T/p} \Bigr) + \varepsilon \Bigl( W^{{\varpi {\boldsymbol h}^n \hspace{-2pt} / \varepsilon}}_{(\ell+1)T/p}
	-
	W^{{\varpi {\boldsymbol h}^n \hspace{-2pt} / \varepsilon}}_{\ell T}
	\Bigr)
\\
&= \Bigl( I_d - \tfrac{T}p \eta_{\ell T/p}^{(p)} \Bigr) 
	X^{n+1}_{\ell T/p} - \tfrac{T}{p}  {\mathbb E}^{{\varpi {\boldsymbol h}^n \hspace{-2pt} / \varepsilon}} \Bigl[ \tilde h_{\ell T/p}^{n+1} \, \vert \,  {\mathcal F}_{\ell T/p}^{p,X_0,{\boldsymbol B},{\boldsymbol W}}\Bigr]
	\\
&\hspace{15pt}	+ \tfrac{1}{\varpi} \sigma \Bigl( B_{(\ell+1)T/p}- 
	B_{\ell T/p} \Bigr) + \varepsilon \Bigl( W^{{\varpi {\boldsymbol h}^n \hspace{-2pt} / \varepsilon}}_{(\ell+1)T/p}
	-
	W^{{\varpi {\boldsymbol h}^n \hspace{-2pt} / \varepsilon}}_{\ell T}
	\Bigr).
\end{split}	
\end{equation}
Modifying the processes ${\boldsymbol k}^{n+1,{\boldsymbol B}}$ and ${\boldsymbol h}^{n+1,{\boldsymbol W}}$
and 
using in addition the first equation 
for
${\boldsymbol \eta}^{(p)}$ in \eqref{eq:discretized:Riccati}, we
 end-up with 
\begin{equation}
\label{eq:bsde:pontryagin:proof:p:000}
\begin{split}
\tilde h_{(\ell+1)T/p}^{n+1} - \tilde h_{\ell T/p}^{n+1}
&= - \tfrac{T}{p \varpi}  
Q^\dagger f ( \overline m_{(\ell+1)T/p}^n)  +\tfrac{T}p  
\Bigl( \eta_{\ell T/p}^{(p)}
+ \rho_{\ell T/p}^{(p)} 
\Bigr)
  {\mathbb E}^{{\varpi {\boldsymbol h}^n \hspace{-2pt} / \varepsilon}} \Bigl[ \tilde h_{\ell T/p}^{n+1} \, \vert \,  {\mathcal F}_{\ell T/p}^{p,X_0,{\boldsymbol B},{\boldsymbol W}}\Bigr] 
 \\
 &\hspace{15pt}+ \int_{\ell T/p}^{(\ell+1)T/p}
 k_{s}^{n+1,{\boldsymbol B}} \ud B_{s} +
 \int_{\ell T/p}^{(\ell+1)T/p}
 k_{s}^{n+1,{\boldsymbol W}}  \ud W_{s}^{{\boldsymbol h}^n},
\end{split}
\end{equation}
for $\ell \in \{0,\cdots,p-2\}$, where 
\begin{equation*}
\rho_{\ell T/p}^{(p)}
= 
\tfrac{T}{p \varpi} Q^\dagger Q 
+
  \Bigl( \eta_{(\ell +1)T/p}^{(p)} - \eta_{\ell T/p}^{(p)} \Bigr).
\end{equation*}
By
recalling that 
$ \overline m_{(\ell+1)T/p}^n$ is 
${\mathcal F}_{(\ell+1)T/p}^{p,{\boldsymbol W}}$-measurable and by 
 modifying the process 
${\boldsymbol k}^{n+1,{\boldsymbol W}}$
accordingly, 
we can rewrite 
\eqref{eq:bsde:pontryagin:proof:p:000}
in the form 
\begin{equation}
\label{eq:bsde:pontryagin:proof:p}
\begin{split}
\tilde h_{(\ell+1)T/p}^{n+1} - \tilde h_{\ell T/p}^{n+1}
&= - \tfrac{T}{p}  {\mathbb E}^{{\varpi {\boldsymbol h}^n \hspace{-2pt} / \varepsilon}}
\Bigl[ 
\tfrac1{\varpi}
Q^\dagger f ( \overline m_{(\ell+1)T/p}^n) \, \vert \, 
{\mathcal F}_{\ell T/p}^{p,{\boldsymbol W}}
\Bigr]
\\
&\hspace{15pt}   +\tfrac{T}p  
\Bigl( \eta_{\ell T/p}^{(p)}
+ \rho_{\ell T/p}^{(p)} 
\Bigr)
  {\mathbb E}^{{\varpi {\boldsymbol h}^n \hspace{-2pt} / \varepsilon}} \Bigl[ \tilde h_{\ell T/p}^{n+1} \vert {\mathcal F}_{\ell T/p}^{p,X_0,{\boldsymbol B},{\boldsymbol W}}\Bigr] 
 \\
 &\hspace{15pt}+ \int_{\ell T/p}^{(\ell+1)T/p}
 k_{s}^{n+1,{\boldsymbol B}} \ud B_{s} +
 \int_{\ell T/p}^{(\ell+1)T/p}
 k_{s}^{n+1,{\boldsymbol W}}  \ud W_{s}^{{\boldsymbol h}^n},
\end{split}
\end{equation}
for $\ell \in \{0,\cdots,p-2\}$. 
When $\ell=p-1$, we deduce
from
\eqref{eq:discrete:time:h}
and
\eqref{eq:xn+1:xn:p}
that
\begin{equation*}
\begin{split}
&\tilde h^{n+1}_{(p-1)T/p}
\\
&= R^\dagger R \, {\mathbb E}^{{\varpi {\boldsymbol h}^n \hspace{-2pt} / \varepsilon}} 
\bigl[ X_T^{n+1} \vert 
{\mathcal F}_{(p-1)T/p}^{X_0,{\boldsymbol B},{\boldsymbol W}} 
\bigr] + \tfrac1{\varpi} R^\dagger
 {\mathbb E}^{{\varpi {\boldsymbol h}^n \hspace{-2pt} / \varepsilon}} 
\bigl[
g\bigl(\overline m^n_{T}\bigr)\, \vert \,
{\mathcal F}_{(p-1)T/p}^{X_0,{\boldsymbol B},{\boldsymbol W}} 
\bigr]   
- \eta_{(p-1)T/p}^{(p)} X^{n+1}_{(p-1)T/p}
\\
&=
\Bigl[ 
R^\dagger R
\Bigl( I_d - \frac{T}p \eta_{(p-1)T/p}^{(p)}  \Bigr)
- \eta_{(p-1)T/p}^{(p)} 
\Bigr] X^{n+1}_{(p-1)T/p}
\\
&\hspace{15pt} + \tfrac1{\varpi} R^\dagger  
 {\mathbb E}^{{\varpi {\boldsymbol h}^n \hspace{-2pt} / \varepsilon}} 
\bigl[
g\bigl(\overline m^n_{T}\bigr)\, \vert \,
{\mathcal F}_{(p-1)T/p}^{X_0,{\boldsymbol B},{\boldsymbol W}} 
\bigr]   
 - \tfrac{T}{p} R^\dagger R \,  {\mathbb E}^{{\varpi {\boldsymbol h}^n \hspace{-2pt} / \varepsilon}} \Bigl[ \tilde h_{(p-1) T/p}^{n+1} \vert {\mathcal F}_{(p-1) T/p}^{p,X_0,{\boldsymbol B},{\boldsymbol W}}\Bigr],
\end{split}
\end{equation*}
which yields, using the boundary condition in \eqref{eq:discretized:Riccati} together with the fact that 
$\overline m^n_{T}$ is 
${\mathcal F}_T^{{\boldsymbol W}}$-measurable
\begin{equation}
\label{eq:bsde:pontryagin:proof:p:boundary}
\tilde h^{n+1}_{(p-1)T/p}
= \tfrac1{\varpi} R^\dagger  
 {\mathbb E}^{{\varpi {\boldsymbol h}^n \hspace{-2pt} / \varepsilon}} 
\bigl[
g\bigl(\overline m^n_{T}\bigr)\, \vert \,
{\mathcal F}_{(p-1)T/p}^{{\boldsymbol W}} 
\bigr]    - \tfrac{T}{p} R^\dagger R \,  {\mathbb E}^{{\varpi {\boldsymbol h}^n \hspace{-2pt} / \varepsilon}} \Bigl[ \tilde h_{(p-1) T/p}^{n+1} \vert {\mathcal F}_{(p-1) T/p}^{p,X_0,{\boldsymbol B},{\boldsymbol W}}\Bigr].
\end{equation}
We
now recall that, by construction,  
$\tilde h_{\ell T/p}^{n+1}$ 
is 
${\mathcal F}_{\ell T/p}^{X_0,{\boldsymbol B},{\boldsymbol W}}$-measurable. 
Arguing as in Remark 
\ref{rem:main:thm:2:b}, we can prove inductively (over $\ell$) that 
$\tilde h_{\ell T/p}^{n+1}$ 
is 
in fact 
${\mathcal F}_{\ell T/p}^{p,{\boldsymbol W}}$-measurable.
As a consequence,  
${\boldsymbol k}^{n+1,{\boldsymbol B}}$ has to be zero.
By combining 
\eqref{eq:bsde:pontryagin:proof:p}
and
\eqref{eq:bsde:pontryagin:proof:p:boundary}, we hence 
 recover BSDE \eqref{eq:bsde:exploration}.

\end{proof}

\begin{proof}[Proof of Lemma
\ref{lem:1:b}.]
The bound
\eqref{eq:lem:1:b:1} on ${\boldsymbol h}^n$ is a direct consequence of 
\eqref{eq:recurrence:cas:discret}, which yields: 
\begin{equation*}
\begin{split}
\textrm{\rm essup}_{\omega \in \Omega}
\vert \tilde h^{n+1}_{\ell T/p} \vert \leq 
\bigl( 1+ \tfrac{C_1}{p} \bigr)
\Bigl[ 
\textrm{\rm essup}_{\omega \in \Omega}
\vert \tilde h^{n+1}_{(\ell+1) T/p} \vert
+ \tfrac{C_1}p \Bigr], \quad \ell \in \{0,\cdots,p-1\}, 
			\end{split}
			\end{equation*}
			for $C_1$ as in the statement. 
			Observing that 
$\textrm{\rm essup}_{\omega \in \Omega}
\vert \tilde h^{n+1}_{T} \vert$ is bounded by $C_1$, we get the result by means of 
a straightforward backward induction. 		
The bound on $\tilde {\boldsymbol h}^{(p)}$ is proven in a similar way. 

In order to prove the second claim 
in the statement (see \eqref{eq:lem:1:b:2}), we invoke \cite[Theorem 2.2]{kazamaki}. 
Indeed, we notice that the ${\mathbb P}^{{\boldsymbol h}^{(p)}/\varepsilon }$-martingale
\begin{equation*}
\biggl( \varepsilon \int_{0}^t  k_{s}^{(p)}  \ud W_{s}^{  {\boldsymbol h}^{(p)}/\varepsilon } \biggr)_{0 \leq t \leq T}
\end{equation*}
has a finite BMO norm, see for instance (2.1) with $p=2$ in the book \cite{kazamaki} by Kazamaki.
The proof is as follows. Rewriting \eqref{eq:bsde:exploration:p,varpi} in the form  
\begin{equation*} 
\begin{split}
		&\ud  \tilde h_{t}^{(p)} 
		= \Bigl\{ -  
		{\mathbb E} 
		\Bigl[ 
		Q^\dagger f\bigl(m_{\tau_{p}(t)+T/p}^{(p)} \bigr)\, \vert \, 
		{\mathcal F}^{p,{\boldsymbol W}}_{\tau_p(t)} \Bigr]
		 {\mathbf 1}_{\{ t \leq (p-1)T/p\}}  
		 \\
&\hspace{50pt}		  + 
		\Bigl( \eta_{\tau_{p(t)}+T/p}^{(p)}
		  + \frac{T}p Q^\dagger Q 
		  {\mathbf 1}_{\{ t \leq (p-1)T/p\}}
		  	\Bigr) 		  
		    h_{t}^{(p)} \Bigr\}
 \ud t  
 	  + \varepsilon   k_{t}^{(p)}   \ud W_{t}^{{\boldsymbol h}^{(p)} \hspace{-2pt} /\varepsilon}, \quad t \in [0,T),
		\\
		&\tilde h_{T}^{(p)} = R^\dagger g \bigl( m^{(p)}_{T} \bigr), 
			\end{split}\end{equation*} 
we easily deduce that,
for any stopping 
$\tau$,
\begin{equation*}
\biggl\vert 
{\mathbb E}^{ {\boldsymbol h}^{(p)} \hspace{-2pt} /\varepsilon} \Bigl[ \int_{\tau}^T 
\varepsilon k_{s}^{(p)} \ud W_s^{{\boldsymbol h}^{(p)} \hspace{-2pt} /\varepsilon} 
\big\vert \, {\mathcal F}_{\tau}^{{\boldsymbol W}} \Bigr] \biggr\vert \leq C_{1},
\end{equation*}
for a possibly new value of $C_1$. We conclude by means of \cite[Theorem 2.2]{kazamaki}.
\end{proof}

\begin{proof}[Proof of Proposition \ref{prop:!:FBSDE:p-discrete}]
The equation for 
$(\tilde h_{\ell T/p}^{(p)})_{\ell=0,\cdots,p}$ in \eqref{eq:decoupled:limit:discrete} can be rewritten in 
the form: 
\begin{equation}
\label{eq:backward:p:one:step}
\begin{split}
			&\Bigl[ I_d + 
			 \tfrac{T}p  
	\Bigl(	\eta_{(\ell+1)T/p}^{(p)} + \tfrac{T}p Q^\dagger Q {\mathbf 1}_{\{\ell \leq p-2\}} \Bigr)
	\Bigr] \tilde h_{\ell T/p}^{(p)} 
	\\
	&= 
	{\mathbb E}
	\biggl[ {\mathcal E}\bigl( \tfrac{1}{\varepsilon} 
	\tilde h_{\ell T/p}^{(p)}\bigr)
	  \Bigl(
	\tilde h_{(\ell+1)T/p}^{(p)} +   \tfrac{T}{ p}  
		Q^\dagger f\bigl(m_{(\ell+1)T/p}^{(p)}\bigr) 
		{\mathbf 1}_{\{\ell \leq p-2\}} \Bigr) \, \bigl\vert \, 
		{\mathcal F}_{\ell T/p}^{\boldsymbol W}
		 \biggr], 
\end{split}
\end{equation} 
for $\ell \in \{0,\cdots,p-1\}$, 
with the shorten notation 
\begin{equation*} 
{\mathcal E}\bigl( \tfrac{1}{\varepsilon} 
	\tilde h_{\ell T/p}^{(p)}\bigr)
	:=
\exp \Bigl(  - \tfrac{1}{\varepsilon}  
	\tilde h_{\ell T/p}^{(p)} \cdot \bigl( W_{(\ell+1)T/p} - W_{\ell T/p} \bigr)
	- \tfrac{T}{2 \varepsilon^2 p} 
	\vert 
	\tilde h_{\ell T/p}^{(p)} \vert^2 
	\Bigr).
\end{equation*} 
The challenge is thus to 
find, for each $\ell \in \{0,\cdots,p-1\}$,
a variable $\tilde h_{\ell T/p}^{(p)} \in L^2(\Omega,{\mathcal F}_{\ell T/p}^{p,{\boldsymbol W}},{\mathbb P};{\mathbb R}^d)$
that solves
\eqref{eq:backward:p:one:step}, 
when  
$\tilde h_{(\ell+1) T/p}^{(p)}$
is given  
in $L^2(\Omega,{\mathcal F}_{(\ell+1) T/p}^{p,{\boldsymbol W}},{\mathbb P};{\mathbb R}^d)$. 

In order to do so, we consider the mapping 
\begin{equation*} 
\begin{split}
\Phi_\ell : L^2(\Omega,{\mathcal F}_{\ell T/p}^{p,{\boldsymbol W}},{\mathbb P};{\mathbb R}^d)
\ni y 
&\mapsto 
	\Bigl[ I_d + 
			 \tfrac{T}p  
	\Bigl(	\eta_{(\ell+1)T/p}^{(p)} + \tfrac{T}p Q^\dagger Q {\mathbf 1}_{\{\ell \leq p-2\}} \Bigr)
	\Bigr]^{-1} 
\\
&\hspace{5pt} \times	{\mathbb E}
	\biggl[ {\mathcal E}\bigl( \tfrac{1}{\varepsilon} 
	y \bigr)
	  \Bigl(
	\tilde h_{(\ell+1)T/p}^{(p)} +   \tfrac{T}{p}  
		Q^\dagger f\bigl(m_{(\ell+1)T/p}^{(p)}\bigr) 
		{\mathbf 1}_{\{\ell \leq p-2\}} \Bigr)  \, \bigl\vert \, 
		{\mathcal F}_{\ell T/p}^{p,{\boldsymbol W} }\biggr]
		\end{split}
\end{equation*} 
and then look for a fixed point to it. 

%
%
%
Following the second step
of Lemma  
\ref{lem:1} (see in particular \eqref{eq:d:TV}), we have, for any two 
$y,y' \in L^2(\Omega,{\mathcal F}_{\ell T/p}^{p,{\boldsymbol W}},{\mathbb P};{\mathbb R}^d)$, 
with probability 1 under ${\mathbb P}$, 
\begin{equation*} 
\begin{split} 
&d_{\rm TV} 
\Bigl( {\mathbb P}_y \bigl( \cdot \, \vert \, {\mathcal F}_{\ell T/p}^{p,\boldsymbol W} \bigr), 
 {\mathbb P}_{y'} \bigl( \cdot \, \vert \, {\mathcal F}_{\ell T/p}^{p,\boldsymbol W} \bigr)\Bigr)
\leq \sqrt{2} \Bigl( 
\tfrac{1}{2 \varepsilon^2}
\tfrac{T}{p} 
\vert y - y' \vert^2
\Bigr)^{1/2}
\leq  \bigl(
 \tfrac{T}{p \varepsilon^2} \bigr)^{1/2} \vert y - y' \vert, 
\end{split} 
\end{equation*} 
where 
$ {\mathbb P}_y = {\mathcal E}(y) \cdot {\mathbb P}$ (and similarly for $y'$). 
Above, 
${\mathbb P}_y( \cdot \, \vert \, {\mathcal F}_{\ell T/p}^{p,\boldsymbol W})$
should be understood as a regular conditional probability 
of ${\mathbb P}_y$ on $(\Omega, {\mathcal F}_{(\ell+1) T/p}^{p,\boldsymbol W})$
given 
$ {\mathcal F}_{\ell T/p}^{p,\boldsymbol W}$ (and similarly for $y'$). 

Recalling that $\tilde h_{(\ell+1)T/p}^{(p)} +   [T/p ] 
	Q^\dagger f(m_{(\ell+1)T/p}^{(p)}) 
		{\mathbf 1}_{\{\ell \leq p-2\}}$ is bounded (by a constant independent of $\varepsilon$, $\ell$ and $p$), 
		we deduce that
		there exists a constant $c$ (depending on the same parameters as those quoted in the statement) 
		such that, with probability 1 under ${\mathbb P}$,
\begin{equation*} 
\vert \Phi_\ell(y') - \Phi_\ell(y) \vert \leq c  
 \tfrac{1}{\sqrt{p} \varepsilon} 
 \vert y' - y \vert.
\end{equation*} 		
And then, 
\begin{equation*} 
{\mathbb E} 
\bigl[
\vert \Phi_\ell(y') - \Phi_\ell(y) \vert^2 \bigr] \leq 
 \tfrac{c^2}{p \varepsilon^2} 
{\mathbb E} 
\bigl[ \vert y' - y \vert^2 \bigr],
\end{equation*} 		
which proves that $\Phi_\ell$ is a contraction in 
$ L^2(\Omega,{\mathcal F}_{\ell T/p}^{p,{\boldsymbol W}},{\mathbb P};{\mathbb R}^d)$
when $ \sqrt{p} \varepsilon>c$. 
By (backward) induction on $\ell$ (following the proof of Lemma 
\ref{lem:1:b}), we know that 
each $\tilde h_{\ell T/p}^{(p)}$ is bounded in $L^\infty$
by a constant $C$ 
(depending on the same parameters as those quoted in the statement).

Once 
$\tilde h_{\ell T/p}^{(p)}$ has been found in 
\eqref{eq:decoupled:limit:discrete}, it remains to find 
the process $(k_s^{(p)})_{\ell T/p \leq s \leq (\ell+1) T/p}$. This can be done by solving the following standard BSDE
(with the pair process $({\boldsymbol Y},{\boldsymbol Z})=(Y_s,Z_s)_{\ell T/p \leq s \leq (\ell+1) T/p}$ as unknown, taking values in 
${\mathbb R}^d \times {\mathbb R}^{d \times d}$):
\begin{equation*} 
\begin{split} 
&\ud Y_s
= - 	Q^\dagger f\bigl(m_{(\ell+1)T/p}^{(p)}\bigr) 
		{\mathbf 1}_{\{\ell \leq p-2\}} \ud s
		+  
	\Bigl(	
		\eta_{(\ell+1)T/p}^{(p)} + \tfrac{T}p Q^\dagger Q {\mathbf 1}_{\{\ell \leq p-2\}} \Bigr)
		  \tilde h_{\ell T/p}^{(p)}  \ud s 
		  \\
&\hspace{15pt}		  +   Z_s \tilde h_{\ell T/p}^{(p)} \ud s
		  + \varepsilon Z_s \ud W_{s},  
\end{split}
\end{equation*}
for $s \in [\ell T/p,(\ell+1)T/p]$, with the boundary condition $Y_{(\ell+1)T/p} = 
\tilde h_{(\ell+1)T/p}^{(p)}$ (at time $(\ell+1)T/p$).
Existence and uniqueness of a solution is standard because 
$\tilde h_{\ell T/p}^{(p)}$ is in $L^\infty$. 
Following 
the last sentence in Remark
\ref{rem:main:thm:2:b}
(about measurability of the conditioning 
of an 
${\mathcal F}_{(\ell+1) T/p}^{p,\boldsymbol W}$-measurable random variable 
given 
${\mathcal F}_{\ell T/p}^{\boldsymbol W}$), 
we deduce that $Y_{\ell T/p}$ coincides with 
$\tilde h_{\ell T/p}^{(p)}$ and eventually 
$(Z_s)_{\ell T/p \leq s \leq (\ell+1) T/p}$ 
solves the equation with $(k_s^{(p)})_{\ell T/p \leq s \leq (\ell+1) T/p}$ 
as unknown in
\eqref{eq:decoupled:limit:discrete}. Uniqueness of this choice 
can be easily checked by observing (from It\^o's isometry) that, for another 
solution, say $(k_s^{(p),\prime})_{\ell T/p \leq s \leq (\ell+1) T/p}$,
\begin{equation*}
\begin{split}
\varepsilon^2 {\mathbb E} \int_{\ell T/p}^{(\ell+1) T/p} 
\bigl\vert k_s^{(p)} - k_s^{(p),\prime} \bigr\vert^2 
\ud s 
&\leq 
C {\mathbb E} \biggl[ \biggl\vert 
 \int_{\ell T/p}^{(\ell+1) T/p}
 \bigl( k_s^{(p)} - k_s^{(p),\prime} \bigr) 
 \ud s 
 \biggr\vert^2 \biggr]
 \\
&\leq 
\tfrac{CT}{p} {\mathbb E} \biggl[   
 \int_{\ell T/p}^{(\ell+1) T/p}
 \bigl\vert k_s^{(p)} - k_s^{(p),\prime} \bigr\vert^2 
 \ud s 
 \biggr],
 \end{split}
\end{equation*} 
for a possibly new choice of the constant $C$. Uniqueness easily follows 
if $CT/p<\varepsilon^2$. 
\end{proof}

\subsection{Exploitation versus exploration}
\label{subse:exploitation:exploration}
We now address the \textcolor{black}{exploitability}, when we play the equilibrium strategy given by the 
fictitious play. 

\subsubsection{Approximate Nash equilibrium formed by 
the solution of the MFG with common noise}
We first prove (which is easier) that the 
solution of the 
$p$-discrete MFG with common noise
(as defined in the statement of Theorem 
\ref{main:thm:2} and in 
\eqref{eq:mfg:p:1}--\eqref{eq:mfg:p:2})
 forms an approximate Nash equilibrium of the game without common noise. 
In order to do so, we recall
from 
\eqref{eq:mfg:p:3} 
that
$ {\boldsymbol \alpha}^{(p),\star}$ is the 
equilibrium strategy of the 
$p$-discrete MFG associated with  
the cost functional 
\eqref{eq:mfg:p:1}
and that the optimal trajectory is given by 
\eqref{eq:mfg:p:3:bbis}.

We claim:
\begin{prop}
\label{prop:epsilon:Nash}
There exists a constant $C$, only depending on the parameters $d$,  $T$, $\| f\|_{1,\infty}$, $\| g \|_{1,\infty}$,
$\vert Q\vert$, $\vert R\vert$ \textcolor{black}{and ${\mathbb E}[\vert X_0\vert^2]$}, such that, for any integer $p \geq 1$, 
\begin{equation*}
\inf_{\boldsymbol \alpha}
	{\mathbb E}^{{\boldsymbol h}^{(p)} \hspace{-2pt} /\varepsilon} \Bigl[ {\mathcal R}^{p,X_0}\bigl({\boldsymbol \alpha};{\boldsymbol m}^{(p)};0\bigr)\Bigr]
	\geq 
	{\mathbb E}^{{\boldsymbol h}^{(p)} \hspace{-2pt} /\varepsilon} \Bigl[ {\mathcal R}^{p,X_0} \bigl({\boldsymbol \alpha}^{(p),\star};{\boldsymbol m}^{(p)};0\bigr)\Bigr]
	 - C \varepsilon,
\end{equation*}
the infimum being taken over 
${\mathbb F}^{p,X_{0},{\boldsymbol B},{\boldsymbol W}}=
(\sigma (X_{0},(B_{\tau_{p}(s)},W_{\tau_{p}(s)})_{s \leq t}))_{0 \leq t \leq T}$-progressively measurable 
and square-integrable process
$({\boldsymbol \alpha}_{t})_{0 \leq t \leq T}$ that is piecewise constant on any interval $[\ell T/p,(\ell +1)T/p)$, for $\ell \in \{0,\cdots,p-1\}$. 
\end{prop}

We start with the following lemma:

\begin{lem}
\label{lem:3:0}
There exists a constant $C$, only depending on  $d$, $T$, $\| f\|_{1,\infty}$, $\| g \|_{1,\infty}$,
$\vert Q\vert$ and $\vert R\vert$, such that, for any integer $p \geq 1$ and any 
${\mathbb F}^{p,X_{0},{\boldsymbol B},{\boldsymbol W}}$-progressively measurable 
and square-integrable process
${\boldsymbol \alpha} = ({\alpha}_{t})_{0 \leq t \leq T}$ that is piecewise constant on any interval $[\ell T/p,(\ell +1)T/p)$, for $\ell \in \{0,\cdots,p-1\}$, 
\begin{equation*}
\begin{split}
&\biggl\vert 
{\mathbb E}^{{\boldsymbol h}^{(p)} \hspace{-2pt} /\varepsilon} \Bigl[ {\mathcal R}^{X_0}\bigl({\boldsymbol \alpha};{\boldsymbol m}^{(p)};\varepsilon {\boldsymbol W}^{p, {\boldsymbol h}^{(p)} \hspace{-2pt} /\varepsilon}\bigr) \Bigr]
- 
{\mathbb E}^{ {\boldsymbol h}^{(p)} \hspace{-2pt} /\varepsilon} \Bigl[ {\mathcal R}^{X_0} \bigl({\boldsymbol \alpha};{\boldsymbol m}^{(p)};0\bigr)   \Bigr]
\biggr\vert 
\\
&\hspace{15pt} \leq   
C \biggl( 1+   {\mathbb E} \bigl[ \vert X_{0} \vert^2 \bigr]^{1/2} 
+
{\mathbb E}^{ {\boldsymbol h}^{(p)} \hspace{-2pt} /\varepsilon}
\biggl[ \int_{0}^T \vert \alpha_{t} \vert^2 \ud t  
\biggr]^{1/2} \biggr)
\varepsilon. 
\end{split}
\end{equation*}
\end{lem}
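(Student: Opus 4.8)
The goal is to compare the two costs $\mathcal R(\boldsymbol\alpha;\boldsymbol m;\varepsilon\boldsymbol W^{\boldsymbol h})$ and $\mathcal R(\boldsymbol\alpha;\boldsymbol m;0)$, which differ only through the state process: in the first case the state is $x_t = X_0 + \int_0^t\alpha_s\,\ud s + \sigma B_t + \varepsilon W_t^{\boldsymbol h}$, in the second $\bar x_t = X_0 + \int_0^t\alpha_s\,\ud s + \sigma B_t$. So $x_t - \bar x_t = \varepsilon W_t^{\boldsymbol h}$, and the whole estimate is really a stability bound for a quadratic functional under an $O(\varepsilon)$ perturbation of the path. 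First I would write
\begin{equation*}
\mathcal R(\boldsymbol\alpha;\boldsymbol m;\varepsilon\boldsymbol W^{\boldsymbol h}) - \mathcal R(\boldsymbol\alpha;\boldsymbol m;0)
= \tfrac12\bigl(|Rx_T+g(m_T)|^2 - |R\bar x_T+g(m_T)|^2\bigr)
+ \tfrac12\int_0^T\bigl(|Qx_t+f(m_t)|^2 - |Q\bar x_t+f(m_t)|^2\bigr)\ud t,
\end{equation*}
noting that the control terms $|\alpha_t|^2$ cancel exactly. Each difference of squares factors as $(a-b)\cdot(a+b)$ with $a-b = \varepsilon R W_T^{\boldsymbol h}$ (resp.\ $\varepsilon Q W_t^{\boldsymbol h}$) and $a+b = Rx_T + R\bar x_T + 2g(m_T)$ (resp.\ the analogue). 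Then I would apply Cauchy--Schwarz under $\mathbb E^{\boldsymbol h}$, bounding the factor $|a-b|$ by $\varepsilon(|Q|\vee|R|)\sup_t|W_t^{\boldsymbol h}|$ and the factor $|a+b|$ by $C(1 + |X_0| + \sup_t|x_t| + \sup_t|\bar x_t|)$ using boundedness of $f,g$.

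**Main estimates.** The remaining work is to control the $L^2(\mathbb P^{\boldsymbol h})$ norms of $\sup_t|x_t|$, $\sup_t|\bar x_t|$ and $\sup_t|W_t^{\boldsymbol h}|$. For the last one: under $\mathbb P^{\boldsymbol h}$ the process $\boldsymbol W^{\boldsymbol h}$ is a Brownian motion (Girsanov), so $\mathbb E^{\boldsymbol h}[\sup_{t\le T}|W_t^{\boldsymbol h}|^2]\le C(d,T)$ by Doob's inequality; this is where it is crucial that the comparison is performed under $\mathbb P^{\boldsymbol h}$ rather than $\mathbb P$. For the state bounds, $\sup_t|\bar x_t| \le |X_0| + \int_0^T|\alpha_t|\ud t + \sigma\sup_t|B_t|$, so
\begin{equation*}
\mathbb E^{\boldsymbol h}\bigl[\sup_{t\le T}|\bar x_t|^2\bigr]^{1/2}
\le \mathbb E[|X_0|^2]^{1/2} + T^{1/2}\mathbb E^{\boldsymbol h}\Bigl[\int_0^T|\alpha_t|^2\ud t\Bigr]^{1/2} + C\sigma\mathbb E^{\boldsymbol h}\bigl[\sup_t|B_t|^2\bigr]^{1/2},
\end{equation*}
and $x_t = \bar x_t + \varepsilon W_t^{\boldsymbol h}$ gives the same bound for $\sup_t|x_t|$ up to an additive $C\varepsilon$ (harmless since $\varepsilon\le 1$). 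One subtlety: $\mathbb E^{\boldsymbol h}[\sup_t|B_t|^2]$ must be controlled — under $\mathbb P^{\boldsymbol h}$ the process $\boldsymbol B$ is still a Brownian motion, since the Girsanov density $\mathcal E(\boldsymbol h)$ only involves $\boldsymbol W$ and $\boldsymbol B,\boldsymbol W$ are independent, so in fact $\boldsymbol B$ has the same law under $\mathbb P^{\boldsymbol h}$ as under $\mathbb P$; hence $\mathbb E^{\boldsymbol h}[\sup_t|B_t|^2]\le C(d,T)$. Assembling these via Cauchy--Schwarz yields the claimed bound, with the factor $(1 + \mathbb E[|X_0|^2]^{1/2} + \mathbb E^{\boldsymbol h}[\int_0^T|\alpha_t|^2\ud t]^{1/2})$ absorbing the state moments and the $\sigma$-dependent constant folded into $C$.

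**Main obstacle.** The only real care needed is in keeping all expectations and moment bounds under the \emph{tilted} measure $\mathbb P^{\boldsymbol h}$, and recognizing that under $\mathbb P^{\boldsymbol h}$ both $\boldsymbol W^{\boldsymbol h}$ and $\boldsymbol B$ are Brownian motions (the former by construction of $\mathbb P^{\boldsymbol h}$, the latter because $\mathcal E(\boldsymbol h)$ is $\sigma(\boldsymbol W)$-measurable and independent of $\boldsymbol B$), so that the usual Doob/BDG moment bounds apply with constants depending only on $d$ and $T$ — \emph{not} on $\varepsilon$. Everything else is a routine difference-of-squares expansion followed by Cauchy--Schwarz; no tightness or compactness argument is required, and the linear-in-$\varepsilon$ rate is immediate once the $\sup$-moments are seen to be $\varepsilon$-uniform.
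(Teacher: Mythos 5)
Your proposal is correct and follows essentially the same route as the paper's (much terser) proof: bound the difference of the two state paths by $\varepsilon\sup_t|W_t^{\boldsymbol h}|$, control the second moments of the states under ${\mathbb P}^{\boldsymbol h}$ in terms of $1+{\mathbb E}[|X_0|^2]+{\mathbb E}^{\boldsymbol h}[\int_0^T|\alpha_t|^2\,\ud t]$, and conclude via the quadratic structure and Cauchy--Schwarz. The details you spell out (cancellation of the control terms, the fact that both ${\boldsymbol W}^{\boldsymbol h}$ and ${\boldsymbol B}$ are Brownian motions under ${\mathbb P}^{\boldsymbol h}$) are exactly what the paper leaves implicit in ``the result easily follows.''
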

The symbol ${\mathbb E}$ instead of 
${\mathbb E}^{{\boldsymbol h}^{(p)} \hspace{-2pt} /\varepsilon}$ for the $L^2$ moment of the initial condition is 
fully justified by the fact that $X_{0}$ is ${\mathcal F}_{0}$-measurable. 
\begin{proof}
For the same initial condition $X_{0}$ and the same ${\boldsymbol \alpha}$ as in the statement, we let 
\begin{equation*}
\begin{split}
&\ud X_{t}^{(p),\varepsilon} = \alpha_{t} \ud t + \sigma \ud B_{t} + \varepsilon \ud W_{t}^{p, {\boldsymbol h}^{(p)} \hspace{-2pt} / \varepsilon},
\quad t \in [0,T]. 
\end{split}
\end{equation*}
In particular, we write ${\boldsymbol X}^{(p),0}=(X_{t}^{(p),0})_{0 \le t \le T}$
when there is no common noise. Then, we obviously have
\begin{equation*}
 \sup_{0 \leq t \leq T} \vert X_{t}^{\textcolor{black}{\varepsilon}} - X_{t}^0 \vert \leq C 
\varepsilon \sup_{0 \leq t \le T} \vert W_{t}^{p,{\boldsymbol h}^{(p)} \hspace{-2pt} / \varepsilon} \vert
\leq C \varepsilon \sup_{0 \leq t \le T} \vert W_{t}^{{\boldsymbol h}^{(p)} \hspace{-2pt} / \varepsilon} \vert. 
\end{equation*}
Meanwhile, we observe that 
\begin{equation*}
{\mathbb E}^{{\boldsymbol h}^{(p)} \hspace{-2pt} / \varepsilon}
\Bigl[ \sup_{0 \leq t \leq T} \bigl( \vert X_{t}^{\textcolor{black}{\varepsilon}} \vert^2 
+ 
\vert X_{t}^0 \vert^2 
\bigr)  \Bigr] \leq C \biggl( 1 + {\mathbb E} \bigl[ \vert X_{0} \vert^2 \bigr] + 
{\mathbb E}^{{\boldsymbol h}^{(p)} \hspace{-2pt} / \varepsilon} \biggl[ \int_{0}^T \vert \alpha_{t} \vert^2 \ud t  \biggr]
 \biggr).
\end{equation*}
Recalling
\eqref{eq:mathcalR:p},  
we get the result. 
\end{proof}
The next lemma says that we can easily restrict ourselves to controls with a finite energy.

\begin{lem}
\label{lem:3:1}
There exists a constant $A$, only depending on the parameters $d$,  $T$, $\| f\|_{1,\infty}$,
$\| g \|_{1,\infty}$, $\vert Q\vert $, $\vert R\vert$
and \textcolor{black}{${\mathbb E}[\vert X_0 \vert^2]$}
 such that, for any 
$\varepsilon \in (0,1]$, any integer $p \geq 1$ and any 
${\mathbb F}^{p,X_{0},{\boldsymbol B},{\boldsymbol W}}$-progressively measurable 
and square-integrable process
${\boldsymbol \alpha} = ({\alpha}_{t})_{0 \leq t \leq T}$ that is piecewise constant on any interval $[\ell T/p,(\ell +1)T/p)$, for $\ell \in \{0,\cdots,p-1\}$, 
\begin{equation*}
\begin{split}
&{\mathbb E}^{ {\boldsymbol h}^{(p)} \hspace{-2pt} /\varepsilon}
\biggl[ \int_{0}^T \vert \alpha_{t} \vert^2 \ud t 
\biggr] \geq A
\\
&\hspace{15pt}  \Rightarrow 
{\mathbb E}^{  {\boldsymbol h}^{(p)} \hspace{-2pt} /\varepsilon} \Bigl[ {\mathcal R}^{p,X_0}\bigl({\boldsymbol \alpha};{\boldsymbol m}^{(p)};0\bigr)  \Bigr] \geq
{\mathbb E}^{  {\boldsymbol h}^{(p)} \hspace{-2pt} /\varepsilon} \Bigl[ {\mathcal R}^{p,X_0}\bigl(0;{\boldsymbol m}^{(p)};\varepsilon {\boldsymbol W}^{p, {\boldsymbol h}^{(p)} \hspace{-2pt} /\varepsilon}\bigr)  \Bigr] + 1. 
\end{split}
\end{equation*}
\end{lem}

\begin{proof}
It suffices to observe that 
\begin{equation*}
{\mathbb E}^{ {\boldsymbol h}^{(p)} \hspace{-2pt} /\varepsilon} \Bigl[ {\mathcal R}^{p,X_0}\bigl({\boldsymbol \alpha};{\boldsymbol m}^{(p)};0\bigr)   \Bigr]
\geq \frac12 
{\mathbb E}^{  {\boldsymbol h}^{(p)} \hspace{-2pt} /\varepsilon}
\biggl[ \int_{0}^T \vert \alpha_{t} \vert^2 \ud t  
\biggr].
\end{equation*}
Meanwhile, 
\begin{equation*}
{\mathbb E}^{ {\boldsymbol h}^{(p)} \hspace{-2pt} /\varepsilon} \Bigl[ {\mathcal R}^{p,X_0}\bigl(0;{\boldsymbol m}^{(p)};\varepsilon {\boldsymbol W}^{p,  {\boldsymbol h}^{(p)} \hspace{-2pt} /\varepsilon}\bigr)  \Bigr]
\leq C.
\end{equation*}
The proof is easily completed. 
\end{proof}

We now complete the

\begin{proof}[Proof of Proposition \ref{prop:epsilon:Nash}.]
By Lemma \ref{lem:3:1}, 
we can reduce ourselves to 
the set ${\mathcal E}_{A}$ of processes ${\boldsymbol \alpha}$ (as in the statement of Proposition \ref{prop:epsilon:Nash}) such that 
\begin{equation*}
{\mathbb E}^{ {\boldsymbol h}^{(p) } \hspace{-2pt} /\varepsilon}
\biggl[ \int_{0}^T \vert \alpha_{t} \vert^2 \ud t 
\biggr] \leq A.
\end{equation*}
We then apply 
Lemma 
\ref{lem:3:0}, which says that 
\begin{equation*}
\sup_{{\boldsymbol \alpha} \in {\mathcal E}_{A}}
	\Bigl\vert {\mathbb E}^{  {\boldsymbol h}^{(p)} \hspace{-2pt} /\varepsilon}\Bigl[ {\mathcal R}^{p,X_0} \bigl({\boldsymbol \alpha};{\boldsymbol m}^{(p)};\varepsilon {\boldsymbol W}^{p,  {\boldsymbol h}^{(p)} \hspace{-2pt} /\varepsilon} \bigr)\Bigr]
	-
		{\mathbb E}^{  {\boldsymbol h}^{(p)}  \hspace{-2pt}  /\varepsilon} \Bigl[ {\mathcal R}^{p,X_0}  \bigl({\boldsymbol \alpha};{\boldsymbol m};0\bigr)\Bigr]
		\Bigr\vert \leq C \varepsilon. 
\end{equation*} 
The proof is easily completed, using 
\eqref{eq:mfg:p:1}--\eqref{eq:mfg:p:3},
from which we get that, for any 
${\boldsymbol \alpha} \in {\mathcal E}_{A}$, 
\begin{equation*}
\begin{split}
{\mathbb E}^{{\boldsymbol h}^{(p)}  \hspace{-2pt} /\varepsilon} \Bigl[ {\mathcal R}^{p,X_0} \bigl(
{\boldsymbol \alpha}^{(p),\star};{\boldsymbol m}^{(p)}; \varepsilon {\boldsymbol W}^{p, {\boldsymbol h}^{(p)}  \hspace{-2pt} /\varepsilon}\bigr)\Bigr]
&\leq {\mathbb E}^{ {\boldsymbol h}^{(p)}  \hspace{-2pt} /\varepsilon} \Bigl[ {\mathcal R}^{p,X_0} \bigl({\boldsymbol \alpha};{\boldsymbol m}^{(p)};  \varepsilon {\boldsymbol W}^{p, {\boldsymbol h}^{(p) }  \hspace{-2pt} /\varepsilon}\bigr)\Bigr]
\\
&\leq
{\mathbb E}^{ {\boldsymbol h}^{(p) }  \hspace{-2pt} /\varepsilon}  \Bigl[ {\mathcal R}^{p,X_0} \bigl({\boldsymbol \alpha};{\boldsymbol m}^{(p)};0 \bigr)\Bigr] + C \varepsilon. 
\end{split}
\end{equation*} 
This completes the proof. 
\end{proof}

\subsubsection{Approximate Nash equilibrium formed by the return of the fictitious play}
We now state a similar result, but for the approximation returned by the fictitious play
subjected to the $\varepsilon$-randomization and to the $p$-discretization. 
Throughout this paragraph, 
${\boldsymbol h}^{n+1}$ and 
${\boldsymbol m}^{n+1}$ are as in 
\eqref{eq:hn+1:tildeh:n+1}
and
\eqref{eq:mn+1:m:n+1:correc}. We recall that these two processes depend on $p$ (although it is not indicated in the notation). 

The point is then to consider 
the strategy ${\boldsymbol \alpha}^{(p),n,\diamond}$ defined by 
\begin{equation}
\label{eq:X:n+1:nearlyNash:000}
\alpha_t^{(p),n,\diamond} := - \eta_{\tau_p(t)}^{(p)} X_{\tau_p(t)}^{(p),\star} - \varpi h_t^{n}, 
\end{equation} 
where we recall 
(see
\eqref{eq:mfg:p:3:bbis})
\begin{equation}
\label{eq:X:n+1:nearlyNash}
\begin{split}
\ud X^{(p),\star}_{t} 
&= - \eta_{\tau_p(t)}^{(p)} X_{\tau_p(t)}^{(p),\star} \ud t +  \sigma \ud B_{t} + \varepsilon \ud W_{t}^{p}
\\
&= \alpha_{t}^{(p),n,\diamond} \ud t +  \sigma \ud B_{t} + \varepsilon \ud W_{t}^{p,\varpi {\boldsymbol h}^n \hspace{-2pt} /\varepsilon}, \quad t \in [0,T]; 
\quad X_0^{(p),n,\diamond} = X_0. 
\end{split}
\end{equation}
Following 
\eqref{eq:mn+1:m:n+1:correc}, 
we have
\begin{equation}
\label{eq:X:n+1:nearlyNash:2}
{\mathbb E}^{\varpi {\boldsymbol h}^n \hspace{-2pt} / \varepsilon}
\Bigl[ X_{t}^{(p),\star} \, \vert \,  \sigma({\boldsymbol W}) \Bigr]=
{\mathbb E} 
\Bigl[ X_{t}^{(p),\star} \, \vert \,  \sigma({\boldsymbol W}) \Bigr]  = m_t^{(p)}, \quad t \in [0,T]. 
\end{equation}
In other words, the conditional state under the candidate strategy 
is the environment itself, which is a pre-requisite in mean field games. 
Notice that 
${\boldsymbol \alpha}^{(p),n,\diamond}$
and ${\boldsymbol h}^n$ and  do depend on $\varepsilon$. 

\begin{thm}
\label{thl:main:regret}
\textcolor{black}{Assume that the law of the initial condition has sub-Gaussian tails, i.e. 
${\mathbb P}(\{ \vert X_0 \vert \geq r \}) \leq a^{-1} \exp( - a r^2)$, for some 
$a>0$ and for any $r >0$.} Then, 
there exist two positive constants $c$ and $C$, only depending on the parameters $a$, $d$, $T$, $\| f\|_{1,\infty}$,  $\| g \|_{1,\infty}$, 
$\vert Q\vert$ and $\vert R\vert$, such that, {for any 
$\varepsilon \in (0,1]$, any integer $p \geq 1$ satisfying $p \varepsilon^2 \geq c$ 
and any integer $n \geq 1$}, 
\begin{equation}
\label{eq:bd:th:main:regret}
\begin{split}
\inf_{\boldsymbol \alpha}
	{\mathbb E}^{\varpi {\boldsymbol h}^{n} \hspace{-2pt} /\varepsilon} \Bigl[ {\mathcal R}^{X_0}\bigl({\boldsymbol \alpha};{\boldsymbol m}^{(p)};0\bigr)\Bigr]
	&\geq 
	{\mathbb E}^{\varpi {\boldsymbol h}^{n} \hspace{-2pt} /\varepsilon} \Bigl[ {\mathcal R}^{X_0} \bigl({\boldsymbol \alpha}^{(p),n,\diamond};{\boldsymbol m}^{(p)};0\bigr)\Bigr]  
	\\
&\hspace{15pt} -  C \varepsilon   - C  p^{-1} -  \exp(C \varepsilon^{-2})  \varpi^{-n},
\end{split}
\end{equation}
the infimum in the left-hand side being taken over 
${\mathbb F}^{X_0,{\boldsymbol B},{\boldsymbol W}}$-progressively measurable 
and square-integrable processes
$({\boldsymbol \alpha}_{t})_{0 \leq t \leq T}$. 

\textcolor{black}{The difference between the term in the left-hand side and the first term in the right-hand side of 
\eqref{eq:bd:th:main:regret} is called the
${\mathbb P}^{\varpi {\boldsymbol h}^{n} \hspace{-2pt} /\varepsilon}$-mean exploitability of the policy ${\boldsymbol \alpha}^{(p),n,\diamond}$. (Obviously, this difference is non-positive.)}
\end{thm}

We feel useful to comment on the meaning and scope of Theorem \ref{thl:main:regret}. We first observe that, in the 
functional ${\mathcal R}$ that appears in ${\mathcal R}^{X_0}({\boldsymbol \alpha};{\boldsymbol m}^{(p)};0)$
and ${\mathcal R}^{X_0}({\boldsymbol \alpha}^{(p),n,\diamond};{\boldsymbol m}^{(p)};0)$, on both sides of the main inequality
\eqref{eq:bd:th:main:regret}, there is no time discretization and no common noise (since the third input is $0$). In other words, both the costs to
$({\boldsymbol \alpha},{\boldsymbol m}^{(p)})$  
and
$({\boldsymbol \alpha}^{(p),n,\diamond},{\boldsymbol m}^{(p)})$  
are computed according to the time-continuous original model without common noise (even though the control ${\boldsymbol \alpha}^{(p),n,\diamond}$ is piecewise constant and random as an output of the scheme). In particular, for a given realization of the common noise ${\boldsymbol W}$ 
(which does manifest here because ${\boldsymbol \alpha}$, ${\boldsymbol \alpha}^{(p),n,\diamond}$, ${\boldsymbol m}^{(p)}$ and ${\boldsymbol h}^{n}$ 
depend on ${\boldsymbol W}$), the conditional expectations 
  ${\mathbb E}^{\varpi {\boldsymbol h}^{n} \hspace{-2pt} /\varepsilon} [{\mathcal R}^{X_0}({\boldsymbol \alpha};{\boldsymbol m}^{(p)};0) \, \vert \, 
  \sigma({\boldsymbol W})]$
  and 
  ${\mathbb E}^{\varpi {\boldsymbol h}^{n} \hspace{-2pt} /\varepsilon} [{\mathcal R}^{X_0}({\boldsymbol \alpha}^{(p),n,\diamond};{\boldsymbol m}^{(p)};0)\, \vert \, 
  \sigma({\boldsymbol W})]$
  coincide   with the 
costs 
$J({\boldsymbol \alpha}(\cdot,\cdot,{\boldsymbol W}) ; {\boldsymbol m}^{(p)}({\boldsymbol W}) )$
and
$J({\boldsymbol \alpha}^{(p),n,\diamond}(\cdot,\cdot,{\boldsymbol W}) ; {\boldsymbol m}^{(p)}({\boldsymbol W}) )$, for 
$J$ as 
in 
\eqref{eq:cost:intro} (with the expectation in the latter being just performed over $(X_0,{\boldsymbol B})$). 
In particular, the notations 
${\boldsymbol \alpha}(\cdot,\cdot,{\boldsymbol W})$, 
${\boldsymbol \alpha}^{(p),n,\diamond}(\cdot,\cdot,{\boldsymbol W})$
and
${\boldsymbol m}^{(p)}({\boldsymbol W})$ are used here to emphasize that, in the computation of $J$, the realization of 
the common noise is kept frozen 
in the inputs 
${\boldsymbol \alpha}(\cdot,\cdot,{\boldsymbol W})$, 
${\boldsymbol \alpha}^{(p),n,\diamond}(\cdot,\cdot,{\boldsymbol W})$
and
${\boldsymbol m}^{(p)}({\boldsymbol W})$. 
In turn, the two expectations in \eqref{eq:bd:th:main:regret}
can be rewritten as 
${\mathbb E}^{\varpi {\boldsymbol h}^{n} \hspace{-2pt} /\varepsilon} [J({\boldsymbol \alpha}(\cdot,\cdot,{\boldsymbol W});{\boldsymbol m}^{(p)}(\cdot,{\boldsymbol W}))]$
and
${\mathbb E}^{\varpi {\boldsymbol h}^{n} \hspace{-2pt} /\varepsilon} [ J({\boldsymbol \alpha}^{(p),n,\diamond}(\cdot,\cdot,{\boldsymbol W});{\boldsymbol m}^{(p)}(\cdot,{\boldsymbol W}))]$, 
with the expectation ${\mathbb E}^{\varpi {\boldsymbol h}^{n} \hspace{-2pt} /\varepsilon}$ being now taken under the sole common noise. 
This can be rephrased quite simply: 
Theorem 
\ref{thl:main:regret}
provides a bound for the ${\mathbb P}^{\varpi {\boldsymbol h}^{n} \hspace{-2pt} /\varepsilon}$-mean  \textcolor{black}{exploitability} associated with the original MFG, 
the mean being here taken with respect to 
the tilted law of the common noise (i.e., the law of ${\boldsymbol W}$ under ${\mathbb P}^{\varpi {\boldsymbol h}^{n} \hspace{-2pt} /\varepsilon}$). 
The bound that is given for the mean \textcolor{black}{exploitability} depends on the three parameters $\varepsilon$, $n$ and $p$. Typically, we want to choose 
$\varepsilon$ small and $p$ large, which is well understood: the equilibrium $({\boldsymbol m}^{(p)},{\boldsymbol \alpha}^{(p),n,\diamond})$
is learnt for the $p$-discrete MFG with an $\varepsilon$-common noise; if $\varepsilon$ is large or $p$ is small, the equilibrium that is learnt cannot be
 a `good' approximate equilibrium of the original MFG. This is exactly what the terms $-C \varepsilon$ and $-C/p$ say in 
\eqref{eq:bd:th:main:regret}. 
As for the last term in 
\eqref{eq:bd:th:main:regret}, it
becomes smaller and smaller as $n$ increases. 
This is also consistent with the intuition: the mean \textcolor{black}{exploitability} becomes small when the number 
$n$ of iterations of the fictitious play is large. 
For sure, there is a price to pay: as $\varepsilon$ tends $0$, the impact of the common noise becomes smaller and 
$n$ has to be chosen larger to attain the same accuracy for the mean \textcolor{black}{exploitability}. 

The reader must \textcolor{black}{also} realize that, differently from what is done 
in	\eqref{eq:optimization:exploration}, the environment used in the cost functional 
is the conditional state of the 
reference particle when using the strategy ${\boldsymbol \alpha}^{(p),n,\diamond}$. This looks subtle, but 
this makes a big difference in the analysis. 
\textcolor{black}{Indeed, if we had to 
follow
\eqref{eq:optimization:exploration}
and thus  
use $\overline{\boldsymbol m}^n$ as environment in 
the cost functional, then 
the 
resulting minimizing path
would NOT be typical of the 
environment, meaning that its (theoretical) conditional expectation
(under ${\mathbb P}^{\varpi {\boldsymbol h}^n  \hspace{-2pt} / \varepsilon} $ or ${\mathbb P}$) 
given the common noise would NOT match
$\overline{\boldsymbol m}^n$. 
Differently, our choice 
to use ${\boldsymbol m}^{(p)}$ as environment in the 
statement of Theorem 
\ref{thl:main:regret}
allows for the following property: 
the 
dynamics 
\eqref{eq:X:n+1:nearlyNash}
are typical of the environment, meaning that 
the environment is exactly given by the conditional state given 
the realization of the common noise, which is exactly what 
\eqref{eq:X:n+1:nearlyNash:2} says. 
In this framework,  
Theorem 
\ref{thl:main:regret} asserts that, by deviating unilaterally from the rest of the population, 
a reference player could hardly 
increase her cost, at best by a remainder that 
is small when $p$ and $n$ are large and $\varepsilon$ is fixed.}

We end-up this discussion with the following two observations, which may echo possible questions of the reader. 
First, one may wonder about the practical implementation of 
\eqref{eq:X:n+1:nearlyNash}, as it requires to know the value of 
${\boldsymbol \eta}^{(p)}$. In fact, as it is clarified in the next section, any efficient method 
that would be used in the implementation of the fictitious play 
should learn, at the same time, the solution 
to the Riccati equation \eqref{eq:discretized:Riccati}, even though the coefficients $f$ and $g$ are not known. This is somehow a pre-requisite 
in the implementation of the fictitious play. Second, the fact that the dynamics 
\eqref{eq:X:n+1:nearlyNash}
are independent of 
$n$ (under the historical measure) may be rather intriguing. Of course, one must recall in this regard that, on a statistical point of view, the dynamics that truly matter are
in fact 
those  computed under the tilted probability measure. Indeed, the two costs
in 
\eqref{eq:bd:th:main:regret}
are averaged under the tilted measure. Should we represent those costs in terms of an infinite cloud of particles,  
then all the particles forming the approximate Nash equilibrium 
provided by  
Theorem 
\ref{thl:main:regret}
should be subjected to a common noise that would be sampled under the tilted probability measure. 
This is exactly the point where the dependence on $n$ manifests.

We start with the following 
refinement of 
Theorem 
\ref{main:thm:2}: 

\begin{lem}
\label{prop:2:b}
With the same notation as in the statement of Theorem 
\ref{main:thm:2}, 
there exist two positive constants $c$ and $C$, only depending on the parameters $a$, $d$, $T$, $\| f\|_{1,\infty}$,  $\| g \|_{1,\infty}$, 
$\vert Q\vert$ and $\vert R\vert$, such that, {for any 
$\varepsilon \in (0,1]$, any integer $p \geq 1$ satisfying $p \varepsilon^2 \geq c$ 
and any integer $n \geq 1$}, 
%
%
\begin{equation*}
\begin{split}
&{\mathbb E}^{ {\boldsymbol h}^{(p)} \hspace{-2pt} /\varepsilon}
\Bigl[
 \sup_{0 \le t \le T} 
\vert \alpha_{t}^{(p),n,\diamond}
-\alpha_{t}^{(p), \star}
\vert^2 
\Bigr]
 \leq   \varpi^{-2n} \exp(C \varepsilon^{-2}),
\end{split}
\end{equation*}
where we recall
that  
${\boldsymbol \alpha}^{(p),\star}$ is the 
equilibrium strategy of the 
$p$-discrete 
MFG with an $\varepsilon$ common noise, as given by 
\eqref{eq:mfg:p:3}.
\end{lem}

\begin{proof}[Proof of Lemma \ref{prop:2:b}]
We recall
from 
\eqref{eq:mfg:p:3}
and 
\eqref{eq:X:n+1:nearlyNash:000}
 that 
\begin{equation*}
\alpha_t^{(p),n,\diamond}
-
\alpha_t^{(p),\star} = \varpi h^n_t - h^{(p)}_t, \quad t \in [0,T], 
\end{equation*} 
and the bound in the statement follows
from Theorem 
\ref{main:thm:2}. 
\end{proof}

We now 
turn to 
\begin{proof}[Proof of Theorem \ref{thl:main:regret}]
{ \ }

\textit{First Step.}
Recalling the shape of the optimizer ${\boldsymbol \alpha}^{(p),n+1,\diamond}$
in 
\eqref{eq:X:n+1:nearlyNash:000}--\eqref{eq:X:n+1:nearlyNash}
, together with 
the fact that ${\boldsymbol h}^{n+1}$
is bounded, uniformly in $n \geq 1$, see 
Lemma \ref{lem:1:b} (recalling that $C_1$ therein is independent of $n$, as mentioned in the statement), we deduce that 
\begin{equation*}
\Bigl\vert 	{\mathbb E}^{\varpi {\boldsymbol h}^{n} \hspace{-2pt} / \varepsilon} \Bigl[ {\mathcal R}^{X_0}\bigl({\boldsymbol \alpha}^{(p),n,\diamond};{\boldsymbol m}^{(p)};0\bigr)\Bigr]
\Bigr\vert \leq C, 
\end{equation*}
where the constant $C$ is independent $n$. In the rest of the proof, the value of $C$ may vary from line to line provided it remains independent 
of $n$. Then, we must invoke the fact that the problem
$$\inf_{\boldsymbol \alpha}
	{\mathbb E}^{\varpi {\boldsymbol h}^{n} \hspace{-2pt} / \varepsilon} \Bigl[ {\mathcal R}^{X_0}\bigl({\boldsymbol \alpha};{\boldsymbol m}^{(p)};0\bigr)\Bigr]$$
	is a linear quadratic-problem in a random environment. 
	Similar to the minimization problem 
studied in Lemma \ref{lem:2.1}, its solution is given in feedback form. If we call $\hat{\boldsymbol \alpha}^{n+1}$ the minimizer, it reads 
\begin{equation}
\label{eq:5th:step:correc:1:e} 
\hat{\alpha}_{t}^{n} = - \bigl( \eta_{t} \hat X_{t}^{n} + \hat{h}_{t}^{n} \bigr), \quad t \in [0,T],
\end{equation}
with 
\begin{equation}
\label{eq:5th:step:correc:1:c} 
\ud \hat X_{t}^{n} = - \bigl( \eta_{t} \hat X_{t}^{n} + \hat{h}_{t}^{n} \bigr)
\ud t +\sigma  \ud B_{t},
\quad t \in [0,T] ; \quad \hat{X}_0^{n} = X_0, 
\end{equation}
and where
\begin{equation}
\label{eq:5th:step:correc:1:d} 
\begin{split} 
&\ud \hat{h}_{t}^{n} = \Bigl( - Q^\dagger f\bigl( m^{(p)}_{t}\bigr) + \eta_{t} \hat{h}_{t}^{n} \Bigr) \ud t + \varepsilon \hat{ k}_{t}^{n} \ud W_{t}^{\varpi {\boldsymbol h}^n \hspace{-2pt} /\varepsilon}, \quad t \in [0,T],
\\
&\hat{h}_T^{n} = R^\dagger g \bigl( m^{(p)}_T \bigr). 
\end{split} 
\end{equation}
We start with a series of results whose proof is given in the last step below. 
We first claim that, for each $k \in \{0,\cdots,p\}$, 
$\hat{h}_{kT/p}^{n}$ is 
${\mathcal F}_{kT/p}^{{\boldsymbol W}^p}$-measurable. 
Moreover, 
\begin{equation}
\label{eq:5th:step:correc:1:f} 
\textrm{\rm essup}_{\omega \in \Omega}  \sup_{0 \le t \le T} \vert \hat{h}_{t}^{n} \vert \leq C. 
\end{equation}
As a result, with probability 1 under ${\mathbb P}^{\varpi {\boldsymbol h}^n \hspace{-2pt} /\varepsilon}$
(and also under ${\mathbb P}$),
\begin{equation}
\label{eq:bound:hat:X:alpha}
\begin{split}
&\sup_{0 \leq t \leq T} 
\vert \hat{X}_{t}^{n} \vert \leq C \Bigl( 1 + 
\vert X_0 \vert+
\sup_{0 \leq t \leq T} \vert B_{t} \vert \Bigr),
\\
&\sup_{0 \leq t \leq T} 
\vert \hat{\alpha}_{t}^{n} \vert \leq C \Bigl( 1 + 
\vert X_0 \vert+
\sup_{0 \leq t \leq T} \vert B_{t} \vert \Bigr),
\end{split}
\end{equation}
which implies 
\begin{equation}
\label{eq:bound:hat:X:alpha:correc}
 {\mathbb E}^{\varpi {\boldsymbol h}^{n} \hspace{-2pt} / \varepsilon}  \Bigl[ 
\sup_{0 \leq t \leq T} \bigl\vert \hat{X}_{t}^{n} \bigr\vert^2
\Bigr] \leq C. 
\end{equation}
and
\begin{equation}
\label{eq:bound:hat:X:alpha:2}
{\mathcal R}^{X_0}\bigl( \hat{\boldsymbol \alpha}^{n};{\boldsymbol m}^{(p)};0\bigr) 
 \leq C \Bigl( 1 + 
 \vert X_0 \vert^2 + 
\sup_{0 \leq t \leq T} \vert B_{t} \vert^2 \Bigr). 
\end{equation}

\textit{Second Step.}
By the first step,
\begin{equation*}
\begin{split}
\inf_{\boldsymbol \alpha}
	{\mathbb E}^{\varpi {\boldsymbol h}^{n} \hspace{-2pt} / \varepsilon} \Bigl[ {\mathcal R}^{X_0}\bigl({\boldsymbol \alpha};{\boldsymbol m}^{(p)};0\bigr)\Bigr]
 &= 	{\mathbb E}^{\varpi {\boldsymbol h}^{n} \hspace{-2pt} / \varepsilon} \Bigl[ {\mathcal R}^{X_0}\bigl(\hat{\boldsymbol \alpha}^{n};{\boldsymbol m}^{(p)};0\bigr)\Bigr].
\end{split}
\end{equation*}
Here, 
\begin{equation*} 
\begin{split}
& {\mathbb E}^{\varpi {\boldsymbol h}^{n} \hspace{-2pt} / \varepsilon} \Bigl[ {\mathcal R}^{X_0} \bigl(\hat{\boldsymbol \alpha}^{n};{\boldsymbol m}^{(p)};0\bigr)\Bigr]
\\
&= \tfrac12 {\mathbb E}^{\varpi {\boldsymbol h}^{n} \hspace{-2pt} / \varepsilon}
\biggl[ \vert R^{\dagger} \hat X_T^{n}
+ g(m_T^{(p)}) \vert^2 
+ \int_0^T \Bigl\{ \vert Q^{\dagger} \hat{X}_t^{n} 
+ f (m_t^{(p)}) \vert^2 + \vert \hat{\alpha}^{n}_t \vert^2 
\Bigr\} \ud t
\biggr],
\end{split}
\end{equation*} 
and the purpose is to lower bound the above cost by ${\mathcal R}^{p,X_0}(\hat{\boldsymbol \alpha}^{(p),n};{\boldsymbol m}^{(p)};0)$
for some well-chosen
${\mathbb F}^{p,X_{0},{\boldsymbol B},{\boldsymbol W}}$-progressively measurable 
and piecewise constant 
control $\hat{\boldsymbol \alpha}^{(p),n}$ and with 
${\mathcal R}^{p,X_0}$ as in 
\eqref{eq:mathcalR:p}. In order to do so, we let 
\begin{equation}
\label{eq:widehat:h:taupt:p,n+1} 
\hat{h}_{\tau_p(t)}^{(p),n}
:= \tfrac{p}T 
 {\mathbb E}^{\varpi {\boldsymbol h}^{n} \hspace{-2pt} / \varepsilon}
 \biggl[ 
\int_{\tau_p(t)}^{\tau_p(t) + T/p}   \hat{h}_s^{n}   \ud s \, \vert \, 
{\mathcal F}_{\tau_p(t)}^{p,X_0,{\boldsymbol B},{\boldsymbol W}} 
\biggr], \quad t \in [0,T], 
\end{equation} 
which allows us to define by induction:
\begin{equation}
\label{eq:fourth:step:correc:2}  
\hat{X}_{(k+1)T/p}^{(p),n} 
:= 
\hat{X}_{k T/p}^{(p),n}
-\tfrac{T}p 
\eta_{kT/p}  
\hat{X}_{k T/p}^{(p),n}
-\tfrac{T}p
\hat{h}_{kT/p}^{(p),n} + 
\sigma \bigl( B_{(k+1)T/p} 
 - B_{kT/p} \bigr),
\end{equation} 
for 
$k \in \{0,\cdots,p-1\}$ (with $X_0$ as initial condition). 
We prove in the fourth step below 
that \begin{equation}
\label{eq:fourth:step:correc:0} 
\begin{split}
&{\mathbb E}^{\varpi {\boldsymbol h}^{n} \hspace{-2pt} / \varepsilon}
\Bigl[ 
\sup_{k=0,\cdots,p} 
\bigl\vert 
\hat{X}_{kT/p}^{n} 
-
\hat{X}_{k T/p}^{(p),n} 
\bigr\vert^2 \Bigr]
\leq 
 \tfrac{C}p,
\end{split} 
\end{equation} 
and
\begin{equation}
\label{eq:fourth:step:correc:1} 
\begin{split}
&\sup_{0 \leq t \leq T} {\mathbb E}^{\varpi {\boldsymbol h}^{n} \hspace{-2pt} / \varepsilon}
\Bigl[ 
\bigl\vert 
\hat{X}_t^{n} 
-
\hat{X}_{\tau_p(t)}^{(p),n} 
\bigr\vert^2 \Bigr]
\leq 
 \tfrac{C}p,
\end{split} 
\end{equation}
for a constant $C$ as in the statement of 
Theorem \ref{thl:main:regret}. 
By 
\eqref{eq:rem:main:thm:2:correc}, we then also have 
\begin{equation} 
\label{eq:5th:step:correc:1} 
\begin{split}
&\sup_{0 \leq t \leq T} {\mathbb E}^{\varpi {\boldsymbol h}^{n} \hspace{-2pt} / \varepsilon}
\Bigl[ 
\bigl\vert 
\eta_t
\hat{X}_t^{n} 
-
\eta_{\tau_p(t)}^{(p)} \hat{X}_{\tau_p(t)}^{(p),n} 
\bigr\vert^2 \Bigr]
\leq 
 \tfrac{C}p,
\end{split} 
\end{equation}
Together with 
\eqref{eq:bound:hat:X:alpha:correc}, we deduce from the latter displays that 
\begin{align}
&\tfrac12 
 {\mathbb E}^{\varpi {\boldsymbol h}^{n} \hspace{-2pt} / \varepsilon}
 \Bigl[ \vert 
 Q^\dagger \hat{X}_t^{n} 
 + f(m^{(p)}_t) 
 \vert^2
 \Bigr]
 \nonumber
\\
&\geq
\tfrac12 
 {\mathbb E}^{\varpi {\boldsymbol h}^{n} \hspace{-2pt} / \varepsilon}
 \Bigl[ \vert 
 Q^\dagger \hat{X}_{\tau_p(t)}^{(p),n} 
 + f\bigl(m^{(p)}_{\tau_p(t)}\bigr) 
 \vert^2
 \Bigr] \nonumber
 \\
&\hspace{5pt}  - 
 {\mathbb E}^{\varpi {\boldsymbol h}^{n} \hspace{-2pt} / \varepsilon}
 \Bigl[ 
 \Bigl( 
 \vert 
 Q^\dagger \bigl(  \hat{X}_t^{n} 
 - 
 \hat{X}_{\tau_p(t)}^{(p),n} 
 \bigr) 
 \vert 
+ 
 \vert 
f (  m_t^{(p)} )
 - 
 f( m_{\tau_p(t)}^{(p)})
 \vert  
 \Bigr) 
 \, 
 \vert 
  Q^\dagger  
 \hat{X}_{\tau_p(t)}^{(p),n} 
 + f(m^{(p)}_{\tau_p(t)}) 
 \vert
 \Bigr] \nonumber
 \\
 &\geq \tfrac12 
 {\mathbb E}^{\varpi {\boldsymbol h}^{n} \hspace{-2pt} / \varepsilon}
 \Bigl[ \vert 
 Q^\dagger \hat{X}_{\tau_p(t)}^{(p),n} 
 + f(m^{(p)}_{\tau_p(t)}) 
 \vert^2
 \Bigr]  - \tfrac{C}{\sqrt{p}}. \label{eq:5th:step:correc:101} 
  \end{align} 
Proceeding similarly for the other terms in the cost functional, we obtain 
\begin{equation*} 
\begin{split}
 {\mathbb E}^{\varpi {\boldsymbol h}^{n} \hspace{-2pt} / \varepsilon} \Bigl[ {\mathcal R}^{X_0} \bigl(\hat{\boldsymbol \alpha}^{n};{\boldsymbol m}^{(p)};0\bigr)\Bigr]
&\geq \tfrac12 {\mathbb E}^{\varpi {\boldsymbol h}^{n} \hspace{-2pt} / \varepsilon}
\biggl[ \vert R^{\dagger} \hat X_T^{(p),n}
+ g(m_T^{(p)}) \vert^2 
\\
&\hspace{-40pt} + \int_0^T \Bigl\{ \vert Q^{\dagger} \hat{X}_{\tau_p(t)}^{(p),n} 
+ f (m_{\tau_p(t)}^{(p)}) \vert^2 + \vert 
\eta_{\tau_p(t)}^{(p)}
   \hat{X}^{(p),n}_{\tau_p(t)}
+ \hat{h}_t^{n}
 \vert^2 
\Bigr\} \ud t
\biggr] - \tfrac{C}{\sqrt{p}},
\end{split}
\end{equation*}
and by
conditioning 
the last term in the integral 
by 
${\mathcal F}_{\tau_p(t)}^{p,X_0,{\boldsymbol B},{\boldsymbol W}}$, 
we deduce from 
\eqref{eq:widehat:h:taupt:p,n+1} 
and
 Jensen's inequality that 
\begin{equation}
\label{eq:5th:step:correc:1:g}  
\begin{split}
{\mathbb E}^{\varpi {\boldsymbol h}^{n} \hspace{-2pt} / \varepsilon} \Bigl[ {\mathcal R}^{X_0}\bigl(\hat{\boldsymbol \alpha}^{n};{\boldsymbol m}^{(p)};0\bigr)\Bigr]
&\geq \tfrac12 {\mathbb E}^{\varpi {\boldsymbol h}^{n} \hspace{-2pt} / \varepsilon}
\biggl[ \vert R^{\dagger} \hat X_T^{(p),n}
+ g(m_T^{(p)}) \vert^2 
\\
&\hspace{-30pt}  + \int_0^T \Bigl\{ \vert Q^{\dagger} \hat{X}_{\tau_p(t)}^{(p),n} 
+ f (m_{\tau_p(t)}^{(p)}) \vert^2 + \vert 
   \hat{\alpha}^{(p),n}_{t}
 \vert^2 
\Bigr\} \ud t
\biggr] - \tfrac{C}{\sqrt{p}},
\end{split}
\end{equation}
%
%
%
where
\begin{equation*} 
\begin{split}
\hat{\alpha}^{(p),n}_t
&= - \eta_{\tau_p(t)}^{(p)}
   \hat{X}^{(p),n}_{\tau_p(t)} 
-  \hat{h}_{\tau_p(t)}^{(p),n}, 
\end{split}  
\end{equation*} 
for $t \in [0,T]$.
Clearly, the above conditioning is licit because 
the process $(\hat{X}_{kT/p}^{(p),n})_{k=0,\cdots,p}$
is
$({\mathcal F}_{kT/p}^{p,X_{0},{\boldsymbol B},{\boldsymbol W}})_{k=0,\cdots,p}$-progressively measurable. 
By the way, we deduce
from the latter that the process 
$(\hat{\alpha}^{(p),n}_{t})_{0 \leq t \leq T}$
is
${\mathbb F}^{p,X_0,{\boldsymbol B},{\boldsymbol W}}$-progressively measurable, square-integrable 
and 
piecewise constant on any interval $[\ell T/p,(\ell +1)T/p)$, for $\ell \in \{0,\cdots,p-1\}$. 
In turn, the right-hand side in  
\eqref{eq:5th:step:correc:1:g}
can be rewritten
\begin{equation*}
{\mathbb E}^{\varpi {\boldsymbol h}^{n} \hspace{-2pt} / \varepsilon} \Bigl[ {\mathcal R}^{p,X_0} \bigl(\hat{\boldsymbol \alpha}^{(p),n};{\boldsymbol m}^{(p)};0\bigr)\Bigr] - \tfrac{C}{\sqrt{p}}.
\end{equation*}
We deduce that, for any $\varrho >1$, 
\begin{equation*}
\begin{split}
&\inf_{\boldsymbol \alpha}
	{\mathbb E}^{\varpi {\boldsymbol h}^{n} \hspace{-2pt} / \varepsilon} \Bigl[ {\mathcal R}^{X_0}\bigl({\boldsymbol \alpha};{\boldsymbol m}^{(p)};0\bigr)\Bigr]
 \\
 &\hspace{15pt} \geq 
 {\mathbb E}^{\varpi {\boldsymbol h}^{n} \hspace{-2pt} / \varepsilon}\Bigl[ {\mathcal R}^{{p,X_0}}\bigl(\hat{\boldsymbol \alpha}^{(p),n};{\boldsymbol m}^{(p)};0\bigr) {\mathbf 1}_{\{ 
 {\mathcal R}^{{p,X_0}}(\hat{\boldsymbol \alpha}^{(p),n};{\boldsymbol m}^{(p)};0) \leq \varrho 
 \}}\Bigr] - \tfrac{C}{\sqrt{p}}.
\end{split}
\end{equation*}
Using 
the upper bound on $
d_{\textrm{\rm TV}}\bigl({\mathbb P}^{{\boldsymbol h}^{(p)} \hspace{-2pt} /\varepsilon},{\mathbb P}^{\varpi {\boldsymbol h}^n \hspace{-2pt} /\varepsilon}
\bigr)$ (see Remark \ref{rem:thm:2:00}), we obtain
\begin{equation*}
\begin{split}
&\inf_{\boldsymbol \alpha}
	{\mathbb E}^{\varpi {\boldsymbol h}^{n}\hspace{-2pt} /\varepsilon} \Bigl[ {\mathcal R}^{X_0}\bigl({\boldsymbol \alpha};{\boldsymbol m}^{(p)};0\bigr)\Bigr]
\\
&\hspace{5pt} \geq 
 {\mathbb E}^{ {\boldsymbol h}^{(p)}\hspace{-2pt} /\varepsilon}  \Bigl[ {\mathcal R}^{{p,X_0}}\bigl(\hat{\boldsymbol \alpha}^{(p),n};{\boldsymbol m}^{(p)};0\bigr) {\mathbf 1}_{\{ 
 {\mathcal R}^{{p},X_0} (\hat{\boldsymbol \alpha}^{(p),n};{\boldsymbol m}^{(p)};0 ) \leq \varrho 
 \}}\Bigr]  - \varrho \exp(C \varepsilon^{-2})  \varpi^{-n} - \tfrac{C}{\sqrt{p}}.  
\end{split}
\end{equation*}
And then, by  
\eqref{eq:bound:hat:X:alpha:2} and \textcolor{black}{the sub-Gaussian property of the law of $X_0$}, 
we obtain
\begin{equation}
\label{eq:bound:hat:X:alpha:3}
\begin{split}
\inf_{\boldsymbol \alpha}
	{\mathbb E}^{\varpi {\boldsymbol h}^{n} \hspace{-2pt} / \varepsilon} \Bigl[ {\mathcal R}^{X_0}\bigl({\boldsymbol \alpha};{\boldsymbol m}^{(p)};0\bigr)\Bigr]
&\geq 
 {\mathbb E}^{{\boldsymbol h}^{(p)}\hspace{-2pt} /\varepsilon}\Bigl[ {\mathcal R}^{{p,X_0}}\bigl(\hat{\boldsymbol \alpha}^{(p),n};{\boldsymbol m}^{(p)};0\bigr) \Bigr] 
 \\
&\hspace{15pt} - C \exp \bigl( - \eta \varrho \bigr) - \varrho \exp(C \varepsilon^{-2}) \varpi^{-n}  -   \tfrac{C}{\sqrt{p}},
\end{split}
\end{equation}
for some $\eta>0$. 
Finally, by 
Proposition 
\ref{prop:epsilon:Nash},
\begin{align}
\label{eq:bound:hat:X:alpha:5}
&\inf_{\boldsymbol \alpha}
	{\mathbb E}^{\varpi {\boldsymbol h}^{n} \hspace{-2pt} / \varepsilon} \Bigl[ {\mathcal R}^{X_0}\bigl({\boldsymbol \alpha};{\boldsymbol m}^{(p)};0\bigr)\Bigr]
\\
&\hspace{10pt} \geq 
 {\mathbb E}^{{\boldsymbol h}^{(p)} \hspace{-2pt} / \varepsilon}\Bigl[ {\mathcal R}^{p,X_0}\bigl({\boldsymbol \alpha}^{(p),\star};{\boldsymbol m}^{(p)} ;0\bigr) \Bigr] 
- C \varepsilon - C \exp \bigl( - \eta \varrho \bigr) - \rho \exp(C \varepsilon^{-2}) \varpi^{-n}  -   \tfrac{C}{\sqrt{p}}.
\nonumber
\end{align}
\vskip 4pt

\textit{Third Step.}
We now revert the computations achieved in the second step. 
Following 
\eqref{eq:5th:step:correc:1:g}
and then 
\eqref{eq:5th:step:correc:101} and the proof of Lemma 
\ref{lem:3:0}, the cost on the last line
can be lower bounded as follows: 
\begin{equation*} 
\begin{split} 
& {\mathbb E}^{ {\boldsymbol h}^{(p)} \hspace{-2pt} /\varepsilon}  \Bigl[ {\mathcal R}^{p,X_0} \bigl({\boldsymbol \alpha}^{(p),\star};{\boldsymbol m}^{(p)};0\bigr) \Bigr] 
\\
&= \tfrac12 {\mathbb E}^{{\boldsymbol h}^{(p)} \hspace{-2pt} /\varepsilon}
\biggl[ \vert R^{\dagger}   X_T^{(p),\star,0}
+ g(m_T^{(p)}) \vert^2 
   + \int_0^T \Bigl\{ \vert Q^{\dagger} {X}_{\tau_p(t)}^{(p),\star,0} 
+ f (m_{\tau_p(t)}^{(p)}) \vert^2 + \vert 
    {\alpha}^{(p),\star}_{t}
 \vert^2 
\Bigr\} \ud t
\biggr]
\\
&\geq \tfrac12 {\mathbb E}^{ {\boldsymbol h}^{(p)} \hspace{-2pt} /\varepsilon}
\biggl[ \vert R^{\dagger}  X_T^{(p),\star}
+ g(m_T^{(p)}) \vert^2 
   + \int_0^T \Bigl\{ \vert Q^{\dagger} {X}_{t}^{(p),\star} 
+ f (m_{t}^{(p)}) \vert^2 + \vert 
    {\alpha}^{(p),\star}_{t}
 \vert^2 
\Bigr\} \ud t
\biggr]
\\
&\hspace{15pt} - \tfrac{C}{\sqrt{p}} - C \varepsilon, 
\end{split} 
\end{equation*} 
where ${\boldsymbol X}^{(p),\star,0}$ on the second line denotes the time-continuous continuous process defined by 
\begin{equation*} 
\begin{split} 
 {X}^{(p),\star,0}_t &= 
 {X}^{(p),\star,0}_{kT/p} +  \bigl( 
t- \tfrac{kT}p \bigr) 
 {\alpha}_{kT/p}^{(p),\star}  + 
\sigma \bigl( B_{t} 
 - B_{kT/p} \bigr),
 \quad t \in [\tfrac{kT}p,\tfrac{(k+1)T}p], \quad k \in \{0,\cdots,p-1\}. 
\end{split} 
\end{equation*} 
In particular, 
$ {\boldsymbol X}^{(p),\star,0}$ is the time-continuous process driven
by $ {\boldsymbol \alpha}^{(p),\star}$ when there is no common noise. 
Notice that 
${\boldsymbol X}^{(p),\star,0}$ differs from 
the process 
${\boldsymbol X}^{(p),\star}$
that appears on the third line and that is defined
 in 
\eqref{eq:mfg:p:3:bbis}. 

In turn, 
by Proposition 
\ref{prop:2:b}
and for $\rho$ as in
\eqref{eq:bound:hat:X:alpha:5}, 
\begin{equation*}
\begin{split}
&\inf_{\boldsymbol \alpha}
	{\mathbb E}^{\varpi {\boldsymbol h}^{n} \hspace{-2pt} / \varepsilon}  \Bigl[ {\mathcal R}^{X_0}\bigl({\boldsymbol \alpha};{\boldsymbol m}^{(p)};0\bigr)\Bigr]
\\
&\geq 
 {\mathbb E}^{{\boldsymbol h}^{(p)} \hspace{-2pt} /\varepsilon}  \Bigl[ {\mathcal R}^{X_0} \bigl({\boldsymbol \alpha}^{(p),\star};{\boldsymbol m}^{(p)};0\bigr)   \Bigr] 
- C \varepsilon - C \exp \bigl( - \eta \varrho \bigr)  - \rho \exp(C \varepsilon^{-2})   \varpi^{-n}  - \tfrac{C}{\sqrt{p}}
\\
&\geq 
 {\mathbb E}^{ {\boldsymbol h}^{(p)} \hspace{-2pt} /\varepsilon}  \Bigl[ {\mathcal R}^{X_0}\bigl({\boldsymbol \alpha}^{(p),n,\diamond};{\boldsymbol m}^{(p)};0\bigr)   \Bigr] 
- C \varepsilon - C \exp \bigl( - \eta \varrho \bigr)  - \rho \exp(C \varepsilon^{-2})  \varpi^{-n}  - \tfrac{C}{\sqrt{p}}
\\
&\geq 
 {\mathbb E}^{{\boldsymbol h}^{(p)} \hspace{-2pt} /\varepsilon}  \Bigl[ {\mathcal R}^{X_0} \bigl({\boldsymbol \alpha}^{(p),n,\diamond};{\boldsymbol m}^{(p)};0\bigr) {\mathbf 1}_{\{ 
 {\mathcal R}^{X_0} ({\boldsymbol \alpha}^{(p),n,\diamond};{\boldsymbol m}^{(p)};0 ) \leq \varrho 
 \}} \Bigr] 
- C \varepsilon - C \exp \bigl( - \eta \varrho \bigr) 
\\
&\hspace{15pt} - \rho \exp(C \varepsilon^{-2})  \varpi^{-n}  -  \tfrac{C}{\sqrt{p}}.
 \end{split}
\end{equation*}
And then, as in the derivation of 
\eqref{eq:bound:hat:X:alpha:3},
\begin{equation*}
\begin{split}
&\inf_{\boldsymbol \alpha}
	{\mathbb E}^{\varpi {\boldsymbol h}^{n} \hspace{-2pt} / \varepsilon} \Bigl[ {\mathcal R}^{X_0}\bigl({\boldsymbol \alpha};{\boldsymbol m}^{(p)};0\bigr)\Bigr]
\\
& \geq  
{\mathbb E}^{\varpi {\boldsymbol h}^{n} \hspace{-2pt} / \varepsilon}  \Bigl[ {\mathcal R}^{X_0}\bigl({\boldsymbol \alpha}^{(p),n,\diamond};{\boldsymbol m}^{(p)};0\bigr) {\mathbf 1}_{\{ 
 {\mathcal R}^{X_0} ({\boldsymbol \alpha}^{(p),n,\diamond};{\boldsymbol m}^{(p)};0 ) \leq \varrho 
 \}} \Bigr] 
- C \varepsilon - C \exp \bigl( - \eta \varrho \bigr) 
\\
&\hspace{15pt} - \rho \exp(C \varepsilon^{-2})   \varpi^{-n}  -  \tfrac{C}{\sqrt{p}}
\\
&  \geq  
 {\mathbb E}^{\varpi {\boldsymbol h}^{n} \hspace{-2pt} / \varepsilon}\Bigl[ {\mathcal R}^{X_0} \bigl({\boldsymbol \alpha}^{(p),n,\diamond};{\boldsymbol m}^{(p)};0\bigr)   \Bigr] 
- C \varepsilon - C \exp \bigl( - \eta \varrho \bigr)  - \rho \exp(C \varepsilon^{-2})  \varpi^{-n}  -  \tfrac{C}{\sqrt{p}}.
 \end{split}
\end{equation*}

Choosing 
$\eta \varrho = 
- \ln(\varepsilon)$
and modifying the value of $C$, 
we complete the proof. 
\vskip 4pt

\textit{Fourth Step.}
It now remains to prove some auxiliary results that are used in the previous steps. 

We start with the analysis of 
$\hat{\boldsymbol h}^{n}$
in 
\eqref{eq:5th:step:correc:1:d}.
We recall from 
\eqref{eq:hn+1:tildeh:n+1} that, 
that, for each $t \in [0,T]$, 
${h}_t^{n}$ is 
$(\sigma( (W^p_{k})_{k \leq \lfloor t p/T \rfloor T/p} ))_{0 \leq t \leq T}$-measurable. 
Hence, from 
\eqref{eq:X:n+1:nearlyNash:000}
and
\eqref{eq:X:n+1:nearlyNash}, 
each 
${m}_t^{(p)}$ is 
$(\sigma( (W^p_{k})_{k \leq \lceil t p/T \rceil T/p} ))_{0 \leq t \leq T}$-measurable (notice the use of the ceil part instead of the floor part in the time index of the filtration).
Further, we notice from 
\eqref{eq:5th:step:correc:1:d} 
and 
Remark
\ref{rem:back:resolvent:correc}
that 
\begin{equation*}
\begin{split} 
&P_{kT/p} 
\hat{h}_{k T/p}^{n} = 
{\mathbb E}^{\varpi {\boldsymbol h}^n \hspace{-2pt} /\varepsilon}
\biggl[ 
P_T
R^\dagger g \bigl( m^{(p)}_T \bigr)
+
\int_{kT/p}^T 
 P_s Q^\dagger f\bigl( m^{(p)}_{t}\bigr)  \ud t 
 \, \vert \, 
 {\mathcal F}^{\boldsymbol W}_{kT/p}
\biggr], \quad k \in \{0,\cdots,p\}. 
\end{split} 
\end{equation*}
 Following the analysis of 
\eqref{eq:recurrence:cas:discret}, the left-hand side is 
${\mathcal F}_{kT/p}^{{\boldsymbol W}^p}$-measurable. Noting 
that each $P_t$, for $t \in [0,T]$, is invertible, we deduce that 
$\hat{h}_{k T/p}^{n}$
is 
${\mathcal F}_{kT/p}^{{\boldsymbol W}^p}$-measurable, which property is used below.

And then, 
\eqref{eq:5th:step:correc:1:f} follows from the above representation of 
$(P_{kT/p} 
\hat{h}_{k T/p}^{n})_{k=0,\cdots,p}$ (with a similar representation 
of
$P_{t} 
\hat{h}_{t}^{n}$
for any $t \in [0,T]$). The two bounds in 
\eqref{eq:bound:hat:X:alpha}
easily follow.

We
now turn to the proof of 
\eqref{eq:fourth:step:correc:0} and
\eqref{eq:fourth:step:correc:1}. By 
\eqref{eq:fourth:step:correc:2} ,  
we know that 
\begin{equation*} 
\hat{X}_{(k+1)T/p}^{(p),n} 
= 
\hat{X}_{k T/p}^{(p),n}
-\tfrac{T}p 
\eta_{kT/p}  
\hat{X}_{k T/p}^{(p),n}
-\tfrac{T}p
\hat{h}_{kT/p}^{(p),n} + 
\sigma \bigl( B_{(k+1)T/p} 
 - B_{kT/p} \bigr), \quad k \in \{0,\cdots,p-1\}.  
\end{equation*} 
Meanwhile,
recalling \eqref{eq:5th:step:correc:1:c}, we write
\begin{equation*} 
\begin{split} 
\hat{X}_{(k+1)T/p}^{n} 
&=
\hat{X}_{kT/p}^{n}  
- 
 \int_{kT/p}^{(k+1)T/p} 
 \bigl[
\eta_s \hat{X}_s^{n} 
+ 
\hat{h}_s^{n} 
\bigr] \ud s  + 
\sigma \bigl( B_{(k+1)T/p} - B_{kT/p} \bigr)
\\
&= \hat{X}_{kT/p}^{n}  
- 
\biggl( 
\tfrac{T}p 
\eta_{kT/p} 
\hat{X}_{kT/p}^{n} 
+ \int_{kT/p}^{(k+1)T/p} 
\hat{h}_s^{n}  
 \ud s \biggr) 
-  \tfrac{T}p 
\hat{A}_{k}^{n}   + 
\sigma \bigl( B_{(k+1)T/p} - B_{kT/p} \bigr),
\end{split} 
\end{equation*} 
with 
\begin{equation} 
\label{eq:correc:A:k:n+1}
\begin{split} 
\hat{A}_{k}^{n}
=  \tfrac{p}T
\int_{kT/p}^{(k+1)T/p}
\bigl[ \eta_s \hat{X}_s^{n} - \eta_{kT/p} \hat{X}^{n}_{kT/p} 
\bigr] \ud s. 
\end{split} 
\end{equation} 
And then,
\begin{equation*} 
\begin{split} 
\hat{X}_{(k+1)T/p}^{(p),n} 
-
\hat{X}_{(k+1)T/p}^{n} 
&= 
\hat{X}_{k T/p}^{(p),n}
-
\hat{X}_{k T/p}^{n} 
-\tfrac{T}p 
\eta_{kT/p}  
\bigl( 
\hat{X}_{k T/p}^{(p),n}
-
\hat{X}_{k T/p}^{n} 
\bigr) 
\\
&\hspace{15pt} 
- \int_{kT/p}^{(k+1)T/p} 
\bigl[
\hat{h}_{\tau_p(s)}^{(p),n} - \hat{h}_s^{n} 
\bigr] \ud s +  \tfrac{T}p 
\hat{A}_{k}^{n}.
\end{split} 
\end{equation*} 
By discrete Gronwall's lemma, there exists a constant $C$ (depending on the same parameters as in the statement of 
Theorem \ref{thl:main:regret}) such that 
\begin{equation*} 
\sup_{\ell=0,\cdots,p} 
\bigl\vert
\hat{X}_{\ell T/p}^{(p),n} 
-
\hat{X}_{\ell T/p}^{n+1} 
\bigr\vert
\leq C \sum_{\ell=0}^{p-1} 
 \biggl( 
 \biggl\vert \int_{\ell T/p}^{(\ell+1)T/p} 
\bigl[
\hat{h}_{\tau_p(s)}^{(p),n} - \hat{h}_s^{n} 
\bigr]  \ud s \biggr\vert + \tfrac{T}p 
\bigl\vert \hat{A}_{\ell}^{n}
\bigr\vert 
\biggr).
\end{equation*} 
And then, 
\begin{equation}
\label{eq:5th:step:correc} 
\begin{split}
&{\mathbb E}^{\varpi {\boldsymbol h}^{n} \hspace{-2pt} / \varepsilon}
\Bigl[ 
\sup_{k=0,\cdots,p} 
\bigl\vert 
\hat{X}_{kT/p}^{(p),n} 
-
\hat{X}_{k T/p}^{n} 
\bigr\vert^2 \Bigr]
\\ 
&\leq 
 C
 \tfrac{T}p 
\sum_{\ell=0}^{p-1} 
 {\mathbb E}^{\varpi {\boldsymbol h}^{n} \hspace{-2pt} / \varepsilon}
\biggl[  \tfrac{p^2}{T^2} \biggl\vert \int_{\ell T/p}^{(\ell+1)T/p} 
\bigl[
\hat{h}_{\tau_p(s)}^{(p),n} - \hat{h}_s^{n} 
\bigr]  \ud s \biggr\vert^2 + 
\bigl\vert \hat{A}_{\ell}^{n}
\bigr\vert^2 
\biggr].
\end{split} 
\end{equation} 
Here, we observe 
from 
\eqref{eq:widehat:h:taupt:p,n+1} that 
\begin{equation}
\label{eq:5th:step:correc:999} 
\begin{split} 
& \tfrac{T}p 
\sum_{\ell=0}^{p-1} 
 \tfrac{p^2}{T^2} 
   {\mathbb E}^{\varpi {\boldsymbol h}^{n} \hspace{-2pt} / \varepsilon}
 \biggl[ \biggl\vert \int_{\ell T/p}^{(\ell+1)T/p} 
\bigl[
\hat{h}_{\tau_p(s)}^{(p),n} - \hat{h}_s^{n} 
\bigr]  \ud s \biggr\vert^2\biggr]
\\
&= \tfrac{p}{T} \sum_{\ell=0}^{p-1} 
{\mathbb E}^{\varpi {\boldsymbol h}^{n} \hspace{-2pt} / \varepsilon}
\biggl[ 
 \biggl\vert 
 {\mathbb E}^{\varpi {\boldsymbol h}^{n} \hspace{-2pt} / \varepsilon}
 \biggl[ 
\int_{\ell T/p}^{(\ell +1) T/p}   \hat{h}_s^{n}   \ud s \, \vert \, 
{\mathcal F}_{\ell T/p}^{{\boldsymbol W}^p} 
\biggr]
-
\int_{\ell T/p}^{(\ell+1)T/p} 
\hat{h}_s^{n} 
\ud s
\biggr\vert^2
\biggr]
\\
&\leq  \sum_{\ell=0}^{p-1} 
\int_{\ell T/p}^{(\ell +1) T/p} 
{\mathbb E}^{\varpi {\boldsymbol h}^{n} \hspace{-2pt} / \varepsilon}
\Bigl[ 
 \bigl\vert
\hat{h}_s^{n} 
-   
 {\mathbb E}^{\varpi {\boldsymbol h}^{n} \hspace{-2pt} / \varepsilon}
 \bigl[    \hat{h}_s^{n} \, \vert \, 
{\mathcal F}_{\ell T/p}^{{\boldsymbol W}^p} 
\bigr]
\bigr\vert^2
\Bigr]
\ud s.
\end{split} 
\end{equation} 
By
\eqref{eq:5th:step:correc:1:d}, we have, for 
$\ell \in \{0,\cdots,p-1\}$
and for $s \in [\ell T/p,(\ell+1)T/p]$, 
\begin{equation*}
\begin{split} 
\hat{h}_{s}^{n} = 
\hat{h}_{\ell T/p}^{n} + 
\int_{\ell T/p}^{s} \bigl[ -   Q^\dagger f\bigl( m^{(p)}_{r}\bigr) + \eta_{r} \hat{h}_{r}^{n} \bigr]  \ud r + \varepsilon
\int_{\ell T/p}^{s}  \hat{ k}_{r}^{n} \ud W_{r}^{\varpi {\boldsymbol h}^n \hspace{-2pt} /\varepsilon}. 
\end{split} 
\end{equation*}
Taking the conditional expectation given ${\mathcal F}_{\ell T/p}^{{\boldsymbol W}^p}$ under ${\mathbb P}^{\varpi {\boldsymbol h}^{n} \hspace{-2pt} / \varepsilon}$, we obtain 
\begin{equation*}
\begin{split} 
 {\mathbb E}^{\varpi {\boldsymbol h}^{n} \hspace{-2pt} / \varepsilon}
 \bigl[  \hat{h}_{s}^{n} \, \vert \, {\mathcal F}_{\ell T/p}^{{\boldsymbol W}^p}
 \bigr]= 
\hat{h}_{\ell T/p}^{n} + 
 {\mathbb E}^{\varpi {\boldsymbol h}^{n} \hspace{-2pt} / \varepsilon}
\biggl[ 
\int_{\ell T/p}^{s} \bigl[ -   Q^\dagger f\bigl( m^{(p)}_{r}\bigr) + \eta_{r} \hat{h}_{r}^{n} \bigr]  \ud r 
\, \vert \, {\mathcal F}_{\ell T/p}^{{\boldsymbol W}^p}
 \biggr],
\end{split} 
\end{equation*}
and then 
\begin{equation*} 
\begin{split} 
&{\mathbb E}^{\varpi {\boldsymbol h}^{n} \hspace{-2pt} / \varepsilon}
\Bigl[ 
 \bigl\vert
\hat{h}_s^{n} 
-   
 {\mathbb E}^{\varpi {\boldsymbol h}^{n} \hspace{-2pt} / \varepsilon}
 \bigl[    \hat{h}_s^{n} \, \vert \, 
{\mathcal F}_{\ell T/p}^{{\boldsymbol W}^p} 
\bigr]
\bigr\vert^2
\Bigr] \leq \tfrac{C}{p^2} + C \varepsilon^2 
 {\mathbb E}^{\varpi {\boldsymbol h}^{n} \hspace{-2pt} / \varepsilon}
 \biggl( \int_{\ell T/p}^s 
  \vert   \hat{k}_r^{n} \vert^2  \ud r\biggr). 
\end{split}
\end{equation*} 
Finally, 
by 
\eqref{eq:5th:step:correc:999}, 
\begin{equation*} 
\begin{split} 
& \tfrac{T}p 
\sum_{\ell=0}^{p-1} 
 \tfrac{p^2}{T^2} 
   {\mathbb E}^{\varpi {\boldsymbol h}^{n} \hspace{-2pt} / \varepsilon}
 \biggl[ \biggl\vert \int_{\ell T/p}^{(\ell+1)T/p} 
\bigl[
\hat{h}_{\tau_p(s)}^{(p),n} - \hat{h}_s^{n} 
\bigr]  \ud s \biggr\vert^2\biggr]
\\
&\leq 
\tfrac{C}{p^2} 
+
\sum_{\ell=0}^{p-1} 
\int_{\ell T/p}^{(\ell +1) T/p} 
{\mathbb E}^{\varpi {\boldsymbol h}^{n} \hspace{-2pt} / \varepsilon}
\biggl( \int_{\ell T/p}^s
\varepsilon^2
 \vert   \hat{k}_r^{n} \vert^2  \ud r
\biggr) \, \ud s
\\
&\leq \tfrac{C}{p^2} 
+ \tfrac{C}{p}
{\mathbb E}^{\varpi {\boldsymbol h}^{n} \hspace{-2pt} / \varepsilon}
\biggl(
\int_{0}^{T} 
 \varepsilon^2 \vert   \hat{k}_r^{n} \vert^2  \ud r
\biggr). 
\end{split} 
\end{equation*} 
Taking the square in 
\eqref{eq:5th:step:correc:1:d}
and using 
\eqref{eq:5th:step:correc:1:f}, we can prove that 
\begin{equation*} 
{\mathbb E}^{\varpi {\boldsymbol h}^{n} \hspace{-2pt} / \varepsilon}
\biggl(
\int_{0}^{T} 
 \varepsilon^2 \vert   \hat{k}_r^{n} \vert^2  \ud r
\biggr) 
\leq C.
\end{equation*} 
Back to 
\eqref{eq:correc:A:k:n+1} 
and \eqref{eq:5th:step:correc}, we get 
\begin{equation*} 
\begin{split}
&{\mathbb E}^{\varpi {\boldsymbol h}^{n} \hspace{-2pt} / \varepsilon}
\Bigl[ 
\sup_{k=0,\cdots,p} 
\bigl\vert 
\hat{X}_{kT/p}^{(p),n} 
-
\hat{X}_{k T/p}^{n} 
\bigr\vert^2 \Bigr]
\leq 
 \tfrac{C}p,
\end{split} 
\end{equation*} 
which is 
\eqref{eq:fourth:step:correc:0}.

By
\eqref{eq:5th:step:correc:1:c}, we easily obtain 
\eqref{eq:fourth:step:correc:1}.
\end{proof}

\section{Numerical experiments}
\label{Se:3}

This section is devoted to several numerical experiments 
based on our learning algorithm. We first provide
in Subsection \ref{Subse:3:1}
 a theoretical analysis of a benchmark example that we use throughout the section. 
In Subsection \ref{Subse:3:2}, we present several variants of the implemented version of the algorithm. We also explain how to compute a reference solution. 
Then, we 
study in Subsection \ref{Subse:3:3:a} the numerical behavior of the algorithm 
when the 
intensity $\varepsilon$ of the common noise is equal to 1.
In Subsection \ref{subse:3:2:bis}, we explain some of the difficulties that arise in the large dimension setting.
 Finally, in 
Subsection 
\ref{subse:3:3}, we address the small viscosity regime. 
In both Subsections 
\ref{Subse:3:3:a} 
and 
\ref{subse:3:3}, we discuss the influence of the parameter $\varpi$. In particular, we compare the two harmonic ($\varpi=1$) and 
geometric ($\varpi>1$) versions of the algorithm. In this regard, it is worth recalling Remark 
\ref{rem:fictitious:play:rate}: all our results hold for the harmonic version of the 
fictitious play with the geometric decay $\varpi^{-n}$ being replaced by $1/n$. 

\subsection{A benchmark example}
\label{Subse:3:1}
As a particular instance of the cost functional $J$ in 
\eqref{eq:cost:intro}, we focus here on 
\begin{equation}
\label{eq:cost:numerical}
J\bigl({\boldsymbol \alpha} ; {\boldsymbol m} \bigr) = \frac12  {\mathbb E}
\biggl[ \bigl\vert X_{T} + g(m_{T}) \bigr\vert^2 + 
\int_{0}^T   
\vert \alpha_{t} \vert^2
  \ud t \biggr],
\end{equation}
which implicitly means that $f=0$. 
The examples that are addressed below are in dimension $d=1$, $d=2$, \textcolor{black}{$d=12$ and $d=20$}, with $T=1$ in any cases.
For simplicity, we also work with $X_{0}=0$.  

In dimension $d=1$, we choose
\begin{equation}
\label{eq:benchmark:d=1}
g(x) = \cos ( \kappa x), \quad x \in {\mathbb R}, 
\end{equation}
for a free positive parameter $\kappa$ that we tune from smaller to larger values. 
Obviously, the Lipschitz constant of $g$ becomes larger with $\kappa$. Accordingly, the coupling between the two forward and backward equations in 
\eqref{eq:intro:FBSDE} becomes stronger as $\kappa$ gets larger, which makes it more difficult to solve, especially when $\varepsilon=0$. We illustrate this in Lemma \ref{lem:uniqueness:MFG:f=0:g:Lipschitz:small} below: it says that, when $\varepsilon=0$,  
\eqref{eq:intro:FBSDE} is uniquely solvable when $\vert \kappa \vert<2$. Even more, the analysis performed in 
\cite{ChassagneuxCrisanDelarue} shows that the more standard (and obviously simpler) Picard scheme
\eqref{eq:backward:step:n}--\eqref{eq:forward:step:n}
 would converge when $\kappa$ is small, whether there is a common noise or not. Our result is thus especially relevant when $\kappa$ gets larger. 

In dimension $2$, we work with a similar terminal cost $g=(g_{1},g_{2})$:
\begin{equation}
\label{eq:benchmark:d=2}
\begin{split}
&g_{1}(x_{1},x_{2}) = \cos(\kappa x_{1}) \cos(\kappa x_{2}),
\quad g_{2}(x_{1},x_{2}) = \sin(\kappa x_{1}) \sin (\kappa x_{2}), 
\quad x_{1}, \ x_{2} \in {\mathbb R},
\end{split}
\end{equation}
where, as in dimension 1, $\kappa$ is a free positive parameter that we tune from smaller to larger values. 

\color{black}
As we just said, we also address higher dimensional examples, with $d=12$ or $d=20$. In order to encode the function $g$ in a systematic manner, we 
then choose:
\begin{equation}
\label{eq:benchmark:d:higher}
\begin{split}
&g_i(x) = \cos \biggl( \sum_{j=1}^d \Theta_{i,j} x_j \biggr)
\quad x \in {\mathbb R}^d, \quad i = 1,\cdots, d, 
\end{split}
\end{equation}
for a fixed square matrix $\Theta$ of size $d \times d$, whose choice depends on the examples. 
For instance, we may take $\Theta_{i,j}=(i+j)/(2d)$. The impact of $\Theta$ is then quite similar to the impact of 
$\kappa$ in the low dimensional case as it induces highly oscillatory phenomena, which make the coupling in 
\eqref{eq:intro:FBSDE} stronger. 

\color{black}
Regardless of the choice of $g$, 
the Riccati equation 
\eqref{eq:riccati}
associated with 
\eqref{eq:cost:numerical} writes 
\begin{equation}
\label{eq:riccati:test}
\dot{\eta}_{t} - \eta_{t}^2 = 0, \quad t \in [0,1] \ ; \quad \eta_{1} = 1, 
\end{equation}
the solution of which is given by 
\begin{equation}
\label{eq:riccati:test:solution}
\eta_{t} = \frac{1}{2-t}, \quad t \in [0,1]. 
\end{equation}
\textcolor{black}{Obviously, the 
solution to \eqref{eq:riccati} is here identified with a scalar-valued function while, formally, it takes values in 
the set of square $d \times d$ matrices.
This follows from the fact that $Q$ and $R$ in 
\eqref{eq:riccati} are just the identity matrix. 
Numerically, this makes both the code and the analysis slightly easier, but the resulting restriction on the scope of the results is in fact limited, 
the real challenge being to learn ${\boldsymbol h}^n$ in 
 \eqref{eq:optimization:exploration}.
}

\subsubsection{Solutions without common noise}
Even though this is not needed for all the examples we handle below, we feel useful 
to have a preliminary discussion about the shape of the solutions when there is no common noise, keeping in mind that, originally, the true model of interest is the MFG without common noise. Recalling \eqref{eq:intro:FBSDE}, 
we 
indeed have that, 
whenever there is no common noise (i.e., $\varepsilon=0$), the 
equilibria are given as the solutions of the deterministic system 
\begin{equation}
\label{eq:coupled:limit}
\dot{m}_{t} = - \bigl( \eta_{t}  m_{t} + h_{t} \bigr), \quad \dot{h}_{t}  =   \eta_{t} h_{t}, \quad t \in [0,1] \ ;  
\quad h_{1}=g(m_{1}),
\end{equation}
with $m_{0}=0$, 
which prompts us to perform the changes of variable: 
\begin{equation}
\label{eq:change:variable:m,h}
\tilde{m}_{t} = \frac{m_{t}}{2-t}, \quad \tilde{h}_{t} = (2-t) h_{t}, \quad 
t \in [0,1]. 
\end{equation}
We then have that $(m_{t},h_{t})_{0 \le t \le 1}$
solves 
\eqref{eq:coupled:limit}
if and only if 
\begin{equation*}
\begin{split}
&\dot{\tilde{m}}_{t} = - \frac1{2-t} h_{t} =
- \frac1{(2-t)^2} \tilde h_{t}, \quad \dot{\tilde{h}}_{t} = 0, \quad t \in [0,1] \ ;
\quad \tilde{h}_{1} = g\bigl( \tilde m_{1} \bigr),
\end{split}
\end{equation*}
from which we deduce the following simpler characterization  (recalling that $m_{0}=0$) 
\begin{equation}
\label{eq:mean_field:d:1:equilibrium}
\tilde m_{1} =  - g\bigl( \tilde m_{1} \bigr) \int_{0}^1 \frac{\ud t}{(2-t)^2}
= - \frac{g(\tilde m_{1})}2.
\end{equation}
There are as many equilibria as solutions to the equation
$x = -g(x)/2$. 
We thus have the following obvious lemma:
\begin{lem}
\label{lem:uniqueness:MFG:f=0:g:Lipschitz:small}
When $\varepsilon=0$ and regardless of the dimension, the solutions of the MFG associated with the cost functional 
\eqref{eq:cost:numerical}
are given by the roots $\tilde m_{1}$ of the equation $2x+g(x)=0$ and then by the changes of variable
\eqref{eq:change:variable:m,h}. In particular, 
if the Lipschitz constant of the function $g$ is strictly less than 2, then the MFG has one and only one solution.
\end{lem}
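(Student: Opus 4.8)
The plan is to follow exactly the reduction already indicated in the paragraph preceding the statement, and to verify the final uniqueness claim by a contraction/monotonicity argument on the scalar (or vector) fixed-point equation $2x+g(x)=0$. First I would recall that, since $\varepsilon=0$, the FBSDE \eqref{eq:intro:FBSDE} becomes the deterministic coupled ODE system \eqref{eq:coupled:limit}, with $\eta_{t}=1/(2-t)$ from \eqref{eq:riccati:test:solution} (note $f=0$ here, so the term $Q^\dagger f$ drops out and $R^\dagger R = I_d$ since $R=I_d$ in the benchmark, giving $h_T=g(m_T)$). Then I would perform the time-dependent change of variables \eqref{eq:change:variable:m,h}, namely $\tilde m_{t}=m_{t}/(2-t)$ and $\tilde h_{t}=(2-t)h_{t}$, and check by direct differentiation that the transformed system reads $\dot{\tilde h}_{t}=0$ and $\dot{\tilde m}_{t}=-\tilde h_{t}/(2-t)^2$, with terminal condition $\tilde h_{1}=g(\tilde m_{1})$ (using that $2-1=1$) and initial condition $\tilde m_{0}=m_{0}/2=0$. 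This is the routine computation that I would not grind through in detail.

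Next I would integrate: since $\tilde h$ is constant, $\tilde h_{t}\equiv \tilde h_{1}=g(\tilde m_{1})$, and hence
\[
\tilde m_{1}-\tilde m_{0} = -g(\tilde m_{1})\int_{0}^{1}\frac{\ud t}{(2-t)^2} = -g(\tilde m_{1})\Bigl[\frac{1}{2-t}\Bigr]_{0}^{1} = -\frac{g(\tilde m_{1})}{2},
\]
which, with $\tilde m_{0}=0$, is precisely \eqref{eq:mean_field:d:1:equilibrium}, i.e.\ $2\tilde m_{1}+g(\tilde m_{1})=0$. Conversely, given any root $x$ of $2x+g(x)=0$, setting $\tilde h_{t}\equiv g(x)$, $\tilde m_{t}=-g(x)\int_0^t(2-s)^{-2}\ud s$ and reversing the change of variables produces a genuine solution of \eqref{eq:coupled:limit} with $m_0=0$; this establishes the claimed bijection between equilibria and roots of $2x+g(x)=0$, in any dimension $d$ (the vector case is identical, component-free).

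Finally, for the uniqueness assertion, I would argue that the map $\Phi(x):=-\tfrac12 g(x)$ is a strict contraction on $\mathbb{R}^d$ whenever the Lipschitz constant $L$ of $g$ satisfies $L<2$: indeed $\vert \Phi(x)-\Phi(x')\vert = \tfrac12\vert g(x)-g(x')\vert \le \tfrac{L}{2}\vert x-x'\vert$ with $L/2<1$. Since $\mathbb{R}^d$ is complete, Banach's fixed point theorem gives existence and uniqueness of a solution to $x=\Phi(x)$, equivalently of a root of $2x+g(x)=0$, hence of an MFG equilibrium. There is essentially no obstacle here; the only point requiring a line of care is the verification that the change of variables \eqref{eq:change:variable:m,h} transforms \eqref{eq:coupled:limit} into the decoupled form, but this is a direct computation using $\dot\eta_t=\eta_t^2$ and $(2-t)\eta_t=1$. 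I would state the lemma's proof in three or four lines along these lines.
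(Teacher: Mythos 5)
Your argument is correct and coincides with the paper's own treatment: the paper derives the reduction to the scalar equation $2\tilde m_{1}+g(\tilde m_{1})=0$ via the same change of variables \eqref{eq:change:variable:m,h} in the paragraph preceding the lemma, and then records the statement as an immediate consequence, with the Lipschitz/contraction step left implicit. You merely make explicit the converse direction and the Banach fixed-point argument, both of which are exactly what the paper intends.
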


\subsubsection{Potential structure when $\varepsilon=0$}
\label{subse:potential}
Following 
\eqref{eq:potential:structure:1}
and
\eqref{eq:potential:structure:2}, we may associate 
a mean field control problem with the MFG (with $\varepsilon=0$) if the function $g$ derives from a potential 
$G$.  
The cost functional is given by 
\begin{equation*}
{\mathcal J}({\boldsymbol \alpha}) 
= {\mathbb E} \biggl[ \frac12 \vert X_{1} \vert^2 + G\bigl({\mathbb E}(X_{1})\bigr) + \frac12 \int_{0}^1 \vert \alpha_{t} \vert^2 \ud t \biggr],
\end{equation*}
where, in the right-hand side, 
\begin{equation*}
X_{1} = \int_{0}^1 \alpha_{t} \ud t.  
\end{equation*} 
The analysis of the minimization  problem $\inf_{{\boldsymbol \alpha}} {\mathcal J}({\boldsymbol \alpha})$ is straightforward. 
Indeed, by an obvious convexity argument, we observe that 
\begin{equation*}
{\mathcal J}({\boldsymbol \alpha})  \geq 
{\mathcal J}\biggl({\mathbb E} \int_{0}^1 \alpha_{t} \ud t \biggr),
\end{equation*}
where, in the right-hand side, it is obviously understood that 
the argument of 
${\mathcal J}$ is a constant control. 
This shows that 
the minimizers 
are deterministic and constant in time. 
Using the generic notation $\beta$ for $\int_{0}^1 \alpha_{t} \ud t$, the problem is thus to minimize
\begin{equation*}
{\mathscr J}(\beta)= \beta^2 + G(\beta ), \quad \beta \in {\mathbb R}. 
\end{equation*}
Of course, we observe that any minimizer of  ${\mathscr J}$ is also a solution of the equation 
$\beta=-g(\beta)/2$: we recover the fact that 
any solution to the mean field control problem is an MFG solution. 
Obviously, the converse may not be true. Accordingly,
the set of solutions to the mean field control problem may be strictly included in the set 
of equilibria of the corresponding mean field game. This principle is illustrated by 
Figure \ref{fig:potential mathscrJ} below with 
$g$ as in \eqref{eq:benchmark:d=1}: For all the values of 
$\kappa \in \{2,3,\cdots,10\}$, the potential ${\mathscr J}$ has a unique minimizer; 
but, for $\kappa \in \{7,8,9,10\}$, the derivative of ${\mathscr J}$ has several zeros, hence proving that the MFG has several solutions.

\begin{figure}[h!]
	\begin{subfigure}[b]{0.48\linewidth}
		\includegraphics[width=\linewidth]{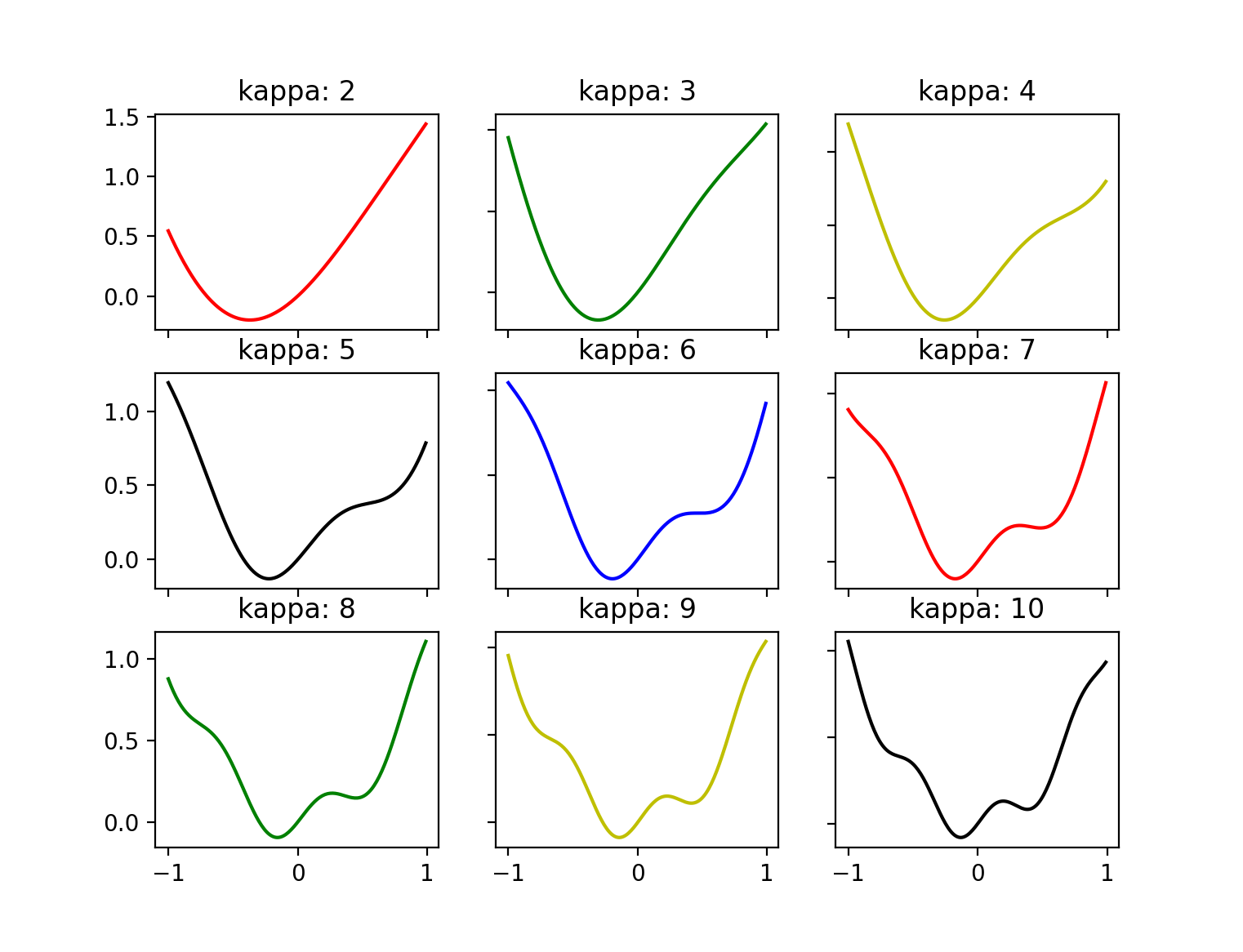}
		\caption{Evolution of ${\mathscr J}$ with $\kappa$. }
	\end{subfigure}
	\begin{subfigure}[b]{0.48\linewidth}
		\includegraphics[width=\linewidth]{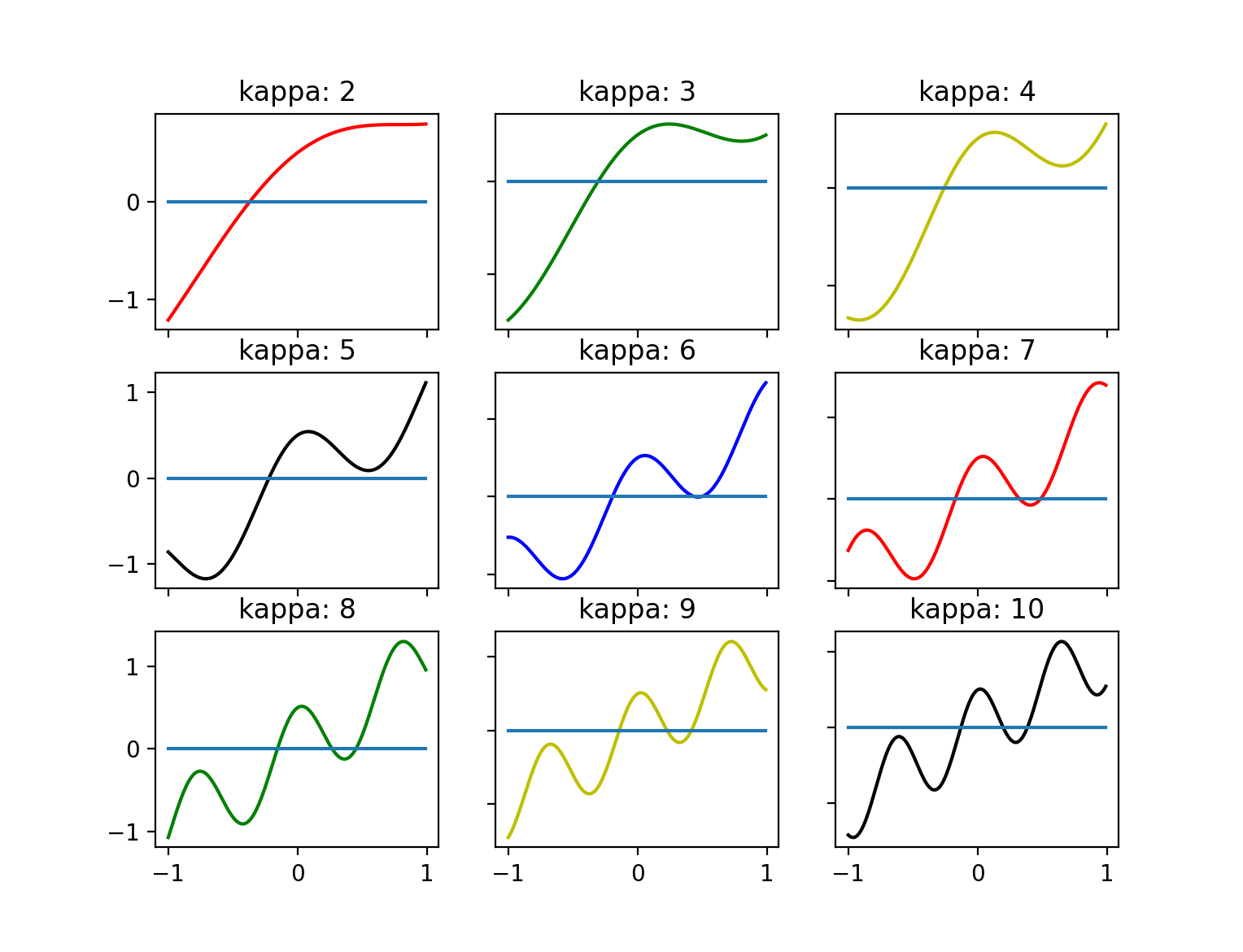}
		\caption{Evolution of $\frac{\ud}{\ud \beta}{\mathscr J}$ with $\kappa$. }
	\end{subfigure}
	\caption{Minimizers of the mean field control problem and equilibria of the mean field game for $\kappa \in \{2,\cdots,10\}$ when $d=1$ and $g(x)=\cos(\kappa x)$:
	Solutions of the mean field control problem are the minimizers of ${\mathcal J}$; Solutions of the mean field game are the roots of $\frac{\ud}{\ud \beta}{\mathscr J}$.}
	\label{fig:potential mathscrJ}
\end{figure}

Interestingly, 
the recent contribution \cite{cecdel2019}
says that, when the MFG is potential, the equilibria 
that are not associated with a minimizer of the potential should be ruled out. Equivalently, 
only the MFG solutions that minimize (globally) the potential should be selected. 
Even more, the guess (which has been rigorously established in  
\cite{cecdel2019,delfog2019} but in different settings that the one addressed here) 
is that the minimizers of the potential are precisely those that appear 
by forcing uniqueness with a common noise and then by tuning down the intensity of the common noise. We check this prediction 
numerically in Subsection \ref{subse:3:3}, by using our learning method.

\subsection{Algorithms for a fixed intensity of the common noise}
\label{Subse:3:2}

We here explain how to implement the fictitious play  
in the two benchmark examples 
\eqref{eq:benchmark:d=1}
and
\eqref{eq:benchmark:d=2} along the lines of Figure \ref{fig:4}. 
Throughout the subsection, the intensity of the common noise is fixed. For simplicity, 
the intensity of the common noise is chosen as $\varepsilon=1$
and the intensity of the independent noise is chosen as $\sigma=0$ or $1$. 
Numerical experiments are discussed in the next subsection. 

\subsubsection{Numerical reference solution}
\label{subse:reference}
We first explain how to compute a reference solution by solving numerically the decoupled forward-backward equation \eqref{eq:decoupled:limit}. 
Observe that there is no contradiction in using a numerical method
for testing our learning algorithm: the numerical method relies on the explicit knowledge of 
the coefficient $g$, whereas the learning algorithm is intended to work on the sole observations of the costs. 

The numerical method we use is tailor-made to our problem. Indeed, we observe that 
the solution to the backward equation in \eqref{eq:bsde:true:underPh}
 writes
\begin{equation}
\label{eq:numerical:guess:0}
h_{t} = {\mathbb E}^{{\boldsymbol h}} \biggl[ \exp  \biggl( - \int_{t}^1 
\eta_{s} \ud s \biggr) g \bigl( m_{1} \bigr) \, \big\vert \, {\mathcal F}^{{\boldsymbol W}}_{t} \biggr], \quad t \in [0,1], 
\end{equation}			
where
$(m_{t})_{0 \le t \le 1}$ is an Ornstein-Uhlenbeck process 
(with independent coordinates) 
\begin{equation}
\label{eq:numerical:guess:forward}
\ud m_{t} = -  \eta_{t}  m_{t} \ud t +  \ud W_{t}, \quad t \in [0,1] \ ; 
\quad m_{0}=0. 
\end{equation}
We then employ a Picard scheme for solving the fixed point equation 
 \eqref{eq:numerical:guess:0}, by iterating
\begin{equation}
\label{eq:numerical:guess}
h_{t}^{\textrm{\rm ref},n+1} = {\mathbb E}^{{\boldsymbol h}^{\textrm{\rm ref},n}} \biggl[ \exp  \biggl( - \int_{t}^1 
\eta_{s} \ud s \biggr) g \bigl( m_{1} \bigr) \, \big\vert \, {\mathcal F}^{{\boldsymbol W}}_{t} \biggr], \quad t \in [0,1], 
\end{equation}			  
 Numerically, we use a time grid $\{t_{k}=k/p\}_{k}$ with $p$ uniform steps (for the same $p$ as the one 
 used in the learning method, which makes the comparison easier)
 \textcolor{black}{and, following the seminal work of 
\cite{GobetLemorWarin}, 
  we employ a regression method in order to approximate the conditional expectation appearing in the right-hand side.
 Precisely,} 
  we find, 
 at each iteration $n$ of the Picard sequence and at each node $t_{j} >0$ of the time mesh, an approximation of
 the conditional expectation in the right-hand side of 
\eqref{eq:numerical:guess}
in the form of 
a (deterministic) function of $m_{t_{j}}$ (the forward component in \eqref{eq:numerical:guess})\textcolor{black}{, with the deterministic function being chosen 
in a given class of functions from ${\mathbb R}^d$ into itself. 
The point is then to choose a convenient class within which 
the regression is achieved.}

\textcolor{black}{Here, we address two approaches, already reported in the literature. The first one
consists in performing regression on linear combinations of Hermite polynomials. 
It
is inspired from 
\cite{BriandLabart}, which offers (in a somewhat more complicated framework) some theoretical bounds 
on the regression error. 
The second approach is taken from 
the work
\cite{EHanJentzen}, which paved the way for a systematic use of neural networks in the 
computation of numerical solutions to nonlinear PDEs and related Markovian BSDEs. 
The point in this second approach is thus to approximate 
the conditional expectation in 
\eqref{eq:numerical:guess}, at time $t=t_j$, by means of a neural network with $m_{t_j}$ as entry.}

\color{black}
For the sake of completeness, we provide of a short overview of the shape of the functions 
within these two classes.
\vskip 5pt

\textit{Using Hermite polynomials.}
The real-valued components of the regression functions are taken in the linear span of
$\{H_{\ell}^d(\cdot/\sigma_{t_{j}})\}_{\vert \ell \vert \leq D}$, 
where $\sigma_{t_{j}}$ is the common standard deviation of the coordinates of $m_{t_{j}}$ (which is explicitly computable) 
and $\{H_{\ell}^d(\cdot)\}_{\vert \ell \vert \leq D}$ is the collection of 
Hermite polynomials of dimension $d$ ($d=1,2$ in our case) and of degree less than $D$, 
for a given integer $D$  (here 
$\ell$ is a $d$-tuple of integers and $\vert \ell \vert$ is the sum of its entries). 
We recall that, when $d=1$, 
$$H_\ell^1(x)= \frac{(-1)^\ell}{\sqrt{2^\ell \ell!}} e^{x^2}\frac{\ud}{\ud x^\ell}[e^{-x^2}], \quad x \in {\mathbb R}.$$  
When $d=2$, 
$$H_{(\ell_{1},\ell_{2})}^2(x_{1},x_{2})= H_{\ell_{1}}^1(x_{1}) H_{\ell_{2}}^1(x_{2}), 
\quad x_{1},x_{2} \in {\mathbb R}.$$
\vskip 5pt

\textit{Using a neural network.}
For a number of layers $L$, 
the functions used for the regression have the 
form $\psi^{L+1} \circ \varphi^{L} \circ \psi^{L} \cdots \circ \psi^2 \circ \varphi^1 \circ \psi^1$, with 
$\psi^\ell$, for each $\ell=1,\cdots,L+1$, being an affine function from ${\mathbb R}^{d_{\ell-1}}$ to 
${\mathbb R}^{d_\ell}$, where $d_0=d_{L+1}=d$. For each $\ell=1,\cdots,L$, $\varphi^\ell$ is a function from ${\mathbb R}^{d_\ell}$ into itself that 
maps $(x_1,\cdots,x_{d_\ell})$ onto $(\varsigma(x_1),\cdots,\varsigma(x_{d_\ell}))$ for some activation function 
$\varsigma$ (which, for simplicity, is taken to be the same for any $\ell$). 
The activation function is fixed \textit{a priori}. Standard examples for it are the ReLu or Sigmoid functions. 
We then call neurons of the networks the coefficients encoding the linear mappings 
$(\psi^\ell)_{\ell=1,\cdots,L+1}$. In clear, the principle is to tune, for each $\ell$, matrices $A^\ell\in {\mathbb R}^{d_{\ell-1} \times d_\ell}$
and vectors $B^\ell \in {\mathbb R}^{d_\ell}$ in the decomposition of $\psi_\ell$ as
\begin{equation*}
\psi^\ell(x) = A^\ell x + B^\ell, \quad x \in {\mathbb R}^{d_{\ell-1}}. 
\end{equation*} 
Neural networks are notoriously known to (possibly) behave well
in higher dimension. Below, we provide examples with $d=12$ and $d=20$. 
\vskip 5pt

Given a finite dimensional class ${\mathcal C}$ of regression functions (obtained by  Hermite polynomials or neural networks), 
we use the following algorithm for the 
the computation of a reference solution.
\vskip 5pt

\color{black}

\noindent {\bf Algorithm 1.} [Reference solution]

\begin{enumerate}
\item[Input:] Introduce
$\Delta_{t_{k}} w^{(j)}$ with $j \in \{1,\cdots,N\}$
and
$k \in \{1,\cdots,p\}$, 
a collection 
of $N$ realizations  of independent Gaussian variables ${\mathcal N}(0,I_{d})$. 
\item[Task:] For each $i$, compute 
$(m^{(j)}_{t_{k}})_{j=0,\cdots,p}$ the realizations  of
the Euler scheme associated with  
\eqref{eq:numerical:guess:forward} and driven by the simulations 
$$\biggl(w^{(j)}_{t_{k}}= \frac1{\sqrt{p}}\Bigl[\Delta_{t_{1}} w^{(j)} + \cdots+ \Delta_{t_{k}} w^{(j)}\Bigr]\biggr)_{k=1,\cdots,p}.$$
\item[Loop:] One iteration consists of one Picard iteration. The input at iteration number $n$ is encoded in the form of an array $(h_{t_{k}}^{n,(j)})_{j,k}$
with entries in ${\mathbb R}^d$.
The following procedure returns the next input for iteration number $n+1$.
\begin{enumerate}
\item Look for 
$(h_{t_{k}}^{n+1,(j)})_{j,k}$
in terms of 
$(h_{t_{k}}^{n,(j)})_{j,k}$
by solving, for each $k=0,\cdots,p-1$, the minimization  problem \color{black}
\begin{equation*}
\min_{{\mathfrak h} \in {\mathcal C}}
\frac1N \sum_{j=1}^N 
{\mathcal E}^{n,(j)}_{k} \biggl\vert \exp \biggl( - 
 \sum_{s=k}^{p-1} \eta_{t_{s}}
\biggr) g(m_{1}^{(j)}) - {\mathfrak h} \bigl( m_{t_{k}}^{(j)} \bigr)
\biggr\vert^2,
\end{equation*}
with 
\begin{equation*}
{\mathcal E}^{n,(j)}_{k} =\exp \biggl( -  \sum_{s=k}^{p-1} h_{t_{s}}^{n,(j)}
\cdot \Delta_{t_{k+1}} 
w^{(j)} - \frac1{2p} 
\sum_{s=k}^{p-1} \vert h_{t_{s}}^{n,(j)} \vert^2
\biggr).
\end{equation*}
\color{black}
\item To enforce some stability, update each coordinate of 
$(h_{t_{k}}^{n+1,(j)})_{j,k}$
by projecting it 
onto $[-1,1]$.
\end{enumerate}
 \end{enumerate}

\begin{rem}
\label{rem:normalisation:hermite}
For instance, when Hermite polynomials are used, 
the minimization step in 
Algorithm 1 can be rewritten as 
 \begin{equation*}
\min_{{\boldsymbol c}=(c_{\ell})_{\vert \ell \vert \leq D}}
\frac1N \sum_{j=1}^N 
{\mathcal E}^{n,(j)}_{k} \biggl\vert \exp \biggl( - 
 \sum_{s=k}^{p-1} \eta_{t_{s}}
\biggr) g(m_{1}^{(j)}) - \sum_{\vert \ell \vert \leq D} c_{\ell}
H^d_{\ell} \bigl( \tfrac1{\sigma_{t_k}} m_{t_{k}}^{(j)} \bigr)
\biggr\vert^2,
\end{equation*}
where each $c_{\ell}$ in 
${\boldsymbol c}=(c_{\ell})_{\vert \ell \vert \leq D}$ is in ${\mathbb R}^d$.
(When $k=0$, only $c_{\boldsymbol 0}$, where $\vert {\boldsymbol 0} \vert=0$, matters and it is given by a mere empirical mean.)

Here, the coordinates in 
\eqref{eq:numerical:guess:forward}
 have the same marginal variances because $(\eta_{t})_{0 \leq t \leq T}$
 is scalar-valued (or equivalent is
 a $d$-diagonal matrix).  
This is no longer true 
 when 
 $R$ and  
 $Q$ in
 \eqref{eq:cost:intro}
are general matrices. In such a case (and more generally when the components are not centered), 
the components of $(m_{t})_{0 \leq t \leq T}$ are no longer independent
and 
the `normalized' Hermite polynomials
$\{H_{\ell}^d(\cdot/\sigma_{t_{k}})\}_{\vert \ell \vert \leq D}$ used  
above should be instead replaced by 
$\{H_{\ell}^d({\mathbb K}^{-1/2}(m_{t_{k}}) (\cdot - {\mathbb E}(m_{t_{k}}))\}_{\vert \ell \vert \leq D}$, where 
${\mathbb K}(m_{t_{j}})$
is the covariance matrix of $m_{t_{k}}$ 
and 
${\mathbb K}^{1/2}(m_{t_{k}})$ is any root of it (e.g., as given by the Cholesky decomposition). 
\end{rem}

\subsubsection{Numerical approximation by fictitious play}
We now address the implementation  of the fictitious play according to the principle stipulated by 
Figure \ref{fig:4}. The black-box therein is discretized in a suitable manner. It is used 
to solve numerically the 
optimization  problem 
\eqref{eq:optimization:exploration},
and then to implement the 
updating rule 
\eqref{eq:update} for the environment. 
Since Lemma 
\ref{lem:4} says that the corresponding optimal law has an affine Markov feedback form, 
we \textcolor{black}{can} perform the optimization  in 
\eqref{eq:optimization:exploration}
over closed loop controls 
that are affine in the state variable, with the linear coefficient being a scalar only depending on time and with the intercept possibly depending on the environment.
From the practical point of view, this choice is fully justified if the model is expected to be linear-quadratic in the space/action variables (as it is in 
\eqref{eq:state:intro}--\eqref{eq:cost:intro}, with $Q$ and $R$ being scalars), but this does not require the coefficients $Q$, $R$, $f$ and $g$ to be known explicitly.
Mathematically, this writes as follows. 
We \textcolor{black}{can} restrict ourselves to controls
${\boldsymbol \alpha}=(\alpha_{t_{k}})_{{k =1,\cdots,p}}$ of the form 
\begin{equation}
\label{eq:numerical:scheme:alphatk:xtk:ctk}
\alpha_{t_{k}} = a_{t_{k-1}} X_{t_{k-1}} + C_{t_{k-1}},
\end{equation}
where $a_{t_{k-1}}$ is a scalar and $C_{t_{k-1}}$ is
a $d$-dimensional  random vector \textcolor{black}{(which is typically adapted to the 
increments $(\Delta_{t_\ell} w)_{\ell \leq k-1}$, but the measurability properties of which 
are specified  
in a finer manner right below)}. 
In comparison with the formula \eqref{eq:lem:4:feedback:form}, $a_{t_{k-1}}$ 
should be understood as a proxy for 
$-\eta_{t_{k-1}}$ and 
$C_{t_{k-1}}$
as a proxy 
for the intercept therein. Part of the numerical difficulty is thus 
to have a tractable regression method for capturing the randomness of 
$C_{t_{k-1}}$.
Recalling that the intercept ${\boldsymbol h}$ 
in the solution of the mean field game with common noise is a function of 
the environment $(m_{t})_{0 \leq t \leq 1}$, see \eqref{eq:representation:h:k}, a key point in our numerical method is to 
look for 
$C_{t_{k-1}}$
as a $\sigma(\overline m_{t_{k-1}})$-measurable random variable, where 
$\overline{m}_{t_{k-1}}$ is the current proxy for the state of the environment at time 
$t_{k-1}$. Numerically, we use a regression method, 
\textcolor{black}{very much in the spirit of Algorithm 1: 
either we use a regression based on 
a $d$-dimensional linear combination of Hermite polynomials of degree less than $D$, for a fixed value of $D$, or we use a neural network
with $L$ layers and with a prescribed numbers of neurons.
Of course, the reader may wonder about another approach 
to 
\eqref{eq:numerical:scheme:alphatk:xtk:ctk}, in which we optimize over general (possibly non-affine) controls in Markov feedback form. In fact, while this seems an attractive way to bypass any \textit{a priori} knowledge about the shape of the model, this approach requires an extra step for updating the intercept ${\boldsymbol h}$ at the next iteration of the fictitious play. Intuitively, the new intercept value can be found by linearly regressing the controls on the states. 
At this point, however, we assert that the presence of this additional step in the numerical procedure can only be fully justified if the linear-quadratic structure of the model is known. This makes the advantage of not postulating 
\eqref{eq:numerical:scheme:alphatk:xtk:ctk} very limited.}

We now make this construction explicit when $\sigma=\varepsilon=1$ \textcolor{black}{and the postulate 
\eqref{eq:numerical:scheme:alphatk:xtk:ctk} is in force}, noticing that the algorithm must rely on simulations for the independent and common noises.
Throughout, the learning parameter $\varpi$ is arbitrary: it may be strictly greater than 1 (in which case we are using the geometric variant of the fictitious play)
or equal to 1 (in which case we are using the harmonic variant). 
Below, we call $M$ the number of particles (given a realization of the common noise) and $N$ the number of simulations of the whole system (or equivalently of the common noise), 
with $i$ denoting the generic label for a particle and $j$ denoting the generic label for a realization. 
We are thus given 
a collection
\begin{equation}
\label{eq:noises:simulation}
\Delta_{t_{k}} b^{(i,j)}, \ \Delta_{t_{k}} w^{(j)}, \quad i \in \{1,\cdots,M\}, \ j \in \{1,\cdots,N\}, \
k \in \{1,\cdots,p\}, 
\end{equation}
of realizations  of independent ${\mathcal N}(0,I_{d})$ Gaussian variables. 
At iteration number $n$ of the fictitious play, the current proxy for the environment is thus given in the form of 
a collection 
$(\overline{m}^{n,(j)}_{t_{k}})_{j,k}$, with $j$ running from $1$ to $N$ and 
$k$ running from $0$ to $p$.  Similarly, 
the proxy for the intercept in the optimal law \eqref{eq:lem:4:feedback:form}
is given in the form of a collection 
$(h^{n,(j)}_{t_{k}})_{j,k}$, also with $j$ running from $1$ to $N$ and 
$k$ running from $0$ to $p-1$. 
The fact that the two proxies are independent of $i$ is fully consistent with the fact that
$\overline{\boldsymbol m}^n$ and ${\boldsymbol h}^n$ in 
Remark 
\ref{rem:main:thm:2:b}
are adapted to the realization  of the common noise.
Following our introductory discussion, we solve, as an approximation of the stochastic control problem 
\eqref{eq:optimization:exploration}, the minimization  problem:
\begin{equation}
\label{eq:approx:optimisation}
\min \biggl\{ 
\frac1{N}
\sum_{j=1}^N \biggl(
{\mathcal E}^{n,(j)}
\frac1M \sum_{i=1}^M
\biggl[  \frac{1}{2p} \sum_{k=1}^{p} 
\varpi^2 \vert \alpha_{t_{k}}^{(i,j)} \vert^2  + \frac12 \Bigl\vert \varpi x^{(i,j)}_{1} + g\Bigl(\overline{m}_{1}^{n,(j)}\Bigr)
\Bigr\vert^2
\biggr]
\biggr)
\biggr\},
\end{equation}
where 
\begin{equation*}
{\mathcal E}^{n,(j)} = \exp \Biggl( - \varpi \sqrt{\frac{1}{p}} \sum_{k=0}^{p-1}
h^{n,(j)}_{t_{k}}
\cdot \Delta_{t_{k+1}} w^{(j)} - \frac{\varpi^2}{2 p}
\sum_{k=0}^{p-1}
\vert 
h^{n,(j)}_{t_{k}}
\vert^2
\Biggr).
\end{equation*}
Moreover, in the minimization  problem,
$\alpha_{t_{k}}^{(i,j)}$, $x_{t_{k}}^{(i,j)}$ are required to be of the form
\begin{equation}
\label{eq:dynamics:numerics}
\begin{split}
&x_{t_{k}}^{(i,j)} = x_{t_{k-1}}^{(i,j)} + \frac{1}{p}
\alpha_{t_{k}}^{(i,j)} + {\frac{1}{\varpi \sqrt{p}}} \Delta_{t_{k}} b^{(i,j)}, 
\quad \ell=1,\cdots,p \ ; 
\quad x_{0}^{(i,j)}=x_{0},
\\
&\alpha_{t_{k}}^{(i,j)} = a_{t_{k-1}} x_{t_{k-1}}^{(i,j)} + C_{t_{k-1}}^{(j)} + \varpi  h^{n,(j)}_{t_{k-1}} +
\sqrt{p} \Delta_{t_{k}} w^{(j)}, \quad k=1,\cdots,p.
\end{split}
\end{equation}
In particular,
$(x_{t_{k}}^{(i,j)})_{k=0,\cdots,p}$ solves the following Euler scheme: 
\begin{equation*}
x_{t_{k}}^{(i,j)} = x_{t_{k-1}}^{(i,j)} + \frac{1}{p}
\Bigl( a_{t_{k-1}} x_{t_{k-1}}^{(i,j)} + C_{t_{k-1}}^{(j)} + \varpi h^{n,(j)}_{t_{k-1}} \Bigr) 
+
{\frac{1}{\varpi \sqrt{p}}} \Delta_{t_{k}} b^{(i,j)} +  
{\frac{1}{\sqrt{p}}} \Delta_{t_{k}} w^{(j)}, \quad k=1,\cdots,p.
\end{equation*}
\color{black}
Moreover, 
$C^{(j)}_{t_{k}}$ is required to be of the form 
\begin{equation}
\label{eq:approx:general}
C^{(j)}_{t_{k}} = {\mathfrak h}_{t_k} \bigl( 
\overline{m}^{n,(j)}_{t_{k}}
\bigr),
\end{equation}
for a function ${\mathfrak h}_{t_k}$ within one of the two classes 
${\mathcal C}$ described in 
Subsection 
\ref{subse:reference}, depending on whether we use Hermite polynomials or 
neural networks. 
As already explained, 
neural networks may provide better results in higher dimension (or, to put it differently, 
Hermite polynomials may be of an intractable complexity in higher dimension). 

As before, we feel useful to exemplify \eqref{eq:approx:general}
within each of the two aforementioned case. 
\vskip 5pt

\textit{Using Hermite polynomials.}
In this case, we use a slightly modified form of 
\eqref{eq:approx:general}, as we also center 
$ \overline{m}^{n,(j)}_{t_{k}}$ by means of the empirical mean in the argument of the Hermite polynomials.
Definition  \eqref{eq:approx:general} thus becomes:
\begin{equation}
\label{eq:approx:hermite}
C^{(j)}_{t_{k}} = \sum_{\vert \ell \vert \leq D}
c_{t_{k}}(\ell) H^{d}_{\ell}
\biggl(\bigl(U_{t_{k}}^n\bigr)^{-1} \Bigl( \overline{m}^{n,(j)}_{t_{k}}
- \frac1{N}
\sum_{r=1}^N \overline{m}^{n,(r)}_{k}
\Bigr)
\biggr),
\end{equation}
where $c_{k}(\ell) \in {\mathbb R}^d$ and $U_{t_{k}}^n$ is the diagonal matrix obtained 
by taking the roots of the diagonal of the empirical covariance matrix:
\begin{equation*}
\Sigma_{t_{k}}^n = \frac1N \sum_{j=1}^N
 \Bigl( \overline{m}^{n,(j)}_{t_{k}}
- \frac1{N}
\sum_{r=1}^N \overline{m}^{n,(r)}_{t_{k}}
\Bigr)
\otimes \Bigl( \overline{m}^{n,(j)}_{t_{k}}
- \frac1{N}
\sum_{r=1}^N \overline{m}^{n,(r)}_{t_{k}}
\Bigr).
\end{equation*}
\begin{rem}
Following 
Remark \ref{rem:normalisation:hermite}, 
we can define $U_{t_{k}}^n$
(in a more general fashion) 
as the 
upper triangular matrix given by the Cholesky decomposition of 
$\Sigma_{t_{k}}^n$ in the form $\Sigma_{t_{k}}^n=(U_{t_{k}}^n)^{\dagger} U_{t_{k}}^n$.
\end{rem}
The minimization  problem 
in 
\eqref{eq:approx:optimisation}
is thus defined (when using Hermite polynomials) over
${\boldsymbol a}=(a_{t_{k}})_{k} \in {\mathbb R}^p$
and
${\boldsymbol c}=(c_{t_k}(\ell))_{k,\ell} \in ({\mathbb R}^d)^{p \times L}$, where 
$L$ is the number of distinct $d$-tuples of (non-negative) integers whose sum is less than $D$. 
\vskip 5pt

\textit{Using neural networks.}
With the same definition as in
Subsection  
\ref{subse:reference}, 
  \eqref{eq:approx:general} takes the form 
   \begin{equation}
\label{eq:approx:neural}
C^{(j)}_{t_{k}} = 
\psi^{L+1}_{t_k} \circ \varphi^{L}_{t_k} \circ \psi^{L}_{t_k} \cdots \circ \psi^2_{t_k} \circ \varphi^1_{t_k} \circ \psi^1_{t_k} \bigl( 
\overline{m}^{n,(j)}_{t_{k}}
\bigr).
\end{equation}
Typically, the number of layers $L$, the dimensions of the various layers and the activation functions are 
the same for any time $t_k$, $k=0,\cdots,N-1$. In other words, we can write 
$\varphi^{L}_{t_k}=\varphi^L$, $\varphi^{L-1}_{t_k} = \varphi^{L-1}$, $\cdots$. 
Each $\psi_{t_k}^{\ell}$, for $\ell=1,\cdots,L+1$, writes as
\begin{equation*}
\psi_{t_k}^\ell(x) = A_k^\ell x + B_k^\ell, \quad x \in {\mathbb R}^{d_{\ell-1}}. 
\end{equation*} 
The minimization  problem 
in 
\eqref{eq:approx:optimisation}
is thus defined over
${\boldsymbol a}=(a_{t_{k}})_{k} \in {\mathbb R}^p$, 
${\boldsymbol A}=(A_{k}^\ell)_{k,\ell}$
and 
${\boldsymbol B}=(B_{k}^\ell)_{k,\ell}$, with 
$A_k^\ell\in {\mathbb R}^{d_{\ell-1} \times d_\ell}$
and $B_k^\ell \in {\mathbb R}^{d_\ell}$.
\vskip 5pt

\color{black}
We can summarize the algorithm in the following form.
It has the same form whether we work with the harmonic ($\varpi=1$) or geometric ($\varpi>1$) 
version of the fictitious play. 
\vskip 4pt

\noindent {\bf Algorithm 2.} [Learning with two noises, $\sigma=\varepsilon=1$]
\begin{enumerate}
\item[Input:] Take as input the realizations  
\eqref{eq:noises:simulation}, which 
are the same at any iteration of the algorithm.
\item[Loop:] At rank $n+1$ of the fictitious play:
\begin{enumerate}
\item Take as input 
the proxies
$(\overline{m}^{n,j}_{t_{k}})_{j,k}$
and
$(h^{n,j}_{t_{k}})_{j,k}$
	for (respectively) the environment and the intercept. 	
	\item \color{black}Solve the minimization  problem 
	\eqref{eq:approx:optimisation}
	over 
	${\boldsymbol a}=(a_{t_{k}})_{k}$ in 
	\eqref{eq:dynamics:numerics}
	and
${\boldsymbol {\mathfrak h}}=({\mathfrak h}_{t_{k}})_{k}$
in
	\eqref{eq:approx:general}.
	Call 
	${\boldsymbol a}^{n+1}=(a^{n+1}_{t_{k}})_{k}$ 
	and
	${\boldsymbol {\mathfrak h}}^{n+1}=({\mathfrak h}^{n+1}_{t_{k}})_{k}$ the optimal points (returned by any optimization  method). 
	\item With ${\boldsymbol a}^{n+1}=(a_{t_{k}}^{n+1})_{k}$ 
	and ${\boldsymbol {\mathfrak h}}^{n+1}=({\boldsymbol {\mathfrak h}}^{n+1}_{t_{k}})_{k}$, associate $(x^{n+1,(i,j)}_{t_{k}})_{i,j,k}$ as in 
		\eqref{eq:dynamics:numerics}
		and $(C_{t_{k}}^{n+1,(j)})_{k,j}$ as in 
				\eqref{eq:approx:general}. \color{black}
				\item  Update the proxies by letting
				\begin{equation*}
				\begin{split}
&m^{n+1,(j)}_{t_k}= \frac1M \sum_{i=1}^M 
x^{n+1,(i,j)}_{t_k}, \quad \overline{m}^{n+1,(j)}_{t_k}
= 
\pi_{n+1}(\varpi)
\times 
m^{n+1,(j)}_{t_k}
+ 
\bigl( 1 - \pi_{n+1}(\varpi)\bigr) \times  \overline m^{n,(j)}_{t_k}, 
\\
&h^{n+1,(j)}_{t_k}= - C_{t_k}^{n+1,(j)},
\end{split}
\end{equation*}
\end{enumerate} 
where
$\displaystyle 
\pi_n(\varpi)
=
\left\{
\begin{array}{ll}
1/n \quad &{\rm if} \ \varpi = 1,
\\
\tfrac{\varpi^{-(n-1)} (1-\varpi^{-1})}{1-\varpi^{-n}} \quad &{\rm if} \ \varpi > 1. 
\end{array}
\right.$
\end{enumerate} 

\color{black}
\begin{rem}
\label{rem:bij}
The construction of Algorithm 2 relies on 
the various noises 
\eqref{eq:noises:simulation}. In fact, it is worth mentioning that
the algorithm can be reformulated in a similar manner when the increments 
$(\Delta_{t_{k}} b^{(i,j)})_{i,j,k}$ 
defining the idiosyncratic noises 
are assumed to just depend on $(i,k)$ (equivalently they are equal for the same value of 
$i$ but for two different values of $j$). As before, the resulting 
random variables $(\Delta_{t_k} b^{(i)})_{i=1,\cdots,M,k=1,\cdots,p}$ are required to be independent and identically distributed. 

Obviously, the smallest family $(\Delta_{t_k} b^{(i)})_{i=1,\cdots,M,k=1,\cdots,p}$
is of a cheapest numerical cost. 
However, it creates additional correlations between the particles: for two different indices $j$ and $j'$, 
the corresponding two empirical means over $i \in \{1,\cdots,N\}$ in 
\eqref{eq:approx:optimisation} are dependent, with the correlations 
vanishing asymptotically as $N$ tends to $\infty$. Even though we do not provide any 
bound for the numerical error in this section, it is worth mentioning that 
the presence of additional correlations would make the Monte-Carlo error harder to estimate. 
As reported below, we tested the two approaches. For the range of parameters 
we used, we have not seen any major difference.  
\end{rem}
\color{black}

In some of the numerical examples below, we will use  variants of Algorithm 2. 
We 
first focus on the shape of the algorithm when $\sigma=0$, recalling that our results do not require the presence of an idiosyncratic noise. 
Implicitly, we then have a single particle only, i.e. $M=1$. 
Moreover, 
$\Delta_{t_{k}} b^{(i,j)}$ no longer appears in 
\eqref{eq:dynamics:numerics}.
We then have 
the following variant of Algorithm 2, which is of lower complexity:
\vskip 4pt

\noindent {\bf Algorithm 3.} [Learning with common noise only, $\sigma=0$, $\varepsilon=1$]
\begin{enumerate}
\item[Input:] Take as input the realizations  
$(\Delta_{t_{k}} w^{(j)})_{j,k}$ in 
\eqref{eq:noises:simulation}, which 
are the same at any iteration of the algorithm.
\item[Loop:] At rank $n+1$ of the fictitious play, apply the same loop as in Algorithm 2, but 
with $M=1$ and with 
$\Delta_{t_{k}} b^{(1,j)}=0$ in 
\eqref{eq:dynamics:numerics}.
\end{enumerate} 

We end up our presentation with the case when there is no common noise, i.e. 
$\sigma=1$ and $\varepsilon=0$. In that case, our strategy no longer applies. 
The point is thus to implement the standard version of the fictitious play. 
Accordingly, 
there is no Girsanov density in 
\eqref{eq:approx:optimisation}
and 
$\alpha_{t_{k}}^{(i,j)}$ in 
\eqref{eq:dynamics:numerics}
merely writes
\begin{equation*}
\alpha_{t_{k}}^{(i,j)} = a_{t_{k-1}} x_{t_{k-1}}^{(i,j)} + C_{t_{k-1}}, \quad k=1,\cdots,p.
\end{equation*}
with $C_{t_{k}}$ being a deterministic $d$-dimensional vector (which is independent of $j$). In particular, 
there is no need to use the proxy 
$(h^{n,j}_{t_{k}})_{j,k}$ for the intercept. Equivalently, 
we can assume that $N=1$ \textcolor{black}{and ${\mathfrak h}_{t_k}$ in 
\eqref{eq:approx:general}
to be a mere constant function (i.e., the function ${\mathfrak h}_{t_k}$ is identified with 
${\mathfrak h}_{t_k}(0)$)}.
The rest of the algorithm is similar. 
It may be written as follows.  
\vskip 4pt

\noindent {\bf Algorithm 4.} [Learning with idiosyncratic noise only, $\sigma=1$, $\varepsilon=0$]
\begin{enumerate}
\item[Input:] Take as input the realizations  
$(\Delta_{t_{k}} b^{(i,1)})_{k}$ in 
\eqref{eq:noises:simulation}, which 
are the same at any iteration of the algorithm.
\item[Loop:] At rank $n+1$ of the fictitious play:
\begin{enumerate}
\item Take as input 
the proxy
$(\overline{m}^{n}_{t_{k}})_{k}$
for the environment and the intercept. 	
	\item \color{black}Solve the minimization  problem 
	\eqref{eq:approx:optimisation}
	over 
	${\boldsymbol a}=(a_{t_{k}})_{k}$ in 
	\eqref{eq:dynamics:numerics}
	and
	${\boldsymbol {\mathfrak h}}=({\mathfrak h}_{t_{k}})$ in 
	\eqref{eq:approx:general}, 
	with 
	${\mathcal E}^{n,(j)}=1$ in 
	\eqref{eq:approx:optimisation}
	and 
	with the last line in 
\eqref{eq:dynamics:numerics}
being replaced by 
\begin{equation*}
\alpha_{t_{k}}^{(i,1)}= a_{t_{k-1}} x_{t_{k-1}}^{(i,1)} + {\mathfrak h}_{t_{k-1}}(0), \quad k=1,\cdots,p.
\end{equation*}
Call 
	${\boldsymbol a}^{n+1}=(a^{n+1}_{t_{k}})_{k}$ 
	and
	${\boldsymbol {\mathfrak h}}^{n+1}(0)=({\mathfrak h}^{n+1}_{t_{k}}(0))_{k}$ the optimal points. 
	\item With ${\boldsymbol a}^{n+1}=(a_{t_{k}}^{n+1})_{k}$ 
	and ${\boldsymbol {\mathfrak h}}^{n+1}=({\mathfrak h}^{n+1}_{t_{k}}(0))_{k}$, associate $(x^{n+1,(i,1)}_{t_{k}})_{i,k}$ as in 
		\eqref{eq:dynamics:numerics} (with the same prescription as above).\color{black}
				\item Update the proxy by letting
				\begin{equation*}
				\begin{split}
&m^{n+1,(1)}_{t_k}= \frac1M \sum_{i=1}^M 
x^{n+1,(i,1)}_{t_k}, \quad \overline{m}^{n+1,(1)}_{t_k}
= 
\pi_{n+1}(\varpi)
m^{n+1,(1)}_{t_k}
+ 
\bigl( 1- 
\pi_{n+1}(\varpi)
\bigr)
 \overline m_{t_k}^{n,(1)},
\end{split}
\end{equation*}
with $\pi_n(\varpi)$ being defined as in Algorithm 2. 
\end{enumerate} 
\end{enumerate}

In all the numerical experiments, we use ADAM optimizer (as implemented in \texttt{TensorFlow}) in order to solve the optimization  step in Algorithms 2, 3 and 4. We recall from the discussion in 
Subsection \ref{subse:num:ex}
that the code in \texttt{TensorFlow} relies on some automatic differentiation and thus makes an explicit use of the linear-quadratic form of the coefficients. In this sense, our numerical implementation requires part of the model to be known, but, as we already explained in 
Subsection \ref{subse:num:ex}, this does not really affect the scope of our conclusions: 
Provided the optimization  method used in Algorithms 2, 3 and 4 is sufficiently accurate,
the whole works well and demonstrates the interest for exploring the state space by means of the common noise.

\subsection{Numerical experiments in \textcolor{black}{low dimension}}
\label{Subse:3:3:a}
The numerical examples that we provide below are here to illustrate several features of our form of fictitious play. In all the examples treated in this subsection, we choose 
$d=2$, $f \equiv 0$ and $g$ as in \eqref{eq:benchmark:d=2} with $\kappa=10$.
\textcolor{black}{The intercept ${\boldsymbol h}^n$ in 
 \eqref{eq:optimization:exploration}
 is approximated by means of Hermite polynomials, as explained in 
 Subsection  
 \ref{subse:reference}. Similar conclusions to the ones that are reported here could be drawn if the regression of 
 ${\boldsymbol h}^{n+1}$ onto
 $\overline{\boldsymbol m}^n$ (see \eqref{eq:approx:general}) was done by using a neural network. 
 For brevity, we feel however more appropriate to restrict the exposition of the numerical results obtained with neural networks to the higher dimensional setting only, which is done in 
 the next section.} 

It is worth mentioning that, whatever the method that is used, our aim is to demonstrate, numerically, the positive impact of the common noise, but not to compare the numerical rates of convergence 
obtained in the numerical experiments with the bounds obtained in the theoretical analysis. 
The reason is that the algorithms we present below include additional features (like numerical optimization  methods and regressions) that are not addressed in the theoretical analysis. 
This makes difficult any precise comparison. Also, it is fair to say that the time of execution of the full algorithm becomes
quite long (several hours for the longest experiments), even for low values of $n$ and $p$ ($n$ less than 50 and $p$ less than 100). 

As for the parameter $\varpi$, we address its influence in several manners. To do so, we implement both the harmonic and geometric variants of the fictitious play. 
Even in the harmonic regime (i.e., $\varpi=1$), 
 we can easily see the impact of the common noise, especially when the latter has intensity 1 (which is our choice in this subsection). 
Obviously, this is very good. A related observation is that, although our  
theoretical guarantees (given by the analogues of Theorems
\ref{main:thm:1}
and
 \ref{main:thm:2} in the harmonic regime, see Remark \ref{rem:fictitious:play:rate}) just provide a harmonic decay when $\varpi=1$, the observed rate of convergence 
may be better. In particular, the difference with the geometric version is rather tiny on the examples that are tested in this subsection, for a common noise with intensity 1 (as we will see in the forthcoming Subsection 
\ref{subse:3:3}, the picture is different when the viscosity is small). 
 There are even situations where, for numerical reasons, the harmonic variant has better results than the  geometric one. For sure, this could be rather surprising for the reader (and even disappointing in some sense), but it is clear that our (theoretical) estimates rely on rather generic properties of the dynamics in hand. Obviously, there might be more subtle features of the equations that should be taken into account in order to explain the behavior of the algorithm
 as the number $n$ of iterations 
 of the fictitious play increases. 
 Having a sharp understanding of 
 the algorithm 
 when $n$ is large and, at the same time, $\varepsilon$ is small is even more difficult. As we already explained, the constant $\exp(C \varepsilon^{-2})$ that appears in all of our estimates is certainly non-optimal in many situations. 

\subsubsection{Evolution of the learnt cost with one type of noise only}
\label{subsubse:M=1N=1}
We first test the influence of each of the two types of noises onto the behavior  of the algorithm. We thus compare the evolution of the cost learnt by the harmonic and geometric fictitious plays in three scenarios (Figure \ref{fig:0000}): In the first scenario, there is an independent noise, but no common noise (Algorithm 4); in the second scenario, there is a common noise but no independent noise and $\varpi=1$ (Algorithm 3, harmonic fictitious play); in the third scenario, there is a common noise but no independent noise and $\varpi=1.1$ (Algorithm 3, geometric fictitious play).

\begin{figure}[h!]
\begin{subfigure}[b]{0.3\linewidth}
\includegraphics[width=\linewidth]{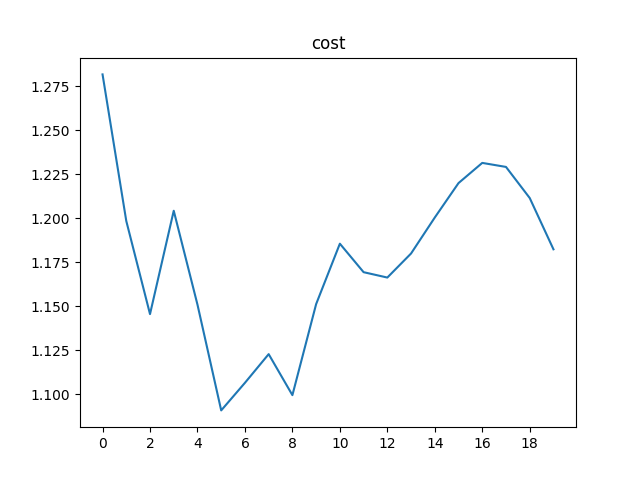}
\caption{With independent but no common noise}
\end{subfigure}
\begin{subfigure}[b]{0.3\linewidth}
\includegraphics[width=\linewidth]{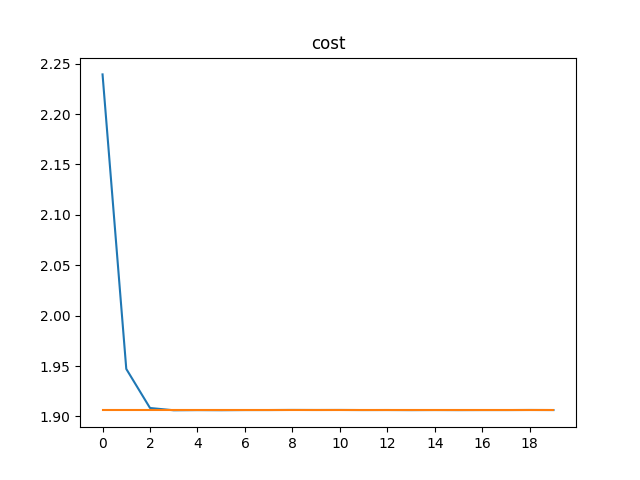}
\caption{With common but no independent noise, $\varpi=1$}
\end{subfigure}
\begin{subfigure}[b]{0.3\linewidth}
\includegraphics[width=\linewidth]{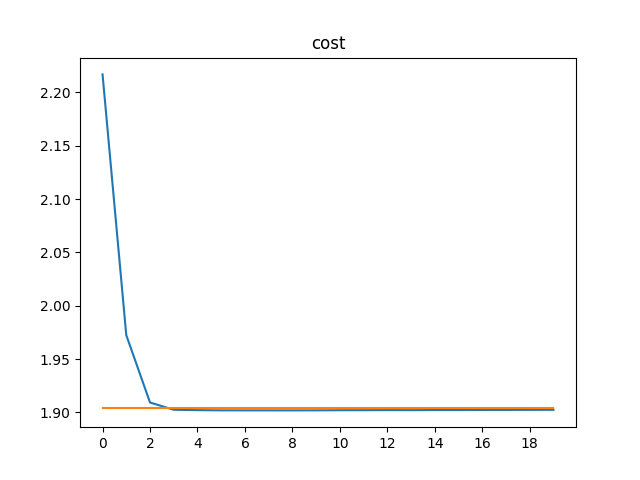}
\caption{With common but no independent noise, $\varpi=1.1$}
\end{subfigure}
\caption{Harmonic and geometric fictitious plays. Comparison of the learnt cost depending on the type of noise and the value of $\varpi$. In $x$-axis: number of iterations; In $y$-axis; learnt cost.}
\label{fig:0000}
\end{figure}
The experiments are computed with: 
$n=20$ learning iterations, $p=30$ time steps, 
$M =4\times 10^5$, $N=1$, 
$\sigma=1$ and $\varepsilon =0$ 
in case (A) 
and
$n=p=20$, 
$M=1$, 
$N=4\times 10^5$, 
$\sigma=0$, $\varepsilon=1$ and $D=4$ in cases (B) and (C). In ADAM method, the learning rate is $0.01$, with 15 epochs and one batch.

In plots (B) and (C), the orange line is the theoretical equilibrium cost as computed by the BSDE method explained in Subsection 
\ref{subse:reference}. In plot (A), there is no computed reference cost. As we already explained, there might not be a unique equilibrium and the notion of reference cost no longer makes sense. Notice by the way that the equilibrium cost in cases (B) and (C) is not an equilibrium cost in case (A) because the problems are different (as being set over different forms of dynamics). Anyway, the conclusion is clear: the learnt cost \textcolor{black}{exhibits an oscillatory behavior} in case (A), whereas it does not in cases (B) and (C). This is an evidence of the numerical impact of the common noise onto the behavior  of the
fictitious play. 
Of course, it would be desirable 
to have more theoretical guarantees on the possible divergence of the fictitious play in absence of the common noise. 
Unfortunately, we are not aware of such results. 
In our numerical experiments (including some that are not reported here), we have been able to reproduce the oscillatory behavior 
characterizing the pane (A) in Figure \ref{fig:0000} in other two-dimensional examples without common noise, meaning that, for a given batch (even of large size), 
the learnt (or training) cost may feature some non-trivial oscillations. However, we have not been able to reproduce\footnote{\color{black} To be complete on this point, 
the experiments that we did are for a class of coefficients of the same nature as in the $2d$ example, in particular with 
a terminal boundary condition exhibiting oscillations of the same frequency, 
see
\eqref{eq:benchmark:d=1} 
with $\kappa$ between $1$ and $10$.} a similar phenomenon 
in dimension 1 
(in which case the fictitious play is known to converge), even when the MFG is known to have multiple equilibria. As for panes (B) and (C), it is worth observing that the 
results are consistent. 
As announced, the
convergence is
fast, even in case (B), for which our theoretical guarantees just provide a harmonically decaying bound for the error. 

\subsubsection{Error in the optimal path/intercept with common noise and no independent noise}
We now compute the error achieved by the learning algorithm. 
Using the same notation as in the statement of Theorem \ref{main:thm:2}, we focus on the following $L^2$ error:
\begin{equation*}
\biggl[ {\mathbb E}
\int_{0}^1  \Bigl( 
\vert \overline m_{t}^{n}
- m_{t}^{(p)} \vert^2
+
\vert \varpi h_{t}^{n} - h_{t}^{(p)} \vert^2 
\Bigr) 
\ud t \biggr]^{1/2}.
\end{equation*}
Focusing here on the time-averaged error is obviously more advantageous from the numerical point of view, but it is sufficient to do so in order to demonstrate the efficiency of the algorithm and also to identify some of its limitations. Notice also that the expectation in the above error is taken under 
the non-tilted expectation. 
This looks a reasonable choice because $\varepsilon$ is here equal to 1. In particular, 
the Girsanov density 
${\mathcal E}({\boldsymbol h})$ 
and its inverse have bounded moments of any order (independently of $n$), as a consequence of which
integrating under either  measure should not make a big difference.  
%
%

Numerically, the error is approximated by 
\begin{equation}
\label{eq:numerical:error}
\biggl[
\frac1{Np} 
\sum_{j=1}^N 
\sum_{k=0}^{p-1}
\Bigl( 
\vert \overline m_{t_{k}}^{n,(j)}
- m_{t_k}^{\star,(j)} \vert^2
+
\vert h_{t_{k}}^{n,(j)} - \varpi h_{t_k}^{\star,(j)} \vert^2\Bigr) \biggr]^{1/2}, 
\end{equation}
where $(\overline m_{t_{k}}^{n,(j)})_{j,k}$ and 
$(h_{t_{k}}^{n,(j)})_{j,k}$ are the returns of Algorithm 2 or 3 (depending on the value of 
$\sigma$) and 
$(\overline m_{t_{k}}^{\star,(j)})_{j,k}$ and 
$(h_{t_{k}}^{\star,(j)})_{j,k}$
are the reference solutions computed with Algorithm 1 (with a sufficiently high number of Picard iterations: 10 in practice).

\begin{figure}[h!]
\begin{subfigure}[b]{0.3\linewidth}
\includegraphics[width=\linewidth]{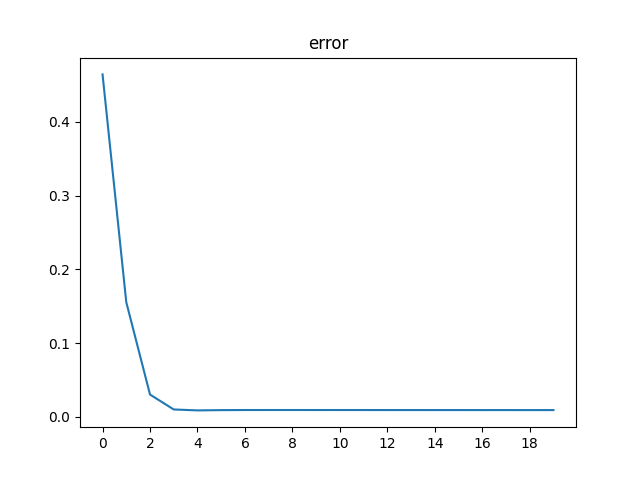}
\caption{Riccati known, $\varpi=1$}
\end{subfigure}
\begin{subfigure}[b]{0.3\linewidth}
\includegraphics[width=\linewidth]{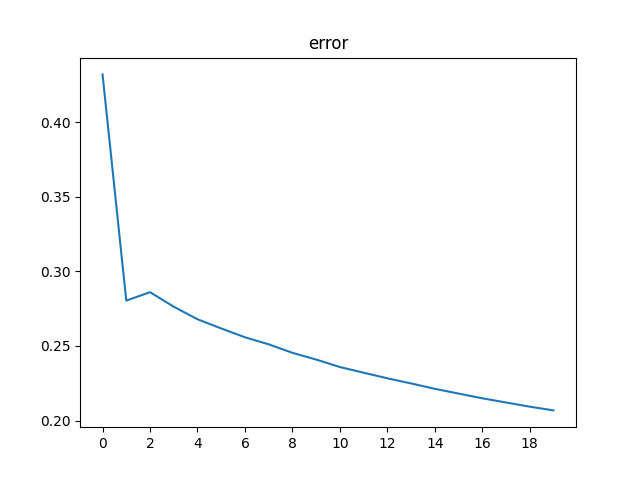}
\caption{Riccati unknown, $\varpi=1$}
\end{subfigure}
\\
\begin{subfigure}[b]{0.3\linewidth}
\includegraphics[width=\linewidth]{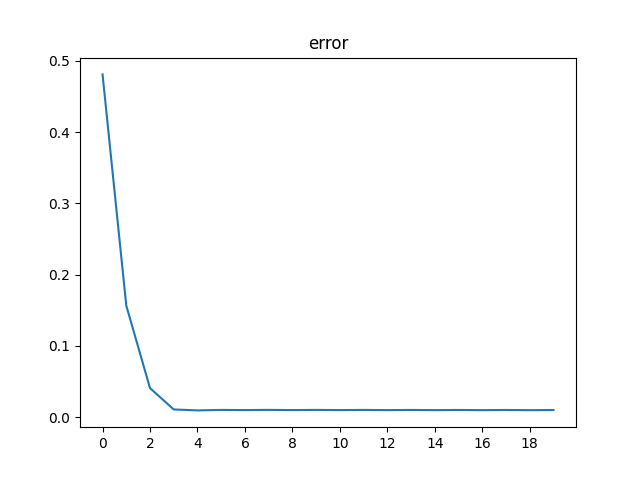}
\caption{Riccati known, $\varpi=1.1$}
\end{subfigure}
\begin{subfigure}[b]{0.3\linewidth}
\includegraphics[width=\linewidth]{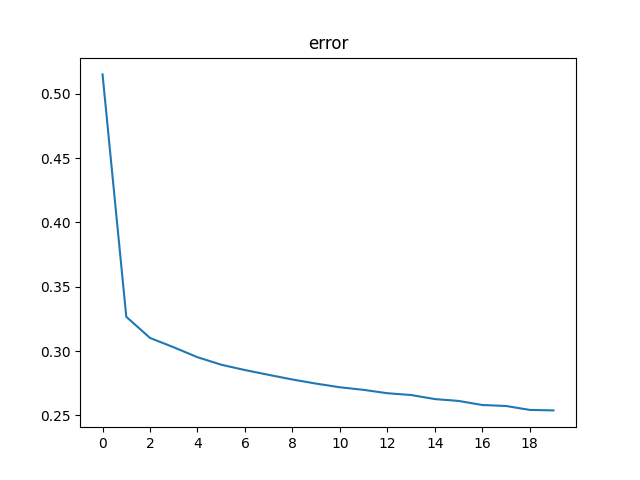}
\caption{Riccati unknown, $\varpi=1.1$}
\end{subfigure}
\caption{Comparison of the error returned by Algorithm 3 (common noise only), depending on whether the solution 
to the Riccati equation is known or not. On the top line, $\varpi=1$. On the bottom line, 
$\varpi=1.1$.}
\label{fig:10}
\end{figure}

We perform the first experiment by running Algorithm 3 (no idiosyncratic noise), but assuming that the solution
$(\eta_{t})_{0 \leq t \leq 1}$ to the Riccati equation 
\eqref{eq:riccati:test}
is known, see 
\eqref{eq:riccati:test:solution}: In Algorithm 3,  $a_{t_{k}}$ is replaced by $\eta_{t_{k}}$. This amounts to say that the coefficients $Q$ and $R$ in 
\eqref{eq:cost:intro} are explicitly known. 
The evolution of the error with the number of iterations is plotted in Figure 
\ref{fig:10}, Plot (A)  when $\varpi=1$ (harmonic fictitious play) and Plot (C) when $\varpi=1.1$ (geometric fictitious play). 
We perform the second experiment by running Algorithm 3, but without assuming any further knowledge of the solution of the Riccati equation 
\eqref{eq:riccati:test}. 
The evolution of the error with the number of iterations is plotted in Figure 
\ref{fig:10}, Plot (B) when $\varpi=1$ and Plot (D) when $\varpi=1.1$.

The experiments are computed with: 
$n=20$ learning iterations, $p=30$ time steps,  
$M=1$, $N =4\times 10^5$, 
$\sigma=0$, $\varepsilon =1$ and $D=4$.
On Plots (A) and (C), the error is less than $0.01$ after iteration 3. 

The main conclusion one may draw from the comparison of these plots is quite clear: when randomness only comes from the 
common noise, it is very difficult for the optimization  method to distinguish between 
$a_{t_{k-1}} x_{t_{k-1}}^{(1,j)}$ and $C_{t_{k-1}}^{(j)}$
in \eqref{eq:dynamics:numerics}. 
In this matter, there is also a difference between Plots (B) and (D), from which we deduce that the harmonic fictitious play behaves better than the geometric one in the absence of independent noise. Comparison of plots (A) and (C), which are very close, suggests that the difference between (B) and (D) mostly comes from the accuracy of the estimate $(a_{t_{k-1}})_{k=1,\cdots,p}$ of the solution to the Riccati equation
\eqref{eq:discretized:Riccati}. Our guess is that the presence of an additional biais in 
\eqref{eq:dynamics:numerics}
(due to the fact that $\varpi>1$) makes the estimation more difficult. On the contrary, 
when $\varpi=1$,
there is no bias and  
the term 
$C_{t_{k-1}}^{(j)} + \varpi  h^{n,(j)}_{t_{k-1}}$ in  \eqref{eq:dynamics:numerics} is very close to $0$. We think that this should help for the identification of $(a_{t_{k-1}})_{k=1,\cdots,p}$. 

\subsubsection{Error in the optimal path/intercept with both noises}
\label{para:error}
We now proceed with the same analysis but putting the two noises. 
As the total number of simulated paths is $N \times M$, this may be however rather costly. 
We proceed below by freezing the quantity
$N \times M$. 
 
\begin{figure}[h!]
\begin{subfigure}[b]{0.32\linewidth}
\includegraphics[width=\linewidth]{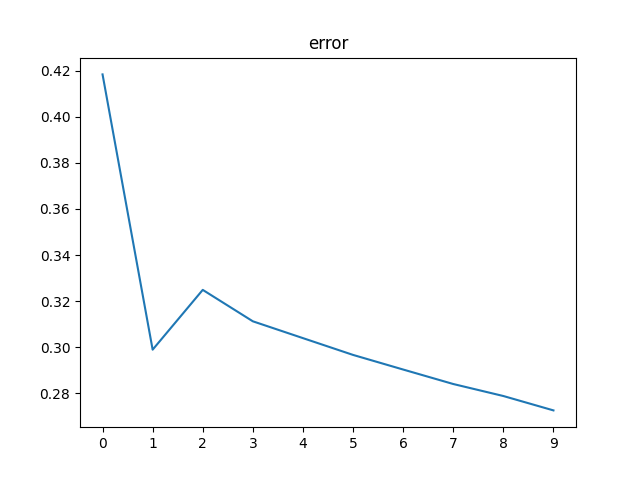}
\caption{$M=4\times 10^4$, $N=1$,  time (cpu): 4867}
\end{subfigure}
\begin{subfigure}[b]{0.32\linewidth}
\includegraphics[width=\linewidth]{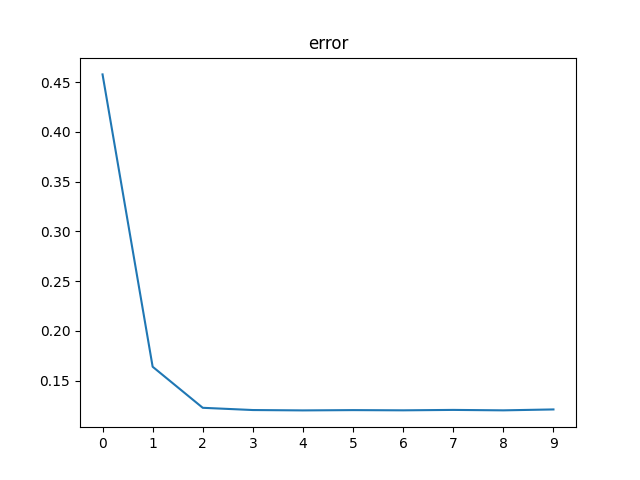}
\caption{$M=2\times 10^4$, $N=20$, time (cpu): 4096} 
\end{subfigure}
\begin{subfigure}[b]{0.32\linewidth}
\includegraphics[width=\linewidth]{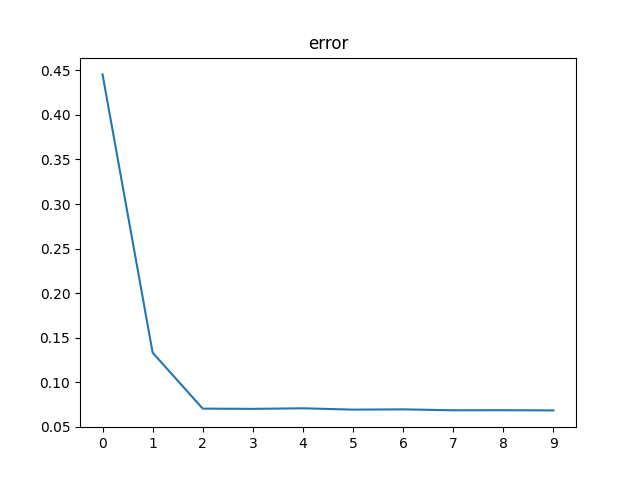}
\caption{$M=10^4$, $N=40$, time (cpu): 8409}
\end{subfigure}

\caption{Comparison of the error returned by Algorithm 2, depending on the number of particles/realizations, $\varpi=1$.}
\label{fig:11}
\end{figure}

The experiments are run with the 
harmonic version of the fictitious play (i.e., 
$\varpi=1$), with: 
$n=10$ learning iterations, $p=30$ time steps,  
$\sigma=1$ and $\varepsilon =1$;
4000 units
in cpu time is around 1h.
The conclusion 
is that idiosyncratic noises 
demonstrate to be useful from the numerical 
point of view, even though the results stated 
in Section 
\ref{Sec:2}
remain the same whatever the value
of $\sigma$. 
A careful inspection  
of the numerical results shows that, in fact,
 idiosyncratic noises 
 provide a better fit 
 of the solution to the Riccati equation, which is 
fully consistent with the conclusion of the previous paragraph. 
In turn, we guess that a model-based method, in which $Q$ and $R$ would be learnt first, and then the solution to the Riccati equation 
would be learnt separately, would make sense in this specific setting. 

When $\varpi=1.1$, we observe a phenomenon similar to the one reported in 
Figure 
\ref{fig:10}: the estimate is less accurate
with the geometric version of the fictitious play.  
Again, we believe that this is due to the learning of the solution to the Riccati equation. In presence of a bias, the letter seems more difficult to catch. To wit, we have plotted in Figure \ref{fig:10:++}, Plot (A),
the error for the 
geometric version of the fictitious play (here, 
$\varpi=1.1$) and for the same parameter as in Plot (C) in Figure \ref{fig:11}: 
$n=10$, $p=30$,  
$\sigma=1$ , $\varepsilon =1$, $M=10^4$ and $N=40$. 
In order to stress that the drawback observed on Plot (A) does not contradict our theoretical results, we have plotted in the same Figure, Plot (B), the results when the solution to the Riccati is known (which is a bit different from what is done in 
Figure 
\ref{fig:10}
because there is here an additional idiosyncratic noise which requires additional Monte-Carlo approximations). It is clear that, in the latter case, the result is better. Possibly, this opens the door for refined procedures in which one first estimates the solution to the Riccati equation.

\begin{figure}[h!]
\begin{subfigure}[b]{0.42\linewidth}
\includegraphics[width=\linewidth]{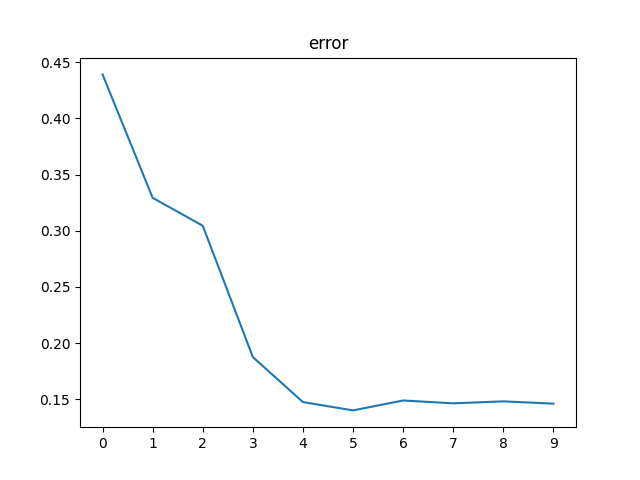}
\caption{$M=10^4$, $N=40$,  Riccati unknown}
\end{subfigure}
\begin{subfigure}[b]{0.42\linewidth}
\includegraphics[width=\linewidth]{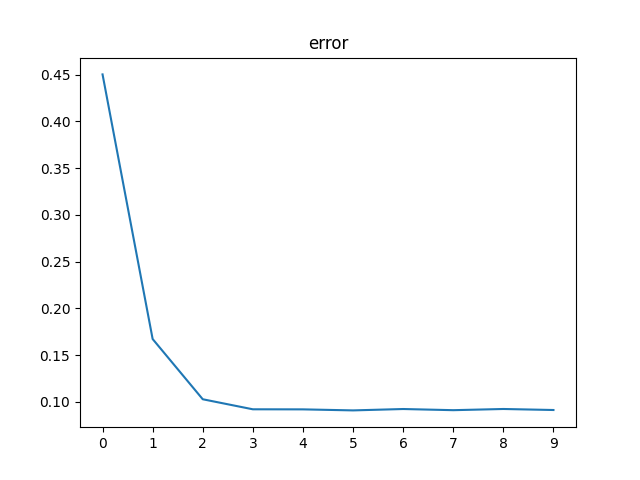}
\caption{$M=10^4$, $N=40$, Riccati known}
\end{subfigure}

\caption{Algorithm 2, $\varpi=1.1$, depending on whether Riccati is known or not.}
\label{fig:10:++}
\end{figure}

\subsubsection{Validation}
The reader could worry about a possible overfitting 
in our numerical experiments. 
Actually, in order to validate our results, 
we can
learn the coefficients ${\boldsymbol a}^n$ and ${\boldsymbol c}^n$ 
in Algorithm 2 for a first series of data 
in 
\eqref{eq:noises:simulation}. In brief, 
this first series of data is used to learn the equilibrium feedback.
Then, 
we can use a second series of data (say, of the same size) in \eqref{eq:noises:simulation} in order to 
compute the error. In clear, 
given this second series of data, 
we can implement 
Algorithm 1 and then compute the resulting 
error 
\eqref{eq:numerical:error}
when 
 $(\overline m_{t_{k}}^{n,(j)})_{j,k}$ and 
$(h_{t_{k}}^{n,(j)})_{j,k}$
therein are obtained from 
the formulas
\eqref{eq:dynamics:numerics}
and
\eqref{eq:approx:hermite} by using the coefficients ${\boldsymbol a}^n$ and ${\boldsymbol c}^n$ 
returned by the first series of data and then by implementing the Euler scheme with respect to the second series of data. 
The resulting plots are very similar to those 
represented in 
Figure \ref{fig:11}  (e.g., when $\varpi=1$). For this reason, we feel useless to insert them here. 
Intuitively, what happens is that there are sufficiently many realizations of the common noise in our experiments to 
guarantee, in the regression 
\eqref{eq:approx:optimisation}, a convenient form of averaging with respect to all these realizations.

\color{black}
\subsection{Experiments in higher dimension}
\label{subse:3:2:bis}
The challenge in higher dimension is twofold. The very main one is to provide an efficient regression 
of ${\boldsymbol h}^{n+1}$ onto $\overline{\boldsymbol m}^{n}$ 
(recall 
\eqref{eq:approx:general}). 
We already reported this difficulty and, as we already announced, we handle it here by using
neural networks. The second issue is directly related with the computational cost. Intuitively, all the tensors that enter the code include an axis corresponding 
to all the possible coordinates of the noises and the states. When the dimension increases, the size of those tensors increase accordingly, which 
impacts the computational effort.
In all the examples below, $\sigma=\varepsilon=1$. 
For simplicity, we just present the result in the case $\varpi=1$ because the difficulties that we report below are the same in the case $\varpi>1$.

\subsubsection{Results with Algorithm 2.}
\label{subsubse:high:dim:1}
In the examples below, we reduce part of the complexity by 
labelling the increments of the independent noises 
$\Delta_{t_k} b^{(i,j)}$
in 
\eqref{eq:noises:simulation}
by the sole 
$i$, which allows us to save memory. 
 We refer to Remark 
\ref{rem:bij} for more details about this. 
We also limit ourselves to batches 
carrying $N=10^3$ realizations of the common noise 
and $M=10^2$ realizations of the independent noise. 
These numbers are a bit less than those used 
in the lower dimensional setting (compare for instance with 
Figure 
\ref{fig:11}). However, differently from what we did in the previous subsection, we now use several batches. In the experiments below, the batches are indexed by 
an index, called \textit{batch}. For a given value $i$ of this index \textit{batch}, we simulate one stack, say $G_{\rm com}[i]$, of $N=10^3$ realizations of the common noise and then a number $m_{\rm ind}$ of sub-batches (or sub-stacks) containing, each, $M=10^2$ realizations
of 
the independent noise. 
Those sub-batches are denoted $G_{\rm ind}[i,j]$, for $j=1,\cdots,m_{\rm ind}$. 
The batches contain independent realizations. In the experiments below, $m_{\rm ind}=10$. 

When we start experiments with a new value $i$ of the index $\textit{batch}$, we perform several runs of ADAM. All these runs use the same batch $G_{\rm com}[i]$ 
of common noises. We then repeat several times the following episode: we do a first run with the first batch 
$G_{\rm ind}[i,1]$
of independent noises, and then a second one with the second batch $G_{\rm ind}[i,2]$, and so on and so forth up until the batch
$G_{\rm ind}[i,m_{\rm ind}]$ of index $m_{\rm ind}$. We repeat those episodes several times: 4 times in the experiments ran below. 
Our choice to proceed in such a way is dictated by the fact that the number $N$ is pretty low, hence the desire to have more batches for the independent noises. 
As for the choice $N \gg M$, it is dictated by our wish to have the lowest possible fluctuations with respect to the common noise, 
as the main object that we want to learn here is ${\mathfrak h}$ in 
\eqref{eq:approx:general}, the input of which is measurable with respect to the common noise only.  
In our experiments, the index \textit{batch} is running over $\{1,\cdots,20\}$. We have chosen to pass once on the realizations of the common noise. 
 
For some choices of the matrix $\Theta$ in  
\eqref{eq:benchmark:d:higher}, our results are bad. In order to appreciate this observation, 
 it is worth recalling that our strategy relies on a change of measure based on 
the Girsanov density
${\mathcal E}({\boldsymbol h}^n)$, see 
\eqref{eq:Girsanov}. In our code, 
the training cost is precisely estimated under this new probability measure, as made clear in 
\eqref{eq:approx:optimisation}. 
Drawing a parallel with preferential sampling in Monte Carlo methods,
one may then guess that the variance 
resulting from the empirical estimate
of the loss in 
\eqref{eq:approx:optimisation} 
may 
dramatically 
 deteriorate as the dimension increases.
 Indeed,  
if ${\boldsymbol h}$ in 
\eqref{eq:Girsanov} has all its coordinates constant equal to $1$, then 
\begin{equation*}
{\mathbb E} 
\Bigl[ {\mathcal E}({\boldsymbol h})^2\Bigr] = \exp \Bigl( \int_0^T \vert h_s \vert^2 ds \Bigr) = \exp(d). 
\end{equation*}
Even more, the fact that the control appearing in 
\eqref{eq:approx:optimisation} contains 
a finite difference of the common noise (see 
\eqref{eq:dynamics:numerics}
) says that the typical size of the variance may be of order 
$p^2 \exp(d)$. 

In order to validate numerically this intuition, we consider the case when  
the matrix $(\Theta_{i,j})_{1 \leq i,j \leq d}$ in 
\eqref{eq:benchmark:d:higher} is given by 
$\Theta_{i,2i  (\rm{mod})d}=-10$, 
$\Theta_{i,3i (\rm{mod})d}=10$
and
$\Theta_{i,j} = 0$ otherwise, where $2i (\rm{mod})d$ is the rest of the Euclidean division of $2i$ by $d$ 
(and similarly for $3i({\rm mod})d$). 
We then represent in Figure 
\ref{fig:high:dim:variance}
the evolution of the variance of the term
\begin{equation}
\label{eq:high:dim:dominant}
{\mathcal E}({\boldsymbol h}) \int_0^T \vert \dot{W}_t^{p,{\boldsymbol h}} \vert^2 dt,
\end{equation}
when 
${\boldsymbol h}$ therein is computed numerically by the FBSDE solver 
described in
Subsection 
\ref{subse:reference}.  We expect this term to give 
the worst contribution to the global variance. As shown by 
the plot in Figure \ref{fig:high:dim:variance}, the log variance becomes 
higher at multiples of 6 (because of the choice $\Theta$)
and the plot even suggests that the variance could blow up exponentially fast along 
the subsequence 6, 12, 18, 24... 
\begin{figure}[h!]
\includegraphics[width=.4\linewidth]{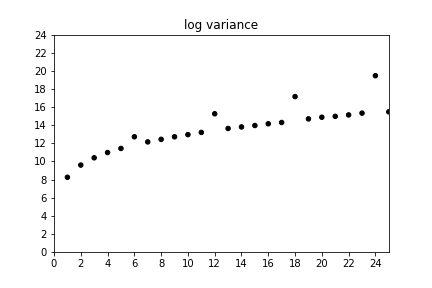}
\caption{Evolution of the log variance of the loss with the dimension for $\Theta_{i,2i  (\rm{mod})d}=-10$, 
$\Theta_{i,3i (\rm{mod})d}=10$ in \eqref{eq:benchmark:d:higher}. Dimension is in $x$-axis. Log variance is in $y$-axis.}
\label{fig:high:dim:variance}
\end{figure}
\vskip 5pt

We thus chose to run Algorithm 2 with $d=12$ with aforementioned values of $M$ and $N$ and with $p=20$ time steps. The results are reported in Figure 
\ref{fig:high:dim:algo2} below. 
\begin{figure}[h!]
\begin{subfigure}[b]{0.4\linewidth}
\includegraphics[width=\linewidth]{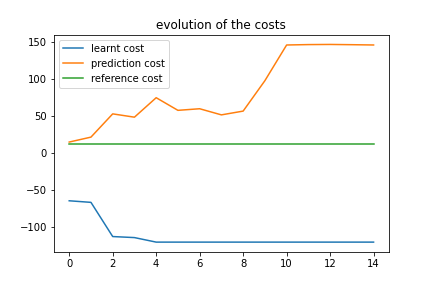}
\caption{Evolution of the costs}
\end{subfigure}
\begin{subfigure}[b]{0.4\linewidth}
\includegraphics[width=\linewidth]{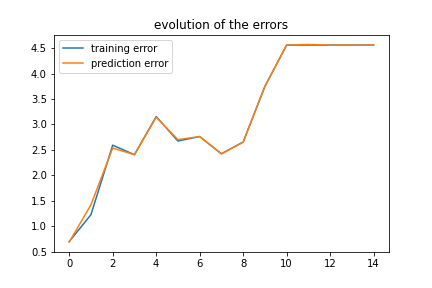}
\caption{Evolution of the errors}
\end{subfigure}
\caption{Results in dimension
$d=12$ for $\Theta_{i,2i  (\rm{mod})d}=-10$, 
$\Theta_{i,3i (\rm{mod})d}=10$ in \eqref{eq:benchmark:d:higher}. Harmonic variant of the fictitious play ($\varpi=1$).}
\label{fig:high:dim:algo2}
\end{figure}
We clearly observe that the learnt cost blows-up. 
Subsequently, the training error does not vanish, 
with the training error being here computed as in 
\eqref{eq:numerical:error}
but solely for the ${\boldsymbol h}$ part, that is
\begin{equation}
\label{eq:numerical:error:2}
\biggl[
\frac1{Np} 
\sum_{j=1}^N 
\sum_{k=0}^{p-1} 
\vert h_{t_{k}}^{(j)} - h_{t_k}^{\star,(j)} \vert^2 \biggr]^{1/2}.
\end{equation}
In the plot, we also include a prediction cost and a prediction error, which are computed on a batch
different from the batches used for training (but of the same size). Given a new batch for the common and independent noises, 
we can indeed use the neural network returned by the training phase to compute, for this new batch, a prediction of the cost (together with a prediction of 
${\boldsymbol h}$), but under the historical probability measure ${\mathbb P}$ (in order to avoid any variance issues). 
The prediction is thus constructed as follows. 
Given the returns $(a_{t_k})_{k=0,\cdots,p-1}$
 and 
 $({\mathfrak h}_{t_k})_{k=0,\cdots,p-1}$ of 
 the neural network for 
 the affine feedback, see
  \eqref{eq:numerical:scheme:alphatk:xtk:ctk}
 and
 \eqref{eq:approx:general}, we implement the 
 following Euler scheme:
\begin{equation*}
\begin{split}
&x_{t_{k}}^{(i,j)} = x_{t_{k-1}}^{(i,j)} + \frac{1}{p}
\alpha_{t_{k}}^{(i,j)} + {\frac{1}{\sqrt{p}}} \Delta_{t_{k}} b^{(i)}
 +
 {\frac{1}{\sqrt{p}}}
 \Delta_{t_{k}} w^{(j)}, 
\quad \ell=1,\cdots,p \ ; 
\quad x_{0}^{(i,j)}=x_{0},
\\
&\alpha_{t_{k}}^{(i,j)} = a_{t_{k-1}} x_{t_{k-1}}^{(i,j)} + {\mathfrak h}_{t_{k-1}}(\overline{m}_{t_{k-1}}^{(j)}), \quad k=1,\cdots,p,
\end{split}
\end{equation*}
with 
$\overline{m}_{t_{k-1}}^{(j)}=M^{-1}
\sum_{i=1}^M
x_{t_{k-1}}^{(i,j)}$. 

The predicted cost is then
\begin{equation*}
\frac1{N}
\sum_{j=1}^N 
\frac1M \sum_{i=1}^M
\biggl[  \frac{1}{2p} \sum_{k=1}^{p} 
\vert \alpha_{t_{k}}^{(i,j)} \vert^2  + \frac12 \Bigl\vert x^{(i,j)}_{1} + g\Bigl(\overline{m}_{1}^{n,(j)}\Bigr)
\Bigr\vert^2
\biggr],
\end{equation*}
and the predicted error is 
\eqref{eq:numerical:error:2}.

While their role is rather limited at this stage, the two prediction cost and error play a more important role in the next paragraph.

%
%

\subsubsection{Variance reduction.}
The issue reported in the previous paragraph calls for the implementation of a variance reduction method.
The method that is addressed in this new paragraph 
aims at reducing the variance associated with the dominant term 
\eqref{eq:high:dim:dominant}. 
The key point is to return back to 
\eqref{eq:effective:cost}, to write the normalization factor $-d \, \varepsilon^2 p/2$ as
 the expectation
 \begin{equation}
  \label{eq:variance:reduction:1}
\varepsilon 
{\mathbb E}^{{\boldsymbol h}_n} \int_0^T \alpha_t \dot{W}_t^{p,{\boldsymbol h}^n} dt 
- 
\frac{\varepsilon^2}2 
{\mathbb E}^{{\boldsymbol h}_n} \int_0^T \bigl\vert \dot{W}_t^{p,{\boldsymbol h}^n}\bigr\vert^2 dt, 
 \end{equation}
which is indeed equal to $-d\, \varepsilon^2 p /2$, and then 
to approximate the above two expectations by two empirical means. 
Intuitively (and this is the computation achieved in \eqref{eq:effective:cost}), the resulting estimator of the cost coincides
with the estimator that would be computed if
we had to solve
(the reader may compare the following cost with 
 \eqref{eq:optimization:exploration}, with $\varpi=1$ for simplicty)
 \begin{equation}
 \label{eq:variance:reduction:2}
\textrm{\rm argmin}_{\boldsymbol \alpha}
	 {\mathbb E}^{{\boldsymbol h}^n}
	\bigl[ {\mathcal R}^p \bigl( {\boldsymbol \alpha} ; \overline{\boldsymbol m}^n;\varepsilon {\boldsymbol W}^p \ \bigr) \bigr],
	\end{equation}
	which is implicitly set over the dynamics 
	\begin{equation}
	 \label{eq:variance:reduction:3}
	\ud X_{t} = \alpha_{t} \ud t + \sigma \ud B_{t} + \varepsilon \ud W_{t}^{p,{\boldsymbol h}^n}, \quad t \in [0,T]. 
\end{equation}
Here, we have two formulations depending on the line that is used in 
 \eqref{eq:optimization:exploration}: $(i)$ We have the formulation 
\eqref{eq:variance:reduction:2}--\eqref{eq:variance:reduction:3},
which is based on the second line in 
 \eqref{eq:optimization:exploration}; $(ii)$ And we have
the formulation based on the top line in 
 \eqref{eq:optimization:exploration} 
 when the corrector $-d \, \varepsilon^2 p/2$ therein is replaced by the empirical mean
deriving from \eqref{eq:variance:reduction:1}. Interestingly, the two formulations do not have the 
same practical interpretation. Indeed, the formulation $(i)$ does not fit 
the principle of Figure 
\ref{fig:2}
in introduction 
 since the common randomization therein does not impact the control directly, but impacts the dynamics. 
 Differently, 
 the formulation $(ii)$ based on the empirical corrector
 associated with  
 \eqref{eq:variance:reduction:1}
preserves the principle 
of Figure \ref{fig:2} since the corrector can be computed separately from the control. Of course, it has a 
practical price: 
implicitly, 
computing 
the
correction empirically 
 makes 
sense only if the dependence of the running cost upon the control is known. In other words,  
part of the model must be known. 

Figure \ref{fig:variance:reduct} below provides 
an estimate of the 
variance of 
the (random) cost in \eqref{eq:variance:reduction:2}
when ${\boldsymbol h}^{n}$ is replaced by 
the numerical solution ${\boldsymbol h}$
of the FBSDE solver 
described in
Subsection  
\ref{subse:reference}
and when $\Theta$ is computed as in Figure
\ref{fig:high:dim:variance}. 
As the reader can notice, the growth is slower (than in Figure
\ref{fig:high:dim:variance}). 
\begin{figure}[h!]
\includegraphics[width=.4\linewidth]{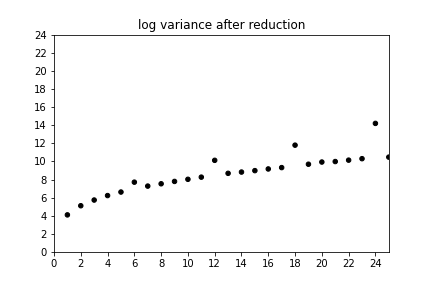}
 \caption{Evolution of the log variance of the corrected optimal cost 
  \eqref{eq:variance:reduction:2}.
 with the dimension for $\Theta_{i,2i  (\rm{mod})d}=-10$, 
$\Theta_{i,3i (\rm{mod})d}=10$ in \eqref{eq:benchmark:d:higher}. Dimension is in $x$-axis. Log variance is in $y$-axis. Harmonic variant of the fictitious play ($\varpi=1$).}
\label{fig:variance:reduct}
\end{figure}

The estimator for 
\eqref{eq:variance:reduction:1}
writes as in \eqref{eq:approx:optimisation}, 
namely
\begin{equation*}
\begin{split}
&\frac{\varepsilon}{N}
\sum_{j=1}^N \biggl(
{\mathcal E}^{n,(j)}
\frac1M \sum_{i=1}^M
\biggl[  \frac{1}{2p} \sum_{k=1}^{p} 
 \alpha_{t_{k}}^{(i,j)} 
 \cdot \Bigl( h^{n,(j)}_{t_{k}}
+p  \Delta_{t_{k+1}} w^{(j)} \Bigr) 
\biggr]\biggr)
\\
&\hspace{15pt} -
\frac{\varepsilon^2}{2N}
\sum_{j=1}^N \biggl(
{\mathcal E}^{n,(j)}
  \frac{1}{2p} \sum_{k=1}^{p} 
  \Bigl\vert h^{n,(j)}_{t_{k}}
+p  \Delta_{t_{k+1}} w^{(j)} \Bigr\vert^2 \biggr).
\end{split}
\end{equation*}

The results are represented in Figure \ref{fig:10:NN}
for the same choice of
$d$ and $\Theta$
as in Figure
\ref{fig:high:dim:algo2}. Obviously, they are much more convincing
than
in Figure \ref{fig:10}. On the left pane (A), the reference cost  is computed by
solving first 
the FBSDE 
described in
Subsection 
\ref{subse:reference} on a series of batches and then by 
averaging the 
corresponding costs over the batches. 
In contrast, the empirical cost is computed by solving the FBSDE
and the associated cost  
on the batch on which the training loss is computed. 
The fact that the reference and empirical costs do not coincide shows that there 
is, in between, some additional Monte-Carlo error that is distinct from the learning procedure.

\begin{figure}[h!]
\begin{subfigure}[b]{0.4\linewidth}
\includegraphics[width=\linewidth]{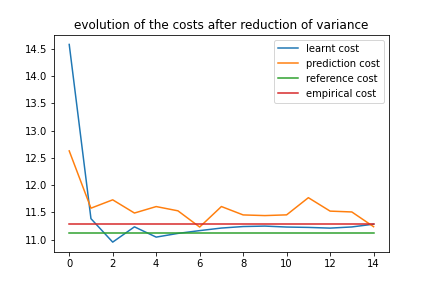}
\caption{Evolution of the losses}
\end{subfigure}
\begin{subfigure}[b]{0.4\linewidth}
\includegraphics[width=\linewidth]{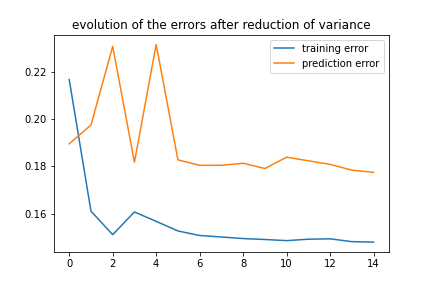}
\caption{Evolution of the errors}
\end{subfigure}
\caption{Results in dimension
$d=12$ for $\Theta_{i,2i  (\rm{mod})d}=-10$, 
$\Theta_{i,3i (\rm{mod})d}=10$ in \eqref{eq:benchmark:d:higher}. Harmonic variant of the fictitious play ($\varpi=1$).}
\label{fig:10:NN}
\end{figure}

The prediction cost and error are computed on a series of 10 batches (of the same size as before). The cost and the error that are plotted are obtained by averaging out on the batches. 
We observe a small bias between the training and prediction errors that would deserve further experiments. Anyway the main message is clear: the plots after variance reduction are much better than before variance reduction. 

We now provide another example, with a new choice of $\Theta$, that exhibits a less trivial evolution of the prediction error. Namely, we choose 
$\Theta=(\theta_{i,j}=(i+j)/(2d))_{i,j}$.
The results in Figure \ref{fig:20} 
may be summarized as follows: the relative error on the cost is less than $0.03$
and the prediction error is less than $0.2$.
As for the latter, it must be recalled that the prediction error 
writes as a $d$-dimensional norm, see 
\eqref{eq:numerical:error:2}. In particular, the mean square error per coordinate is less than $0.2^2/d$. Here, 
$d=12$ and the mean error per coordinate is less than $0.06$. 
We address the same example in Figure 
\ref{fig:21} in dimension $d=20$. The absolute error on the cost is good, except at iterations 11 and 13, but even for these two the relative 
error is less than $0.05$. 
The prediction error is between $0.22$ and $0.25$ after iteration 6, except at iterations 11 and 13, where the prediction error reaches 0.32. 
In any case, the mean error per coordinate is less than $0.07$ after iteration 6.

\begin{figure}[h!]
\begin{subfigure}[b]{0.4\linewidth}
\includegraphics[width=\linewidth]{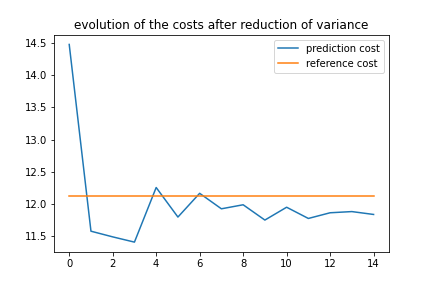}
\caption{Evolution of the losses}
\end{subfigure}
\begin{subfigure}[b]{0.4\linewidth}
\includegraphics[width=\linewidth]{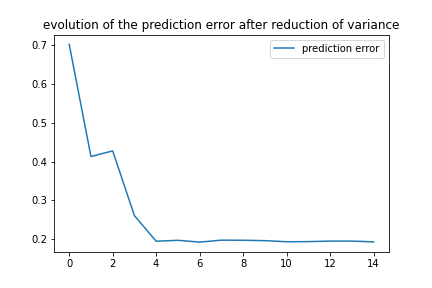}
\caption{Evolution of the errors}
\end{subfigure}
\caption{Results in dimension
$d=12$ for
$\Theta=(\theta_{i,j}=(i+j)/(2d))_{i,j}$ in \eqref{eq:benchmark:d:higher}. The reference loss is around 12.17 and the prediction loss at iteration 14 is 11.84. The prediction error at iteration 14 is around 0.19. Harmonic variant of the fictitious play ($\varpi=1$).}
\label{fig:20}
\end{figure}

\begin{figure}[h!]
\begin{subfigure}[b]{0.4\linewidth}
\includegraphics[width=\linewidth]{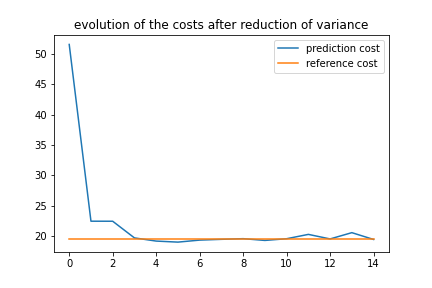}
\caption{Evolution of the losses}
\end{subfigure}
\begin{subfigure}[b]{0.4\linewidth}
\includegraphics[width=\linewidth]{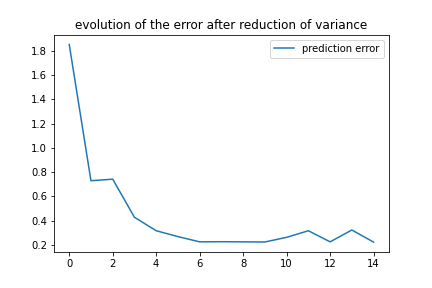}
\caption{Evolution of the errors}
\end{subfigure}
\caption{Results in dimension
$d=20$ for $\Theta=(\theta_{i,j}=(i+j)/(2d))_{i,j}$ in \eqref{eq:benchmark:d:higher}. The reference loss is around 19.47 and the prediction loss at iteration 14 is 19.45. The prediction error at iteration 14 is around 0.22. Harmonic variant of the fictitious play ($\varpi=1$).}
\label{fig:21}
\end{figure}

\color{black}

\subsection{Small viscosity}
\label{subse:3:3}

The next question in our numerical experiments is to address the behavior of the algorithm as the viscosity tends to $0$. As made clear in the analysis performed in Section 
\ref{Sec:2}, the influence of the small viscosity may manifest in an exponential manner
and this was our original motivation to design a geometrically converging scheme. 
%
%
%
%
%

Beyond the theoretical challenge raised by the possible occurrence of singularities in the 
vanishing viscosity limit (an example of which is given by 
Lemma \ref{lem:2}), small viscosity may also create 
additional numerical instability phenomena. 
As a main example, 
high variances may occur in the computations of empirical measures under the tilted probability measure. 
Obviously, this follows from the fact that 
the Girsanov density driving the change of reference measure in the algorithm becomes highly singular 
as the viscosity tends to $0$. We reported a similar drawback in the previous subsection, but 
in the large dimensional framework.
In the first arXiv version \cite{DelarueVasileiadis} of this work, the observation of this phenomenon prompted us to introduce a
method at the intersection between annealed simulating 
and preferential sampling, which we illustrated on a specific example that is recalled below. 
While this method is performing well (we revisit it in the framework of the geometric 
fictitious play in this subsection), it requires to repeat the scheme for 
several values of the viscosity and it is thus rather costly. 

Instead, we here show that the geometric fictitious play can quickly return  a very accurate numerical solution to the vanishing viscosity property 
problem (at least in the example addressed in the first arXiv version \cite{DelarueVasileiadis}). This is a very striking exemplification of our approach. It clearly demonstrates the benefits of choosing 
a higher value of the rate $\varpi$  and therefore of working with the geometric variant (instead of the harmonic variant) of the fictitious play. 
Our numerical experiment based on the aforementioned benchmark model. We choose 
a variant of $g$ in 
\eqref{eq:benchmark:d=1}, with $d=1$:
\begin{equation}
\label{eq:benchmark:d=1:variant}
g(x) = \cos\bigl( \kappa ( x-x_{0}) \bigr) - 2x_{0}, 
\end{equation}
with $\kappa=10$ and where $x_{0}$ is a root of the equation 
\begin{equation*}
\cos \bigl( \kappa x_{0} \bigr) = 2x_{0}. 
\end{equation*}
Numerically, we find that a choice is $x_{0}\approx -0.384$. 
The motivation for such an $x_{0}$ is that $0$ is 
a solution of \eqref{eq:mean_field:d:1:equilibrium}. In other words, 
$0$ is an equilibrium. 
Even more, 
we observe that, if the viscosity is zero, then the iterative sequence defined through the two 
updating rules 
\eqref{eq:backward:step:n}
and
\eqref{eq:forward:step:n}
remains in $({\boldsymbol m}^{n},{\boldsymbol h}^n)=(0,-2x_{0})$
for any $n \geq 1$ if 
${\boldsymbol m}^{0}$ is chosen as $0$. In other words, the standard fictitious (without any exploration) play converges (as predicted by the theory since this 1-dimensional MFG is potential, see  \S \ref{subse:potential}) and chooses the $0$-equilibrium. 
In order to compare with the prediction method exposed in  \S \ref{subse:potential}, we have plotted the corresponding potential
(which is given by a primitive $G$ of $g$). The plots are given in Figure \ref{fig:selection:potential}. We observe that $0$ is just a local minimizer of the potential and that the global minimizer  is around $-0.5$. 
\begin{figure}[h!]
	\begin{subfigure}[b]{0.4\linewidth}
		\includegraphics[width=\linewidth]{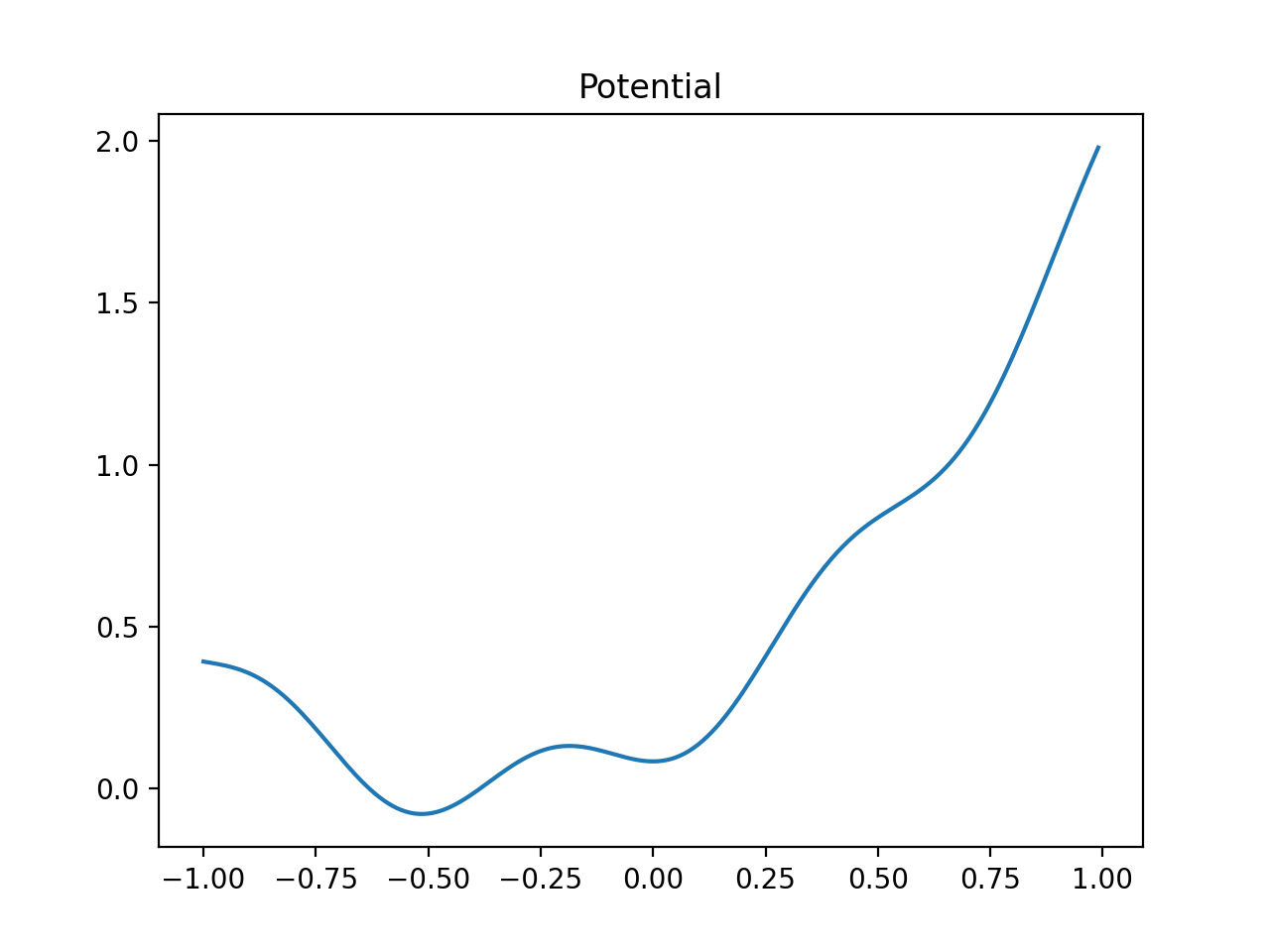}
	\end{subfigure}
	\begin{subfigure}[b]{0.4\linewidth}
		\includegraphics[width=\linewidth]{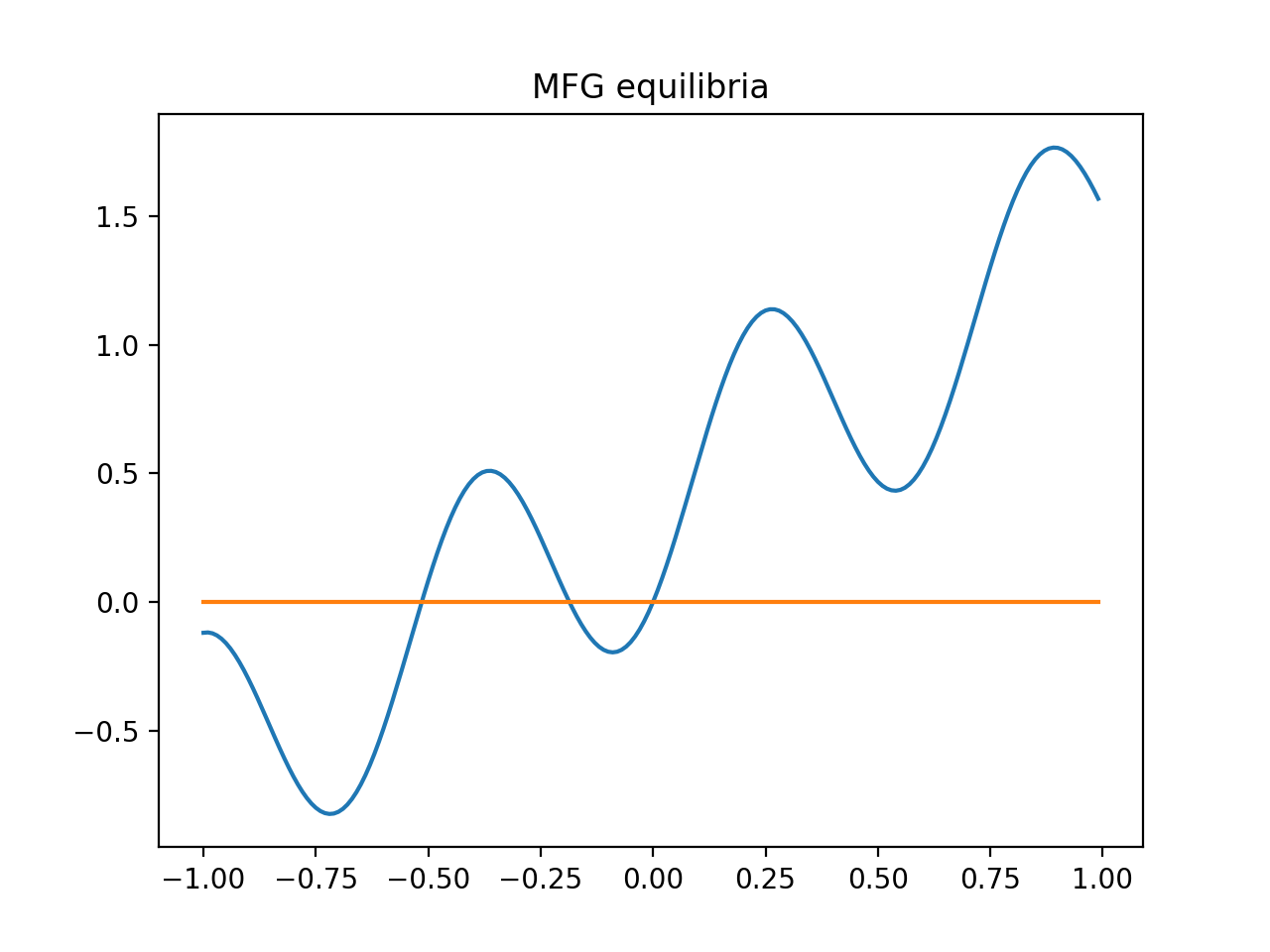}
			\end{subfigure}
		\caption{Equilibrium predicted by the potential rule: Potential $G$ on the left pane; Zeros of the function $g$ in the right pane.}
\label{fig:selection:potential}
\end{figure}

Our geometric fictitious play is able to rule out the $0$-equilibrium and to retain the other one.
In order to check this numerically, we have used three values for $\varpi$: $\varpi=4$, $\varpi=1.5$ and 
$\varpi=1$. As for the other parameters, we have chosen 
  $\sigma=0$ (no idiosyncratic noise, but Riccati is known), 
$M=2 \times 10^4$ (number of Monte-Carlo simulations)
and $p=100$. 
In all the experiments, $\varepsilon$ is set equal to $0.2$. 
The algorithm is initialized from the local minimizer. 
Our regression on the Hermite polynomials goes up to polynomials of degree 6. 
The results are presented in 
Figures 
\ref{fig:selection}, 
\ref{fig:selection:2}  and 
\ref{fig:selection:3}. Therein, we have plotted the histogram, under the tilted probability measure and at the end of each episode, of the terminal conditional mean of the system.
Even though the regime $\varpi=4$ is outside the scope of Theorem
\ref{main:thm:1} (because we assumed $\varpi \in (1,\sqrt{2}]$), the result 
with this choice
is quite impressive: the right equilibrium is selected in five iterations only. 
When $\varpi=1.5$, ten iterations are necessary. 
When $\varpi=1$, the results are satisfactory with around fifteen iterations but it is clear that the histogram features a kind of residual variance, which makes the selection 
less obvious. We believe that these plots are a strong case for supporting our results.

\begin{figure}[h!]
	\begin{subfigure}[b]{0.26\linewidth}
		\includegraphics[width=\linewidth]{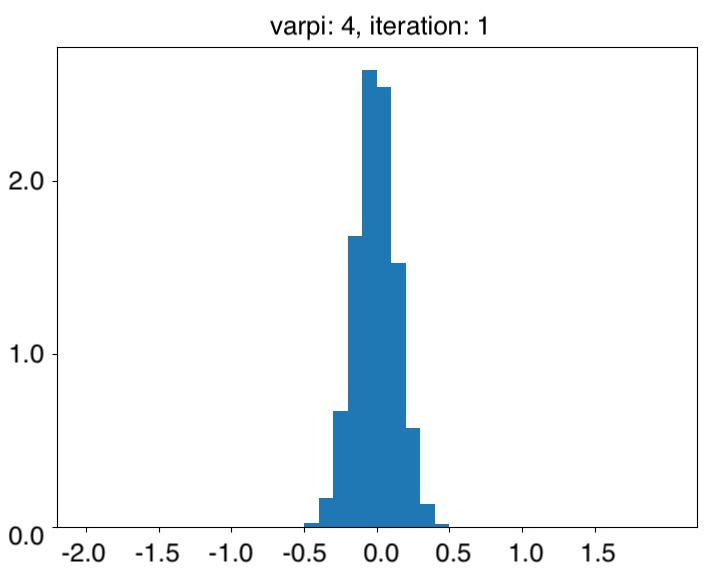}
	\end{subfigure}
	\begin{subfigure}[b]{0.26\linewidth}
		\includegraphics[width=\linewidth]{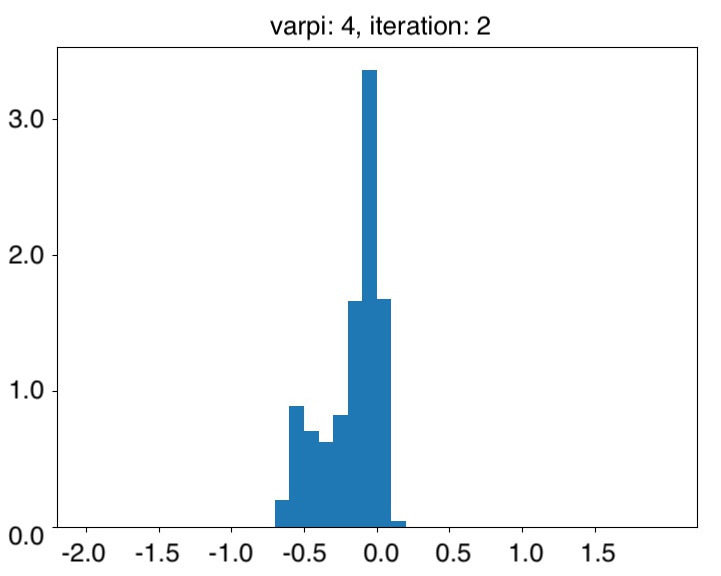}
	\end{subfigure}
	\begin{subfigure}[b]{0.26\linewidth}
		\includegraphics[width=\linewidth]{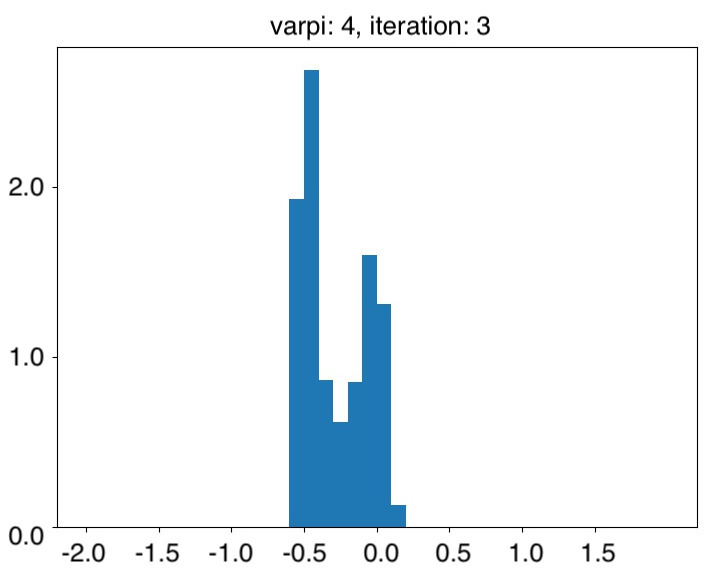}
	\end{subfigure}
	
		\begin{subfigure}[b]{0.26\linewidth}
		\includegraphics[width=\linewidth]{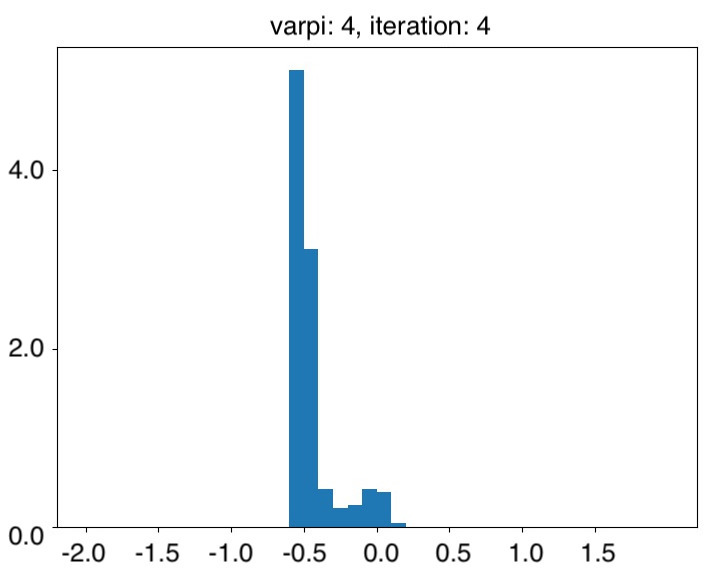}
	\end{subfigure}
		\begin{subfigure}[b]{0.26\linewidth}
		\includegraphics[width=\linewidth]{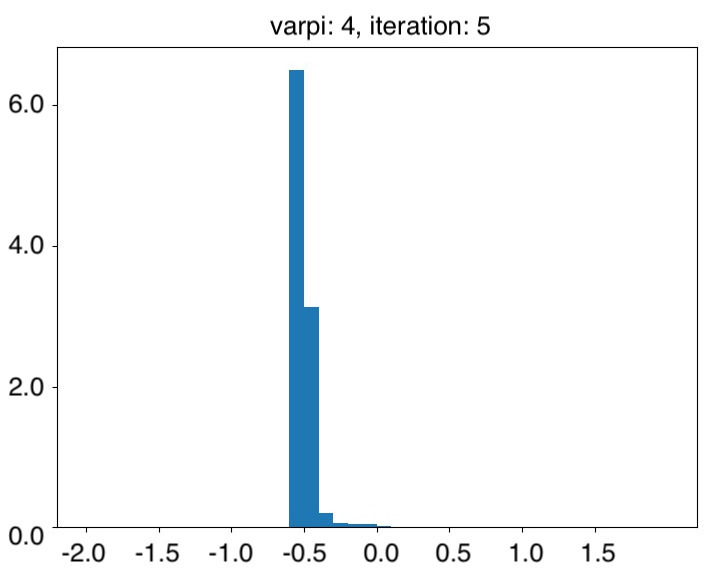}
	\end{subfigure}
		\caption{Selection of a solution by vanishing viscosity: histogram under the tilted measure of the terminal mean for 
	$\varepsilon=0.2$ and
	$\varpi=4$. Equilibrium is clearly selected in 5 iterations. }
\label{fig:selection}
\end{figure}

\begin{figure}[h!]
	\begin{subfigure}[b]{0.26\linewidth}
		\includegraphics[width=\linewidth]{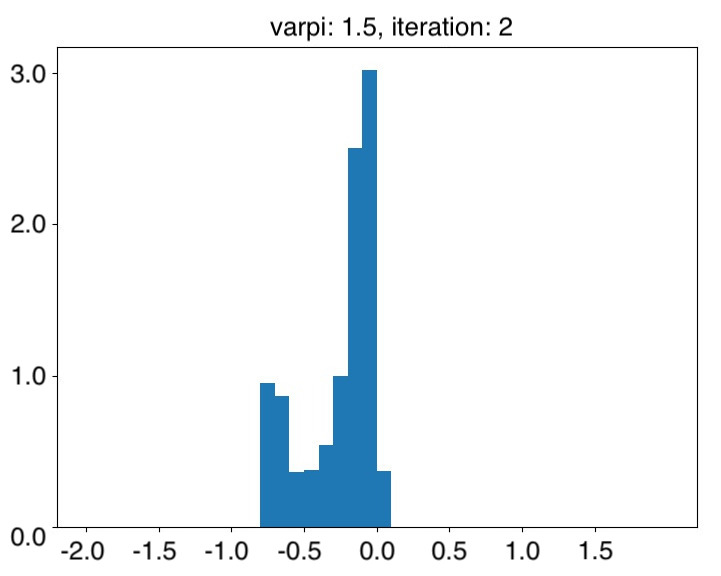}
	\end{subfigure}
	\begin{subfigure}[b]{0.26\linewidth}
		\includegraphics[width=\linewidth]{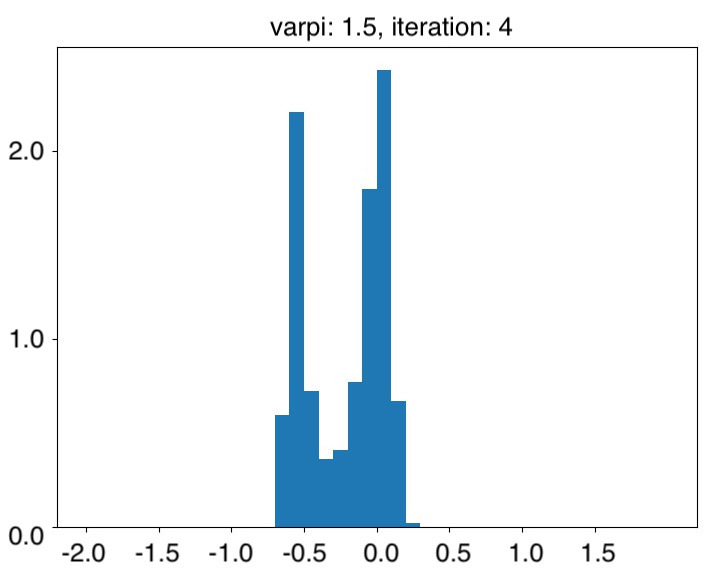}
	\end{subfigure}
	\begin{subfigure}[b]{0.26\linewidth}
		\includegraphics[width=\linewidth]{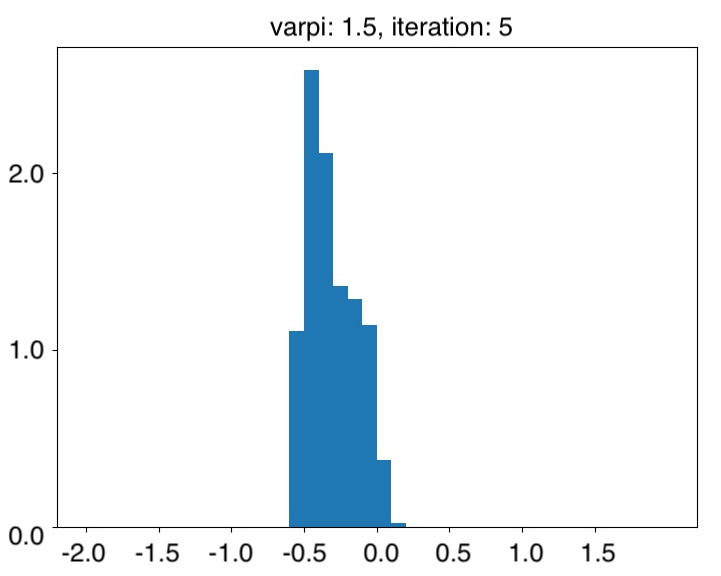}
	\end{subfigure}
	
		\begin{subfigure}[b]{0.26\linewidth}
		\includegraphics[width=\linewidth]{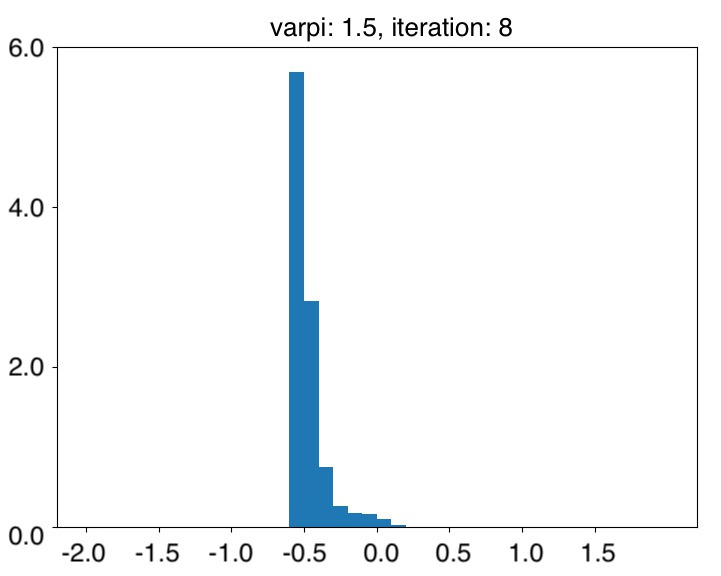}
	\end{subfigure}
		\begin{subfigure}[b]{0.26\linewidth}
		\includegraphics[width=\linewidth]{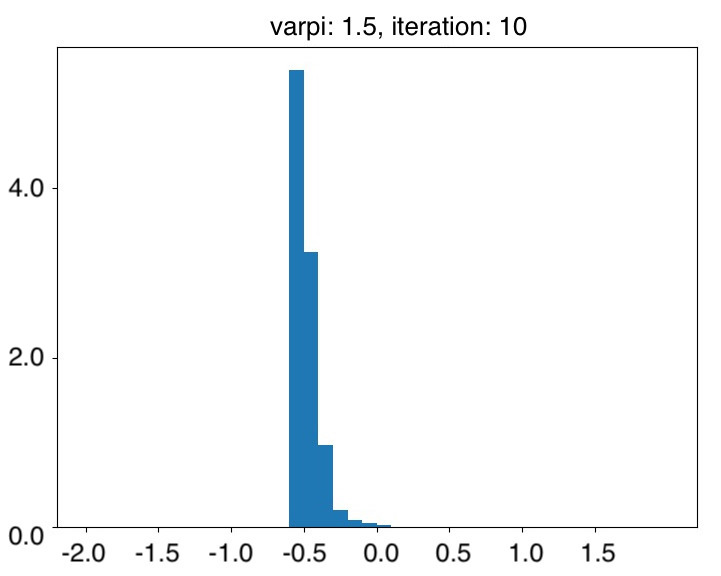}
	\end{subfigure}
	\caption{Selection of a solution by vanishing viscosity: histogram under the tilted measure of the terminal mean for 
	$\varepsilon=0.2$ and
	$\varpi=1.5$. Equilibrium is selected in 10 iterations. }
\label{fig:selection:2}
\end{figure}

\begin{figure}[h!]
	\begin{subfigure}[b]{0.26\linewidth}
		\includegraphics[width=\linewidth]{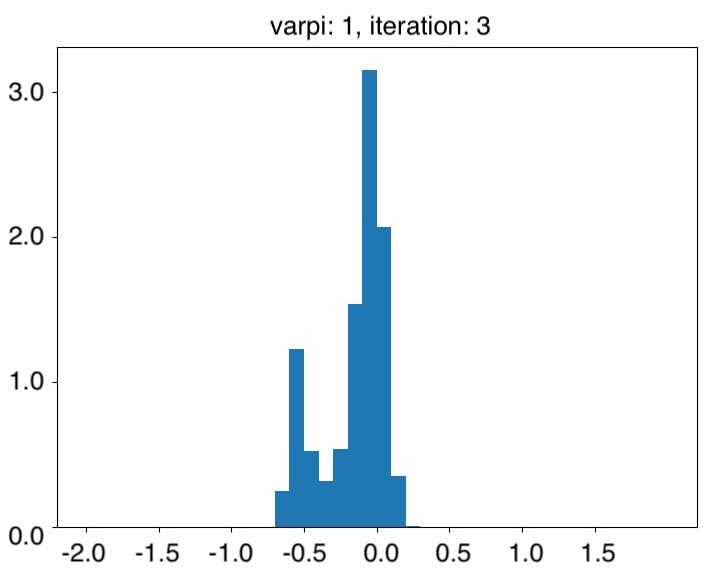}
	\end{subfigure}
	\begin{subfigure}[b]{0.26\linewidth}
		\includegraphics[width=\linewidth]{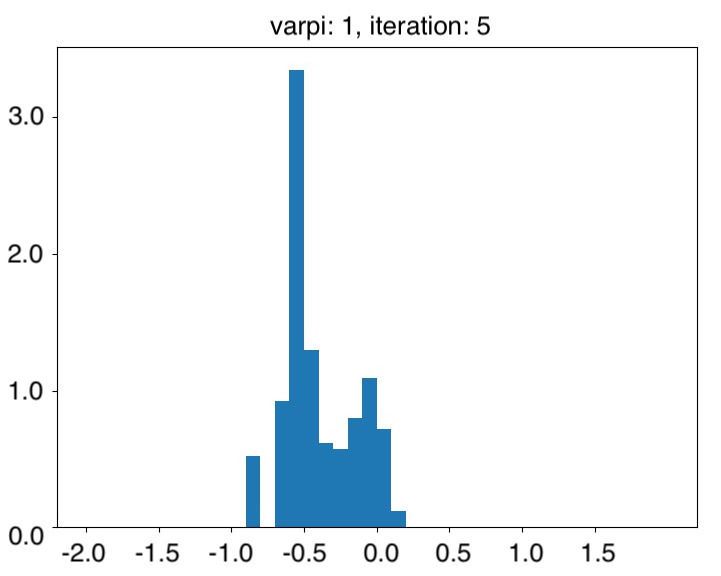}
	\end{subfigure}
	\begin{subfigure}[b]{0.26\linewidth}
		\includegraphics[width=\linewidth]{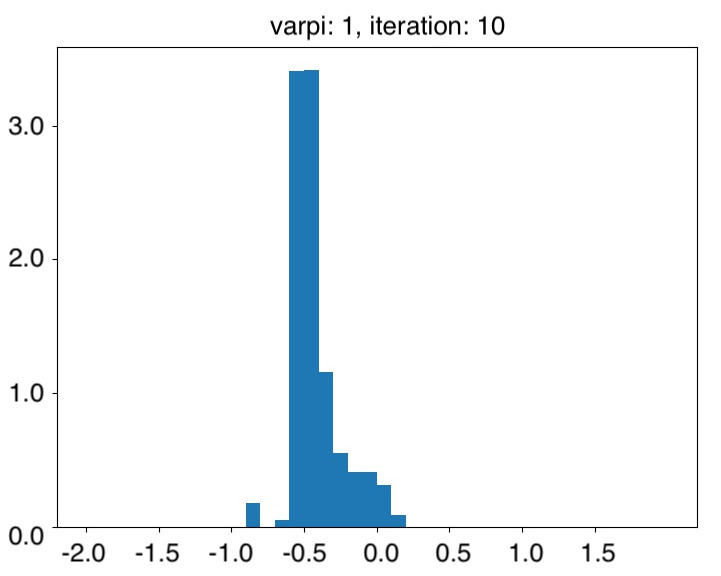}
	\end{subfigure}
	
		\begin{subfigure}[b]{0.26\linewidth}
		\includegraphics[width=\linewidth]{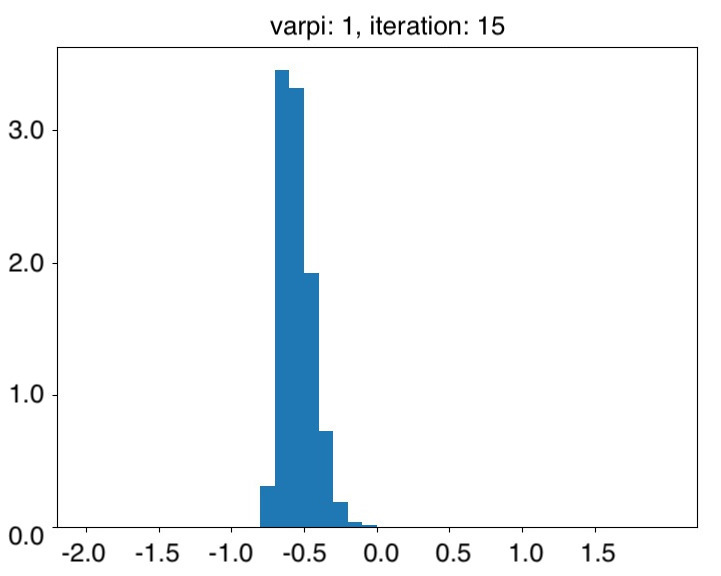}
	\end{subfigure}
		\begin{subfigure}[b]{0.26\linewidth}
		\includegraphics[width=\linewidth]{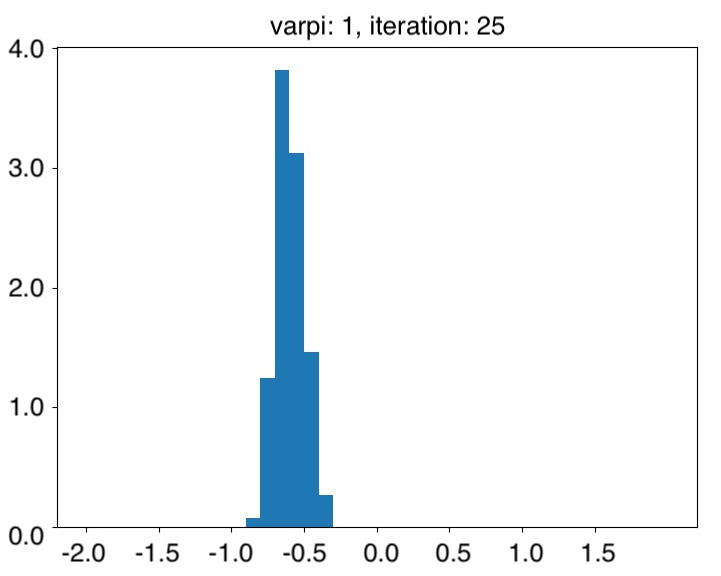}
	\end{subfigure}
	\caption{Selection of a solution by vanishing viscosity: histogram under the tilted measure of the terminal mean for 
	$\varepsilon=0.2$ and
	$\varpi=1$. Equilibrium is selected in more than 10 iterations, with a non-trivial residual variance. }
\label{fig:selection:3}
\end{figure}

For sure, the reader may wonder about the same plots if we decrease the value of 
$\varepsilon$ (say $\varepsilon=0.1$). 
It is fair to recognize that the results are not so good when $\varepsilon=0.1$. To our mind, this is due to the variance issues that we reported above:
the Girsanov change of reference may induce high variances. Numerical observations seem to indicate that those issues may even get worse when increasing 
the value of 
$\varpi$ (which is not so easy to explain from a theoretical point of view because the integrand $\varpi {\boldsymbol h}^n$ 
in 
\eqref{eq:opti:n+1:n}
is close to the true solution ${\boldsymbol h}$, regardless of the value of $\varpi$). One possible strategy
 to
 reduce the underlying variance 
 would be to implement the same 
 preferential sampling
 argument as in 
 the first arXiv version \cite{DelarueVasileiadis}, but it is fair to say that it is difficult 
 to obtain a plot that is as good as 
 the one obtained in Figure  
 \ref{fig:selection}. 
 For this reason, we feel better 
to address  
 this possible extension in a future work.  

\section*{Acknowledgement}
\noindent The authors are grateful to Mathieu Lauri\`ere for very helpful discussions on the topic.  
They are also grateful to the two anonymous referees whose comments allowed to improve the first version of this work. 
 \bibliographystyle{abbrv}
\bibliography{references2c}

\end{document}